\documentclass{amsart}

\usepackage{amsmath,amssymb,amsthm,amsfonts,mathrsfs}
\usepackage[frame,cmtip,arrow,matrix,line,graph,curve]{xy}
\usepackage{graphpap,color,paralist,pstricks}
\usepackage[mathscr]{eucal}
\usepackage[pdftex,colorlinks,backref=page,citecolor=blue,linkcolor=black]{hyperref}
\usepackage{tikz}
\usepackage{epic,eepic,url}
\usepackage{yfonts}
\usepackage{enumerate}
\usepackage{mathtools}
\usepackage{mathabx}
\def\multiset#1#2{\ensuremath{\left(\kern-.3em\left(\genfrac{}{}{0pt}{}{#1}{#2}\right)\kern-.3em\right)}}
\usepackage{verbatim}

\setlength{\oddsidemargin}{0in}
\setlength{\evensidemargin}{0in}
\setlength{\marginparwidth}{0in}
\setlength{\marginparsep}{0in}
\setlength{\marginparpush}{0in}
\setlength{\topmargin}{0.3in}
\setlength{\headsep}{14pt}
\setlength{\footskip}{.3in}
\setlength{\textheight}{8.0in}
\setlength{\textwidth}{5.8in}
\setlength{\parskip}{4pt}
\linespread{1.2}

\theoremstyle{definition}
\newtheorem{theorem}{Theorem}[section]
\newtheorem{definition}[theorem]{Definition}
\newtheorem{theorem-definition}[theorem]{Theorem-Definition}
\newtheorem{conjecture}[theorem]{Conjecture}
\newtheorem{lemma}[theorem]{Lemma}
\newtheorem{proposition}[theorem]{Proposition}
\newtheorem{corollary}[theorem]{Corollary}

\newtheorem{question}[theorem]{Question}
\newtheorem*{theorem*}{Theorem}
\newtheorem*{definition*}{Definition}

\providecommand{\customgenericname}{}
\newcommand{\newcustomtheorem}[2]{%
  \newenvironment{#1}[1]
  {%
   \renewcommand\customgenericname{#2}%
   \renewcommand\theinnercustomgeneric{##1}%
   \innercustomgeneric
  }
  {\endinnercustomgeneric}
}

\newcustomtheorem{customthm}{Theorem}
\newcustomtheorem{customlemma}{Lemma}

\theoremstyle{remark}
\newtheorem{remark}[theorem]{Remark}
\newtheorem*{remark*}{Remark}
\newtheorem{example}[theorem]{Example}

\newcommand{\JJ}{\mathrm{J}}
\newcommand{\MM}{\mathrm{M}}
\newcommand{\BB}{\mathrm{B}}

\newcommand{\supp}{\mathrm{supp}}

\newcommand{\HE}{\mathscr{H}}

\newcommand{\RR}{\mathbb{R}}

\newcommand{\NN}{\mathbb{N}}

\def\newop#1{\expandafter\def\csname #1\endcsname{\mathop{\rm
#1}\nolimits}}

\newop{diag}

\title{Lorentzian polynomials}

\author{Petter Br\"and\'en and June Huh}

\address{Department of Mathematics, KTH, Royal Institute of Technology, Stockholm, Sweden.}
\email{pbranden@kth.se}

\address{Institute for Advanced Study and Princeton University, Princeton, NJ, USA.}
\address{Korea Institute for Advanced Study, Seoul, Korea.}
\email{junehuh@ias.edu}

\begin{document}

\begin{abstract}
We study the class of Lorentzian polynomials. 
The class contains homogeneous stable polynomials as well as volume polynomials of convex bodies and projective varieties. 
We prove that the Hessian of a nonzero Lorentzian polynomial has exactly one positive eigenvalue at any point on the positive orthant. 
This property can be seen as an analog of the Hodge--Riemann relations for Lorentzian polynomials. 

Lorentzian polynomials are intimately connected to matroid theory and negative dependence properties. 
We show that matroids, and more generally $\MM$-convex sets, are characterized by the Lorentzian property,
and develop a theory around Lorentzian polynomials. 
In particular, we provide a large class of linear operators that preserve the Lorentzian property and prove that Lorentzian measures enjoy several negative dependence properties. 
We also prove that the class of tropicalized Lorentzian polynomials coincides with the class of $\MM$-convex functions in the sense of discrete convex analysis. 
The tropical connection is used to produce Lorentzian polynomials from $\MM$-convex functions.

We give two applications of the general theory.
First, we prove that the homogenized multivariate Tutte polynomial of a matroid is Lorentzian whenever  the parameter $q$ satisfies $0< q \leq 1$. 
Consequences are proofs of the strongest Mason's conjecture from 1972 and negative dependence properties of  the random cluster model in statistical physics. 
Second, we prove that the multivariate characteristic polynomial of an $\MM$-matrix is Lorentzian. 
This refines a result of Holtz who proved that the coefficients of the characteristic polynomial of an $\MM$-matrix form an ultra log-concave sequence.
\end{abstract}

\maketitle

\thispagestyle{empty}

\tableofcontents

\section{Introduction}

Let $\mathrm{H}^d_n$ be the space of degree $d$ homogeneous polynomials in $n$ variables with real coefficients.
Inspired by Hodge's index theorem for projective varieties, we introduce a  class of  polynomials with remarkable properties. 
Let $\mathring{\mathrm{L}}^2_n \subseteq \mathrm{H}^2_n$ be the open subset of quadratic forms with positive coefficients that have the \emph{Lorentzian signature} $(+,-,\ldots,-)$.
For $d$ larger than $2$, we define an open subset $\mathring{\mathrm{L}}^d_n \subseteq \mathrm{H}^d_n$ by setting
\[
\mathring{\mathrm{L}}^d_n=\Big\{f \in \mathrm{H}^d_n\mid \text{$\partial_i f \in \mathring{\mathrm{L}}^{d-1}_n$ for all $i$} \Big\},
\]
where $\partial_i$ is the partial derivative with respect to the $i$-th variable.
Thus $f$ belongs to $\mathring{\mathrm{L}}^d_n$  if and only if all polynomials of the form $\partial_{i_1}\partial_{i_2}\cdots \partial_{i_{d-2}} f$ belongs to $\mathring{\mathrm{L}}^2_n$.
The polynomials in $\mathring{\mathrm{L}}^d_n$  are called \emph{strictly Lorentzian}, and the limits of strictly Lorentzian polynomials are called \emph{Lorentzian}.
We show that the class of Lorentzian polynomials contains the class of homogeneous stable polynomials (Section \ref{Lorent}) as well as volume polynomials of convex bodies  and projective varieties (Sections \ref{SectionConvex} and \ref{SectionProjective}).

Lorentzian polynomials link discrete and continuous notions of convexity.
Let  $\mathrm{L}^2_n \subseteq \mathrm{H}^2_n$ be the closed subset of quadratic forms with nonnegative coefficients that have at most one positive eigenvalue,
which is the closure of  $\mathring{\mathrm{L}}^2_n$ in $\mathrm{H}^2_n$.
We write $\text{supp}(f) \subseteq \NN^n$ for the support of $f \in \mathrm{H}^d_n$, 
the set of monomials appearing in $f$ with nonzero coefficients.
For $d$ larger than $2$, we define $\mathrm{L}^d_n \subseteq \mathrm{H}^d_n$ by setting
\[
\mathrm{L}^d_n=\Big\{f \in \mathrm{M}^d_n\mid \text{$\partial_i f \in \mathrm{L}^{d-1}_n$ for all $i$} \Big\},
\]
where $\mathrm{M}^d_n \subseteq \mathrm{H}^d_n$ is the set of polynomials with nonnegative coefficients whose supports are \emph{$\MM$-convex}
 in the sense of discrete convex analysis \cite{Murota}:
For any index $i$ and any $\alpha,\beta \in \text{supp}(f)$ whose $i$-th coordinates satisfy $\alpha_i > \beta_i$,
there is an index $j$ satisfying
\[
\alpha_j<\beta_j \ \ \text{and} \ \ \alpha-e_i+e_j \in \text{supp}(f) \ \ \text{and} \ \ \beta-e_j+e_i \in \text{supp}(f),
\]
where $e_i$ is the $i$-th standard unit vector in $\NN^n$.
Since $f \in \mathrm{M}^d_n$ implies $\partial_i f \in \mathrm{M}^{d-1}_n$, we have
\[
\mathrm{L}^d_n=\Big\{f \in \mathrm{M}^d_n\mid \text{$\partial_{i_1}\partial_{i_2} \cdots \partial_{i_{d-2}} f \in \mathrm{L}^{2}_n$ for all $i_1,i_2,\ldots,i_{d-2}$} \Big\}.
\]

Our central result states that
$\mathrm{L}^d_n$  is the set of Lorentzian polynomials in $\mathrm{H}^d_n$ (Theorem \ref{chars}).
To show that $\mathrm{L}^d_n$ is contained in the closure of $\mathring{\mathrm{L}}^d_n$,
we construct a Nuij-type homotopy for $\mathrm{L}^d_n$  in Section \ref{Lorent}.
The construction is used in Section \ref{secHR} to prove that all polynomials in  $\mathrm{L}^d_n$ satisfy 
a formal version of the Hodge--Riemann relations:
The Hessian of any nonzero polynomial in $\mathrm{L}^d_n$ has exactly one positive eigenvalue at any point on the positive orthant.
To show that  $\mathrm{L}^d_n$ contains the closure of $\mathring{\mathrm{L}}^d_n$,
we develop the theory of \emph{$c$-Rayleigh polynomials} in Section \ref{Negative}.
Since homogeneous stable polynomials are Lorentzian, the latter inclusion generalizes a result of Choe \emph{et al.} that the support of any homogenous multi-affine  stable polynomial is the set of bases of a matroid \cite{COSW}.
In Section \ref{secChar}, we use the above results to show that
the classes of strongly log-concave  \cite{GurvitsL}, completely log-concave \cite{AGV}, and Lorentzian polynomials are identical for homogeneous polynomials (Theorem \ref{allequal}).
 This enables us to affirmatively answer two questions of Gurvits on strongly log-concave polynomials (Corollaries \ref{CorollarySupport} and \ref{CorollaryProduct}).

Lorentzian polynomials are intimately connected to matroid theory and discrete convex analysis. 
We show that matroids, and more generally $\MM$-convex sets, are characterized by the Lorentzian property.
Let $\mathbb{P}\mathrm{H}^d_n$ be the projectivization of the vector space $\mathrm{H}^d_n$,
and let $\mathrm{L}_\mathrm{J}$ be the set of polynomials in $\mathrm{L}^d_n$ with nonempty support $\mathrm{J}$.
We denote  the images of $\mathrm{L}^d_n$, $\mathring{\mathrm{L}}^d_n$, and  $\mathrm{L}_\mathrm{J}$ in $\mathbb{P}\mathrm{H}^d_n$
by $\mathbb{P}\mathrm{L}^d_n$, $\mathbb{P}\mathring{\mathrm{L}}^d_n$,  and $\mathbb{P}\mathrm{L}_\mathrm{J}$  respectively,
and write
\[
\mathbb{P}\mathrm{L}^d_n = \coprod_{\mathrm{J}} \mathbb{P}\mathrm{L}_\mathrm{J}, 
\]
where the union is over all nonempty  $\MM$-convex subsets  of the $d$-th discrete simplex in $\NN^n$.
The space $\mathbb{P}\mathrm{L}^d_n$ is homeomorphic to the intersection of $\mathrm{L}^d_n$ with the unit sphere in  $\mathrm{H}^d_n$ for the Euclidean norm on the coefficients.
We prove that
$\mathbb{P}\mathrm{L}^d_n$ is a compact contractible set with contractible interior $\mathbb{P}\mathring{\mathrm{L}}^d_n$ (Theorem \ref{BallLike}).\footnote{We conjecture that $\mathbb{P}\mathrm{L}^d_n$ is homeomorphic to the closed Euclidean ball of the same dimension (Conjecture \ref{Ball}).} 
In addition, we show that
 $\mathbb{P}\mathrm{L}_\mathrm{J}$ is nonempty and contractible  for every nonempty $\MM$-convex set $\mathrm{J}$  (Theorem \ref{charjump} and Proposition \ref{tot}).
 Similarly, writing $\underline{\mathrm{H}}^{d}_{n}$ for the space of multi-affine degree $d$ homogeneous polynomials in $n$ variables
and $\underline{\mathrm{L}}^{d}_{n}$ for the corresponding set of multi-affine Lorentzian polynomials,
we have
\[
\mathbb{P}\underline{\mathrm{L}}^d_n = \coprod_{\mathrm{B}} \mathbb{P}\underline{\mathrm{L}}_\mathrm{B}, 
\]
where the union is over all rank $d$ matroids on the $n$-element set $[n]$.
The space $\mathbb{P}\underline{\mathrm{L}}^d_n$ is  compact and contractible, 
and $\mathbb{P}\underline{\mathrm{L}}_\mathrm{B}$ is nonempty and contractible for every  matroid $\mathrm{B}$  (Remark \ref{BallLikeMultiaffine}). 
The latter fact contrasts the case of stable polynomials.
For example, 
 there is no stable polynomial  whose support is the set of bases of the Fano plane \cite{Branden}.




In Section \ref{secOp}, we describe a large class of linear operators preserving the class of Lorentzian polynomials, thus providing a toolbox for working with Lorentzian polynomials. 
We give a Lorentzian analog of a theorem of Borcea and Br\"and\'en for stable polynomials \cite{BBI},
who characterized linear operators preserving stable polynomials (Theorem \ref{hrcpr}).
It follows from our result that any homogeneous linear operator that preserves stable polynomials and polynomials with nonnegative coefficients
also  preserves  Lorentzian polynomials (Theorem \ref{stab-lor}). 

In Section \ref{tropsec}, we strengthen the connection between Lorentzian polynomials and discrete convex analysis.
For a  function  $\nu:\NN^n \to \mathbb{R} \cup \{\infty\}$, 
we write  $\text{dom}(\nu) \subseteq \NN^n$ for the effective domain of $\nu$, 
the  subset of $\NN^n$ where $\nu$ is finite.
For a positive real parameter $q$, we consider the generating function
\[
f^{\nu}_{q}(w)=\sum_{\alpha \in \text{dom}(\nu)} \frac{q^{\nu(\alpha)}}{\alpha!} w^\alpha, \quad w=(w_1,\ldots,w_n). 
\]
The main result here is Theorem \ref{classical}, which states that $f^{\nu}_{q}$ is a Lorentzian polynomial for all $0<q \le 1$ if and only if the function $\nu$ is \emph{$\MM$-convex} in the sense of discrete convex analysis \cite{Murota}:
For any index $i$ and any $\alpha,\beta \in \text{dom}(\nu)$ whose $i$-th coordinates satisfy $\alpha_i >\beta_i$, there is an index $j$ satisfying
\[
\alpha_j <\beta_j \ \ \text{and} \ \  \nu(\alpha)+\nu(\beta) \ge \nu(\alpha-e_i+e_j)+\nu(\beta-e_j+e_i).
\]
In particular,  $\mathrm{J}\subseteq \NN^n$ is $\MM$-convex if and only if its exponential generating function $\sum_{\alpha \in \mathrm{J}} \frac{1}{\alpha!} w^\alpha$ is a Lorentzian polynomial (Theorem \ref{charjump}).
Another special case of Theorem \ref{classical}  is the statement that a homogeneous polynomial with nonnegative coefficients is Lorentzian if the natural logarithms of its normalized coefficients form an $\MM$-concave function (Corollary \ref{normalizedcoefficients}).
Working over the field of formal Puiseux series $\mathbb{K}$,  
we show 
 that the tropicalization of any Lorentzian polynomial  over $\mathbb{K}$ is an $\MM$-convex function, and that \emph{all} $\MM$-convex functions are limits of tropicalizations of Lorentzian polynomials over $\mathbb{K}$ (Corollary \ref{CorollaryTropical}).  
This generalizes a result of Br\"and\'en \cite{SIAM}, who showed that the tropicalization of any homogeneous stable polynomial  over $\mathbb{K}$ is $\MM$-convex.\footnote{In \cite{SIAM}, the field of formal Puiseux series with real exponents  was used. The tropicalization used in \cite{SIAM} differs from ours by a sign.}   
In particular, for any matroid $\MM$ with the set of bases $\mathrm{B}$,
the \emph{Dressian} of all valuated matroids on $\MM$  can be identified with the tropicalization of the space of Lorentzian polynomials over $\mathbb{K}$ with support $\mathrm{B}$.

In Sections  \ref{SectionConvex} and \ref{SectionProjective}, we show that the volume polynomials of convex bodies  and projective varieties are Lorentzian.
It follows that, for any convex bodies $\mathrm{K}_1,\ldots,\mathrm{K}_n$ in $\mathbb{R}^d$,
the set of all $\alpha \in \NN^n$ satisfying the conditions
\[
\alpha_1+\cdots+\alpha_n=d \ \ \text{and} \ \ 
 V(\underbrace{\mathrm{K}_1, \ldots, \mathrm{K}_1}_{\alpha_1}, \ldots,  \underbrace{\mathrm{K}_n,  \ldots, \mathrm{K}_n}_{\alpha_n}) \neq 0
 \]
 is $\MM$-convex, 
 where the symbol $V$ stands for the mixed volume of convex bodies in $\RR^d$.
Similarly,  for any  $d$-dimensional projective variety $Y$ and any nef divisors $\mathrm{H}_1,\ldots,\mathrm{H}_n$ on $Y$, 
the set of all $\alpha \in \NN^n$ satisfying the conditions
\[
\alpha_1+\cdots+\alpha_n=d \ \ \text{and} \ \ 
 (\underbrace{\mathrm{H}_1 \cdot \ldots \cdot \mathrm{H}_1}_{\alpha_1} \cdot \ldots \cdot  \underbrace{\mathrm{H}_n \cdot \ldots \cdot \mathrm{H}_n}_{\alpha_n}) \neq 0
 \]
 is $\MM$-convex, where  the symbol $\cdot$ stands for the intersection product of Cartier divisors on $Y$.
The problem of finding a Lorentzian polynomial that is not a volume polynomial remains open.
For a precise formulation, see Question \ref{RealizationQuestion}.

In Section \ref{SecqP}, we use the basic theory developed in Section \ref{BasicTheory} to show that the homogenized multivariate Tutte polynomial of any matroid is Lorentzian. 
We use the Lorentzian property to prove a conjecture of Mason from 1972 on the enumeration of independent sets  \cite{Mason}: 
For any matroid $\mathrm{M}$ on $[n]$ and any positive integer $k$,
 \[
\frac{I_k(\mathrm{M})^2}{{n \choose k}^2} \ge \frac{I_{k+1}(\mathrm{M})}{{n \choose k+1}}\frac{I_{k-1}(\mathrm{M})}{{n \choose k-1}},
\]
where $I_k(\mathrm{M})$ is the number of $k$-element independent sets of $\mathrm{M}$.  
More generally, the Lorentzian property  reveals several inequalities satisfied by the coefficients of the classical  \emph{Tutte polynomial} 
\[
\mathrm{T}_\mathrm{M}(x,y)=\sum_{A  \subseteq [n]} (x-1)^{\text{rk}_\mathrm{M}([n])-\text{rk}_\mathrm{M}(A)} (y-1)^{|A|-\text{rk}_\mathrm{M}(A)},
\]
where $\text{rk}_\mathrm{M}:\{0,1\}^n \to \mathbb{N}$ is the rank function of $\mathrm{M}$.
For example, if we write
\[
w^{\text{rk}_\mathrm{M}([n])}\mathrm{T}_\mathrm{M}\Big(1+\frac{q}{w},1+w\Big)
=\sum_{k=0}^n c_q^k(\mathrm{M}) w^k, 
\]
then the sequence $c_q^k(\mathrm{M})$ is ultra log-concave for every $0 \le q \le 1$.\footnote{
Nima Anari, Kuikui Liu, Shayan Oveis Gharan and Cynthia Vinzant have  independently developed methods that partially overlap with our work in a series of papers  \cite{AGV,ALGVII,ALGVIII}. They study the class of \emph{completely log-concave polynomials}. For homogenous polynomials this class agrees with the class of Lorentzian polynomials, see Theorem \ref{allequal} in this paper.
 The main overlap is an independent proof of Mason's conjecture in  \cite{ALGVIII}. The manuscript \cite{BH}, which is not intended for publication, contains a short self-contained proof of Mason's conjecture which was published on arXiv simultaneously as \cite{ALGVIII}.
 In addition, the authors of \cite{AGV}  prove that the basis generating polynomial of any matroid is completely log-concave, using results of Adiprasito, Huh, and Katz \cite{AHK}.
An equivalent statement on the Hessian of the basis generating polynomial can be found in  \cite[Remark 15]{HW}. 
 A self-contained proof of the complete log-concavity of the basis generating polynomial, based on an implication similar to $(3) \Rightarrow (1)$ of Theorem \ref{allequal} in this paper, appears in \cite[Section 5.1]{ALGVII}.
The authors of \cite{ALGVII} apply these results to design an FPRAS to count the number of bases of any matroid given by an independent set oracle, and to prove the conjecture of Mihail and Vazirani that the bases exchange graph of any matroid has expansion at least $1$.}

 In Section \ref{secMm},
we show that the multivariate  characteristic polynomial of any $\MM$-matrix is Lorentzian.\footnote{An $n \times n$ matrix is an \emph{$\MM$-matrix} if all the off-diagonal entries are nonpositive and all the principal minors are positive.
The class of $\MM$-matrices shares many properties of hermitian positive definite matrices and appears in mathematical economics and computational biology \cite{BP}.}  
This strengthens a theorem of Holtz \cite{Holtz}, who proved that the coefficients of the characteristic polynomial of any $\MM$-matrix form an ultra log-concave sequence.

In Section \ref{secLM}, we define a class of discrete probability measures, called \emph{Lorentzian measures},
properly containing the class of strongly Rayleigh measures studied in \cite{BBL}. 
We show that Lorentzian measures enjoy several negative dependence properties and prove that the class of Lorentzian measures is closed under the symmetric exclusion process. 
As an example, 
we show that  the uniform  measure $\mu_\mathrm{M}$ on $\{0,1\}^n$ concentrated on the independent sets of a matroid $\mathrm{M}$ on $[n]$ is Lorentzian (Proposition \ref{PropositionMatroidMeasures}).
A conjecture of Kahn \cite{Kahn} and Grimmett--Winkler \cite{GW} states that, for any graphic matroid $\mathrm{M}$ and distinct elements $i$ and $j$,
\[
\text{Pr}(\text{$F$ contains $i$ and $j$}) \le \text{Pr}(\text{$F$ contains $i$})\hspace{0.5mm} \text{Pr}(\text{$F$ contains  $j$}),
\]
where $F$ is an independent set of $\mathrm{M}$ chosen uniformly at random.
The Lorentzian property of the measure $\mu_\mathrm{M}$ shows that, for any matroid $\mathrm{M}$ and distinct elements $i$ and $j$,
\[
\text{Pr}(\text{$F$ contains $i$ and $j$}) \le 2\text{Pr}(\text{$F$ contains $i$})\hspace{0.5mm} \text{Pr}(\text{$F$ contains  $j$}),
\]
where $F$ is an independent set of $\mathrm{M}$ chosen uniformly at random.

\noindent
{\bf Acknowledgments.}
Petter Br\"and\'en is a Wallenberg Academy Fellow supported by the Knut and Alice Wallenberg Foundation and Vetenskapsr\aa det. 
June Huh was supported by NSF Grant DMS-1638352 and the Ellentuck Fund.
Special thanks go to  anonymous referees, Claus Scheiderer's reading group, Jonathan Leake, and Yanxin Liu, whose valuable comments significantly improved the quality of the paper.


\section{Basic theory}\label{BasicTheory}

\subsection{The space of Lorentzian polynomials}\label{Lorent}


Let $n$ and $d$ be nonnegative integers, and set $[n]=\{1,\ldots,n\}$. 
We write $\mathrm{H}^d_n$ for the set of degree $d$ homogeneous polynomials in $\mathbb{R}[w_1,\ldots,w_n]$.
We define a topology on $\mathrm{H}^d_n$ using the Euclidean norm for the coefficients, 
and write $\mathrm{P}^d_n \subseteq \mathrm{H}^d_n$ for the open subset of polynomials  all of whose 
coefficients are positive. 
The \emph{Hessian} of $f \in \RR[w_1,\ldots, w_n]$ is the symmetric matrix  
\[
\mathscr{H}_f(w)=\Big( \partial_i\partial_j f \Big)_{i,j=1}^n,
\] 
where $\partial_i$ stands for the partial derivative $\frac{\partial}{\partial w_i}$. 
For $\alpha \in \mathbb{N}^n$,
we write
\[
\alpha=\sum_{i=1}^n \alpha_ie_i \ \ \text{and} \ \ |\alpha|_1=\sum_{i=1}^n \alpha_i,
\]
where $\alpha_i$ is a nonnegative integer and $e_i$ is the standard unit vector in $\NN^n$, and set
\[
w^\alpha=w_1^{\alpha_1} \cdots w_n^{\alpha_n}  \ \ \text{and} \ \ 
 \partial^\alpha=\partial_1^{\alpha_1} \cdots \partial_n^{\alpha_n}.
 \]
We define the \emph{$d$-th discrete simplex}  $\Delta^d_n \subseteq \NN^n$ by
\[
\Delta^d_n =\Big\{ \alpha \in \NN^n \mid |\alpha|_1=d\Big\},
\]
and define the \emph{Boolean cube}  $\{0,1\}^n \subseteq \NN^n$ by
\[
\{0,1\}^n=\Bigg\{\sum_{i \in S} e_i  \in \NN^n \mid S \subseteq [n]\Bigg\}. 
\]
The intersection of the $d$-th discrete simplex and the Boolean cube  will be denoted
\[
 { n\brack d}=\{0,1\}^n \cap \Delta^d_n.
\]
The cardinality of ${n \brack d}$ is the binomial coefficient ${n \choose d}$.
We often identify a subset $S$ of $[n]$ with the zero-one vector $\sum_{i \in S} e_i$ in $\NN^n$.
For example, we write $w^S$ for the square-free monomial $\prod_{i \in S} w_i$.

\begin{definition}[Lorentzian polynomials]\label{FirstDefinition}
We set $\mathring{\mathrm{L}}^0_n=\mathrm{P}^0_n$, $\mathring{\mathrm{L}}^1_n=\mathrm{P}^1_n$, and 
\[
\mathring{\mathrm{L}}^2_n=\Big\{f \in \mathrm{P}^2_n\mid \text{$\HE_f$ is nonsingular and has exactly one positive eigenvalue} \Big\}.
\]
For $d$ larger than $2$, we define $\mathring{\mathrm{L}}^d_n$ recursively by setting
\[
\mathring{\mathrm{L}}^d_n=\Big\{f \in \mathrm{P}^d_n\mid \text{$\partial_i f \in \mathring{\mathrm{L}}^{d-1}_n$ for all $i \in [n]$} \Big\}.
\]
The polynomials in $\mathring{\mathrm{L}}^d_n$  are called \emph{strictly Lorentzian}, and the limits of strictly Lorentzian polynomials are called \emph{Lorentzian}.
\end{definition}

Clearly, $\mathring{\mathrm{L}}^d_n$ is an open subset of $\mathrm{H}^d_n$, 
and the space $\mathring{\mathrm{L}}^2_n$  may be identified with the set of $n \times n$ symmetric matrices with positive entries that have the \emph{Lorentzian signature} $(+,-,\ldots, -)$. 
Unwinding the recursive definition, we have
\[
\mathring{\mathrm{L}}^d_n= \Big\{f \in \mathrm{P}^d_n\mid \text{$\partial^\alpha f \in \mathring{\mathrm{L}}^{2}_n$ for every $\alpha \in \Delta^{d-2}_n$} \Big\}.
\]
Proposition \ref{PropositionStableLorentzian} below on stable polynomials shows that $\mathring{\mathrm{L}}^d_n$ is nonempty for every $n$ and $d$.


An important subclass of Lorentzian polynomials is homogeneous stable polynomials, 
which play a guiding role in many of our proofs.
Recall that a polynomial $f$ in $\RR[w_1,\ldots, w_n]$ is \emph{stable}  if $f$ is non-vanishing on $\mathcal{H}^n$ or identically zero,
where $\mathcal{H}$ is the open upper half plane in $\mathbb{C}$.
Let $\mathrm{S}^d_n$ be the set of degree $d$ homogeneous stable polynomials in $n$ variables with nonnegative coefficients. 
Hurwitz's theorem shows that $\mathrm{S}^d_n$ is a closed subset of $\mathrm{H}^d_n$ \cite[Section 2]{Wagner11}.
When $f$ is homogeneous and has nonnegative coefficients,
 the stability of $f$ is equivalent to any one of the following statements on univariate polynomials in the variable $x$ \cite[Theorem 4.5]{BBL}:
\begin{enumerate}[--]\itemsep 5pt
\item
 For any $u \in \mathbb{R}^n_{>0}$,  $f(xu-v)$ has only real zeros for all $v \in \mathbb{R}^n$.
\item
 For some $u \in \mathbb{R}^n_{>0}$,  $f(xu-v)$ has only real zeros for all $v \in \mathbb{R}^n$.
 \item
 For any $u \in \mathbb{R}^n_{\ge 0}$ with $f(u)>0$, $f(xu-v)$ has only real zeros for all $v \in \mathbb{R}^n$.
  \item
 For some $u \in \mathbb{R}^n_{\ge 0}$ with $f(u)>0$, $f(xu-v)$ has only real zeros for all $v \in \mathbb{R}^n$.
\end{enumerate}
We refer to \cite{Wagner11} and \cite{Pemantle}   for background on the class of stable polynomials.
We will use the fact that any  polynomial $f \in \mathrm{S}^d_n$ is the limit of polynomials in the interior of $\mathrm{S}^d_n$, that is, of \emph{strictly stable polynomials} \cite{Nuij}. 

\begin{proposition}\label{PropositionStableLorentzian}
Any polynomial in $\mathrm{S}^d_n$ is Lorentzian.
\end{proposition}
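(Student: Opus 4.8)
The plan is to unwind the recursive definition of $\mathring{\mathrm{L}}^d_n$ down to the quadratic case, and to produce strictly Lorentzian approximants from the strictly stable approximants supplied by Nuij's theorem. First I would record the classical fact that differentiation preserves stability: if $f\in\mathrm{S}^d_n$ then $\partial_if\in\mathrm{S}^{d-1}_n$ for every $i\in[n]$. Indeed (assuming $\partial_if\not\equiv0$, else there is nothing to prove), freezing the variables $w_j$ with $j\neq i$ in the open upper half plane $\mathcal{H}$, the univariate polynomial $w_i\mapsto f(w)$ has no zeros in $\mathcal{H}$, so all of its zeros lie in the closed lower half plane; by the Gauss--Lucas theorem the same holds for its $w_i$-derivative, and as the frozen variables range over $\mathcal{H}$ we conclude that $\partial_if$ does not vanish on $\mathcal{H}^n$. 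Nonnegativity of the coefficients is clearly preserved, so $\partial_if\in\mathrm{S}^{d-1}_n$, and by iteration $\partial^\alpha f\in\mathrm{S}^2_n$ for every $\alpha\in\Delta^{d-2}_n$.

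Next I would analyze the quadratic case. Write $q=\tfrac{1}{2}\,w^{\mathsf{T}}Mw$ with $M=\HE_q$, fix $u\in\RR^n_{>0}$, and assume $q$ has a positive coefficient, so $u^{\mathsf{T}}Mu=2q(u)>0$. Expanding $q(xu-v)=\tfrac{1}{2}(x^2\,u^{\mathsf{T}}Mu-2x\,u^{\mathsf{T}}Mv+v^{\mathsf{T}}Mv)$, real-rootedness in $x$ for all $v$ --- one of the equivalent criteria for stability recalled above --- is the reverse Cauchy--Schwarz inequality $(u^{\mathsf{T}}Mv)^2\ge(u^{\mathsf{T}}Mu)(v^{\mathsf{T}}Mv)$. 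Decomposing $\RR^n=\RR u\oplus u^{\perp_M}$ (the $M$-orthogonal complement of $u$, well defined since $u^{\mathsf{T}}Mu\neq0$), this reduces to $y^{\mathsf{T}}My\le0$ for all $y\in u^{\perp_M}$; hence $M$ has exactly one positive eigenvalue, and it is nonsingular if and only if $M$ is negative definite on $u^{\perp_M}$. So if $q$ lies in the interior of $\mathrm{S}^2_n$, then all its coefficients are positive (the interior of $\mathrm{S}^d_n$ is contained in that of the set of polynomials with nonnegative coefficients, i.e.\ in $\mathrm{P}^d_n$), and $M$ is nonsingular: otherwise, picking a unit null vector $v_0$ of $M$ and a unit eigenvector $p_1$ for the positive eigenvalue $\lambda>0$ (automatically orthogonal, $M$ being symmetric), for small $t>0$ the symmetric matrix $M+t\,v_0v_0^{\mathsf{T}}$ still has positive entries but is positive definite on $\operatorname{span}(p_1,v_0)$ (its matrix in the orthonormal basis $\{p_1,v_0\}$ is $\operatorname{diag}(\lambda,t)$), hence has at least two positive eigenvalues; the associated quadratic then has positive coefficients and an unstable Hessian although it lies in every neighbourhood of $q$, contradicting $q\in\operatorname{int}\mathrm{S}^2_n$. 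Therefore $q\in\mathring{\mathrm{L}}^2_n$.

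To conclude, note that $\partial_i\colon\mathrm{H}^d_n\to\mathrm{H}^{d-1}_n$ is surjective and linear, hence open, so by the first step it carries $\operatorname{int}\mathrm{S}^d_n$ into $\operatorname{int}\mathrm{S}^{d-1}_n$; iterating, $f\in\operatorname{int}\mathrm{S}^d_n$ forces $\partial^\alpha f\in\operatorname{int}\mathrm{S}^2_n\subseteq\mathring{\mathrm{L}}^2_n$ for every $\alpha\in\Delta^{d-2}_n$, which together with positivity of the coefficients of $f$ gives $f\in\mathring{\mathrm{L}}^d_n$. Finally, every $f\in\mathrm{S}^d_n$ is a limit of polynomials in $\operatorname{int}\mathrm{S}^d_n$, hence a limit of strictly Lorentzian polynomials, hence Lorentzian. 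The recursion and the stability of partial derivatives are routine; the point that requires genuine care is the implication ``interior of $\mathrm{S}^d_n$ $\Rightarrow$ strictly Lorentzian'', i.e.\ ruling out a degenerate Hessian, which is exactly what the openness of $\partial_i$ and the perturbation argument above are designed to handle.
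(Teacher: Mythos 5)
Your proof is correct and follows essentially the same route as the paper's: show the interior of $\mathrm{S}^d_n$ lands in $\mathring{\mathrm{L}}^d_n$ by reducing to the quadratic case and using that $\partial_i$ is an open map sending $\mathrm{S}^d_n$ into $\mathrm{S}^{d-1}_n$, then take limits via Nuij. The only cosmetic difference is that you supply a self-contained perturbation argument (the rank-one bump $M+tv_0v_0^{\mathsf{T}}$) to get nonsingularity of the Hessian in the strictly stable quadratic case, whereas the paper defers the $d=2$ base case to its Lemma~\ref{StrictlyStable}.
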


\begin{proof}
We show that the interior of  $\mathrm{S}^d_n$ is a subset of  $\mathring{\mathrm{L}}^d_n$  by induction on $d$.
When $d=2$, the statement follows from Lemma \ref{StrictlyStable} below. 
The general case follows from the fact that $\partial_i$ is an open map sending $\mathrm{S}^d_n$ to $\mathrm{S}^{d-1}_n$ \cite[Lemma 2.4]{Wagner11}.
\end{proof}

All the nonzero coefficients of a homogeneous stable polynomial have the same sign \cite[Theorem 6.1]{COSW}.
Thus, any homogeneous stable polynomial is a constant multiple of a Lorentzian polynomial.
For example, determinantal polynomials of the form
\[
f(w_1,\ldots, w_n) = \det(w_1A_1+\cdots+w_nA_n),
\]
where $A_1,\ldots, A_n$ are positive semidefinite matrices, are stable \cite[Proposition 2.4]{BB08}, and hence Lorentzian. 

\begin{example}\label{n2}
Consider the homogeneous bivariate polynomial with positive coefficients 
\[
f= \sum_{k=0}^d a_k w_1^k w_2^{d-k}.
\]
Computing the partial derivatives  of $f$ reveals that $f$ is strictly Lorentzian if and only if 
\[
 \frac {a_{k}^2} {{\binom d {k}}^2} > \frac {a_{k-1}} {\binom d {k-1}} \frac {a_{k+1}} {\binom d {k+1}} \ \ \text{for all $0<k<d$}.
\]
On the other hand,   $f$ is stable if and only if the univariate polynomial $f|_{w_2=1}$ has only real zeros.
Thus, a Lorentzian polynomial need not be stable.
For example, consider the cubic form
\[
f=2w_1^3+12w_1^2w_2+18w_1w_2^2+\theta w_2^3,
\]
where $\theta$ is a real parameter. A straightforward computation shows  that 
\[
\text{$f$ is Lorentzian if and only if $0 \le \theta \le 9$, and $f$ is stable if and only if $0 \le \theta \le 8$.}
\]
\end{example}

\begin{example}
Clearly, if $f$ is in the closure of $\mathring{\mathrm{L}}^d_n$ in $\mathrm{H}^d_n$, then $f$ has nonnegative coefficients and
\[
\text{$\partial^\alpha f$ has at most one positive eigenvalue for every $\alpha \in \Delta^{d-2}_n$.}
\]
The bivariate cubic $f=w_1^3+w_2^3$ shows that the converse fails.
In this case, $\partial_1 f$ and $\partial_2 f$ are Lorentzian, but $f$ is not Lorentzian.
\end{example}

We give alternative characterizations of $\mathring{\mathrm{L}}^2_n$. 
Similar arguments  were given in  \cite{Gregor} and  \cite[Theorem 5.3]{COSW}.

\begin{lemma}\label{StrictlyStable}
The following conditions are equivalent for any $f \in \mathrm{P}^2_n$.
\begin{enumerate}[(1)]\itemsep 5pt
\item The Hessian of $f$ has the Lorentzian signature $(+,-,\ldots, -)$, that is, $f \in \mathring{\mathrm{L}}^2_n$.
\item For any nonzero $u \in \mathbb{R}^n_{\ge 0}$, 
$(u^T \mathscr{H}_f v)^2 > (u^T \mathscr{H}_f u)(v^T \mathscr{H}_f v)$ for any $v \in \mathbb{R}^n$ not parallel to  $u$.
\item For some  $u \in \mathbb{R}^n_{\ge 0}$, 
$(u^T \mathscr{H}_f v)^2 > (u^T \mathscr{H}_f u)(v^T \mathscr{H}_f v)$ for any $v \in \mathbb{R}^n$ not parallel to  $u$.
\item For any nonzero $u \in \mathbb{R}^n_{\ge 0}$, the univariate polynomial $f(xu-v)$ in $x$ has two distinct real zeros for any $v \in \mathbb{R}^n$ not parallel to  $u$.
\item For some  $u \in \mathbb{R}^n_{\ge 0}$, the univariate polynomial $f(xu-v)$ in $x$ has two distinct real zeros for any $v \in \mathbb{R}^n$ not parallel to  $u$.
\end{enumerate}
\end{lemma}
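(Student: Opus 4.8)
The plan is to split the five conditions into two groups. Conditions $(4)$ and $(5)$ are the one-variable specializations of the ``bilinear form'' conditions $(2)$ and $(3)$, and passing between the two groups is a one-line discriminant computation; the substance of the lemma is the equivalence of $(1)$, $(2)$, and $(3)$, which is elementary linear algebra of a symmetric bilinear form of Lorentzian signature.

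First I would record two facts about $f\in\mathrm{P}^2_n$. Since $f$ is a quadratic form, its Hessian $M:=\mathscr{H}_f$ is a \emph{constant} symmetric matrix all of whose entries are positive, and $f(w)=\tfrac12\,w^{T}Mw$. Consequently, for every nonzero $u\in\mathbb{R}^n_{\ge 0}$ we have $u^{T}Mu=\sum_{i,j}M_{ij}u_iu_j>0$; this positivity is the only point where the hypothesis $f\in\mathrm{P}^2_n$ (rather than merely $f\in\mathrm{M}^2_n$) is used, and it is exactly what allows an arbitrary nonzero $u$ in the \emph{closed} nonnegative orthant to serve as a ``timelike'' vector. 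Expanding, $f(xu-v)=\tfrac12\big((u^{T}Mu)\,x^2-2(u^{T}Mv)\,x+v^{T}Mv\big)$, a genuine quadratic in $x$ whose discriminant is a positive multiple of $(u^{T}Mv)^2-(u^{T}Mu)(v^{T}Mv)$, so ``$f(xu-v)$ has two distinct real zeros'' is equivalent to ``$(u^{T}Mv)^2>(u^{T}Mu)(v^{T}Mv)$.'' This gives $(2)\Leftrightarrow(4)$ and $(3)\Leftrightarrow(5)$ at once, and $(2)\Rightarrow(3)$ is immediate.

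It remains to prove $(1)\Rightarrow(2)$ and $(3)\Rightarrow(1)$, and both I would obtain from the $B$-orthogonal splitting of $\mathbb{R}^n$ along a vector on which the form $B(x,y):=x^{T}My$ is positive. For $(1)\Rightarrow(2)$: if $M$ has signature $(+,-,\dots,-)$ and $u\in\mathbb{R}^n_{\ge 0}$ is nonzero, then $B(u,u)>0$ by the observation above; since $B$ is nondegenerate, $\mathbb{R}^n=\mathbb{R}u\oplus u^{\perp}$ with $u^{\perp}=\{v:B(u,v)=0\}$, and Sylvester's law of inertia forces $B$ to be negative definite on $u^{\perp}$; writing $v=\lambda u+w$ with $w\in u^{\perp}$, a short computation gives $(u^{T}Mv)^2-(u^{T}Mu)(v^{T}Mv)=-B(u,u)B(w,w)\ge 0$, with equality precisely when $w=0$, i.e. when $v$ is parallel to $u$, which is exactly $(2)$. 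For $(3)\Rightarrow(1)$: take $u$ as in $(3)$, necessarily nonzero since otherwise the condition in $(3)$ reads $0>0$; then $B(u,u)>0$, the linear functional $v\mapsto B(u,v)$ is nonzero, and its kernel $U$ is a hyperplane not containing $u$; for any nonzero $v\in U$ the vector $v$ is not parallel to $u$, so $(3)$ yields $0=B(u,v)^2>B(u,u)B(v,v)$ and hence $B(v,v)<0$; thus $B$ is negative definite on $U$ and positive on $\mathbb{R}u$, and Sylvester's law of inertia gives $M$ signature $(+,-,\dots,-)$, which is $(1)$.

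I do not expect a genuine obstacle; once set up this way the argument is short, and it handles the degenerate range $n\le 1$ uniformly (there every $v$ is parallel to every nonzero $u$, so $(2)$--$(5)$ hold vacuously, while $(1)$ is automatic). The one point that must be stated with care is that it is the positivity of the \emph{entries} of $\mathscr{H}_f$---not merely of the coefficients of $f$, which would give $u^{T}Mu>0$ only on the open orthant---that guarantees $u^{T}Mu>0$ for \emph{every} nonzero $u$ in the closed nonnegative orthant, so that the boundary vectors $u$ appearing in $(2)$ and $(3)$ are indeed timelike for $B$. One could instead derive $(1)\Rightarrow(2)$ from the reverse Cauchy--Schwarz inequality for forms of Lorentzian signature, as in \cite[Theorem 5.3]{COSW}, but the direct splitting above is self-contained.
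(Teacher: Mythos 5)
Your proof is correct and follows essentially the same strategy as the paper's: reduce $(2)\Leftrightarrow(4)$ and $(3)\Leftrightarrow(5)$ to the discriminant identity, then prove $(1)\Rightarrow(2)$ and $(3)\Rightarrow(1)$ by linear algebra on the Lorentzian form; your $(3)\Rightarrow(1)$ is word-for-word the paper's argument. The one small variation is in $(1)\Rightarrow(2)$: the paper restricts $\mathscr{H}_f$ to the plane $\mathrm{span}(u,v)$ and invokes Cauchy's interlacing theorem to conclude the restriction has signature $(+,-)$, whereas you split $\mathbb{R}^n=\mathbb{R}u\oplus u^{\perp}$, use Sylvester's law to see $B$ is negative definite on $u^{\perp}$, and compute $B(u,v)^2-B(u,u)B(v,v)=-B(u,u)B(w,w)$ directly. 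Both are elementary and correct; your version has the minor advantage of not invoking interlacing.
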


It follows that a quadratic form with nonnegative coefficients is strictly Lorentzian if and only if it is strictly stable.
Thus,  a  quadratic form with nonnegative coefficients is Lorentzian if and only if it is stable.

\begin{proof}
We prove $(1) \Rightarrow (2)$.
Since all the entries of $\mathscr{H}_f$ are positive, $u^T \mathscr{H}_f u >0$ for any nonzero  $u \in \mathbb{R}^n_{\ge 0}$.
By Cauchy's interlacing theorem, for any $v \in \mathbb{R}^n$ not parallel to $u$,
 the restriction of $\mathscr{H}_f$ to the plane spanned by $u,v$ has signature $(+,-)$.
It follows that
\[
\det \left(\begin{array}{cc} u^T \mathscr{H}_f u&u^T \mathscr{H}_f v \\ u^T \mathscr{H}_f v&v^T \mathscr{H}_f v\end{array} \right)=(u^T \mathscr{H}_f u)(v^T \mathscr{H}_f v)-(u^T \mathscr{H}_f v)^2<0.
\]

We prove $(3) \Rightarrow (1)$. Let $u$ be the nonnegative vector in the statement $(3)$.
Then $\mathscr{H}_f$ is negative definite on the hyperplane 
$\{v \in \mathbb{R}^n \mid u^T \mathscr{H}_f v=0\}$.
Since $f \in \mathrm{P}^2_n$, we have $u^T \mathscr{H}_f u>0$, and hence  $\mathscr{H}_f$ has the Lorentzian signature.

The remaining implications follows from the fact that the univariate polynomial $\frac{1}{2}f(xu-v)$ has the discriminant $(u^T \mathscr{H}_f v)^2 - (u^T \mathscr{H}_f u)(v^T \mathscr{H}_f v)$.
\end{proof}




Matroid theory captures various combinatorial notions of independence. A \emph{matroid} $\MM$ on $[n]$ is a nonempty family of subsets $\BB$ of $[n]$, called the \emph{set of bases of} $\MM$, that satisfies the \emph{exchange property}: 
\begin{multline*}
\text{For any $B_1, B_2 \in \BB$ and $i \in B_1\setminus B_2$, there is  $j \in B_2\setminus B_1$ such that $(B_1 \setminus i) \cup j \in \BB$.}
\end{multline*}
We refer to \cite{Oxley} for background on matroid theory.
More generally, following \cite{Murota},
we define  a subset $\JJ \subseteq \NN^n$ to be \emph{$\MM$-convex} if it satisfies any one of the following equivalent conditions\footnote{The class of $\MM$-convex sets is essentially identical to the class of generalized polymatroids in the sense of \cite{Fujishige}. 
Some other notions in the literature that are equivalent to $\MM$-convex sets are  integral polymatroids \cite{Welsh},  discrete polymatroids \cite{HH}, and integral generalized permutohedras \cite{Postnikov}.
We refer to \cite[Section 1.3]{Murota} and \cite[Section 4.7]{Murota} for more details.}: 
\begin{enumerate}[--]\itemsep 5pt
\item For any $\alpha, \beta \in \JJ$ and any index $i$ satisfying $\alpha_i>\beta_i$, there is an index $j$ satisfying
\[
\alpha_j<\beta_j \ \ \text{and} \ \ \alpha-e_i+e_j \in \JJ.
\]
\item For any $\alpha, \beta \in \JJ$ and  any index $i$ satisfying $\alpha_i>\beta_i$, there is an index $j$ satisfying
\[
\alpha_j<\beta_j \ \ \text{and} \ \ \alpha-e_i+e_j \in \JJ \ \ \text{and} \ \ \beta-e_j+e_i \in \JJ.
\]
\end{enumerate}
The first condition is called the \emph{exchange property} for $\MM$-convex sets,
and the second condition is called the \emph{symmetric exchange property} for $\MM$-convex sets. 
A proof of the equivalence can be found in \cite[Chapter 4]{Murota}.
Note that any  $\MM$-convex subset of $\NN^n$ is necessarily contained in the discrete simplex $\Delta^d_n$ for some $d$.
We refer to \cite{Murota} for a comprehensive treatment of $\MM$-convex sets.

Let $f$ be a polynomial in $\mathbb{R}[w_1,\ldots,w_n]$.
We write $f$ in the normalized form
\[
f=\sum_{\alpha \in \NN^n} \frac{c_\alpha}{\alpha!} \hspace{0.5mm}w^\alpha,  \ \ \text{where $\alpha!=\prod_{i=1}^n \alpha_i!$.}
\]
The \emph{support} of the polynomial $f$ is the subset of $\NN^n$ defined by
\[
\mathrm{supp}(f)= \Big\{ \alpha \in \NN^n \mid c_\alpha \neq 0\Big\}.
\]
We write $\mathrm{M}^d_n$ for the set of all degree $d$ homogeneous polynomials in $\mathbb{R}_{\ge 0}[w_1,\ldots,w_n]$ 
whose supports are  $\MM$-convex.
Note that, in our convention, the empty subset of $\NN^n$ is an $\MM$-convex set.
Thus, the zero polynomial belongs to $\mathrm{M}^d_n$,
and $f \in \mathrm{M}^d_n$ implies $\partial_i f \in \mathrm{M}^{d-1}_n$.
It follows from \cite[Theorem 3.2]{Branden} that $\mathrm{S}^d_n \subseteq \mathrm{M}^d_n$.


\begin{definition}\label{SecondDefinition}
We set $\mathrm{L}^0_n=\mathrm{S}^0_n$, $\mathrm{L}^1_n=\mathrm{S}^1_n$, and  $\mathrm{L}^2_n=\mathrm{S}^2_n$.
For $d$ larger than $2$, we define 
\[
\mathrm{L}^d_n=\Big\{f \in \mathrm{M}^d_n\mid \text{$\partial_i f \in \mathrm{L}^{d-1}_n$ for all $i \in [n]$} \Big\}
= \Big\{f \in \mathrm{M}^d_n\mid \text{$\partial^\alpha f \in \mathrm{L}^{2}_n$ for every $\alpha \in \Delta^{d-2}_n$} \Big\}.
\]
\end{definition}

Clearly, $\mathrm{L}^d_n$ contains $\mathring{\mathrm{L}}^d_n$.
In Theorem \ref{chars}, we show that  $\mathrm{L}^d_n$ is the closure of $\mathring{\mathrm{L}}^d_n$ in $\mathrm{H}^d_n$.
In other words,  $\mathrm{L}^d_n$ is exactly the set of degree $d$ Lorentzian polynomials in $n$ variables.
In this section, we show that $\mathring{\mathrm{L}}^d_n$ is contractible and its  closure 
contains $\mathrm{L}^d_n$. 
The following proposition plays a central role in our analysis of $\mathrm{L}^d_n$.
Analogous statements, in the context of hyperbolic polynomials and stable polynomials, appear in  \cite{Nuij} and \cite{LiebSokal}.
We fix  a degree $d$ homogeneous polynomial $f$  in $n$ variables and  indices $i$, $j$ in $[n]$.

\begin{proposition}\label{deform}
If $f \in \mathrm{L}^d_n$, then $\big(1+\theta w_i\partial_j \big)f \in \mathrm{L}^d_n$
for every nonnegative real number $\theta$.
\end{proposition}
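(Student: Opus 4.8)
The plan is to induct on the degree $d$. Write $g:=\big(1+\theta w_i\partial_j\big)f=f+\theta\,w_i\partial_j f$; this is again homogeneous of degree $d$, and since $f$ and $w_i\partial_j f$ have nonnegative coefficients and $\theta\ge 0$, so does $g$. The only combinatorial ingredient is that $g\in\mathrm M^d_n$: for $\theta>0$ there is no cancellation, so $\mathrm{supp}(g)=\mathrm{supp}(f)\cup\{\gamma-e_j+e_i:\gamma\in\mathrm{supp}(f),\ \gamma_j\ge 1\}$, and I would check the lemma that the union of an $\MM$-convex set $\JJ$ with $\{\gamma-e_j+e_i:\gamma\in\JJ,\ \gamma_j\ge1\}$ is $\MM$-convex, by a direct verification of the exchange property (a standard stability property of $\MM$-convex sets, cf.\ \cite{Murota}). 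Granting this, $\partial_i^{m}g\in\mathrm M^{d-m}_n$ for every $m$ as well.

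\emph{Base cases.} For $d\le1$ there is nothing to prove. For $d=2$, set $r:=\mathscr H_fe_j$; then $\mathscr H_{w_i\partial_jf}=e_ir^{T}+re_i^{T}=e_ie_j^{T}\mathscr H_f+\mathscr H_fe_je_i^{T}$, so, with $N:=\theta e_ie_j^{T}$,
\[
\mathscr H_g=\mathscr H_f+N\mathscr H_f+\mathscr H_fN^{T}=(I+N)\mathscr H_f(I+N)^{T}-\theta^{2}(\mathscr H_f)_{jj}\,e_ie_i^{T}.
\]
The first summand is congruent to $\mathscr H_f$, hence has at most one positive eigenvalue; since $(\mathscr H_f)_{jj}\ge0$, subtracting the positive semidefinite rank-one term does not increase the number of positive eigenvalues. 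As $\mathscr H_g$ also has nonnegative entries, $g\in\mathrm L^2_n$.

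\emph{Inductive step $d\ge3$.} From $[\partial_k,w_i\partial_j]=\delta_{ki}\partial_j$ one gets $\partial_kg=\big(1+\theta w_i\partial_j\big)\partial_kf+\theta\,\delta_{ki}\,\partial_jf$, and iterating,
\[
\partial^\alpha g=\big(1+\theta w_i\partial_j\big)\partial^\alpha f+\theta\,\alpha_i\,\partial^{\alpha-e_i+e_j}f=\partial_i^{\alpha_i}\Big[\big(1+\theta w_i\partial_j\big)\partial^{\,\alpha-\alpha_ie_i}f\Big].
\]
By Definition \ref{SecondDefinition} it remains to show $\partial^\alpha g\in\mathrm L^2_n$ for every $\alpha\in\Delta^{d-2}_n$. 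If $\alpha_i<d-2$, then $\partial^{\,\alpha-\alpha_ie_i}f\in\mathrm L^{\alpha_i+2}_n$ with $\alpha_i+2<d$, so the inductive hypothesis gives $\big(1+\theta w_i\partial_j\big)\partial^{\,\alpha-\alpha_ie_i}f\in\mathrm L^{\alpha_i+2}_n$, and applying $\partial_i^{\alpha_i}$ lands in $\mathrm L^2_n$ (the case $\alpha_i=0$ being the base case $d=2$). The single remaining case is $\alpha=(d-2)e_i$; writing $P:=\mathscr H_{\partial_i^{d-2}f}$, $Q:=\mathscr H_{\partial_i^{d-3}\partial_jf}$, and $r:=Pe_j=Qe_i$ (the last equality holding because both sides are the coefficient vector of $\partial_i^{d-2}\partial_jf$), this case asks that
\[
M:=P+\theta\big(e_ir^{T}+re_i^{T}\big)+(d-2)\,\theta\,Q\ \text{ have at most one positive eigenvalue.}
\]

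This quadratic estimate is the heart of the matter and the main obstacle. After disposing of the degenerate cases $\partial_i^{d-2}f=0$ or $\partial_i^{d-3}\partial_jf=0$ (where $M$ is, up to a positive scalar and a congruence, just $P$ or $Q$), both $P$ and $Q$ are Hessians of nonzero polynomials in $\mathrm L^2_n$, hence have nonnegative entries and at most one positive eigenvalue, and I would use the characterization of $\mathrm L^2_n=\mathrm S^2_n$ by real-rooted restrictions recalled before Lemma \ref{StrictlyStable}: it suffices to exhibit one $u\in\RR^n_{\ge0}$ with $(\partial_i^{d-2}g)(u)>0$ — e.g.\ $u=(1,\dots,1)$, since $\partial_i^{d-2}f$ and $\partial_i^{d-3}\partial_jf$ then take positive values — and show that $(\partial_i^{d-2}g)(xu-v)$ has only real zeros for all $v$, i.e.\ that $(u^{T}Mv)^2-(u^{T}Mu)(v^{T}Mv)\ge0$ for all $v$. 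Expanding in powers of $\theta$, the $\theta^0$-term is $(u^{T}Pv)^2-(u^{T}Pu)(v^{T}Pv)\ge0$ by the Cauchy--Schwarz inequality for the Lorentzian form $P$, and the top-degree term is controlled likewise using $Q$; the delicate $\theta$-linear term, which couples $P$, $Q$ and $r$, is where the $\MM$-convexity of $\mathrm{supp}(f)$ must be exploited — in two variables it amounts to the elementary fact that an inequality such as $c_dc_{d-3}\le c_{d-1}c_{d-2}$ follows from the log-concavity relations $\det P\le0$, $\det Q\le0$ together with the gaplessness (i.e.\ $\MM$-convexity) of the support. That $\MM$-convexity is genuinely needed is already visible from the Hessian identity: for $f=w_1^3+w_2^3$, whose support is not $\MM$-convex, one computes $\mathscr H_{\partial_1g}=\mathrm{diag}(6,\,6\theta)$, which has two positive eigenvalues when $\theta>0$ — consistent with $f$, and $g$, not being Lorentzian. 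The analogous homotopies of Nuij \cite{Nuij} and Lieb--Sokal \cite{LiebSokal} for hyperbolic and stable polynomials avoid this difficulty, since there no support condition intervenes.
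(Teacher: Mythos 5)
Your organization is genuinely different from the paper's. You induct on $d$ and use the factorization $\partial^\alpha g=\partial_i^{\alpha_i}\big[(1+\theta w_i\partial_j)\partial^{\,\alpha-\alpha_ie_i}f\big]$ to push every $\alpha$ with $\alpha_i<d-2$ down to strictly lower degree; the paper does not induct on $d$ but instead argues uniformly via the interlacing calculus for stable polynomials (its Lemma \ref{closure}), splitting on $\alpha_i=0$ versus $\alpha_i>0$. Your base case $d=2$ is a nice self-contained replacement for that calculus: the congruence $\mathscr H_g=(I+N)\mathscr H_f(I+N)^T-\theta^2(\mathscr H_f)_{jj}e_ie_i^T$, together with the Weyl inequality that subtracting a positive semidefinite rank-one matrix cannot increase the count of positive eigenvalues, is correct. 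The support lemma you invoke (closure of $\MM$-convexity under the shift operation) is exactly the content of the paper's Lemma \ref{support}, proved there via elementary splitting and aggregation from \cite{KMT}; it is true, but it is not a one-line check, so at minimum it needs to be stated as a separate lemma and proved.

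The real gap is the single remaining case $\alpha=(d-2)e_i$, which you flag as ``the heart of the matter'' but do not actually prove, and the sketch you give for it has two concrete defects. First, the $\theta^2$-coefficient of your discriminant is $(u^TRv)^2-(u^TRu)(v^TRv)$ with $R=e_ir^T+re_i^T+(d-2)Q$; this is not ``controlled likewise using $Q$,'' because $R$ is not a scalar multiple of $Q$ and is not a priori the Hessian of a quadratic in $\mathrm L^2_n$. Second, and more importantly, your degenerate-case disposal skips the subcase that is genuinely hard: $P\neq0$, $Q\neq0$, but $r=Pe_j=Qe_i=0$, i.e.\ $\partial_i^{d-2}\partial_jf\equiv0$. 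Here $M=P+(d-2)\theta Q$, the $\theta^0$ and $\theta^2$ coefficients of the discriminant are individually nonnegative, but the $\theta$-linear cross term $2(u^TPv)(u^TQv)-(u^TPu)(v^TQv)-(u^TQu)(v^TPv)$ has no sign in general: two Hessians each with at most one positive eigenvalue need not have that property for their positive combinations, and your own example $f=w_1^3+w_2^3$ (where $\partial_1\partial_2 f\equiv 0$ and $\partial_1 f+\phi\,\partial_2 f$ fails) witnesses exactly this failure. This is precisely where the paper spends the second half of its proof: it uses the symmetric exchange property of $\mathrm{supp}(f)$ to find an index $i'$ with $\partial_i\partial_{i'}f\neq0$, perturbs $f\mapsto h_s=(1+sw_i\partial_{i'})f$ so that $\partial_i\partial_jh_s\neq0$ (landing in the easy $r\neq0$ regime), and then lets $s\to0$. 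You correctly anticipate that $\MM$-convexity must enter at this point, but the acknowledgment is a placeholder rather than an argument; without the perturbation step, or a worked-out substitute, the induction does not close.
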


We prepare the proof of Proposition \ref{deform} with two lemmas.

\begin{lemma}\label{support}
If $f \in \mathrm{M}^d_n$, then $\big(1+\theta w_i\partial_j \big) f \in \mathrm{M}^d_n$ for every nonnegative real number $\theta$.
\end{lemma}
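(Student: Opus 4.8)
The plan is to reduce to the essential case, read off the support of $\big(1+\theta w_i\partial_j\big)f$, and then recognize that support as obtained from $\supp(f)$ by two operations known to preserve $\MM$-convexity.

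\emph{Reductions and identification of the support.} If $\theta=0$ there is nothing to prove, and if $i=j$ then $1+\theta w_i\partial_i$ merely rescales coefficients, so it leaves the support unchanged and again there is nothing to prove. Assume then $\theta>0$ and $i\neq j$. Since $f$ has nonnegative coefficients and $\theta>0$, no cancellation occurs in $f+\theta w_i\partial_j f$; hence $\big(1+\theta w_i\partial_j\big)f$ has nonnegative coefficients, and its support is $\JJ\cup T$ with $\JJ:=\supp(f)$ and $T:=\{\alpha-e_j+e_i:\alpha\in\JJ,\ \alpha_j\geq 1\}$. Thus the lemma reduces to the purely combinatorial claim that $\JJ\cup T$ is $\MM$-convex.

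\emph{Main argument.} Write $v=e_i-e_j$. Working with $\MM$-convex subsets of $\ZZ^n$ (where the theory is identical), one has the Minkowski sum $\JJ+\{0,v\}=\JJ\cup(\JJ+v)$, and intersecting with the half-space $\{\alpha:\alpha_j\geq 0\}$ recovers exactly $\JJ\cup T$: the translate $\JJ+v$ is cut down to those $\alpha+v$ with $\alpha_j\geq 1$, which is $T$, while $\JJ$ already lies in the half-space. Now $\{0,v\}$ is an $\MM$-convex two-point set, the Minkowski sum of two $\MM$-convex sets is $\MM$-convex, and the intersection of an $\MM$-convex set with an integer box is $\MM$-convex — all standard facts of discrete convex analysis \cite{Murota}. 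Hence $\JJ\cup T$ is $\MM$-convex. Geometrically this is the statement that $\mathrm{conv}(\JJ)$ is an integral generalized permutohedron in $\Delta^d_n$, that Minkowski-summing a segment in a direction $e_i-e_j$ and then intersecting with a coordinate half-space keeps it one, and that the lattice points of the resulting polytope are precisely $\JJ\cup T$.

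\emph{A self-contained alternative, and the main obstacle.} If one prefers to avoid invoking these closure properties, one can verify the exchange property of $\MM$-convex sets for $\JJ\cup T$ directly. When both chosen points lie in $\JJ$ this is immediate; when both lie in $T$, writing them as $\gamma'+v$ and $\delta'+v$ and applying the exchange property of $\JJ$ to $\gamma',\delta'$ works immediately as well, because the shift by $v$ cancels in every inequality and in every membership statement that occurs. The obstacle is the mixed case, one point in $\JJ$ and the other in $T\setminus\JJ$: applying the exchange property of $\JJ$ at the un-shifted points may return an exchange index equal to $i$ or $j$, in which case the inequality one needs degenerates from strict to an equality, and one must re-route through a second application of the (symmetric) exchange property. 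This bookkeeping is the technical heart of the direct route; I would rather use the short Minkowski-sum/box argument above.
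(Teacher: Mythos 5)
Your proof is correct, and it takes a genuinely different route from the paper's. The paper temporarily introduces a new variable $w_{n+1}$: the support of $g=f(w_1,\ldots,w_n+w_{n+1})$ is obtained from $\supp(f)$ by an elementary splitting and is $\MM$-convex by \cite[Lemma 6]{KMT}; intersecting with the box $\alpha_{n+1}\le 1$ yields the support of $(1+w_{n+1}\partial_n)f$; and identifying $w_{n+1}$ with $w_i$ is an elementary aggregation, which preserves $\MM$-convexity by \cite[Lemma 9]{KMT}. You instead stay in $\ZZ^n$ and write the target support directly as $\bigl(\supp(f)+\{0,e_i-e_j\}\bigr)\cap\{\alpha:\alpha_j\ge 0\}$, then invoke the $\MM$-convexity of the constant-sum two-point set $\{0,e_i-e_j\}$, the closure of $\MM$-convex sets under Minkowski sums, and closure under intersection with a coordinate box. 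Both arguments share the box-intersection step; the real difference is that the paper passes through $\NN^{n+1}$ via splitting and aggregation, whereas you invoke the Minkowski-sum closure theorem for $\MM$-convex sets --- a heavier but standard result in discrete convex analysis \cite{Murota} --- in exchange for a shorter argument that avoids auxiliary variables and gives a cleaner geometric picture of the new support as a cut of a Minkowski sum. Your secondary route by directly verifying the exchange property is left incomplete (you flag the mixed case as an obstacle), but since the Minkowski-sum argument stands on its own, this does not affect the verdict.
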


\begin{proof}
We may suppose $\theta=1$ and $j=n$. 
We use two combinatorial lemmas from \cite{KMT}.
Introduce a new variable $w_{n+1}$, and set
\[
g(w_1,\ldots,w_n,w_{n+1})=f(w_1,\ldots,w_n+w_{n+1})=\sum_{k = 0}^d \frac{1}{k!}w_{n+1}^k\partial_n^k f(w_1,\ldots,w_n).
\]
By  \cite[Lemma 6]{KMT},  the support of $g$ is $\MM$-convex.
In terms of \cite{KMT}, the support of $g$ is obtained from the support of $f$ by an elementary splitting,
and the operation of splitting preserves $\MM$-convexity.
Therefore, $g$ belongs to $\mathrm{M}^d_{n+1}$.
Since the intersection of an $\mathrm{M}$-convex set with a cartesian product of intervals is $\mathrm{M}$-convex,
it follows that
\[
\big(1+w_{n+1}\partial_n\big)f \in \mathrm{M}^d_{n+1}.
\]
By \cite[Lemma 9]{KMT}, the above displayed inclusion implies
\[
\big(1+w_{i}\partial_n\big)f \in \mathrm{M}^d_{n}.
\]
In terms of \cite{KMT}, the support of $\big(1+w_{i}\partial_n\big)f$ is obtained from the support of $\big(1+w_{n+1}\partial_n\big)f$ by an elementary aggregation, and the operation of aggregation preserves $\MM$-convexity.
\end{proof}

For stable polynomials $f$ and $g$  in $\mathbb{R}[w_1,\ldots,w_n]$, we define a relation $f \prec g$ by
\[
f \prec g \Longleftrightarrow
\text{$g+w_{n+1}f$ is a stable polynomial in $\mathbb{R}[w_1,\ldots,w_n,w_{n+1}]$}. 
\]
If $f$ and $g$ are univariate polynomials with  leading coefficients of the same sign, then $f \prec g$  if and only if the zeros of $f$ interlace the zeros of $g$ \cite[Lemma 2.2]{BB10}. 
In general, we have
\[
f \prec g \Longleftrightarrow
\text{$f(xu-v) \prec g(xu-v)$ for all $u \in \mathbb{R}^n_{>0}$ and $v \in \mathbb{R}^n$.}
\]
For later use, we record here basic properties of stable polynomials and the relation $\prec$.

\begin{lemma}\label{closure}
Let $f,g_1,g_2,h_1,h_2$ be stable polynomials  satisfying $h_1 \prec f \prec g_1$ and  $h_2\prec f \prec g_2$. 
\begin{enumerate}[(1)]\itemsep 5pt
\item The derivative $\partial_1 f$ is stable  and $\partial_1 f \prec f$.
\item The diagonalization $f(w_1,w_1,w_3,\ldots,w_n)$ is stable.
\item The dilation $f(a_1w_1,\ldots,a_nw_n)$ is stable for any $a \in \mathbb{R}^n_{\ge 0}$.
\item If $f$ is not identically zero,  then $f \prec \theta_1 g_1+\theta_2 g_2$ for any $\theta_1,\theta_2 \ge 0$.
\item If $f$ is not identically zero,  then $\theta_1 h_1+\theta_2 h_2 \prec f$ for any $\theta_1,\theta_2 \ge 0$.
\end{enumerate}
\end{lemma}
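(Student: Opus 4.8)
The plan is to deduce all five parts from the definition of $\prec$ together with one complex-analytic reformulation of it. Write $\mathcal{H}$ for the open upper half-plane, and recall that a stable polynomial which is not identically zero has no zero on $\mathcal{H}^n$. The reformulation I would record first is: if $f$ is stable with $f\not\equiv 0$, then for every polynomial $g$,
\[
f\prec g \iff \mathrm{Im}\,\frac{g(w)}{f(w)}\ge 0\ \ \text{for all}\ w\in\mathcal{H}^n,\qquad g\prec f \iff \mathrm{Im}\,\frac{g(w)}{f(w)}\le 0\ \ \text{for all}\ w\in\mathcal{H}^n.
\]
Both are immediate from the definition: for $w\in\mathcal{H}^n$ one has $f(w)\ne 0$, so $g(w)+w_{n+1}f(w)$ (resp.\ $f(w)+w_{n+1}g(w)$) vanishes for some $w_{n+1}\in\mathcal{H}$ exactly when $-g(w)/f(w)\in\mathcal{H}$ (resp.\ $-f(w)/g(w)\in\mathcal{H}$), i.e.\ exactly when the displayed inequality fails somewhere; moreover $g+w_{n+1}f$ restricts to $g$ at $w_{n+1}=0$, so it is identically zero only when $g$ is, in which case it equals the stable polynomial $w_{n+1}f$.

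For (1), fix $w_2,\dots,w_n\in\mathcal{H}$ and set $g(w_1)=f(w_1,w_2,\dots,w_n)$. Stability of $f$ forces $g$ to be identically zero or to have all zeros in the closed lower half-plane, so by the Gauss--Lucas theorem the zeros of $g'$ also lie there; hence $\partial_1 f$ is zero-free on $\mathcal{H}^n$ and therefore stable (this is the usual argument that $\partial_1$ preserves stability, cf.\ \cite[Lemma 2.4]{Wagner11}). If moreover $f\not\equiv 0$, then $g\not\equiv 0$ and $\partial_1 f(w)/f(w)=g'(w_1)/g(w_1)=\sum_j (w_1-z_j)^{-1}$ for $w\in\mathcal{H}^n$, where $z_j$ runs over the zeros of $g$; since $\mathrm{Im}(w_1-z_j)>0$, every summand has negative imaginary part, so $\mathrm{Im}\big(\partial_1 f(w)/f(w)\big)\le 0$ on $\mathcal{H}^n$, and the second reformulation gives $\partial_1 f\prec f$. (When $f\equiv 0$ all assertions are trivial.)

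Parts (2) and (3) are then restriction and limit arguments. The diagonalization $f(w_1,w_1,w_3,\dots,w_n)$, if not identically zero, is the restriction of $f$ to $\{(w_1,w_1,w_3,\dots,w_n):w_1,w_3,\dots,w_n\in\mathcal{H}\}\subseteq\mathcal{H}^n$, where $f$ is zero-free, hence it is stable. The dilation $f(a_1w_1,\dots,a_nw_n)$ with all $a_i>0$ is zero-free on $\mathcal{H}^n$ because multiplication by a positive real maps $\mathcal{H}$ into itself, and the general case $a\in\mathbb{R}^n_{\ge 0}$ follows by replacing the vanishing coordinates of $a$ by $\varepsilon>0$ and letting $\varepsilon\to 0^+$, invoking Hurwitz's theorem. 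For (4) and (5), with $f\not\equiv 0$ as hypothesized: the first reformulation gives $\mathrm{Im}\big(g_i(w)/f(w)\big)\ge 0$ on $\mathcal{H}^n$, hence by additivity of the imaginary part $\mathrm{Im}\big((\theta_1 g_1(w)+\theta_2 g_2(w))/f(w)\big)\ge 0$ on $\mathcal{H}^n$ whenever $\theta_1,\theta_2\ge 0$, and reading the reformulation backwards yields $f\prec\theta_1 g_1+\theta_2 g_2$; statement (5) is the same computation with every inequality reversed and the dual reformulation. (Equivalently, one may substitute $f(xu-v)$ and $g_i(xu-v)$ and invoke the classical theorem of Obreschkoff on common interlacers of univariate real-rooted polynomials.)

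I expect the only substantive step to be part (1): the stability of $\partial_1 f$ (which is exactly Gauss--Lucas, together with the fact that a stable polynomial does not drop degree when a variable is evaluated in $\mathcal{H}$) and the interlacing $\partial_1 f\prec f$ (the logarithmic-derivative computation). The remaining parts become mechanical once the reformulation of $\prec$ is in place; the one point there that requires a moment's care is tracking the identically-zero polynomials, which is why the remark that $g+w_{n+1}f\equiv 0$ forces $g\equiv 0$ is included above.
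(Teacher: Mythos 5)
Your proof is correct. The paper itself does not prove Lemma \ref{closure} --- it delegates part (1) to \cite[Section 4]{BBL} and the rest to \cite[Section 2]{Wagner11} and \cite[Section 2]{BB10} --- so there is no in-paper argument to compare against; your reformulation of $f\prec g$ (for $f$ stable, $f\not\equiv 0$) as $\mathrm{Im}\big(g(w)/f(w)\big)\ge 0$ on $\mathcal{H}^n$ is exactly the standard device those references use, and parts (2)--(5) follow mechanically from it as you say. You also correctly isolate the one delicate point in (1): the Gauss--Lucas step alone is not enough, one also needs that $w_1\mapsto f(w_1,w_2,\ldots,w_n)$ cannot degenerate to a constant for $(w_2,\ldots,w_n)\in\mathcal{H}^{n-1}$ when $\partial_1 f\not\equiv 0$ (the ``degree does not drop on $\mathcal{H}$'' fact, which follows by Hurwitz from stability of $\lambda^{-d}f(\lambda w_1,w_2,\ldots,w_n)$ as $\lambda\to\infty$).
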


The statement $\partial_1 f \prec f$ appears, for example, in \cite[Section 4]{BBL}.
It follows that, if $f$ is stable, then $(1+\theta w_i \partial_j) f$ is stable for every nonnegative real number $\theta$.
The remaining proof of Lemma \ref{closure} can be found in \cite[Section 2]{Wagner11} and \cite[Section 2]{BB10}.

\begin{proof}[Proof of Proposition \ref{deform}]
When $d = 2$, 
Lemma \ref{closure}  implies Proposition \ref{deform}.
Suppose $d \ge 3$, and set
\[
g=\big(1+\theta w_i\partial_j \big)f.
\]
By Lemma \ref{support}, the support of $g$ is $\MM$-convex. 
Therefore, it is enough to prove that
$\partial^\alpha g$ is stable
for all $\alpha \in \Delta^{d-2}_n$.
We give separate arguments when $\alpha_i=0$ and $\alpha_i>0$.
If $\alpha_i=0$,  then
\[
\partial^\alpha g=\partial^\alpha f+\theta w_i  \partial^{\alpha+e_j} f.
\]
In this case,  (1), (2), and (3) of 
Lemma \ref{closure}  for $\partial^\alpha f$ show that $\partial^\alpha g$ is stable.
If $\alpha_i>0$, then
\begin{align*}
\partial^{\alpha} g&=\partial^{\alpha} f +\theta \alpha_i \partial^{\alpha-e_i+e_j} f +\theta w_i \partial^{\alpha+e_j}  f \\
&=\partial_i \Big(\partial^{\alpha-e_i} f\Big) +\theta \alpha_i \partial_j \Big( \partial^{\alpha-e_i} f\Big) +\theta w_i \partial_i \partial_j \Big(\partial^{\alpha-e_i}  f\Big).
\end{align*}
In this case, (1) of  Lemma \ref{closure} applies to the stable polynomials $\partial^\alpha f$ and  $\partial^{\alpha-e_i+e_j} f$:
\[
 \partial_i \partial_j \Big(\partial^{\alpha-e_i}  f\Big) \prec \partial_i \Big(\partial^{\alpha-e_i} f\Big)  \ \ \text{and} \ \   \partial_i \partial_j \Big(\partial^{\alpha-e_i}  f\Big) \prec \partial_j \Big(\partial^{\alpha-e_i} f\Big).
\]
Therefore, unless $\partial^{\alpha+e_j}f$ is identically zero, $\partial^\alpha g$ is stable by (2) and (4) of Lemma \ref{closure}.

It remains to prove that, whenever $\alpha_i$ is positive  and  $\partial^{\alpha+e_j}f$ is identically zero, 
\[
\text{$\partial_i \Big(\partial^{\alpha-e_i} f\Big) +\phi \  \partial_j \Big( \partial^{\alpha-e_i} f\Big)$ is stable for every nonnegative real number $\phi$.}
\]
Since the cubic form $\partial^{\alpha-e_i} f$ is in $\mathrm{L}^3_n$, it is enough to prove the statement when $d=3$ and $\alpha=e_i$.

We show that, if $f$ is in $\mathrm{L}^3_n$ and $\partial_i \partial_j f$ is identically zero, then
\[
\text{$ \partial_i f + \phi\ \partial_j f$ is stable for every nonnegative real number $\phi$.}
\]
The statement is clear when $(\partial_i f)( \partial_j f)$ is identically zero.
If otherwise, there are monomials of the form $w_iw_{i'}w_{i''}$ and $w_jw_{j'}w_{j''}$ in the support of $f$.
We apply  the symmetric exchange property to the support of $f$, the monomials $w_iw_{i'}w_{i''}$, $w_jw_{j'}w_{j''}$, and the variable $w_i$:
We see that the monomial $w_jw_{i'}w_{i''}$ must be in the support of $f$, since no monomial in the support of $f$ is divisible by $w_iw_j$.
For a positive real parameter $s$, set
\[
h_s=\big( 1+s w_i \partial_{i'} \big) f.
\]
Since  $\partial_i\partial_{i'} f$ is not identically zero,
 the argument in the first paragraph shows  that $h_s$ is in $\mathrm{L}^3_n$.
Similarly, since $\partial_i \partial_j h_s$ is not identically zero, we have
\[
\big( 1+\phi  w_i \partial_j \big) h_s \in \mathrm{L}^3_n \ \ \text{for every nonnegative real number $\phi$.}
\]
Since  stability is a closed condition, it follows that
\[
\text{$\lim_{s \to 0} \partial_i\Big( h_s +\phi   w_i \partial_j h_s\Big)= 
 \partial_i f + \phi\ \partial_j f$ is stable for every nonnegative real number $\phi$.} \qedhere
\]
\end{proof}

We use Proposition \ref{deform} to show that any nonnegative linear change of variables preserves $\mathrm{L}^d_n$.

\begin{theorem}\label{flow}
If $f(w) \in \mathrm{L}^d_n$, then $f(Av)\in \mathrm{L}^d_m$ for any $n \times m$ matrix $A$ with nonnegative entries.
\end{theorem}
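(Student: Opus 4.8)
The plan is to factor the nonnegative substitution $w\mapsto Av$ into a chain of elementary moves and to check that each move preserves the class, with Proposition~\ref{deform} carrying the essential content. First I would view $f$ as a polynomial $F(w_1,\ldots,w_n,v_1,\ldots,v_m)$ in $n+m$ variables that happens not to involve the $v$'s. Then $\supp(F)=\supp(f)\times\{0\}^m$ is $\MM$-convex, and for $d\ge 3$ every derivative $\partial^\gamma F$ with $\gamma\in\Delta^{d-2}_{n+m}$ is either zero (when $\gamma$ touches a $v$-variable) or equals $\partial^\gamma f$ regarded in more variables, which is still a nonnegative-coefficient quadratic with at most one positive eigenvalue and hence still stable; since the zero quadratic also lies in $\mathrm{S}^2_{n+m}$, this shows $F\in\mathrm{L}^d_{n+m}$. (The cases $d\le 2$ are the same or easier, using $\mathrm{L}^2=\mathrm{S}^2$ and the fact that a stable polynomial stays stable after adjoining dummy variables.)

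The heart of the argument is the claim that, for $g\in\mathrm{L}^d_N$, distinct indices $a,b\in[N]$, and $\theta\ge 0$, the translate $g(w_1,\ldots,w_b+\theta w_a,\ldots,w_N)$ — with only the $b$-th variable altered — again lies in $\mathrm{L}^d_N$. To prove this I would use that $w_a\partial_b$ is nilpotent on degree-$d$ polynomials, so that
\[
g(w_1,\ldots,w_b+\theta w_a,\ldots,w_N)=\exp(\theta w_a\partial_b)\,g=\lim_{k\to\infty}\Big(1+\tfrac{\theta}{k}w_a\partial_b\Big)^{\!k}g ,
\]
while each polynomial $\big(1+\tfrac{\theta}{k}w_a\partial_b\big)^{k}g$ lies in $\mathrm{L}^d_N$ by iterating Proposition~\ref{deform}. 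The limit has nonnegative coefficients; its support is $\MM$-convex because it agrees with the support of the iterate once $k\ge d$, which is $\MM$-convex by iterating Lemma~\ref{support}; and every $\partial^\gamma$ of the limit is a limit of polynomials in $\mathrm{S}^2_N$, hence lies in the closed set $\mathrm{S}^2_N$. Thus the translate lies in $\mathrm{L}^d_N$.

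Next I would apply this claim inside $\mathrm{L}^d_{n+m}$ repeatedly, once for each pair $(k,j)\in[n]\times[m]$, with the shift $w_k\mapsto w_k+A_{kj}v_j$; these shifts commute and compose to $w_k\mapsto w_k+(Av)_k$ for all $k$, so that
\[
G(w,v):=f\big(w_1+(Av)_1,\ldots,w_n+(Av)_n\big)\in\mathrm{L}^d_{n+m}.
\]
Finally I would set $w_1=\cdots=w_n=0$: this intersects the support with a product of intervals (so it stays $\MM$-convex), keeps every quadratic derivative stable (a stable polynomial stays stable when a variable is specialized to a real value), and produces $G|_{w=0}=f(Av)$; read off in the remaining $m$ variables this is the desired statement $f(Av)\in\mathrm{L}^d_m$.

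The hard part will be the translation claim, and within it the passage to the limit: one has to be sure that $\MM$-convexity of the support is not lost (which is why Lemma~\ref{support} is applied to the iterates rather than directly to the limit) and that the ``at most one positive eigenvalue'' property of the quadratic derivatives is not lost (which is why one needs $\mathrm{S}^2_N$ to be closed). The remaining bookkeeping — that adjoining dummy variables, positive rescaling, and specializing a variable to $0$ each preserve $\mathrm{L}^d_\bullet$ — is routine, using that restricting a quadratic form to a coordinate subspace cannot create a second positive eigenvalue and that $\MM$-convexity is preserved under intersection with boxes.
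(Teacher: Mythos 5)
Your proof is correct and it is essentially the paper's argument, differently packaged: the paper decomposes $w\mapsto Av$ into elementary splittings, dilations, and diagonalizations and treats each via the limit $\lim_{k\to\infty}(1+\tfrac{\theta}{k}w_i\partial_j)^k$, whereas you lift to $n+m$ variables, perform all shifts $w_k\mapsto w_k+A_{kj}v_j$ at once, and then set $w=0$. In both cases the engine is Proposition~\ref{deform} together with Lemma~\ref{support}, and both proofs hinge on the same subtle point you identify explicitly (and the paper flags in a footnote): since $\mathrm{L}^d$ is not yet known to be closed, one must verify $\mathrm{M}$-convexity of the support of the limit separately, and rely only on the closedness of $\mathrm{S}^2$ for the quadratic derivatives.
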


\begin{proof}
Fix $f=f(w_1,\ldots,w_n)$ in $\mathrm{L}^d_n$.
Note that Theorem \ref{flow} follows from its three special cases:
\begin{enumerate}\itemsep 5pt
\item[(I)]  the elementary splitting $f(w_1,\ldots,w_{n-1},w_n+w_{n+1})$ is in $\mathrm{L}^d_{n+1}$, 
\item[(II)]  the dilation $f(w_1,\ldots,w_{n-1},\theta w_n)$  is in $\mathrm{L}^d_n$ for any $\theta \ge 0$, 
\item[(III)] the diagonalization $f(w_1,\ldots,w_{n-2},w_{n-1},w_{n-1})$ is in $\mathrm{L}^d_{n-1}$,
\end{enumerate}
As observed in the proof of Lemma \ref{support}, an elementary splitting preserves $\mathrm{M}$-convexity:
\[
f(w_1,\ldots,w_{n-1},w_n+w_{n+1}) \in \mathrm{M}^d_{n+1}.
\] 
Therefore,\footnote{It is necessary to check the inclusion in $\mathrm{M}^d_{n+1}$ in advance because we have not yet proved that $\mathrm{L}^d_{n+1}$ is closed.} the first statement follows from Proposition \ref{deform}:
\[
\lim_{k \to \infty} \left(1+\frac {w_{n+1} \partial_n}{k}\right)^k \!\!\!\! f = f(w_1,\ldots, w_{n-1},w_n+w_{n+1})\in \mathrm{L}_{n+1}^d. 
\]

For the second statement, 
note from the definition of $\MM$-convexity that
\[
f(w_1,\ldots,w_{n-1},0) \in \mathrm{M}^d_n.
\]
Thus the second statement for $\theta=0$ follows from the case $\theta>0$, which is trivial to verify.

The proof of the third statement is similar to that of the first statement. 
As observed in the proof of Lemma \ref{support}, an elementary aggregation preserves  $\mathrm{M}$-convexity, and hence
\[
f(w_1,\ldots, w_{n-1},w_{n-1}+w_{n})\in\mathrm{M}^d_{n}.
\] 
Therefore, Proposition \ref{deform} implies that
\[
\lim_{k \to \infty} \left(1+\frac {w_{n-1} \partial_n}{k}\right)^k \!\!\!\! f = f(w_1,\ldots, w_{n-1},w_{n-1}+w_{n})\in \mathrm{L}_{n}^d. 
\]
By the second statement, we may substitute $w_n$ in the displayed equation by zero.
\end{proof}

Theorem \ref{flow} can be used to show that taking directional derivatives in nonnegative directions takes polynomials in $ \mathrm{L}^d_n$ to polynomials in $ \mathrm{L}^{d-1}_n$.

\begin{corollary}\label{derivatives}
If $f \in \mathrm{L}^d_n$, then $\sum_{i=1}^n a_i\partial_i f \in \mathrm{L}^{d-1}_n$ for any $a_1,\ldots,a_n \ge 0$.
\end{corollary}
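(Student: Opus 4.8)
The plan is to reduce the statement to Theorem~\ref{flow} by realizing the directional derivative as a limit of nonnegative linear changes of variables. First I would introduce an auxiliary variable $w_{n+1}$ and consider the polynomial $f(w_1,\ldots,w_n)$ as an element of $\mathrm{L}^d_{n+1}$ (it lies there since its support, viewed in $\NN^{n+1}$, is still $\MM$-convex and all its $(d-2)$-fold partials are stable in the extra variable vacuously). The key observation is that
\[
\sum_{i=1}^n a_i \partial_i f = \lim_{t \to 0^+} \frac{1}{t}\Big( f\big(w_1+ta_1 w_{n+1},\ldots,w_n+ta_n w_{n+1}\big) - f(w_1,\ldots,w_n)\Big),
\]
but since the left side is homogeneous of degree $d-1$ in $w_1,\ldots,w_n$ with no $w_{n+1}$, it is cleaner to extract it as a coefficient: the coefficient of $w_{n+1}$ in $f(w_1+ta_1w_{n+1},\ldots,w_n+ta_nw_{n+1})$ is exactly $t\sum_i a_i\partial_i f$.

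The main step is then: the polynomial $g_t := f(w_1+ta_1w_{n+1},\ldots,w_n+ta_nw_{n+1})$ is obtained from $f$ by the nonnegative linear change of variables $w_i \mapsto w_i + ta_iw_{n+1}$, so Theorem~\ref{flow} gives $g_t \in \mathrm{L}^d_{n+1}$ for every $t \ge 0$. Now I would apply the already-established fact that $\mathrm{L}^d_n$ is closed under partial derivatives with respect to a single variable composed with specialization: more precisely, $\partial_{w_{n+1}} g_t \in \mathrm{L}^{d-1}_{n+1}$ by the definition of $\mathrm{L}^d_n$ (this uses $d \ge 3$; the cases $d \le 2$ follow directly since there $\mathrm{L}^d_n = \mathrm{S}^d_n$ and stable polynomials are closed under taking $\sum a_i \partial_i$ by Lemma~\ref{closure}(1),(3),(5)). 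Evaluating $\partial_{w_{n+1}} g_t$ at $w_{n+1}=0$ and using the second statement of Theorem~\ref{flow} (specialization of a variable to $0$ stays in $\mathrm{L}$) yields exactly $t\sum_i a_i \partial_i f \in \mathrm{L}^{d-1}_{n+1}$; since this polynomial does not involve $w_{n+1}$, it lies in $\mathrm{L}^{d-1}_n$, and scaling by $1/t$ for $t>0$ (a dilation, harmless by Theorem~\ref{flow}(II)) gives the claim.

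The only genuine subtlety — the step I expect to need the most care — is the bookkeeping at the boundary case $d \le 2$ and the verification that $\partial_{w_{n+1}} g_t$, as a polynomial with no constant term in $w_{n+1}$ in the relevant grading, restricts correctly; both are routine once one is careful that $\mathrm{L}^{d-1}_{n+1}$ membership is defined via $\MM$-convexity of support plus stability of iterated partials, and that deleting an unused variable preserves both. An alternative, even more direct route avoids the auxiliary variable entirely: since $\mathrm{L}^{d-1}_n$ is a cone and (by Theorem~\ref{flow} applied to $1\times n$ "splitting then diagonalizing" moves) is closed under $f \mapsto \partial_i f$ for each $i$ after the analysis of Proposition~\ref{deform}, one could try to show directly that $\sum a_i\partial_i f$ is a limit of iterates of the operators $(1+\theta w_k\partial_\ell)$; but the change-of-variables argument above is shorter and self-contained given Theorem~\ref{flow}, so that is the route I would write up.
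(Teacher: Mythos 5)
Your proposal is correct and takes essentially the same route as the paper: introduce an auxiliary variable $w_{n+1}$, apply Theorem~\ref{flow} to get $f(w_1+a_1w_{n+1},\ldots,w_n+a_nw_{n+1})\in\mathrm{L}^d_{n+1}$, take $\partial_{n+1}$, and set $w_{n+1}=0$ via a second application of Theorem~\ref{flow}. The parameter $t$ and the final $1/t$-dilation are harmless but superfluous (take $t=1$ throughout), as is the opening remark about viewing $f$ itself in $\mathrm{L}^d_{n+1}$; and the $d\le 2$ bookkeeping you flag is indeed covered by $\mathrm{L}^d_n=\mathrm{S}^d_n$ together with Lemma~\ref{closure}(1).
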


\begin{proof}
We apply Theorem \ref{flow} to $f$ and the matrix with column vectors $e_1,\ldots,e_n$ and $\sum_{i=1}^n a_ie_i$:
\[
g\coloneqq f(w_1+a_1w_{n+1},\ldots,w_n+a_nw_{n+1}) \in \mathrm{L}^d_{n+1}, \ \text{and hence} \   \partial_{n+1}g \in \mathrm{L}^{d-1}_{n+1}.
\]
Applying Theorem \ref{flow} to $ \partial_{n+1}g$ and the matrix with column vectors $e_1,\ldots,e_n$ and $0$,  we get
\[
\partial_{n+1}g|_{w_{n+1}=0}=\sum_{i=1}^na_i\partial_if\in \mathrm{L}^{d-1}_{n}.\qedhere
\]
\end{proof}

Let $\theta$ be a nonnegative real parameter.
We define a  linear  operator $T_n(\theta,-)$ by
\[
T_{n} (\theta,f) = \Bigg( \prod_{i=1}^{n-1}\big(1+\theta w_i\partial_n \big)^{d} \Bigg) f.
\]
By Proposition \ref{deform},  if $f \in \mathrm{L}_n^d$, then $T_{n} (\theta,f) \in \mathrm{L}_n^d$.
In addition, if $f \in \mathrm{P}_n^d$, then $T_{n} (\theta,f) \in \mathrm{P}_n^d$.
Most importantly, the operator $T_n$ satisfies  the following Nuij-type homotopy lemma.
For a similar argument in the context of hyperbolic polynomials, see  the proof of the main theorem in \cite{Nuij}.

\begin{lemma}\label{split}
If $f \in \mathrm{L}_n^d \cap  \mathrm{P}_n^d$, then 
$T_n (\theta,f) \in \mathring{\mathrm{L}}_n^d$ for every positive real number $\theta$.
\end{lemma}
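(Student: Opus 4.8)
Write $g:=T_n(\theta,f)$ and prove $g\in\mathring{\mathrm{L}}^d_n$ by induction on $d$. First note $g\in\mathrm{L}^d_n\cap\mathrm{P}^d_n$: containment in $\mathrm{L}^d_n$ is Proposition~\ref{deform} applied once for each factor, and each operator $1+\theta w_i\partial_n$ keeps every coefficient positive. By the recursive description of $\mathring{\mathrm{L}}^d_n$ it then suffices, for $d\ge 3$, to show $\partial_k g\in\mathring{\mathrm{L}}^{d-1}_n$ for every $k\in[n]$; the cases $d\le 1$ are trivial, and $d=2$ is the genuine base case.

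\textbf{Base case $d=2$.} The key device is the chain‑rule identity $[(1+\theta w_i\partial_n)P](xe_n-v)=(1-\theta v_i\tfrac{d}{dx})[P(xe_n-v)]$ for $i\le n-1$. Iterating it gives $T_n(\theta,f)(xe_n-v)=\prod_{i<n}(1-\theta v_i\tfrac{d}{dx})^{2}[f(xe_n-v)]$, a quadratic in $x$ whose leading coefficient is the unchanged value $f(e_n)>0$. Since applying $1-c\tfrac{d}{dx}$ to a quadratic raises its discriminant by $4(\text{leading coeff})^2c^2$, the discriminant of $T_n(\theta,f)(xe_n-v)$ equals $\operatorname{disc}_x[f(xe_n-v)]+8\,f(e_n)^2\theta^2\sum_{i<n}v_i^{2}$. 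The first summand is $\ge 0$ because $f$ is stable, so the whole expression is $>0$ whenever $\theta>0$ and $v\not\parallel e_n$. Thus $T_n(\theta,f)(xe_n-v)$ has two distinct real zeros for all such $v$, and Lemma~\ref{StrictlyStable} gives $T_n(\theta,f)\in\mathring{\mathrm{L}}^2_n$.

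\textbf{Inductive step $d\ge 3$.} Here I use the commutation relation $\partial_k(1+\theta w_k\partial_n)^{d}=(1+\theta w_k\partial_n)^{d}\partial_k+d\theta(1+\theta w_k\partial_n)^{d-1}\partial_n$, together with the fact that $\partial_k$ commutes with $1+\theta w_i\partial_n$ for $i\ne k$ and with $\partial_n$. For $k=n$ this immediately gives $\partial_n g=T_n(\theta,f')$ with $f'=(\prod_{i<n}(1+\theta w_i\partial_n))\partial_n f$, which lies in $\mathrm{L}^{d-1}_n\cap\mathrm{P}^{d-1}_n$ by the definition of $\mathrm{L}^d_n$, Proposition~\ref{deform}, and positivity, so the induction hypothesis yields $\partial_n g\in\mathring{\mathrm{L}}^{d-1}_n$. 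For $k<n$ the same manipulation gives $\partial_k g=T_n(\theta,\tilde h_k)$ with $\tilde h_k=(\prod_{i\ne k,\,i<n}(1+\theta w_i\partial_n))h_k$ and $h_k=(1+\theta w_k\partial_n)\partial_k f+d\theta\,\partial_n f$, so it is enough to show $h_k\in\mathrm{L}^{d-1}_n\cap\mathrm{P}^{d-1}_n$; positivity of $h_k$ is clear and its support is all of $\Delta^{d-1}_n$, hence $\MM$-convex, so what remains is the stability of $\partial^\gamma h_k$ for every $\gamma\in\Delta^{d-3}_n$.

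A short computation gives $\partial^\gamma h_k=(1+\theta w_k\partial_n)(\partial_k F)+(\gamma_k+d)\theta\,\partial_n F$ with $F:=\partial^\gamma f$ a cubic in $\mathrm{L}^3_n$, so the matter reduces to: for any cubic $F\in\mathrm{L}^3_n$ and $c\ge 0$, the quadratic $(1+\theta w_k\partial_n)(\partial_k F)+c\,\partial_n F$ is stable — which is exactly the situation treated in the closing paragraph of the proof of Proposition~\ref{deform}. One argues with the $\prec$‑calculus: putting $f_0:=\partial_k\partial_n F$, Lemma~\ref{closure}(1) gives $f_0\prec\partial_k F$ and $f_0\prec\partial_n F$, and $f_0\prec(1+\theta w_k\partial_n)\partial_k F$ because $(1+\theta w_k\partial_n)\partial_k F+w_{n+1}f_0=\partial_k F+(\theta w_k+w_{n+1})f_0$ is the nonnegative substitution $w_{n+1}\mapsto w_{n+1}+\theta w_k$ applied to the stable polynomial $\partial_k F+w_{n+1}f_0$; then Lemma~\ref{closure}(4) shows $f_0$ interlaces the sum, so the sum is stable. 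This needs only $f_0\not\equiv 0$, which holds automatically in our setting because $f\in\mathrm{P}^d_n$ forces $\partial_k\partial_n F=\partial^{\gamma+e_k+e_n}f$ to have positive coefficients — so, unlike in Proposition~\ref{deform}, no limiting argument is required. The point I expect to be the fussiest is precisely this bookkeeping of the commutator term in $\partial_k g$ for $k<n$ and the reduction of ``$h_k$ Lorentzian'' to the cubic statement.
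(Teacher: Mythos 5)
Your proof is correct, and it takes a genuinely different route from the paper's. The paper works directly with arbitrary $\alpha\in\Delta^{d-2}_n$: after restricting to a line $xe_n-v$, it tracks how the zeros of the quadratic $\partial^\alpha T_n(\theta,f)(xe_n-v)$ evolve under successive applications of $1+\theta w_i\partial_n$, via a trio of interlacing claims whose upshot is that a multiple zero of the final quadratic would force $v_i=0$ for every $i<n$. Your proof instead inducts on $d$, hinging on the commutation identity $\partial_k(1+\theta w_k\partial_n)^d=(1+\theta w_k\partial_n)^d\partial_k+d\theta(1+\theta w_k\partial_n)^{d-1}\partial_n$, which lets you present each $\partial_k T_n(\theta,f)$ as $T_n(\theta,\cdot)$ applied to an explicit degree-$(d-1)$ polynomial. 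The base case $d=2$ then collapses to a single discriminant identity along $xe_n-v$, which is cleaner and more explicit than the paper's zero-counting. The cost is the commutator bookkeeping for $k<n$, together with the auxiliary verification that $h_k=(1+\theta w_k\partial_n)\partial_k f+d\theta\,\partial_n f$ lies in $\mathrm{L}^{d-1}_n\cap\mathrm{P}^{d-1}_n$; you handle this by re-running a variant of the $\prec$-argument embedded in the proof of Proposition~\ref{deform}, and you correctly observe that $f\in\mathrm{P}^d_n$ guarantees the common interlacer $\partial_k\partial_n F$ is nonzero, so the degenerate limiting branch of that proof never arises here. Both proofs are sound: yours modularizes the degree reduction at the cost of one extra Lorentzian-membership check, while the paper's direct approach avoids commutators and needs only real-zero interlacing along a line rather than any further appeal to the multivariate $\prec$-calculus.
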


\begin{proof}
Let $e_i$ be the $i$-th standard unit vector in $\mathbb{R}^n$, and let
 $v$ be any vector  in $\mathbb{R}^n$ not parallel to $e_n$.
 From here on, in this proof, all polynomials are restricted to the line $xe_n-v$
and considered as univariate polynomials in $x$. 

Let $\alpha$ be an arbitrary element of $\Delta^{d-2}_n$.
By Lemma \ref{StrictlyStable}, it is enough to show that the quadratic polynomial $\partial^\alpha T_n(\theta,f)$ has two distinct real zeros.
Using Proposition \ref{deform}, we can deduce the preceding statement  from the following claims:
 \begin{enumerate}\itemsep 5pt
\item[(I)] If $\partial^\alpha f$ has two distinct real zeros, then  $\partial^\alpha\big(1+\theta w_i\partial_n \big)f$ has two distinct zeros.
\item[(II)] If $v_i$ is nonzero, then $\partial^\alpha\big(1+\theta w_i\partial_n \big)^{d}f$ has two distinct real zeros.
\end{enumerate}
We first prove  (I). Suppose $\partial^\alpha f$ has two distinct real zeros, and set
$g=\big(1+\theta w_i\partial_n \big)f$. 
Note that
\[
\partial^\alpha g= \partial^\alpha f+\theta \alpha_i \partial^{\alpha-e_i+e_n} f+\theta w_i \partial^{\alpha +e_n}f.
\]
Let $c$ be the unique zero of $\partial^{\alpha+e_n} f$. 
Since $c$ strictly interlaces two distinct zeros of $\partial^\alpha f$, we have
\[
\partial^\alpha f|_{x=c} <0.
\]
Similarly, since $\partial^{\alpha-e_i+e_n} f$ has only real zeros and $\partial^{\alpha+e_n} f \prec  \partial^{\alpha-e_i+e_n} f$, we have
\[
 \partial^{\alpha-e_i+e_n} f|_{x=c} \le 0.
\]
Thus $\partial^\alpha g|_{x=c}<0$, and hence  $\partial^\alpha g$ has two distinct real zeros.
This completes the proof of (I).

Before proving (II), we strengthen (I) as follows:
 \begin{enumerate}\itemsep 5pt
\item[(III)] A multiple zero of $\partial^\alpha g$ is necessarily a multiple zero of $\partial^\alpha f$.
 \end{enumerate}
Suppose  $\partial^\alpha g$ has a multiple zero. 
Using (I), we know that $\partial^\alpha f$ has a multiple zero, say $c$.
Clearly, $c$ must be also a zero of $\partial^{\alpha+e_n} f$. 
Since $c$ interlaces the two (not necessarily distinct) zeros of $\partial^{\alpha-e_i+e_n} f$, we have
\[
\partial^\alpha g|_{x=c}= \theta \alpha_i \partial^{\alpha-e_i+e_n} f|_{x=c} \le 0.
\]
Therefore, if  $c$ is not a zero of $\partial^\alpha g$, then $\partial^\alpha g$ has two distinct zeros, contradicting the hypothesis that $\partial^\alpha g$ has a multiple zero.
This completes the proof of (III).

We prove (II).
Suppose  $\partial^\alpha \big(1+\theta w_i\partial_n \big)^{d}f$ has a multiple zero, say  $c$.
Using (III), we know that
\[
\text{the number $c$ is a multiple zero of $\partial^\alpha\big(1+\theta w_i\partial_n \big)^{k}f$ for all $0 \leq k \leq d$.}
\]
Expanding the $k$-th power and using the linearity of $\partial^\alpha$, we deduce that
\[
\text{the number $c$ is a zero of $\partial^\alpha w_i^k\partial_n^{k}\hspace{0.5mm} f$ for all $0 \le k \le d$.}
\]
However, since $f$ has positive coefficients,
the value of $\partial^\alpha w_i^{\alpha_i+2}\partial_n^{\alpha_i+2}\hspace{0.5mm} f$ at $c$  is a positive multiple of $v_i^2$,
and hence $v_i$ must be zero.
This completes the proof of (II).
\end{proof}

We use Lemma \ref{split} to prove the main result of this subsection.

\begin{theorem}\label{conn}
The  closure of $\mathring{\mathrm{L}}^d_n$  in $\mathrm{H}^d_n$ contains  $\mathrm{L}_n^d$. 
\end{theorem}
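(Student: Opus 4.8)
The plan is to combine the Nuij-type homotopy of Lemma \ref{split} with two facts already in hand: the operators $1+\theta w_i\partial_j$ preserve $\mathrm{L}^d_n$ (Proposition \ref{deform}), and they can be used to enlarge the support of a polynomial in $\mathrm{L}^d_n$. Since $\mathrm{L}^d_n=\mathrm{S}^d_n$ for $d\le 1$, and there $\mathring{\mathrm{L}}^d_n=\mathrm{P}^d_n$ has closure $\mathrm{S}^d_n$, those cases are immediate; so I would fix $d\ge 2$ and $f\in\mathrm{L}^d_n$, writing $\mathrm{J}=\supp(f)$, a nonempty $\MM$-convex subset of $\Delta^d_n$.

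The first step is to approximate $f$, within $\mathrm{L}^d_n$, by polynomials with full support. The combinatorial input is that $\Delta^d_n$ is connected under single ``mass moves'' $\gamma\leftrightarrow\gamma-e_j+e_i$: given distinct $\gamma,\gamma'\in\Delta^d_n$, choose $i$ with $\gamma_i<\gamma'_i$ and $j$ with $\gamma_j>\gamma'_j$, pass to $\gamma-e_j+e_i$, which is strictly closer to $\gamma'$ in $\ell_1$-distance, and induct. Consequently, whenever $\supp(g)\subsetneq\Delta^d_n$ there is a pair $(i,j)$ and $\alpha\in\supp(g)$ with $\alpha_j\ge 1$ and $\alpha-e_j+e_i\notin\supp(g)$; applying $1+\theta w_i\partial_j$ with $\theta>0$ then strictly enlarges the support, introduces no cancellation because all coefficients are nonnegative, keeps the support $\MM$-convex by Lemma \ref{support}, and preserves membership in $\mathrm{L}^d_n$ by Proposition \ref{deform}. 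Iterating finitely many times yields an operator $P_\theta=\prod_k\big(1+\theta w_{i_k}\partial_{j_k}\big)$ with a fixed number of factors such that $P_\theta f\in\mathrm{L}^d_n\cap\mathrm{P}^d_n$ for every $\theta>0$, while $P_0=\mathrm{id}$, so $P_\theta f\to f$ in $\mathrm{H}^d_n$ as $\theta\to 0^+$.

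Now I would invoke Lemma \ref{split} with the polynomial $P_\theta f\in\mathrm{L}^d_n\cap\mathrm{P}^d_n$: for every $\theta>0$ we obtain $T_n(\theta,P_\theta f)\in\mathring{\mathrm{L}}^d_n$. Both $T_n(\theta,g)=\big(\prod_{i=1}^{n-1}(1+\theta w_i\partial_n)^d\big)g$ and $P_\theta f$ depend polynomially on $\theta$ with value $f$ at $\theta=0$, so the composite $T_n(\theta,P_\theta f)$ is a polynomial in $\theta$ with coefficients in $\mathrm{H}^d_n$ equal to $f$ at $\theta=0$, hence $T_n(\theta,P_\theta f)\to f$ as $\theta\to0^+$. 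This exhibits $f$ as a limit of strictly Lorentzian polynomials, proving $\mathrm{L}^d_n\subseteq\overline{\mathring{\mathrm{L}}^d_n}$. (One could equally let $\theta\to0$ in two stages: first use $T_n(\cdot,P_\theta f)$ to see $P_\theta f\in\overline{\mathring{\mathrm{L}}^d_n}$ for each fixed $\theta>0$, then let the support shrink back to $\mathrm{J}$.)

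The only step that requires genuine care is the support-spreading one: one must confirm that iterating $1+\theta w_i\partial_j$ can inflate an arbitrary nonempty $\MM$-convex support all the way to $\Delta^d_n$ with no monomial ever cancelling, so that the approximants truly lie in $\mathrm{P}^d_n$ and Lemma \ref{split} applies. This is precisely where nonnegativity of the coefficients (built into $\mathrm{M}^d_n$) and the connectivity of $\Delta^d_n$ enter; once that is in place, the rest is a routine limiting argument driven by Proposition \ref{deform} and Lemma \ref{split}.
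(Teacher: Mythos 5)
Your proof is correct and ends where the paper ends, with Lemma \ref{split}, but the positivity step is handled by a genuinely different mechanism. The paper takes $f\in\mathrm{L}^d_n\setminus\{0\}$ directly to a member of $\mathrm{L}^d_n\cap\mathrm{P}^d_n$ via the single linear substitution
$S(\theta,f)=|f|_1^{-1}f\big((1-\theta)w+\theta(w_1+\cdots+w_n)\mathbf{1}\big)$,
invoking Theorem \ref{flow}; positivity of all coefficients of $S(\theta,f)$ for $\theta>0$ is immediate and needs no combinatorics. You instead build a finite product $P_\theta=\prod_k\big(1+\theta w_{i_k}\partial_{j_k}\big)$ and use the connectedness of $\Delta^d_n$ under single mass moves to spread the support from $\mathrm{J}$ out to all of $\Delta^d_n$, relying only on Proposition \ref{deform} and Lemma \ref{support}. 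Both arguments then feed their positive-coefficient approximant into Lemma \ref{split} and let $\theta\to 0^+$. The trade-off: your route sidesteps Theorem \ref{flow} entirely and so rests on lighter prerequisites, at the cost of the explicit support-spreading argument (finding a boundary pair $(\alpha,i,j)$ at each stage, noting nonnegativity prevents cancellation, noting the sequence of pairs can be chosen independently of $\theta$). The paper's version is shorter because the all-variables mixing substitution makes positivity of coefficients evident in one stroke. Both are correct; it is worth knowing that Theorem \ref{flow} is not essential for this particular closure statement, as your argument shows.
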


\begin{proof}
Let $f$ be a polynomial in $\mathrm{L}_n^d$ that is not identically zero, and let $\theta$ be a real parameter satisfying 
$0 \le \theta \le 1$.
By Theorem \ref{flow}, we have
\[
S(\theta,f)\coloneqq \frac{1}{|f|_1}
 f\Big((1-\theta) w_1+\theta (w_1+\cdots+w_n) , \ldots, (1-\theta) w_n+\theta (w_1+\cdots+w_n)\Big)  \in \mathrm{L}_n^d,
\]
where $|f|_1$ is the sum of all coefficients of $f$.
Since $S(\theta,f)$  belongs to $\mathrm{P}_n^d$ when $0<\theta \le 1$, 
Lemma \ref{split} shows that
we have a homotopy 
\[
T_n\Big(\theta, S(\theta, f)\Big) \in \mathring{\mathrm{L}}^d_n,  \quad 0 < \theta \leq 1,
\]
that deforms $f$ to  the polynomial $T_n\big(1, (w_1+\cdots+w_n)^d\big)$.
It follows that
  the closure of  $\mathring{\mathrm{L}}^d_n$ in $\mathrm{H}^d_n$ contains  $\mathrm{L}_n^d$.
\end{proof}

We show in Theorem \ref{chars} that the closure of $\mathring{\mathrm{L}}^d_n$ in $\mathrm{H}^d_n$ is, in fact, equal to $\mathrm{L}^d_n$.

\subsection{Hodge--Riemann relations for Lorentzian polynomials}\label{secHR}

Let $f$ be a nonzero degree $d \ge 2$ homogeneous polynomial with nonnegative coefficients in variables $w_1,\ldots,w_n$.
The following proposition may be seen as an analog of the Hodge--Riemann relations for homogeneous stable polynomials.\footnote{We refer to \cite{Huh} for a survey of the Hodge--Riemann relations in combinatorial contexts.}

\begin{proposition}\label{hrstable}
If $f$ is  in $\mathrm{S}^d_n \setminus 0$, then $\HE_f(w)$ has exactly one positive eigenvalue for all  $w\in \mathbb{R}_{>0}^n$. 
Moreover, if $f$ is in the interior of $\mathrm{S}^d_n$, then  $\HE_f(w)$ is nonsingular for all  $w\in \mathbb{R}_{>0}^n$. 
\end{proposition}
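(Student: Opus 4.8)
The plan is to control the restriction of $\HE_f(w)$ to every $2$-plane through $w$ via the univariate characterization of stability, and then pass from $2$-planes to the full Hessian by a linear algebra argument. First I would observe that, since $f$ has nonnegative coefficients, is nonzero, and $w\in\mathbb{R}^{n}_{>0}$, we have $f(w)>0$, hence $w^{T}\HE_f(w)\hspace{0.3mm}w=d(d-1)f(w)>0$ by Euler's identity. In particular $\HE_f(w)w\neq 0$, so $\mathbb{R}^{n}=\mathbb{R}w\oplus H$ with $H=\{v\in\mathbb{R}^{n}\mid w^{T}\HE_f(w)\hspace{0.3mm}v=0\}$, and this is an orthogonal splitting for the quadratic form $\HE_f(w)$. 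By Sylvester's law of inertia the matrix is then congruent to the block-diagonal form $\big(w^{T}\HE_f(w)\hspace{0.3mm}w\big)\oplus\HE_f(w)|_{H}$, so ``exactly one positive eigenvalue'' is equivalent to $\HE_f(w)|_{H}\preceq 0$, and nonsingularity is equivalent to $\HE_f(w)|_{H}\prec 0$.

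I would then establish the semidefiniteness. Let $v$ be a nonzero vector of $H$; since $w\notin H$, $v$ is not a multiple of $w$. Consider the binary form $q(x,y)=f(xw+yv)$, homogeneous of degree $d$. Taking $u=w$ in the univariate characterization of stability, the polynomial $f(xw-v')$ has only real zeros for every $v'\in\mathbb{R}^{n}$; applying this with $v'=-v$ and noting that $q(x,0)=f(w)x^{d}$ has nonzero leading coefficient, we get $q(x,y)=f(w)\prod_{i=1}^{d}(x-\rho_{i}y)$ with $\rho_{1},\dots,\rho_{d}\in\mathbb{R}$. An elementary computation (differentiating $\log q$) then gives, at $(x,y)=(1,0)$,
\[
q_{xx}q_{yy}-q_{xy}^{2}\;=\;(d-1)\hspace{0.3mm}f(w)^{2}\Big({\textstyle(\sum_{i}\rho_{i})^{2}-d\sum_{i}\rho_{i}^{2}}\Big)\;\le\;0,
\]
the inequality being Cauchy--Schwarz; on the other hand, by the chain rule this left-hand side equals $\big(w^{T}\HE_f(w)w\big)\big(v^{T}\HE_f(w)v\big)-\big(w^{T}\HE_f(w)v\big)^{2}$. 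Since $v\in H$ the cross term vanishes and $w^{T}\HE_f(w)w>0$, so $v^{T}\HE_f(w)v\le 0$. Thus $\HE_f(w)|_{H}\preceq 0$, which proves the first assertion.

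For the ``moreover'' part I would argue by contradiction: assume $f$ lies in the interior of $\mathrm{S}^{d}_{n}$ but $\HE_f(w)$ is singular at some $w\in\mathbb{R}^{n}_{>0}$; choose a nonzero $v\in\ker\HE_f(w)$, an index $i$ with $v_{i}\neq 0$, and set $h=w_{i}^{d}\in\mathrm{H}^{d}_{n}$. Then $v^{T}\HE_h(w)v=(\partial_{v}^{2}h)(w)=v_{i}^{2}\hspace{0.3mm}d(d-1)\hspace{0.3mm}w_{i}^{d-2}>0$. Since $\HE_{f+th}(w)=\HE_f(w)+t\hspace{0.3mm}\HE_h(w)$ and $v\in\ker\HE_f(w)$, first-order eigenvalue perturbation theory shows that $\HE_{f+th}(w)$ acquires a second positive eigenvalue for all sufficiently small $t>0$: the unique positive eigenvalue of $\HE_f(w)$ persists, and a zero eigenvalue moves in the positive direction because the restriction of $\HE_h(w)$ to $\ker\HE_f(w)$ is positive at $v$. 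But the interior of $\mathrm{S}^{d}_{n}$ is open, so $f+th\in\mathrm{S}^{d}_{n}\setminus 0$ for small $t$, contradicting the first part of the proposition. Hence $\HE_f(w)$ is nonsingular.

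The step I expect to be delicate is this last one. It is tempting to argue instead from the identity $\HE_f(w)=\sum_{|\gamma|=d-2}\tfrac{w^{\gamma}}{\gamma!}\HE_{\partial^{\gamma}f}$, a positive combination of matrices of Lorentzian signature, but a positive combination of nonsingular Lorentzian matrices with positive entries can be singular, so one must genuinely use the strict stability of $f$; the perturbation argument above is the cleanest way I see to do so. An alternative is to analyze the equality case of Cauchy--Schwarz in the second step: $v^{T}\HE_f(w)v=0$ with $v\in H$ forces $f(xw+yv)=f(w)x^{d}$, hence $f(v)=0$ and $\partial_{v}^{k}f(w)=0$ for all $k\ge 1$, from which one would derive a contradiction with strict stability — but this route looks more involved.
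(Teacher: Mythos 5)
Your proof is correct, and it takes a genuinely different and more self-contained route than the paper's.

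For the first assertion, the paper observes that the quadratic form $z\mapsto z^{T}\HE_f(w)z$ is the degree-two part of the stable polynomial $f(z+w)$, and then cites a lemma of Borcea--Br\"and\'en--Liggett to conclude that this quadratic part is stable; the equivalence of Lemma \ref{StrictlyStable} then finishes. You instead restrict $\HE_f(w)$ to $2$-planes through $w$, factor the binary form $q(x,y)=f(xw+yv)$ over $\mathbb{R}$ using the real-rootedness characterization of stability, and read off the sign of $v^{T}\HE_f(w)v$ from a Cauchy--Schwarz inequality on the roots. Together with the $\HE_f(w)$-orthogonal splitting $\mathbb{R}^{n}=\mathbb{R}w\oplus H$, this is an elementary, citation-free argument that essentially re-derives the relevant special case of \cite[Lemma 4.16]{BBL} from scratch. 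What the paper's route buys is brevity; what yours buys is self-containedness.

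For the ``moreover'' part, both proofs perturb $f$ so as to push a hypothetical zero eigenvalue off zero, but the perturbing polynomial differs in a way that matters for how delicate the linear algebra is. The paper perturbs by $\epsilon(w_1^{d}+\cdots+w_n^{d})$, which shifts $\HE_f(w)$ by $\epsilon\,d(d-1)\,\mathrm{diag}(w_1^{d-2},\ldots,w_n^{d-2})$, a positive definite matrix; by Weyl's monotonicity every eigenvalue then strictly increases, so a zero eigenvalue immediately becomes a second positive one, with no need for spectral perturbation theory. You perturb by the single monomial $w_i^{d}$, which shifts the Hessian by a rank-one positive semidefinite matrix, and then invoke first-order perturbation theory for a possibly degenerate zero eigenvalue. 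That works, but it is the more delicate route and you rightly flag it as such. If you want to keep your choice of $h=w_i^{d}$ but avoid perturbation theory, note that the restriction of $\HE_{f+th}(w)$ to $\mathrm{span}(w,v)$ is the $2\times2$ matrix with entries $w^{T}\HE_{f+th}(w)w=d(d-1)\big(f(w)+tw_i^{d}\big)$, $w^{T}\HE_{f+th}(w)v=t\,d(d-1)w_i^{d-1}v_i$, and $v^{T}\HE_{f+th}(w)v=t\,d(d-1)v_i^{2}w_i^{d-2}$, whose determinant is $\big(d(d-1)\big)^{2}t\,v_i^{2}w_i^{d-2}f(w)>0$ for $t>0$ and whose trace is positive; this $2\times2$ block is therefore positive definite, so by Cauchy interlacing $\HE_{f+th}(w)$ has at least two positive eigenvalues, contradicting the first part. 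Alternatively, simply adopt the paper's perturbation by the full power sum, which makes the conclusion immediate. Your closing remark about the decomposition $\HE_f(w)=\sum_{|\gamma|=d-2}\frac{w^{\gamma}}{\gamma!}\HE_{\partial^{\gamma}f}$ is also apt: the paper does eventually exploit exactly this identity, but only via the separate kernel lemma (Lemma \ref{LefschetzInduction}), which is precisely the non-trivial input needed to make the ``sum of Lorentzian forms'' heuristic rigorous.
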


\begin{proof}
Fix a vector $w\in \mathbb{R}_{>0}^n$. By Lemma \ref{StrictlyStable}, the Hessian of $f$ has exactly one positive eigenvalue at $w$ if and only if the following quadratic polynomial in $z$ is stable:
\[
z^T \mathscr{H}_f(w) z= \sum_{1 \le i,j \le n} z_iz_j \partial_i\partial_j f(w).
\]
The above is the quadratic part of the stable polynomial with nonnegative coefficients $f(z+w)$, and hence is stable by \cite[Lemma 4.16]{BBL}. 

Moreover, if $f$ is strictly stable, then $f_\epsilon = f \pm \epsilon(w_1^d +\cdots+w_n^d)$ is stable for all sufficiently small positive $\epsilon$. 
Therefore, by the result obtained in  the previous paragraph, the matrix 
\[
\HE_{f_\epsilon}(w)= \HE_{f}(w) \pm d(d-1)\epsilon \ \mathrm{diag}(w_1^{d-2},\ldots, w_n^{d-2})
\]
has exactly one positive eigenvalue for all sufficiently small positive $\epsilon$,
and hence $\HE_{f}(w)$ is nonsingular. 
\end{proof}

In Theorem \ref{HRTheorem}, we extend  the above result to Lorentzian polynomials.

\begin{lemma}\label{LefschetzInduction}
If $\mathscr{H}_{\partial_if}(w)$ has exactly one positive eigenvalue for every $i\in [n]$ and  $w \in \mathbb{R}^{n}_{>0}$,
then
\[
\ker \mathscr{H}_{f}(w)=\bigcap_{i=1}^{n} \ker \mathscr{H}_{\partial_if}(w) \ \ \text{for every $w \in \mathbb{R}^{n}_{>0}$.}
\]
\end{lemma}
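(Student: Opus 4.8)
The plan is to prove the two inclusions separately, both resting on Euler's identity for homogeneous functions; abbreviate $M \coloneqq \mathscr{H}_f(w)$ and $M_i \coloneqq \mathscr{H}_{\partial_i f}(w)$. First I would discard the degenerate case $d=2$, where each $\partial_i f$ is linear, $M_i=0$ has no positive eigenvalue, and the hypothesis is vacuous; so assume $d\ge 3$. Euler's identity applied to the degree $d-2$ entries of $\mathscr{H}_f$ gives $\sum_i w_i M_i = (d-2)M$, and two further applications give $M_i w = (d-2)\,Me_i$ and $w^{T}M_i w = (d-1)(d-2)\,\partial_i f(w)$. The hypothesis forces each $\partial_i f$ to be nonzero (otherwise $M_i=0$), and $\partial_i f$ has nonnegative coefficients, so $\partial_i f(w)>0$ on the positive orthant; hence $w^{T}M_i w>0$ and $M_i w\ne 0$ there, for every $i$.

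The inclusion $\bigcap_i \ker M_i \subseteq \ker M$ is then immediate from $\sum_i w_i M_i=(d-2)M$ and $d-2\ne 0$. For the reverse inclusion I would fix $v\in\ker M$ and proceed in three moves. The first is the observation that $w^{T}M_i z = (d-2)\,e_i^{T}Mz = (d-2)(Mz)_i$, so the $M_i$-orthogonal complement of $w$ is exactly the hyperplane $H_i \coloneqq \{z : (Mz)_i = 0\}$, which contains $\ker M$ automatically; in particular $v\in H_i$ for all $i$. The second is that, since $M_i$ has exactly one positive eigenvalue and $w^{T}M_i w>0$, the $M_i$-orthogonal splitting $\mathbb{R}^n = \mathbb{R}w \oplus H_i$ and additivity of the signature show that $M_i$ restricted to $H_i$ is negative semidefinite. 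Thus $v^{T}M_i v\le 0$ for every $i$, while $\sum_i w_i\,v^{T}M_i v = (d-2)\,v^{T}Mv = 0$ with all $w_i>0$; therefore $v^{T}M_i v = 0$ for every $i$.

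The third move — which I expect to be the one real step — upgrades $v^{T}M_i v=0$ to $M_i v=0$. Being a null vector of the negative semidefinite form $M_i|_{H_i}$, the vector $v$ lies in the radical of that form, so $M_i v$ is Euclidean-orthogonal to $H_i = (M_i w)^{\perp}$, i.e.\ $M_i v = \lambda_i M_i w$ for some scalar $\lambda_i$. Pairing with $w$ and using the symmetry of $M_i$ gives $\lambda_i\,w^{T}M_i w = w^{T}M_i v = v^{T}M_i w = (d-2)(Mv)_i = 0$, and since $w^{T}M_i w>0$ this forces $\lambda_i=0$, hence $M_i v=0$ and $v\in\bigcap_i\ker M_i$. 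The ``exactly one positive eigenvalue'' hypothesis enters only through the negative-semidefiniteness of $M_i$ on $H_i$; the rest is bookkeeping with Euler's identity, the pleasant accident being that $H_i$ is cut out by the vanishing of a single coordinate of $Mz$, which makes $\ker M\subseteq H_i$ free.
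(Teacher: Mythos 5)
Your proof is correct and takes essentially the same route as the paper's: both inclusions come from Euler's identity, and the nontrivial direction reduces to the observation that $\mathscr{H}_{\partial_i f}$ is negative semidefinite on the $\mathscr{H}_{\partial_i f}$-orthocomplement of $w$, so that $0 = \sum_i w_i\, v^T\mathscr{H}_{\partial_i f}\, v$ forces each summand to vanish. The only difference is that you spell out explicitly why $v^T M_i v = 0$ upgrades to $M_i v = 0$ (via $M_i v \in \mathrm{span}(M_i w)$ and pairing with $w$), a step the paper compresses into the single claim ``with equality if and only if $\mathscr{H}_{\partial_i f} z = 0$.''
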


\begin{proof}
We may suppose $d \ge 3$.
Fix $w \in \mathbb{R}^{n}_{>0}$, and write $ \mathscr{H}_{f}$ for $\mathscr{H}_{f}(w)$.
We will use Euler's formula for homogeneous functions:
\[
d \hspace{0.2mm} f=\sum_{i=1}^n w_i \hspace{0.5mm} \partial_i f.
\]
It follows that the Hessians of $f$ and $\partial_i f$ satisfy the relation
\[
(d-2) \hspace{0.5mm}\mathscr{H}_{f}=\sum_{i=1}^{n} w_i\hspace{0.5mm}\mathscr{H}_{\partial_if},
\]
and hence the kernel of $\mathscr{H}_f$ contains the intersection of the kernels of $\mathscr{H}_{\partial_if}$.

For the other inclusion, 
let $z$ be a vector in the kernel of $\mathscr{H}_f$. 
By Euler's formula again, 
 \[
(d-2) \hspace{0.5mm}e_i^T \mathscr{H}_f
= w^T\mathscr{H}_{\partial_i f} \ \ \text{for every $i \in [n]$,}
 \]
and hence $w^T\mathscr{H}_{\partial_if} z=0$ for every $i \in [n]$.
We have $w^T  \mathscr{H}_{\partial_if}  w >0$
because $\partial_if$ is nonzero and has nonnegative coefficients.
Since $\mathscr{H}_{\partial_if}(w)$ has exactly one positive eigenvalue, it follows that $ \mathscr{H}_{\partial_if} $ is negative semidefinite on the kernel of $w^T \mathscr{H}_{\partial_if} $.
In particular, 
\[
\text{$z^T  \mathscr{H}_{\partial_if}  z \le 0$, with equality if and only if $ \mathscr{H}_{\partial_if} z=0$.}
\]
To conclude, we write zero as the positive linear combination
\[
0=(d-2)\Big(z^T  \mathscr{H}_{f}  z\Big) = \sum_{i =1}^{n} w_i \Big(z^T \mathscr{H}_{\partial_if} z\Big).
\]
Since every summand in the right-hand side is non-positive by the previous analysis, we must have $z^T\mathscr{H}_{\partial_if} z=0$ for every $i \in [n]$,
and hence $\mathscr{H}_{\partial_if} z=0$ for every $i \in. [n]$. 
\end{proof}

We  now  prove an analog of the Hodge--Riemann relation for Lorentzian polynomials. 
When $f$ is the volume polynomial of a projective variety as defined in Section \ref{SectionProjective}, then the one positive eigenvalue condition for the Hessian of $f$ at $w$ is equivalent to the validity of the Hodge--Riemann relations on the space of divisor classes of the projective variety with respect to the polarization corresponding to $w$.

\begin{theorem}\label{HRTheorem}
Let $f$ be a nonzero homogeneous polynomial in $\mathbb{R}[w_1,\ldots,w_n]$ of degree $d \ge 2$.
\begin{enumerate}[(1)]\itemsep 5pt
\item If $f$ is in $\mathring{\mathrm{L}}^d_n$, then  $\HE_f(w)$ is nonsingular for all  $w\in \mathbb{R}_{>0}^n$. 
\item If $f$ is  in $\mathrm{L}^d_n$, then $\HE_f(w)$ has exactly one positive eigenvalue for all  $w\in \mathbb{R}_{>0}^n$. 
\end{enumerate}
\end{theorem}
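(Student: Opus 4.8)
The plan is to prove $(1)$ and $(2)$ simultaneously by induction on $d$, establishing $(1)$ before $(2)$ at each stage. For the base case $d = 2$, statement $(1)$ is immediate, since a polynomial in $\mathring{\mathrm{L}}^2_n$ has a nonsingular Hessian by definition and this Hessian is a constant matrix; statement $(2)$ is Proposition \ref{hrstable} applied to $\mathrm{L}^2_n = \mathrm{S}^2_n$.

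For the inductive step with $d \ge 3$, I would first prove $(1)$. Suppose $f \in \mathring{\mathrm{L}}^d_n$ is nonzero, so that all coefficients of $f$ are positive and each $\partial_i f$ is a nonzero element of $\mathring{\mathrm{L}}^{d-1}_n$. By the inductive hypothesis, for every $w \in \mathbb{R}^n_{>0}$ the matrix $\mathscr{H}_{\partial_i f}(w)$ is nonsingular (by $(1)$ in degree $d-1$) and has exactly one positive eigenvalue (by $(2)$ in degree $d-1$). Hence the hypothesis of Lemma \ref{LefschetzInduction} holds, and it yields
\[
\ker \mathscr{H}_f(w) = \bigcap_{i=1}^n \ker \mathscr{H}_{\partial_i f}(w) = \{0\},
\]
which is $(1)$ in degree $d$.

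To prove $(2)$ in degree $d$, I would use a Nuij-type connectedness argument along the homotopy constructed in the proof of Theorem \ref{conn}. Fix $w \in \mathbb{R}^n_{>0}$, and for $g \in \mathring{\mathrm{L}}^d_n$ let $N(g)$ be the number of positive eigenvalues of $\mathscr{H}_g(w)$. By part $(1)$, $\mathscr{H}_g(w)$ is nonsingular for every $g \in \mathring{\mathrm{L}}^d_n$, so the integer-valued function $N$ is locally constant on $\mathring{\mathrm{L}}^d_n$. Given a strictly Lorentzian $g$, the path $\theta \mapsto T_n(\theta, S(\theta, g))$, $\theta \in [0,1]$, where $S(\theta, g)$ is the normalized substitution $w_i \mapsto (1-\theta)w_i + \theta(w_1+\cdots+w_n)$ from the proof of Theorem \ref{conn}, stays inside $\mathring{\mathrm{L}}^d_n$: for $\theta > 0$ this is Lemma \ref{split}, once one checks $S(\theta, g) \in \mathrm{L}^d_n \cap \mathrm{P}^d_n$ (it is obtained from $g$ by substituting linear forms with positive coefficients, so one applies Theorem \ref{flow} and notes that positivity of the coefficients is preserved), and at $\theta = 0$ the path takes the value $g/|g|_1 \in \mathring{\mathrm{L}}^d_n$ and is continuous there. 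Since $S(1, g) = (w_1+\cdots+w_n)^d$ for every $g$, this path joins $g/|g|_1$ to the fixed polynomial $h \coloneqq T_n\big(1, (w_1+\cdots+w_n)^d\big)$, whence $N(g) = N(g/|g|_1) = N(h)$. Finally, $h$ is obtained from the stable polynomial $(w_1+\cdots+w_n)^d$ by applying operators of the form $1 + w_i\partial_n$, which preserve stability; thus $h \in \mathrm{S}^d_n$ is nonzero with positive coefficients, and Proposition \ref{hrstable} gives $N(h) = 1$. Therefore $N(g) = 1$ for all $g \in \mathring{\mathrm{L}}^d_n$.

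For an arbitrary nonzero $f \in \mathrm{L}^d_n$, Theorem \ref{conn} lets us write $f = \lim_k g_k$ with $g_k \in \mathring{\mathrm{L}}^d_n$, so $\mathscr{H}_f(w) = \lim_k \mathscr{H}_{g_k}(w)$; since the number of positive eigenvalues of a symmetric matrix cannot strictly increase in a limit, $\mathscr{H}_f(w)$ has at most one positive eigenvalue, and since $w^T \mathscr{H}_f(w)\,w = d(d-1)f(w) > 0$ by Euler's formula ($f$ being nonzero with nonnegative coefficients and $w \in \mathbb{R}^n_{>0}$) it has at least one. This completes $(2)$. I expect the main obstacle to be the interlocking of the two parts — the rigidity of the eigenvalue count in $(2)$ relies on nonsingularity from $(1)$, while $(1)$ is only accessible through Lemma \ref{LefschetzInduction}, which in turn requires the one-positive-eigenvalue property one degree lower — together with the bookkeeping needed to keep the Nuij homotopy strictly inside $\mathring{\mathrm{L}}^d_n$, especially at its endpoints.
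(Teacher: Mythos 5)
Your proposal is correct and follows essentially the same route as the paper: simultaneous induction on $d$, deducing (1) from Lemma~\ref{LefschetzInduction} together with the inductive hypothesis, deducing (2) from (1) via local constancy of the positive-eigenvalue count along the Nuij homotopy from the proof of Theorem~\ref{conn} (anchored at a stable polynomial, where Proposition~\ref{hrstable} applies), and finally passing to the closure by the upper semicontinuity of the positive-eigenvalue count combined with Euler's formula to rule out zero positive eigenvalues. The paper is merely terser; the ideas and the order in which they are deployed are the same.
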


\begin{proof}
By Theorem \ref{conn}, $\mathrm{L}^d_n$ is in the closure of $\mathring{\mathrm{L}}^d_n$.
Note that, for any nonzero polynomial $f$ of degree $d \ge 2$ with nonnegative coefficients,
 $\HE_f(w)$ has at least one positive eigenvalue for any  $w\in \mathbb{R}_{>0}^n$. 
Therefore, we may suppose $f \in \mathring{\mathrm{L}}^d_n$ in (2).
We prove (1) and (2) simultaneously by induction on $d$ under this assumption. 
The base case $d=2$ is trivial. 
We suppose that $d \ge 3$ and that the theorem holds for $\mathring{\mathrm{L}}^{d-1}_n$.

That (1) holds for $\mathring{\mathrm{L}}^d_n$ follows from induction and Lemma \ref{LefschetzInduction}. 
Using Proposition \ref{hrstable}, we see that (2) holds for stable polynomials in $\mathring{\mathrm{L}}_n^d$. 
Since $\mathring{\mathrm{L}}^d_n$ is connected by Theorem \ref{conn}, the continuity of eigenvalues and the validity of (1) together implies (2).
\end{proof}

Theorem \ref{HRTheorem}, when combined with the following proposition, shows that 
all polynomials in $\mathrm{L}^d_n$ share a negative dependence property.
The negative dependence property will be systematically studied in the following section.

\begin{proposition}\label{RayleighLemma}
If $\mathscr{H}_f(w)$ has exactly one positive eigenvalue for all $w\in \mathbb{R}^n_{> 0}$, then 
\[
 f(w)\hspace{0.5mm}\partial_i\partial_j f(w) \leq 2\Big(1-\frac{1}{d}\Big) \partial_i f(w)\hspace{0.5mm} \partial_j f(w) \ \ \text{for all $w\in \mathbb{R}^n_{\geq 0}$  and $i,j \in [n]$.}
\]
\end{proposition}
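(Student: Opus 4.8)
The plan is to fix a point $w$ and translate the polynomial inequality into a linear-algebraic statement about the single symmetric matrix $A \coloneqq \HE_f(w)$, where the ``exactly one positive eigenvalue'' hypothesis enters through a Cauchy--Schwarz estimate. Since $f$, $\partial_i f$, and $\partial_i\partial_j f$ are polynomials and $\mathbb{R}^n_{>0}$ is dense in $\mathbb{R}^n_{\ge 0}$, it is enough to prove the inequality for $w \in \mathbb{R}^n_{>0}$; fix such a $w$ and write $A = \HE_f(w)$. Because $f$ is a nonzero homogeneous polynomial with nonnegative coefficients, we have $f(w) > 0$, $\partial_i f(w) \ge 0$, and the diagonal Hessian entry $A_{ii} = \partial_i^2 f(w) \ge 0$ --- this last nonnegativity, and not just that of $\partial_i\partial_j f$, will be what forces the constant $2$. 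Differentiating Euler's identity $\sum_k w_k \partial_k f = d f$ once more yields $A w = (d-1)\nabla f(w)$, and hence $w^T A w = d(d-1) f(w) > 0$ and $e_i^T A w = (d-1)\partial_i f(w)$.

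Next I would decompose each standard basis vector relative to the $A$-form of $w$: set $W = \{ v \in \mathbb{R}^n : w^T A v = 0 \}$ and write $e_i = \lambda_i w + u_i$ with $u_i \in W$, so that $\lambda_i = (w^T A e_i)/(w^T A w) = \partial_i f(w)/(d f(w)) \ge 0$. Substituting this decomposition into $f(w)$, $\partial_i f(w)\,\partial_j f(w)$, and $\partial_i\partial_j f(w)$ through the Euler identities above, and using that the cross terms $w^T A u_k$ vanish by definition of $W$, a brief computation shows that, after clearing the positive factor $w^T A w$, the inequality to be proved is equivalent to
\[
u_i^T A u_j \ \le\ \lambda_i \lambda_j \, (w^T A w).
\]

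The main point is that $A$ is negative semidefinite on $W$: if some $v_0 \in W$ had $v_0^T A v_0 > 0$, then on $\mathrm{span}\{w, v_0\}$ the form $A$ would have the diagonal Gram matrix $\mathrm{diag}(w^T A w,\ v_0^T A v_0)$ with both entries positive, giving $A$ two positive eigenvalues and contradicting the hypothesis. Hence $-A|_W$ is positive semidefinite, and Cauchy--Schwarz gives $|u_i^T A u_j| \le \sqrt{(-u_i^T A u_i)(-u_j^T A u_j)}$. Expanding $\partial_i^2 f(w) = e_i^T A e_i = \lambda_i^2 (w^T A w) + u_i^T A u_i \ge 0$ gives $-u_i^T A u_i \le \lambda_i^2 (w^T A w)$, and similarly for $j$; combining these with $\lambda_i, \lambda_j \ge 0$ and $w^T A w > 0$ yields $u_i^T A u_j \le |u_i^T A u_j| \le \lambda_i \lambda_j (w^T A w)$, which is exactly what is needed.

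I expect the only delicate point to be the bookkeeping of signs and the constant: the factor $2$ is precisely what one concedes by estimating $-u_i^T A u_i$ by $\lambda_i^2 (w^T A w)$, and that estimate uses the nonnegativity of the diagonal Hessian entries. Everything else --- the reduction to $\mathbb{R}^n_{>0}$, the Euler computation, and the equivalence with $u_i^T A u_j \le \lambda_i\lambda_j(w^T A w)$ --- is routine.
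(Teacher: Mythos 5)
Your proof is correct, and the reduction to $u_i^T A u_j \le \lambda_i\lambda_j\,(w^T A w)$ is exact (clearing the positive factor $P/(d(d-1))$ where $P = w^T A w$ gives precisely this). The paper's proof and yours are two presentations of the same underlying argument: the paper restricts $\mathscr{H}_f(w)$ to the plane spanned by $w$ and $v_t = e_i + t e_j$, applies Cauchy interlacing to conclude the $2\times2$ determinant is nonpositive, drops the nonnegative terms $f\,\partial_i^2 f$ and $t^2 f\,\partial_j^2 f$, and requires the resulting quadratic in $t$ to have nonpositive discriminant; you instead make the decomposition $e_i = \lambda_i w + u_i$ with $u_i$ in the $\mathscr{H}_f$-orthogonal complement $W$ of $w$ explicit, derive negative semidefiniteness of $\mathscr{H}_f$ on $W$ from the one-positive-eigenvalue hypothesis, and apply Cauchy--Schwarz there, using $\partial_i^2 f(w) \ge 0$ to bound $-u_i^T A u_i$. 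These are equivalent: the paper's determinant inequality over the one-parameter family $v_t$ packages the same Cauchy--Schwarz estimate on $W$ together with the same bound on $-u_i^T A u_i$, and the discriminant step unpacks it. Your version has the pedagogical advantage of isolating exactly which step consumes the factor $2$ (the replacement of $-u_i^T A u_i$ by $\lambda_i^2 P$) and of making the Lorentzian geometry visible, while the paper's version is a bit terser by letting the discriminant do the bookkeeping.
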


\begin{proof}
Fix $w\in \mathbb{R}_{>0}^n$, and write $\mathscr{H}$ for $\mathscr{H}_f(w)$. 
By Euler's formula for homogeneous functions,
\[
w^T\mathscr{H}w = d(d-1)f(w) \ \ \text{and} \ \ w^T\mathscr{H}e_i= (d-1)\partial_i f(w).
\]
Let $t$ be a real parameter, and consider the restriction of $\mathscr{H}$ to the plane spanned by $w$ and $v_t=e_i+te_j$.
By Theorem \ref{HRTheorem}, $\mathscr{H}$ has exactly one positive eigenvalue.
Therefore, by Cauchy's interlacing theorem, the restriction of $\mathscr{H}$ also has exactly one positive eigenvalue.
In particular, the determinant of the restriction must be nonpositive:
\[
\left(w^T\mathscr{H} v_t \right)^2 - \left(w^T\mathscr{H}w \right) \cdot \left( v_t^T\mathscr{H} v_t \right) \ge 0  \ \ \text{for all $t \in \mathbb{R}$.}
\]
In other words, for all $t \in \mathbb{R}$, we have
\[
(d-1)^2 (\partial_i f+t \partial_j f)^2 -  d(d-1) f (\partial_i^2 f+ 2 t \partial_i\partial_j f + t^2\partial_j^2f) \ge 0.
\]
It follows that, for all $t \in \mathbb{R}$, we have
\[
(d-1)^2 (\partial_i f+t \partial_j f)^2 - 
2td (d-1)f   \partial_i\partial_j f \ge 0.
\]
Thus, the discriminant of the above quadratic polynomial in $t$ should be nonpositive:
\[
 f\partial_i\partial_j f - 2\Big(1-\frac{1}{d}\Big) \partial_i f \partial_j f \le 0. 
\]
This completes the proof of Proposition \ref{RayleighLemma}.
\end{proof}

\subsection{Independence and negative dependence}\label{Negative}

Let $c$ be a fixed positive real number, and let $f$ be a polynomial in $\mathbb{R}[w_1,\ldots,w_n]$.
In this section, the polynomial $f$ is not necessarily homogeneous.
As before, we write $e_i$ for the $i$-th standard unit vector in $\mathbb{R}^n$.

\begin{definition}
We say that $f$ is $c$-\emph{Rayleigh} if $f$ has nonnegative coefficients and
\[
 \partial^\alpha f(w) \hspace{0.5mm}\partial^{\alpha+e_i+e_j} f(w) \leq c \hspace{0.5mm}\partial^{\alpha+e_i} f(w)  \hspace{0.5mm}\partial^{\alpha+e_j} f(w) \ \ \text{for all $i,j \in [n], \alpha \in \NN^n$, $w \in \RR_{\geq 0}^n$}. 
\]
\end{definition}

When $f$ is the partition function of a discrete probability measure $\mu$, the $c$-Rayleigh condition captures a negative dependence property of $\mu$.
More precisely, when $f$ is \emph{multi-affine}, that is, when $f$ has degree at most one in each variable, the $c$-Rayleigh condition for $f$ is equivalent to 
\[
 f(w) \hspace{0.5mm}\partial_i\partial_j  f(w)  \leq c\partial_i f(w)  \hspace{0.5mm}\partial_j  f(w) \ \ \text{for all distinct $i,j \in [n]$, and $w \in \RR_{\geq 0}^n$}. 
\]
Thus the $1$-Rayleigh property of multi-affine polynomials is equivalent to the Rayleigh property for discrete probability measures studied in  \cite{Wagner08} and \cite{BBL}.

\begin{proposition}\label{HRRayleigh}
Any polynomial in $\mathrm{L}^d_n$ is
  $2\Big(1-\frac{1}{d}\Big)$-Rayleigh.
\end{proposition}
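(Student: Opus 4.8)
The plan is to reduce the statement about arbitrary derivatives $\partial^\alpha f$ to the already-established case $\alpha=0$ by differentiating and specializing. First I would recall that, by Theorem~\ref{HRTheorem}(2), any nonzero $f\in\mathrm{L}^d_n$ has $\mathscr{H}_f(w)$ with exactly one positive eigenvalue for all $w\in\mathbb{R}^n_{>0}$, and hence by Proposition~\ref{RayleighLemma} it satisfies
\[
f(w)\,\partial_i\partial_j f(w)\le 2\Big(1-\frac1d\Big)\partial_i f(w)\,\partial_j f(w)\quad\text{for all }w\in\mathbb{R}^n_{\ge 0},\ i,j\in[n].
\]
(The case $f\equiv 0$ is trivial, and the inequality at boundary points $w\in\mathbb{R}^n_{\ge 0}$ follows by continuity from the interior.) So the content of Proposition~\ref{HRRayleigh} beyond Proposition~\ref{RayleighLemma} is passing from $f$ itself to all iterated derivatives $\partial^\alpha f$ with the \emph{same} constant $2(1-1/d)$.

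The key observation is that Definition~\ref{SecondDefinition} gives $\partial^\beta f\in\mathrm{L}^{d-|\beta|}_n$ for every $\beta\in\NN^n$ with $|\beta|\le d$ (since $\mathrm{L}^d_n$ is closed under each $\partial_i$, hence under $\partial^\beta$). Fix $\alpha\in\NN^n$ and $i,j\in[n]$, and set $g=\partial^\alpha f$, which lies in $\mathrm{L}^{e}_n$ with $e=d-|\alpha|$. If $e\ge 2$ and $g\ne 0$, then applying Proposition~\ref{RayleighLemma} to $g$ (legitimate, since $\mathscr{H}_g(w)$ has exactly one positive eigenvalue by Theorem~\ref{HRTheorem}(2)) yields
\[
\partial^\alpha f(w)\,\partial^{\alpha+e_i+e_j}f(w)\le 2\Big(1-\frac1e\Big)\partial^{\alpha+e_i}f(w)\,\partial^{\alpha+e_j}f(w).
\]
Since $e\le d$, we have $2(1-1/e)\le 2(1-1/d)$, and because both sides are nonnegative (all polynomials involved have nonnegative coefficients, as $\mathrm{L}^{\bullet}_n\subseteq\mathrm{M}^{\bullet}_n$ and $w\in\mathbb{R}^n_{\ge 0}$), the desired inequality with constant $2(1-1/d)$ follows. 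The remaining cases are easy: if $g\equiv 0$ the inequality is $0\le 0$; and if $e\le 1$, i.e. $|\alpha|\ge d-1$, then $\partial^{\alpha+e_i+e_j}f$ has degree $\le -1$, hence is identically zero, so again $0\le$ (nonnegative).

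The main obstacle — really the only subtle point — is making sure the constant does not degrade and that the eigenvalue hypothesis is actually available for every derivative. The first is handled by the monotonicity $2(1-1/e)\le 2(1-1/d)$ for $e\le d$ together with nonnegativity of all terms on $\mathbb{R}^n_{\ge 0}$; the second is exactly Theorem~\ref{HRTheorem}(2) applied to $\partial^\alpha f\in\mathrm{L}^{d-|\alpha|}_n$, which is why the proposition is placed after that theorem. I would write the argument in three short lines: invoke $\partial^\alpha f\in\mathrm{L}^{d-|\alpha|}_n$, apply Theorem~\ref{HRTheorem}(2) and Proposition~\ref{RayleighLemma} to it, and note $2(1-1/(d-|\alpha|))\le 2(1-1/d)$ together with the trivial vanishing cases.
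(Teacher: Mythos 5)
Your proof is correct and is exactly the paper's argument, merely spelled out in more detail: the paper's one-line proof also combines Theorem~\ref{HRTheorem} with Proposition~\ref{RayleighLemma} applied to each $\partial^\alpha f \in \mathrm{L}^{d-|\alpha|}_n$ and invokes the monotonicity of $2(1-1/d)$ in $d$. Your explicit treatment of the vanishing and low-degree cases is a sensible elaboration of what the paper leaves implicit.
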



\begin{proof}
The statement follows from Theorem \ref{HRTheorem} and 
Proposition \ref{RayleighLemma}
because  $2\left(1-\frac{1}{d}\right)$ is an increasing function of $d$.
\end{proof}

The goal of this section is to show that the support of any homogeneous $c$-Rayleigh polynomial  is $\MM$-convex (Theorem \ref{MRayleigh}).
The notion of $\MM^\natural$-convexity will be useful for the proof:
A subset $\mathrm{J}^\natural \subseteq \NN^n$ is said to be \emph{$\MM^\natural$-convex}
if there is an $\MM$-convex set $\mathrm{J}$ in $\NN^{n+1}$ such that 
\[
\mathrm{J}^\natural= \Big\{ (\alpha_1,\ldots, \alpha_n) \mid (\alpha_1,\ldots, \alpha_n,\alpha_{n+1}) \in \JJ\Big\}. 
\]
The projection from $\mathrm{J}$ to $\mathrm{J}^\natural$ should be bijective for any such $\mathrm{J}$, as 
the $\MM$-convexity of $\mathrm{J}$ implies that $\mathrm{J}$ is in $\Delta^d_n$ for some $d$.
We refer to \cite[Section 4.7]{Murota} for more on $\MM^\natural$-convex sets.

We prepare the proof of Theorem \ref{MRayleigh} with three lemmas.
Verification of the first lemma is routine and will be omitted.

\begin{lemma}\label{DeletionContraction}
The following polynomials are $c$-Rayleigh whenever $f$ is $c$-Rayleigh:
\begin{enumerate}[(1)]\itemsep 5pt
\item The \emph{contraction} $\partial_if$ of $f$.
\item The \emph{deletion} $f \setminus i$ of $f$, the polynomial obtained from $f$ by evaluating $w_i=0$.
\item[(4)] The \emph{dilation} $f(a_1w_1,\ldots,a_nw_n)$, for $(a_1,\ldots,a_n) \in \mathbb{R}^n_{\ge 0}$.
\item[(5)] The \emph{translation} $f(a_1+w_1,\ldots,a_n+w_n)$,  for $(a_1,\ldots,a_n) \in \mathbb{R}^n_{\ge 0}$.
\end{enumerate}
\end{lemma}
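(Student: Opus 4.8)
The plan is to verify each of the five closure properties directly from the defining inequality of $c$-Rayleigh-ness, namely
\[
\partial^\alpha f(w)\,\partial^{\alpha+e_i+e_j}f(w)\le c\,\partial^{\alpha+e_i}f(w)\,\partial^{\alpha+e_j}f(w)\quad\text{for all }i,j\in[n],\ \alpha\in\NN^n,\ w\in\RR^n_{\ge0}.
\]
For (1), the contraction $g=\partial_kf$: a partial derivative $\partial^\alpha g=\partial^{\alpha+e_k}f$, so the inequality for $g$ at $(\alpha,i,j,w)$ is literally the inequality for $f$ at $(\alpha+e_k,i,j,w)$, which holds since $\alpha+e_k\in\NN^n$. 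For (2), the deletion $g=f|_{w_k=0}$: one checks $\partial^\alpha g=(\partial^\alpha f)|_{w_k=0}$ when $\alpha_k=0$, and the $c$-Rayleigh inequality for $g$ at a point $w$ with $w_k=0$ is the inequality for $f$ at the same point (which has nonnegative coordinates); for $\alpha$ with $\alpha_k>0$ both $\partial^\alpha g$ and the relevant products vanish, so the inequality is trivial. Nonnegativity of coefficients is obviously preserved in both cases.

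For (3), the diagonalization $g(w_1,w_3,\dots,w_n)=f(w_1,w_1,w_3,\dots,w_n)$ (identifying variables $1$ and $2$): here I would use the chain rule, $\partial_1 g=(\partial_1 f+\partial_2 f)|_{w_2=w_1}$, and more generally $\partial^\alpha g=\big((\partial_1+\partial_2)^{\alpha_1}\partial_3^{\alpha_3}\cdots\partial_n^{\alpha_n}f\big)|_{w_2=w_1}$. The cleanest route is to note that diagonalization is the composition of the dilation/substitution framework with an evaluation: actually the slickest argument is that $g$ is obtained from $f$ by the substitution $w_2\mapsto w_1$, and to reduce to (4)-type reasoning one observes that for any fixed nonnegative $w$, evaluating the $c$-Rayleigh inequalities of $f$ on the affine line where the first two coordinates are equal, together with the identity $\partial_1 g+\partial_2(\text{dummy})=\cdots$, gives exactly the inequalities for $g$. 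The honest way: expand $(\partial_1+\partial_2)^{\alpha_1}$ binomially and track that the products appearing in the $g$-inequality are sums of the corresponding $f$-products over split indices; then apply the $f$-inequality termwise. This is the step most prone to index bookkeeping.

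For (4), the dilation $g(w)=f(a_1w_1,\dots,a_nw_n)$ with $a_i\ge0$: we have $\partial^\alpha g(w)=a^\alpha\,(\partial^\alpha f)(a_1w_1,\dots,a_nw_n)$ where $a^\alpha=\prod a_i^{\alpha_i}$. Substituting into the $g$-inequality, the scalar factors on both sides are $a^{\alpha}a^{\alpha+e_i+e_j}=a^{2\alpha+e_i+e_j}$ on the left and $a^{\alpha+e_i}a^{\alpha+e_j}=a^{2\alpha+e_i+e_j}$ on the right — they match exactly — so the $g$-inequality at $w$ is the $f$-inequality at the point $(a_1w_1,\dots,a_nw_n)\in\RR^n_{\ge0}$. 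For (5), the translation $g(w)=f(a_1+w_1,\dots,a_n+w_n)$ with $a_i\ge0$: now $\partial^\alpha g(w)=(\partial^\alpha f)(a+w)$, so the $g$-inequality at $w\in\RR^n_{\ge0}$ is the $f$-inequality at $a+w\in\RR^n_{\ge0}$; nonnegativity of coefficients of $g$ follows because $g$ is a nonnegative-coefficient polynomial evaluated via a translation by a nonnegative vector of a nonnegative-coefficient polynomial (expand $f$ and use the binomial theorem). The only genuine obstacle is (3), and even there the point is purely combinatorial identification of derivative products after identifying two variables; everything else is a change-of-variables substitution into the defining inequality. Given this, I would simply say "verification is routine" — as the paper does — but the substitution arguments above are what underlie that claim.
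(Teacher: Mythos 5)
Your arguments for parts (1), (2), (4), and (5) are correct, and they are indeed the ``routine'' substitutions the paper has in mind: contraction shifts $\alpha$ to $\alpha+e_k$; deletion, dilation by nonnegative scalars, and translation by a nonnegative vector each send $\RR^n_{\ge 0}$ into $\RR^n_{\ge 0}$; and in (4) the scalar $a^{2\alpha+e_i+e_j}$ appears identically on both sides.

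Part (3), however, is exactly where your sketch has a genuine gap, and the ``honest way'' you describe does not work. After expanding $(\partial_1+\partial_2)^{\alpha_1}$, the $g$-inequality becomes a bilinear sum of products $\partial^\beta f\cdot\partial^\gamma f$, and many of these are \emph{not} instances of the $c$-Rayleigh inequality for $f$, so there is nothing to apply ``termwise.'' Concretely, take $n=2$, $g(w)=f(w,w)$, $\alpha_1=1$, and $i=j$ the merged index. Writing $F_{pq}=\partial_1^p\partial_2^q f$, the left side $g'\cdot g'''=(F_{10}+F_{01})(F_{30}+3F_{21}+3F_{12}+F_{03})$ contains the term $F_{01}F_{30}=\partial_2 f\cdot\partial_1^3 f$. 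There is no $\beta$ and no $k,\ell$ with $\{\beta,\beta+e_k+e_\ell\}=\{(0,1),(3,0)\}$, because $(3,0)-(0,1)=(3,-1)$ has a negative entry (and likewise with the roles reversed); so this term is not the left side of any $c$-Rayleigh inequality for $f$, and it cannot be matched against any summand of $c\,(g'')^2$. The same happens for $F_{10}F_{03}$, and more generally for every cross term $F_{p,a-p}F_{r,a+2-r}$ with $r-p\notin\{0,1,2\}$; such terms are unavoidable once $\alpha_1\ge 1$. Your ``cleanest route'' is also not a proof: restricting the $c$-Rayleigh inequalities of $f$ to the line $w_2=w_1$ yields inequalities among the coordinate partials $\partial^\beta f$ evaluated on that line, not among the mixed operators $(\partial_1+\partial_2)^{k}\partial'$ that actually arise when you differentiate $g$, so it does not ``give exactly the inequalities for $g$''; and the clause ``$\partial_1 g+\partial_2(\text{dummy})=\cdots$'' trails off without saying what identity is meant. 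The paper itself omits the verification, but (3) is the one item on the list that is not a bare change of variables, and neither of the two routes you sketch closes it.
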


\begin{remark*}
Lemma \ref{DeletionContraction} in the previous version of this manuscript contained the following incorrect statement:
\begin{enumerate}[(3)]\itemsep 5pt
\item If $f(w_1,w_2,w_3,\ldots,w_n)$ is $c$-Rayleigh, then so is the \emph{diagonalization} $f(w_1,w_1,w_3,\ldots, w_n)$.
\end{enumerate}
This implies that the rank sequence of any Rayleigh measure is strongly log-concave, which is known to be not true \cite[Section 7, Counterexample 1]{BBL}.
In fact, the rank sequence of a Rayleigh measure need not even be unimodal \cite[Theorem 6]{KN10}.
Proof of Lemma \ref{PolymatroidRayleigh} below is revised accordingly.
\end{remark*}

We introduce a partial order $\le$ on $\NN^n$ by setting
\[
\alpha \le \beta  \Longleftrightarrow \text{$\alpha_i \le \beta_i$ for all $i \in [n]$}.
\]
We say that  a subset $\JJ^\natural$ of $\NN^n$ is \emph{interval convex}  if the following implication holds:
\[
\Big(\text{$\alpha \in \JJ^\natural$, $\beta \in \JJ^\natural$, $\alpha \leq \gamma \leq \beta$}\Big)  \Longrightarrow \gamma \in \JJ^\natural.
\]
The \emph{augmentation property} for $\JJ^\natural \subseteq \NN^n$  is the implication
\[
\Big(\text{$\alpha \in \JJ^\natural$, $\beta \in \JJ^\natural$,  $|\alpha|_1<|\beta|_1$}\Big) \Longrightarrow
\Big(\text{$\alpha_j<\beta_j$ and $\alpha +e_j \in \JJ^\natural$ for some $j \in [n]$}\Big).
\]

\begin{lemma}\label{polyind}
Let $\JJ^\natural$ be an interval convex subset of $\mathbb{N}^n$ containing $0$. 
Then $\JJ^\natural$ is $\MM^{\natural}$-convex if and only if  $\JJ^\natural$ satisfies the augmentation property.
\end{lemma}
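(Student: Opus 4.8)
The statement is a characterization of $\MM^\natural$-convexity for interval convex subsets of $\NN^n$ containing the origin, in terms of the augmentation property. The plan is to prove both directions by reducing to the known characterization of $\MM$-convex sets (via the exchange property, as recorded in the excerpt from \cite{Murota}), using the standard lifting trick that relates a subset $\JJ^\natural \subseteq \NN^n$ to a subset $\JJ \subseteq \NN^{n+1}$ by attaching a slack coordinate.

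\emph{The easy direction} ($\MM^\natural$-convex $\Rightarrow$ augmentation). Suppose $\JJ^\natural$ is the projection of an $\MM$-convex $\JJ \subseteq \NN^{n+1}$, which by definition lies in some $\Delta^d_{n+1}$. Given $\alpha,\beta \in \JJ^\natural$ with $|\alpha|_1 < |\beta|_1$, lift them to $\tilde\alpha = (\alpha, d-|\alpha|_1)$ and $\tilde\beta = (\beta, d-|\beta|_1)$ in $\JJ$. Since $|\alpha|_1 < |\beta|_1$, the last coordinates satisfy $\tilde\alpha_{n+1} > \tilde\beta_{n+1}$, so the exchange property for $\MM$-convex sets applied to $\tilde\alpha,\tilde\beta$ and the index $n+1$ produces an index $j \in [n]$ (it cannot be $n+1$) with $\tilde\alpha_j < \tilde\beta_j$ and $\tilde\alpha - e_{n+1} + e_j \in \JJ$. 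Projecting, $\alpha + e_j \in \JJ^\natural$ and $\alpha_j < \beta_j$, which is exactly the augmentation property. (Interval convexity and containing $0$ are not needed here.)

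\emph{The harder direction} (augmentation $\Rightarrow$ $\MM^\natural$-convex). Here I would construct the lift explicitly. Since $\JJ^\natural$ is interval convex and contains $0$, it is finite iff bounded; in general one should first argue $\JJ^\natural$ is bounded — if not, the augmentation property lets one walk upward forever, but one must check this does not obstruct the construction; more carefully, set $d = \max_{\gamma \in \JJ^\natural}|\gamma|_1$ (finiteness of this max is the first thing to verify, using that augmentation forces any two points to be joinable by unit steps and interval convexity fills in, so if $\JJ^\natural$ were unbounded it would contain arbitrarily large elements but that is consistent only if... — this is the subtle point, see below). Assuming $d < \infty$, define $\JJ = \{(\gamma, d - |\gamma|_1) : \gamma \in \JJ^\natural\} \subseteq \Delta^d_{n+1}$. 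Then $\JJ^\natural$ is visibly the projection of $\JJ$, so it remains to verify that $\JJ$ satisfies the exchange property. Given $\tilde\alpha = (\alpha, \alpha_{n+1}), \tilde\beta = (\beta,\beta_{n+1}) \in \JJ$ and an index $i$ with $\tilde\alpha_i > \tilde\beta_i$, there are two cases. If $i = n+1$, then $|\alpha|_1 < |\beta|_1$, so augmentation gives $j \in [n]$ with $\alpha_j < \beta_j$ and $\alpha + e_j \in \JJ^\natural$; since $|\alpha + e_j|_1 = |\alpha|_1 + 1 \le d$, the lift $\tilde\alpha - e_{n+1} + e_j$ lies in $\JJ$, as required. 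If $i \in [n]$, so $\alpha_i > \beta_i$: here I would use interval convexity to slide from $\beta$ toward $\alpha$. Concretely, consider $\gamma = \beta + e_i$ if $|\beta|_1 < |\alpha|_1$, or otherwise look at coordinates where $\beta$ exceeds $\alpha$; the goal is to produce $j \in [n]$ with $\alpha_j < \beta_j$ and $\alpha - e_i + e_j \in \JJ^\natural$. One clean route: apply the augmentation property after a suitable deletion/translation (using that these operations, analyzed combinatorially, preserve the hypotheses), reducing to the case where one decrements the $i$-th coordinate of $\alpha$ and then re-augments; interval convexity guarantees the intermediate point $\alpha - e_i$ (which is $\le \alpha$ and $\le$ some element $\ge \beta$) lies in $\JJ^\natural$, and then augmentation from $\alpha - e_i$ toward $\beta$ (whose total degree is now larger or equal) supplies the index $j$ with $\alpha_j - [j=i] < \beta_j$, forcing $j \ne i$ and $\alpha_j < \beta_j$.

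\emph{Main obstacle.} The technical heart is the case $i \in [n]$ of the exchange property in the hard direction: one must combine interval convexity with the augmentation property to simultaneously decrement coordinate $i$ and increment some coordinate $j$ with $\alpha_j < \beta_j$, and I expect the bookkeeping — ensuring $\alpha - e_i \in \JJ^\natural$ via interval convexity (which needs an element of $\JJ^\natural$ that dominates $\alpha - e_i$ in a coordinate where we have slack, e.g.\ $\beta$ itself after checking $\beta_i \le \alpha_i - 1$) and then getting $j \ne i$ out of augmentation — to be where all the care is needed. The finiteness/boundedness preamble is a secondary subtlety worth isolating as a short first lemma. I would also remark that this lemma is the $\NN^n$ analog of the standard fact that matroids are exactly the interval-convex, augmentation-closed subsets of $\{0,1\}^n$, and the proof is the natural polymatroid generalization of that argument, so citing \cite[Section 4.7]{Murota} for the parallel facts about $\MM^\natural$-convex sets is appropriate.
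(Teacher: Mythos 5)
Your proposal matches the paper's on the easy direction and on the $i = n+1$ case of the exchange property, and your key move for the $i \in [n]$ case — interval convexity together with $0 \in \JJ^\natural$ to get $\alpha - e_i \in \JJ^\natural$, then augmentation toward $\beta$ — is exactly the paper's argument when $|\alpha - e_i|_1 < |\beta|_1$. But your parenthetical claim that $\beta$'s ``total degree is now larger or equal'' to that of $\alpha - e_i$ is false whenever $|\alpha|_1 > |\beta|_1 + 1$, and in that regime augmentation from $\alpha - e_i$ toward $\beta$ does not apply at all. You gesture at this case (``consider $\gamma = \beta + e_i$ if $|\beta|_1 < |\alpha|_1$, or otherwise\ldots'') but the intended move is unclear and does not close the gap. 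The underlying issue is that you over-restrict the target: the exchange property on $\JJ \subseteq \NN^{n+1}$ permits the witness index $j$ to be $n+1$, and in the regime $|\alpha|_1 > |\beta|_1$ (equivalently $\tilde\alpha_{n+1} < \tilde\beta_{n+1}$) that is precisely the right choice — take $j = n+1$, and the needed membership $\tilde\alpha - e_i + e_{n+1} \in \JJ$ is just $\alpha - e_i \in \JJ^\natural$, already supplied by interval convexity with no further appeal to augmentation. Splitting on the sign of $\tilde\alpha_{n+1} - \tilde\beta_{n+1}$ and letting the slack coordinate serve as the exchange witness in the second subcase is the missing idea; once you add it, you also get $j \ne i$ for free in the other subcase, since $\alpha_i > \beta_i$ forces $\alpha_i - 1 \ge \beta_i$.

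Your concern about boundedness is legitimate and worth keeping: as literally stated the lemma is false for $\JJ^\natural = \NN^n$, which is interval convex, contains $0$, and satisfies the augmentation property, yet is not $\MM^\natural$-convex (any $\MM^\natural$-convex set is the projection of an $\MM$-convex set lying in a single $\Delta^d_{n+1}$, hence finite). The paper's phrase ``let $d$ be any sufficiently large positive integer'' tacitly presumes $\JJ^\natural$ is bounded. This is harmless in context — the lemma is only applied to supports of polynomials, which are finite — but the hypothesis really should be explicit, and your instinct to isolate it as a short preliminary remark is correct; you should not, however, expect the augmentation property to imply it.
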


Therefore, a nonempty interval convex subset of $\{0,1\}^n$ containing $0$  is $\MM^\natural$-convex 
if and only if it is the collection of independent sets of a matroid on $[n]$.

\begin{proof}
Let $d$ be any sufficiently large positive integer,  and set
\[
\mathrm{J}=\Big\{(\alpha_1,\ldots,\alpha_n,d-\alpha_1-\cdots-\alpha_n) \in \mathbb{N}^{n+1} \mid (\alpha_1,\ldots,\alpha_n) \in \mathrm{J}^\natural\Big\} .
\]
The ``only if'' direction is straightforward: If $\mathrm{J}^\natural$ is $\MM^\natural$-convex, then $\mathrm{J}$ is $\MM$-convex, and the augmentation property for $\mathrm{J}^\natural$ is a special case of the exchange property for $\mathrm{J}$.

We prove the ``if'' direction by checking the exchange property for $\mathrm{J}$.
Let $\alpha$ and $\beta$ be elements of  $\mathrm{J}$, and let $i$ be an index satisfying $\alpha_i >\beta_i$.
We claim that there is an index $j$ satisfying
\[
 \alpha_j <  \beta_j  \ \ \text{and} \ \  \alpha-e_i+e_j \in \mathrm{J}.
 \]
By the augmentation property for $\mathrm{J}^\natural$, it is enough to justify the claim when $i \neq n+1$.
When  $\alpha_{n+1} < \beta_{n+1}$, then we may take $j=n+1$,  
again by the augmentation property for $\MM^\natural$.

Suppose  $\alpha_{n+1} \ge  \beta_{n+1}$. 
In this case, we consider the element $\gamma=\alpha -e_i +e_{n+1}$. 
The element $\gamma$ belongs to $\mathrm{J}$, because $\mathrm{J}^\natural$ is 
an interval convex set containing $0$.
We have $\gamma_{n+1} > \beta_{n+1}$,  and hence
 the augmentation property for $\mathrm{J}^\natural$ gives
 an index $j$ satisfying 
 \[
\gamma_j< \beta_j \ \ \text{and} \ \ \alpha-e_i+e_j= \gamma -e_{n+1}+e_j \in \mathrm{J}.
 \]
 This index $j$ is necessarily different from $i$ because  $\alpha_i >\beta_i$.
It follows that $\alpha_j =\gamma_j< \beta_j$, and the $\MM$-convexity of $\JJ$ is proved.
\end{proof}

\begin{lemma}\label{PolymatroidRayleigh}
Let $f$ be a  $c$-Rayleigh polynomial in $\mathbb{R}[w_1,\ldots,w_n]$.
\begin{enumerate}[(1)]\itemsep 5pt
\item\label{PR1} The support of $f$  is interval convex.
\item\label{PR4} If $f(0)$ is nonzero, then $\supp(f)$ is $\MM^\natural$-convex.
\end{enumerate}
\end{lemma}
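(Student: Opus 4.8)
\textbf{Proof proposal for Lemma \ref{PolymatroidRayleigh}.}

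The plan is to prove the two parts in order, using the reductions provided by Lemma \ref{DeletionContraction} and the characterization of $\MM^\natural$-convexity in Lemma \ref{polyind}. For part \eqref{PR1}, I would argue by contradiction: suppose $\alpha \le \gamma \le \beta$ with $\alpha, \beta \in \supp(f)$ but $\gamma \notin \supp(f)$. By applying a sequence of contractions $\partial_i$ (which preserve the $c$-Rayleigh property by Lemma \ref{DeletionContraction}) one can reduce to a situation where $\gamma$, $\alpha$, $\beta$ differ only in one or two coordinates; more precisely, choosing $i$ so that $\alpha_i < \gamma_i$ and $j$ so that $\gamma_j < \beta_j$, I would look at the $c$-Rayleigh inequality $\partial^{\delta} f \cdot \partial^{\delta + e_i + e_j} f \le c\, \partial^{\delta + e_i} f \cdot \partial^{\delta + e_j} f$ evaluated at a suitable point $w \in \RR_{>0}^n$, with $\delta$ chosen so that $\delta + e_i$ and $\delta + e_j$ bracket the "gap" at $\gamma$. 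The left side has a nonzero contribution coming from the monomials supported near $\alpha$ and $\beta$, while the right side vanishes because the relevant derivative of $f$ has no monomials in the corresponding range. Making this precise requires choosing $\delta$ carefully and observing that $\partial^\delta f$ and $\partial^{\delta+e_i+e_j} f$ are nonzero while $\partial^{\delta+e_i} f$ or $\partial^{\delta+e_j} f$ is zero — a contradiction since all these polynomials have nonnegative coefficients and we may take $w$ in the open positive orthant.

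For part \eqref{PR4}, assume $f(0) \neq 0$, i.e. $0 \in \supp(f)$. By part \eqref{PR1}, $\supp(f)$ is interval convex, and it contains $0$, so by Lemma \ref{polyind} it suffices to verify the augmentation property: given $\alpha, \beta \in \supp(f)$ with $|\alpha|_1 < |\beta|_1$, I must find $j$ with $\alpha_j < \beta_j$ and $\alpha + e_j \in \supp(f)$. Here I would again use contractions to reduce: replacing $f$ by $\partial^\alpha f$ (still $c$-Rayleigh, still with nonnegative coefficients), one reduces to the case $\alpha = 0$, so that $f(0) \neq 0$ and one must show: if $\beta \in \supp(f)$ with $|\beta|_1 \ge 1$, then $e_j \in \supp(f)$ for some $j$ with $\beta_j > 0$. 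Equivalently, the linear part of $f$ must involve at least one variable $w_j$ appearing in the monomial $w^\beta$. Suppose not; then $\partial_j f(0) = 0$ for every $j$ in the support of $\beta$. Pick such a $j$ with $\beta_j$ maximal or simply any such $j$, and apply the $c$-Rayleigh inequality with $\alpha = 0$ and $i = j$: $f(w)\, \partial_j^2 f(w) \le c\, (\partial_j f(w))^2$. Near $w = 0$ the left side is $f(0) \cdot \partial_j^2 f(0) + O(|w|)$ while the right side is $c\,(\partial_j f(0))^2 + O(|w|) = O(|w|)$; hence $\partial_j^2 f(0) = 0$ as well, and inductively $\partial_j^k f(0) = 0$ for all $k \ge 1$, using repeatedly the $c$-Rayleigh inequality with $\alpha = (k-1)e_j$, $i=j$ and the relation $\partial^{(k-1)e_j} f(0) \neq 0$ only when all lower derivatives vanish. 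Iterating over the coordinates in the support of $\beta$ shows $w^\beta$ cannot appear in $f$ — contradicting $\beta \in \supp(f)$.

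The main obstacle, I expect, is part \eqref{PR1}: arranging the bookkeeping so that after the contractions the $c$-Rayleigh inequality genuinely forces a contradiction. One has to be careful that contracting does not accidentally kill the monomials near $\alpha$ or $\beta$ that witness the nonvanishing of the two factors on the left-hand side, while simultaneously ensuring the factor on the right that is supposed to vanish really does have an empty support in the relevant range. A clean way to handle this is: choose $i, j$ (not necessarily distinct) and a lattice point $\delta$ with $\delta \le \gamma - e_i$ componentwise such that $\delta + e_i$ and $\delta + e_j$ both fail to lie in $\supp(f)$ but $\delta$ and $\delta + e_i + e_j$ do, exploiting interval-convexity-style minimality of the counterexample $\gamma$; then the $c$-Rayleigh inequality at any $w \in \RR^n_{>0}$ reads (positive)$\cdot$(positive) $\le c \cdot 0$, which is absurd. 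The only real work is showing such $\delta, i, j$ exist, which is a short combinatorial argument on the poset $(\NN^n, \le)$ once one picks $\gamma$ minimal with respect to some refinement of $\le$ among counterexamples.
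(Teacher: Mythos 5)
Your overall strategy (reduce by contractions and deletions, then force a contradiction in the $c$-Rayleigh inequality) is in the spirit of the paper's proof, but the specific mechanisms you propose have genuine gaps.

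For part \eqref{PR1}, the ``clean way'' you describe cannot work as stated. You want $\delta,\delta+e_i+e_j\in\supp(f)$ and $\delta+e_i,\delta+e_j\notin\supp(f)$, and you then claim that the $c$-Rayleigh inequality at $w\in\RR^n_{>0}$ reads $(\text{positive})\cdot(\text{positive})\le c\cdot 0$. This is false: the factor $\partial^{\delta+e_i}f(w)$ is a sum of positive contributions from \emph{every} monomial $\mu\in\supp(f)$ with $\mu\ge\delta+e_i$, and $\delta+e_i+e_j$ is such a $\mu$. So whenever the left side has a chance to be nonzero, so does the right side, and there is no contradiction on the open positive orthant. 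The paper sidesteps this by evaluating the inequality at $w=0$, where $\partial^\alpha f(0)$ detects only membership of $\alpha$ itself in $\supp(f)$; that gives a contradiction when the ``gap'' has length one ($\gamma=e_i$, $\beta=e_i+e_j$). But when the gap is longer (for instance $\supp(f)=\{0,3e_1\}$ in one variable), both sides vanish at $w=0$ for every choice of $\delta,i,j$, and the pointwise argument is powerless. The paper therefore handles that remaining case with a different idea: pass to the diagonalization $g(w_1)=f(w_1,\ldots,w_1)=a_1+a_2w_1^k+\cdots$ with $k\ge 2$ and $a_1,a_2>0$, and compare the leading orders of $g\,\partial_1^2g$ and $(\partial_1g)^2$ as $w_1\to 0^+$; the former dominates, contradicting the $c$-Rayleigh inequality asymptotically. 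This asymptotic step is the content your plan is missing.

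For part \eqref{PR4}, the reduction to $\alpha=0$ by passing to $\partial^\alpha f$ is legitimate only when $\alpha\le\beta$ componentwise, which the augmentation property does not assume. More seriously, your induction on $k$ to show $\partial_j^kf(0)=0$ stalls immediately: with $\alpha=e_j$, $i=j$, the $c$-Rayleigh inequality at $w=0$ reads $\partial_jf(0)\cdot\partial_j^3f(0)\le c(\partial_j^2f(0))^2$, and since you have just established $\partial_jf(0)=\partial_j^2f(0)=0$, this is $0\le 0$ and gives no information about $\partial_j^3f(0)$. The claim that ``iterating over the coordinates in the support of $\beta$'' finishes the argument is not substantiated and does not obviously follow. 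The paper instead works with a minimal counterexample and progressively normalizes it: first $S(\alpha)\cap S(\beta)=\emptyset$ (else contract), then $S(\alpha)\cup S(\beta)=[n]$ (else delete), then $|S(\beta)|=1$ (else diagonalize), so one may assume $\beta=de_n$. Then a combinatorial choice of a maximal ``gap'' element $\gamma$ produces a bivariate restriction $g(w_j,w_n)$ which, by interval convexity from part \eqref{PR1}, must split as $g=h(w_j)+r(w_n)$ with $\deg h\ge 1$ and $\deg r\ge 2$; this concrete form violates the $c$-Rayleigh inequality for $w_j$ large. Both the normalization to a single ``long'' coordinate and the splitting argument are structurally different from your proposed induction and are what make the proof close.
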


\begin{proof}
Suppose that the polynomial $f$ and the vectors $\alpha \le \gamma \le \beta$ constitute a minimal counterexample to (1) with respect to the degree and the number of variables of $f$.
We may and will suppose that $|\beta|_1$ is minimal among all such  $\alpha \le \gamma \le \beta$ for the polynomial $f$.
We have
\[
\alpha_j=0 \ \ \text{for all $j$,}
\]
since otherwise some contraction $\partial_j f$ is a smaller counterexample to (1). Similarly, we have
\[
\beta_j>0 \ \ \text{for all $j$,}
\]
since otherwise some deletion $f \setminus j$ is a smaller counterexample to (1). In addition, we may assume that $\gamma$ is a unit vector, say
\[
\gamma=e_i,
\]
since otherwise the contraction $\partial_j f$ for any $j$ satisfying $\gamma_j>0$ is a smaller counterexample to (1).
Suppose $e_j$ is in the support of $f$ for some $j$.
In this case, we should have
\[
e_i+e_j \in \textrm{supp}(f),
\]
since otherwise $\partial_j f$ is a smaller counterexample to (1). 
However, the above implies 
\[
\partial_i f(0)=0 \ \ \text{and} \ \ f(0) \partial_i\partial_j f(0)>0,
\]
contradicting the $c$-Rayleigh property of $f$. Therefore, no $e_j$ is in the support of $f$, 
and hence 
\[
|\delta|_1 \ge |\beta|_1 \ \ \text{for all $\delta \in \textrm{supp}(f)$,}
\]
by the minimality of $|\beta|_1$.
Thus, for any indices $k$ and $l$ satisfying $\beta \ge e_{k}+e_{l}$, 
we have
\begin{align*}
f(\epsilon,\ldots,\epsilon) &= a_1 +\text{higher order terms},\\
\partial_k f(\epsilon,\ldots,\epsilon)   &= a_2 \epsilon^{|\beta|_1-1}+\text{higher order terms},\\
\partial_l f(\epsilon,\ldots,\epsilon)   &=a_3 \epsilon^{|\beta|_1-1}+\text{higher order terms},\\
\partial_k \partial_l f(\epsilon,\ldots,\epsilon)   &= a_4 \epsilon^{|\beta|_1-2}+\text{higher order terms},
\end{align*}
for some positive constants $a_1,a_2,a_3,a_4$.
This contradicts the $c$-Rayleigh property of $f$ for sufficiently small positive $\epsilon$, proving (1).

Suppose $f$ is a counterexample to (2) that is minimal with respect to the degree and the number of variables of $f$.
By Lemma \ref{polyind} and (1) of the current lemma, we know that the support of $f$ fails to have the augmentation property.
In other words, there are $\alpha$ and $\beta$ in the support of $f$ such that $|\alpha|_1 < |\beta|_1$ and, for all $i$,
\[
\alpha_i < \beta_i \Longrightarrow \alpha+e_i \notin \textrm{supp}(f).
\]
We may and will suppose that $|\alpha|_1$ is minimal among all such  $\alpha$ and $\beta$ for the polynomial $f$.
For any $\gamma$, write $S(\gamma)$ for the set of indices $i$ such that $\gamma_i>0$.
If $i$ is in the intersection of $S(\alpha)$ and $S(\beta)$, then $\partial_i f$ is a counterexample to (2) that has degree less than that of $f$,
and hence
\[
S(\alpha) \cap S(\beta)=\varnothing.
\]
Similarly, if $i$ is not in the union of $S(\alpha)$ and $S(\beta)$, then $f \setminus i$ is a counterexample to (2) that involves less than $n$ variables,
and hence
\[
S(\alpha) \cup S(\beta)=[n].
\]
Since $\textrm{supp}(f)$ is interval convex by (1),  there is an index $i$ in $S(\alpha)$. In addition, since $f(0)$ is nonzero, we have 
\[
\alpha-e_i \in \textrm{supp}(f).
\]
The vectors $\alpha-e_i$ and $\beta$ satisfy the augmentation property for $\textrm{supp}(f)$, by the minimality of $|\alpha|_1$.
Therefore, there is an index $j$ such that
\[
(\alpha-e_i)_j < \beta_j \ \ \text{and} \ \ \alpha-e_i+e_j \in \textrm{supp}(f).
\]
The first condition shows that  the index $j$ cannot be in $S(\alpha)$, so it must be in $S(\beta)$.
The vectors $\alpha-e_i+e_j$ and $\beta$ satisfy the augmentation property  for $\textrm{supp}(f)$, since otherwise $\partial_j f$ is a smaller counterexample to (2).
Therefore, there is an index $k$  such that
\[
(\alpha-e_i+e_j)_k < \beta_k \ \ \text{and} \ \  \alpha-e_i+e_j+e_k \in \textrm{supp}(f).
\]
The first condition shows that  the index $k$ cannot be in $S(\alpha)$, so it must be in $S(\beta)$.
On the other hand, since $j$ and $k$ are in $S(\beta)$ and not in $S(\alpha)$, the failure of the augmentation property for $\alpha$ and $\beta$ implies
\[
\alpha+e_j \notin \textrm{supp}(f) \ \ \text{and} \ \  \alpha+e_k \notin \textrm{supp}(f).
\]
Note that the $c$-Rayleigh polynomial $g\coloneq \partial^{\alpha-e_i} f$ satisfies
\[
e_i, e_j+e_k \in \textrm{supp}(g) \ \ \text{and} \ \ e_i+e_j,e_i+e_k \notin \textrm{supp}(g).
\]
The first pair of conditions shows that $g(w) \partial_j\partial_k g(w)$ is a strictly increasing function of $w_i$ if we set all the variables other than $w_i$ equal to $1$.
On the other hand, the second pair of conditions shows that $\partial_j g(w) \partial_k g(w)$ is independent of $w_i$, by  the interval convexity of $\textrm{supp}(g)$. 
This contradicts the $c$-Rayleigh property of $g$, proving (2).
\end{proof}

\begin{theorem}\label{MRayleigh}
If $f$ is homogeneous and $c$-Rayleigh, then the support of $f$  is $\MM$-convex.
\end{theorem}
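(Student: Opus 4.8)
The plan is to deduce the $\MM$-convexity of $\supp(f)$ from the $\MM^{\natural}$-convexity statement in Lemma \ref{PolymatroidRayleigh}\eqref{PR4}, using the translation operation of Lemma \ref{DeletionContraction}(5) to manufacture a nonzero constant term. We may assume $f \neq 0$, since the empty set is $\MM$-convex by convention; let $d$ denote the degree of $f$, so that $\supp(f) \subseteq \Delta^d_n$.

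First I would pass to the translate $g(w) = f(w_1+1,\ldots,w_n+1)$, which is $c$-Rayleigh by Lemma \ref{DeletionContraction}(5) and satisfies $g(0) = f(1,\ldots,1) > 0$ because the coefficients of $f$ are nonnegative and not all zero. Since no cancellation occurs among these nonnegative coefficients, a one-line computation (for instance from $g(w) = \sum_{\gamma} \frac{1}{\gamma!}\,(\partial^{\gamma} f)(1,\ldots,1)\, w^{\gamma}$) identifies $\supp(g)$ with the downward closure $\{\gamma \in \NN^n : \gamma \le \alpha \text{ for some } \alpha \in \supp(f)\}$. In particular $d = \max_{\gamma \in \supp(g)} |\gamma|_1$, and $\supp(f)$ is exactly the top layer $\{\gamma \in \supp(g) : |\gamma|_1 = d\}$, because $\gamma \le \alpha$ together with $|\gamma|_1 = |\alpha|_1$ forces $\gamma = \alpha$.

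Now by Lemma \ref{PolymatroidRayleigh}\eqref{PR4}, $\supp(g)$ is $\MM^{\natural}$-convex, so there is an $\MM$-convex set $\JJ \subseteq \NN^{n+1}$, necessarily lying in some $\Delta^{D}_{n+1}$, that projects bijectively onto $\supp(g)$ by deleting the last coordinate. The element of $\JJ$ lying over $\gamma \in \supp(g)$ has last coordinate $D - |\gamma|_1$, so the preimage of $\supp(f)$ is precisely $\{\delta \in \JJ : \delta_{n+1} = m\}$, where $m = D - d = \min_{\delta \in \JJ} \delta_{n+1}$. It then suffices to show that slicing $\JJ$ along a fixed value of its last coordinate yields an $\MM$-convex set in the remaining $n$ coordinates. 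Given $\alpha, \beta \in \supp(f)$ and an index $i$ with $\alpha_i > \beta_i$, apply the exchange property of $\JJ$ to the lifts $(\alpha, m)$ and $(\beta, m)$ and the index $i$: this produces an index $j$ with $(\alpha,m)_j < (\beta,m)_j$ and $(\alpha,m) - e_i + e_j \in \JJ$. The index $j$ cannot equal $n+1$, since that would give $m < m$; hence $j \in [n]$, $\alpha_j < \beta_j$, and $(\alpha,m) - e_i + e_j$ still has last coordinate $m$, so $\alpha - e_i + e_j \in \supp(f)$. Thus $\supp(f)$ satisfies the exchange property, i.e. it is $\MM$-convex.

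The only genuinely substantive steps are the translation trick and the identification of $\supp(g)$ with the downward closure of $\supp(f)$ (which rests on the absence of sign cancellation), together with the matching of the top layer of $\supp(g)$ with a fixed-coordinate slice of $\JJ$. Granting these bookkeeping identifications, the slicing argument is immediate: the exchange index supplied by the $\MM$-convexity of $\JJ$ can never fall on the appended coordinate, because a strict increase there is incompatible with both lifts sharing the same final coordinate. I expect the only point requiring care to be carrying out these identifications precisely.
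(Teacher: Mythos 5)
Your proposal is correct and follows essentially the same route as the paper: translate $f$ by $(1,\ldots,1)$ to make the constant term positive, apply Lemma~\ref{PolymatroidRayleigh}(2) to obtain an $\MM^\natural$-convex support, and recover $\supp(f)$ by slicing the corresponding $\MM$-convex lift $\JJ$. The one difference is cosmetic: the paper invokes, as a known fact, that intersecting an $\MM$-convex set with a coordinate hyperplane preserves $\MM$-convexity (taking $\JJ$ to be the support of the homogenization of $g$, where the slice sits at $\delta_{n+1}=0$), whereas you verify the exchange property of the slice directly by noting that the exchange index produced by $\JJ$ cannot land on the appended coordinate when both lifted points already have the minimal last coordinate — a clean argument that makes the bookkeeping self-contained.
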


\begin{proof}
By (5) of Lemma \ref{DeletionContraction} and (2) of Lemma \ref{PolymatroidRayleigh}, the support of the translation 
\[
g(w_1,\ldots,w_n)=f(w_1+1, \ldots, w_n+1)
\]
 is $\MM^\natural$-convex.  
In other words, the support $\JJ$ of the homogenization of $g$ is $\MM$-convex. 
Since the intersection of an $\mathrm{M}$-convex set with a coordinate hyperplane is $\mathrm{M}$-convex, 
this implies the $\MM$-convexity of the support of $f$.
\end{proof}

A multi-affine polynomial $f$ is said to be  \emph{strongly Rayleigh} if
\[
 f(w) \hspace{0.5mm}\partial_i\partial_j  f(w)  \leq \partial_i f(w)  \hspace{0.5mm}\partial_j  f(w) \ \ \text{for all distinct $i,j \in [n]$, and $w \in \RR^n$}. 
\]
Clearly, any strongly Rayleigh multi-affine polynomial is $1$-Rayleigh.
Since a multi-affine polynomial is stable if and only if it is strongly Rayleigh \cite[Theorem 5.6]{Branden}, 
Theorem \ref{MRayleigh} extends the following theorem of Choe \emph{et al.} \cite[Theorem 7.1]{COSW}: If $f$ is a nonzero homogeneous stable multi-affine polynomial, then the support of $f$ is the set of bases of a matroid.

Lastly, we show that the bound in  Proposition \ref{HRRayleigh} is optimal.

\begin{proposition}
When $n \le 2$, all polynomials in $\mathrm{L}^d_n$ are $1$-Rayleigh.
When $n \ge 3$, we have
\[
\Big(\text{all polynomials in $\mathrm{L}^d_n$ are $c$-Rayleigh}\Big) \Longrightarrow c \ge 2\Big(1-\frac{1}{d}\Big).
\]
In other words, for any $n \ge 3$ and any $c <2\Big(1-\frac{1}{d}\Big)$,  there is  $f\in \mathrm{L}^d_n$ that is not $c$-Rayleigh.
\end{proposition}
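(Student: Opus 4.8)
The plan is to split the $c$-Rayleigh inequality into its \emph{diagonal} instances ($i=j$) and \emph{off-diagonal} instances ($i\neq j$), after reducing to the level of first derivatives. For $\alpha\in\NN^n$ set $h=\partial^\alpha f$; iterating Definition \ref{SecondDefinition} gives $h\in\mathrm{L}^{d'}_n$ with $d'=d-|\alpha|$, and the instance of the $c$-Rayleigh inequality for $f$ indexed by $(\alpha,i,j)$ is exactly the instance for $h$ indexed by $(0,i,j)$. If $d'\le 1$ the relevant second derivative vanishes, so we may assume $d'\ge 2$ and $h\neq 0$, hence $h(w)>0$ on $\RR^n_{>0}$. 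For the diagonal case (valid for every $n$), fix $w\in\RR^n_{>0}$ and write $\mathscr{H}=\mathscr{H}_h(w)$. By Theorem \ref{HRTheorem} it has exactly one positive eigenvalue, and $w^T\mathscr{H}w=d'(d'-1)h(w)>0$ by Euler's formula; as in the proof of Lemma \ref{StrictlyStable} this forces $\mathscr{H}$ to be negative semidefinite on the $\mathscr{H}$-orthogonal complement of $w$, so $(w^T\mathscr{H}v)^2\ge(w^T\mathscr{H}w)(v^T\mathscr{H}v)$ for all $v$. Taking $v=e_i$ and using $w^T\mathscr{H}e_i=(d'-1)\partial_ih(w)$ gives $h\,\partial_i^2h\le\tfrac{d'-1}{d'}(\partial_ih)^2\le(\partial_ih)^2$ at $w$, hence on $\RR^n_{\ge 0}$ by continuity.

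For the off-diagonal case with $n\le 2$ there is nothing beyond the diagonal case unless $n=2$ and $\{i,j\}=\{1,2\}$. By density of $\RR^2_{>0}$ in $\RR^2_{\ge 0}$ and homogeneity it suffices to show $\partial_1h\,\partial_2h-h\,\partial_1\partial_2h\ge 0$ on $\RR^2_{>0}$, and a further use of homogeneity reduces this to the one-variable restriction $g(x)=h(x,1)=\sum_{k=0}^{d'}b_kx^k$ (with $b_k\ge 0$, $g\not\equiv 0$), where Euler's relation rewrites the inequality as $x\big[(g')^2-gg''\big]\le gg'$ for $x>0$. The key point is the identity
\[
\frac{gg'+x\,gg''-x(g')^2}{g^2}\;=\;\frac{d}{dx}\!\left(\frac{x\,g'(x)}{g(x)}\right)\;=\;\frac{\mathrm{Var}(K_x)}{x},
\]
where $K_x$ is the random variable on $\{0,1,\dots,d'\}$ with $\mathrm{Pr}(K_x=k)=b_kx^k/g(x)$, so that $\langle K_x\rangle=xg'/g$; nonnegativity of the variance yields $gg'+x\,gg''-x(g')^2\ge 0$, which is what is wanted. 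Combined with the diagonal case this shows every $f\in\mathrm{L}^d_n$ is $1$-Rayleigh when $n\le 2$.

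For $n\ge 3$ it suffices, by Theorem \ref{flow} (which gives $\mathrm{L}^d_3\subseteq\mathrm{L}^d_n$ via the coordinate inclusion), to exhibit the example for $n=3$; we may also assume $d\ge 2$. I propose
\[
f\;=\;d\,w_1w_2w_3^{d-2}+d\,w_1w_3^{d-1}+d\,w_2w_3^{d-1}+2(d-1)\,w_3^d .
\]
Its support $\{(1,1,d-2),(1,0,d-1),(0,1,d-1),(0,0,d)\}$ is $\MM$-convex, and since $f$ has degree $\le 1$ in each of $w_1,w_2$, the only $\partial^\gamma f$ with $\gamma\in\Delta^{d-2}_3$ that can be nonzero are those with $\gamma_1,\gamma_2\le 1$; for such $\gamma$ with $(\gamma_1,\gamma_2)\neq(0,0)$ the polynomial $\partial^\gamma f$ is a quadratic form in at most two of the variables whose Hessian obviously has exactly one positive eigenvalue, while for $\gamma=(0,0,d-2)$ a direct computation shows that $\mathscr{H}_{\partial_3^{d-2}f}$ has its nonzero $3\times 3$ principal block equal to $\left(\begin{smallmatrix}0&p&q\\ p&0&q\\ q&q&r\end{smallmatrix}\right)$ with $p=d(d-2)!$, $q=d(d-1)!$, $r=2(d-1)d!$, for which $2q^2=pr$; hence that block has determinant $0$, positive trace, and all three $2\times 2$ principal minors negative, so it has signature $(+,-,0)$. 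Therefore $\partial^\gamma f\in\mathrm{L}^2_3$ for every $\gamma$, so $f\in\mathrm{L}^d_3$. Finally, at $w=e_3$ one has $f=2(d-1)$ and $\partial_1f=\partial_2f=\partial_1\partial_2f=d$, so
\[
f(e_3)\,\partial_1\partial_2f(e_3)\;=\;2d(d-1)\;=\;2\Big(1-\tfrac1d\Big)\,d^2\;=\;2\Big(1-\tfrac1d\Big)\,\partial_1f(e_3)\,\partial_2f(e_3),
\]
with $\partial_1f(e_3)\,\partial_2f(e_3)=d^2\neq 0$. Thus $f$ is not $c$-Rayleigh for any $c<2\big(1-\tfrac1d\big)$, proving the implication.

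The two steps carrying the real content are the probabilistic identity in the off-diagonal $n=2$ case (everything else there being bookkeeping with Euler's relation) and the verification that the extremal polynomial is Lorentzian; the crux of the latter is the equality $2q^2=pr$, which is precisely what pins the optimal constant in the example to exactly $2\big(1-\tfrac1d\big)$ rather than to something smaller.
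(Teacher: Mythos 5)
Your proof is correct, and both halves hold up to scrutiny. The diagonal estimate $h\,\partial_i^2 h\le\frac{d'-1}{d'}(\partial_i h)^2$ follows exactly as you say from Theorem \ref{HRTheorem} and the $2\times 2$ minor argument (this is the $i=j$ specialization of the computation in Proposition \ref{RayleighLemma}, and correctly yields a constant $<1$). The reduction to first derivatives via $h=\partial^\alpha f\in\mathrm{L}^{d'}_n$ is sound. For $n\ge 3$, your extremal polynomial is the paper's example $2(1-\tfrac1d)w_1^d+w_1^{d-1}w_2+w_1^{d-1}w_3+w_1^{d-2}w_2w_3$ multiplied by $d$ after the substitution $w_1\leftrightarrow w_3$, and your verification of the Lorentzian property (including the identity $2q^2=pr$ that makes the Hessian singular with signature $(+,-,0)$) is a nice piece of bookkeeping that the paper elides with ``straightforward to check.''

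Where you genuinely depart from the paper is the off-diagonal $n=2$ case. The paper proves the inequality $f\,\partial_1\partial_2 f\le\partial_1 f\,\partial_2 f$ for \emph{all} bivariate homogeneous polynomials with nonnegative coefficients by an induction on degree: it writes $f=c_1w_1^d+c_2w_2^d+w_1w_2g$, expands $\partial_1f\,\partial_2 f-f\,\partial_1\partial_2 f$ into four summands, and bounds each using the elementary observation $(\deg h+1)h\ge(1+w_i\partial_i)h$ together with the inductive hypothesis applied to $g$. You instead pass to the dehomogenization $g(x)=h(x,1)$, identify $xg'/g$ as the mean of the discrete distribution $\Pr(K_x=k)=b_kx^k/g(x)$, and derive the desired inequality from nonnegativity of $\tfrac{d}{dx}\langle K_x\rangle=\mathrm{Var}(K_x)/x$. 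Both approaches have the same scope (they use only nonnegativity of coefficients, not the Lorentzian hypothesis), but yours is shorter and exposes the probabilistic meaning of the off-diagonal $1$-Rayleigh inequality in two variables: it says exactly that $\langle K_x\rangle$ is monotone in the external field $x$, a fact that recurs in negative-dependence theory. The paper's inductive decomposition, by contrast, makes no detour through probability and stays entirely within polynomial identities. Either argument would have served the paper; yours is arguably the more illuminating of the two.
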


\begin{proof}
We first show  by induction that, for any homogeneous bivariate polynomial $f=f(w_1,w_2)$ with nonnegative coefficients, we have
\[
f(w) \hspace{0.5mm}\Big(\partial_1 \partial_2 f(w)\Big) \le \Big(\partial_1 f(w)\Big) \Big(\partial_2 f(w)\Big) \ \ \text{for any $w \in \mathbb{R}^2_{\ge 0}$.}
\]
We use the obvious fact that, for any homogeneous polynomial with nonnegative coefficients $h$,
\[
\big(\text{deg}(h)+1\big)  h \ge  \big(1+w_i \partial_i\big) h \ \ \text{for any $i \in [n]$ and $w \in \mathbb{R}^n_{\ge 0}$.}
\]
Since $f$ is bivariate, we may write $f=c_1w_1^d+c_2w_2^d+w_1w_2g$. We have
\begin{align*}
\partial_1 f\partial_2 f-f\partial_1 \partial_2 f&=d^2c_1c_2w_1^{d-1}w_2^{d-1}\\
&\quad +dc_1w_1^d(1+w_2\partial_2)g-c_1w_1^d(1+w_1\partial_1)(1+w_2\partial_2)g\\
&\quad +dc_2w_2^d(1+w_1\partial_1)g-c_2w_2^d(1+w_1\partial_1)(1+w_2\partial_2)g\\
&\quad +w_1w_2(g+w_1\partial_1 g)(g+w_2\partial_2 g)-w_1w_2 g(1+w_1\partial_1)(1+w_2\partial_2)g.
\end{align*} 
The summand in the second line  is nonnegative on $\mathbb{R}^2_{\ge 0}$ by the mentioned fact for
$(1+w_2\partial_2)g$.
The summand in the third line is nonnegative on $\mathbb{R}^2_{\ge 0}$  by the  mentioned fact for
$(1+w_1\partial_1)g$.
The summand in the fourth line is nonnegative  on $\mathbb{R}^2_{\ge 0}$ by the induction hypothesis applied to $g$.

We next show that, for any bivariate Lorentzian polynomial $f=f(w_1,w_2)$, we have
\[
f(w) \hspace{0.5mm}\Big(\partial_1 \partial_1 f(w)\Big) \le \Big(\partial_1 f(w)\Big) \Big(\partial_1 f(w)\Big) \ \ \text{for any $w \in \mathbb{R}^2_{\ge 0}$.}
\]
Since $f$ is homogeneous, it is enough to prove the inequality when $w_2=1$.
In this case, the inequality follows from the concavity of the function $\log f$ restricted to the line $w_2=1$.
This completes the proof that any bivariate Lorentzian polynomial is $1$-Rayleigh.

To see the second statement, consider the polynomial
\[
f=2\Big(1-\frac{1}{d}\Big)w_1^d+w_1^{d-1}w_2+w_1^{d-1}w_3+w_1^{d-2}w_2w_3.
\]
It is straightforward to check that $f$ is in $\mathrm{L}^d_n$.
If $f$ is $c$-Rayleigh, then, for any $w \in \mathbb{R}^n_{\ge 0}$,
\[
w_1^{2d-4}\Big(2\Big(1-\frac{1}{d}\Big)w_1^2+w_1w_2+w_1w_3+w_2w_3\Big) \le
c w_1^{2d-4}\Big(w_1+w_2\Big)
\Big(w_1+w_3\Big).
\]
The desired lower bound for $c$ is obtained by setting $w_1=1,w_2=0,w_3=0$.
\end{proof}

\subsection{Characterizations of Lorentzian polynomials}\label{secChar}

%
We may now give a complete and useful description of the space of Lorentzian polynomials. 
As before, we write $\mathrm{H}^d_n$ for the space of degree $d$ homogeneous polynomials in $n$ variables.

\begin{theorem}\label{chars}
The closure of $\mathring{\mathrm{L}}^d_n$ in $\mathrm{H}^d_n$ is $\mathrm{L}_n^d$.
In particular,  $\mathrm{L}_n^d$ is a closed subset of $\mathrm{H}^d_n$. 
\end{theorem}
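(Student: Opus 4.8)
The plan is to deduce the theorem from Theorem \ref{conn} together with the $c$-Rayleigh theory of Section \ref{Negative}. By definition $\mathring{\mathrm{L}}^d_n \subseteq \mathrm{L}^d_n$, and Theorem \ref{conn} gives $\mathrm{L}^d_n \subseteq \overline{\mathring{\mathrm{L}}^d_n}$, so the whole statement reduces to the single inclusion $\overline{\mathring{\mathrm{L}}^d_n} \subseteq \mathrm{L}^d_n$; equivalently, every limit of strictly Lorentzian polynomials is Lorentzian, which in particular makes $\mathrm{L}^d_n$ closed. The cases $d\le 2$ are immediate from the definitions, so I would assume $d\ge 3$.

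So I would fix $f = \lim_{k} f_k$ with $f_k \in \mathring{\mathrm{L}}^d_n$ and verify the two defining conditions of $\mathrm{L}^d_n$ for $f$. Being a limit of polynomials in $\mathrm{P}^d_n$, the polynomial $f$ has nonnegative coefficients. For the first condition, fix $\alpha \in \Delta^{d-2}_n$; since $f_k \in \mathrm{L}^d_n$ we have $\partial^\alpha f_k \in \mathrm{L}^2_n = \mathrm{S}^2_n$, i.e. $\partial^\alpha f_k$ is a stable quadratic form. Stability is a closed condition by Hurwitz's theorem, so $\partial^\alpha f = \lim_k \partial^\alpha f_k$ is stable; as it also has nonnegative coefficients, $\partial^\alpha f \in \mathrm{S}^2_n = \mathrm{L}^2_n$.

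The second condition, that $\supp(f)$ is $\MM$-convex, is the point where a naive limiting argument fails: under coefficient-wise convergence the support can only shrink, and $\MM$-convexity is not inherited by subsets. Instead I would route through negative dependence. Each $f_k$ lies in $\mathrm{L}^d_n$, hence is $2\big(1-\frac{1}{d}\big)$-Rayleigh by Proposition \ref{HRRayleigh}. The $c$-Rayleigh inequalities are weak polynomial inequalities in the coefficients, required to hold for every $w \in \RR^n_{\ge 0}$; since $\partial^\beta f_k \to \partial^\beta f$ coefficientwise for every $\beta$, passing to the limit for each fixed $w$ shows that $f$ is again $2\big(1-\frac{1}{d}\big)$-Rayleigh, and $f$ has nonnegative coefficients. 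As $f$ is homogeneous, Theorem \ref{MRayleigh} then gives that $\supp(f)$ is $\MM$-convex, so $f \in \mathrm{M}^d_n$. Combined with the first condition this yields $f \in \mathrm{L}^d_n$, proving $\overline{\mathring{\mathrm{L}}^d_n} = \mathrm{L}^d_n$ and hence that $\mathrm{L}^d_n$ is closed.

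The main obstacle is exactly the support condition in the third paragraph: it is what forces the use of Section \ref{Negative} — the closedness of the $c$-Rayleigh class and Theorem \ref{MRayleigh} — rather than a direct argument, because the $\MM$-convexity of the supports $\supp(f_k) = \Delta^d_n$ does not by itself control the support of the limit. Everything else is a routine application of Hurwitz's theorem and of results already established in the excerpt.
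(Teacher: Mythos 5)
Your proof is correct and follows the same route as the paper: use Theorem \ref{conn} for one inclusion, and for the reverse inclusion combine Proposition \ref{HRRayleigh} (every polynomial in $\mathrm{L}^d_n$, hence every strictly Lorentzian polynomial, is $2(1-\tfrac{1}{d})$-Rayleigh), the closedness of the $c$-Rayleigh condition under coefficientwise limits, and Theorem \ref{MRayleigh} to conclude that the support of the limit is $\MM$-convex. The only difference is that you spell out the verification of $\partial^\alpha f \in \mathrm{L}^2_n$ via Hurwitz's theorem explicitly, whereas the paper's two-sentence proof treats it as immediate; this is a presentational, not mathematical, difference.
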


\begin{proof}
By Theorem \ref{conn}, the closure of $\mathring{\mathrm{L}}^d_n$ contains $\mathrm{L}^d_n$.
Since any limit of $c$-Rayleigh polynomials must be $c$-Rayleigh,
the other inclusion follows from Theorem \ref{MRayleigh} and Proposition \ref{HRRayleigh}.
\end{proof}

Therefore, a degree $d$ homogeneous polynomial $f$ with nonnegative coefficients is Lorentzian if and only if 
 the support of $f$ is $\MM$-convex and  
 $\partial^\alpha f$ has at most one positive eigenvalue for every $\alpha \in \Delta^{d-2}_n$. 
 In other words, Definitions \ref{FirstDefinition} and \ref{SecondDefinition} define the same class of polynomials. 

\begin{example}\label{ExampleBivariate}
A sequence of  nonnegative numbers $a_0,a_1,\ldots,a_d$ is said to be \emph{ultra log-concave} if 
\[
 \frac {a_{k}^2} {{\binom d {k}}^2} \geq \frac {a_{k-1}} {\binom d {k-1}} \frac {a_{k+1}} {\binom d {k+1}} \quad \text{for all $0<k<d$}.
\]
The sequence is said to have \emph{no internal zeros} if
\[
 a_{k_1}a_{k_3}>0 \Longrightarrow a_{k_2}>0
 \quad \text{for all $0\leq k_1 < k_2 < k_3 \leq d$.}
\]
Recall from Example \ref{n2} that a bivariate homogeneous polynomial $\sum_{k=0}^d a_k w_1^kw_2^{d-k}$  is strictly Lorentzian if and only if 
the sequence $a_k$ is positive and strictly ultra log-concave.
Theorem \ref{chars} says that, in this case,
 the polynomial $\sum_{k=0}^d a_k w_1^kw_2^{d-k}$   is Lorentzian if and only if 
the sequence $a_k$ is nonnegative, ultra log-concave, and has no internal zeros.
\end{example}

\begin{example}
Using Theorem \ref{chars}, it is straightforward to check that elementary symmetric polynomials are Lorentzian.
In fact, one can show more generally that all normalized Schur polynomials are Lorentzian \cite[Theorem 3]{HMMS}.
Any elementary symmetric polynomial is stable \cite[Theorem 9.1]{COSW}, but a normalized Schur polynomial need not be stable \cite[Example 9]{HMMS}.
\end{example}

Let $\mathbb{P}\mathrm{H}^d_n$ be the projectivization of  $\mathrm{H}^d_n$ equipped with the quotient topology.
The image  $\mathbb{P}\mathrm{L}^d_n$  of $\mathrm{L}^d_n$  in the projective space 
 is homeomorphic to the intersection of $\mathrm{L}^d_n$ with the unit sphere in $\mathrm{H}^d_n$ for the Euclidean norm on the coefficients.
 
\begin{theorem}\label{BallLike}
The space $\mathbb{P}\mathrm{L}^d_n$ is compact and contractible. 
\end{theorem}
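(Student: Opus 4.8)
The plan is to obtain compactness from Theorem \ref{chars} and contractibility from the Nuij‑type homotopy already constructed in the proof of Theorem \ref{conn}. Fix the identification of $\mathrm{H}^d_n$ with a Euclidean space via the coefficients, let $\|\cdot\|$ be the Euclidean norm, and let $\mathbb{S} \subseteq \mathrm{H}^d_n$ denote the corresponding unit sphere. For compactness: by Theorem \ref{chars}, $\mathrm{L}^d_n$ is closed in $\mathrm{H}^d_n$; it is moreover a cone (each condition defining $\mathrm{L}^d_n$ in Definition \ref{SecondDefinition} is preserved under multiplication by a nonnegative scalar) containing no line through the origin, since every nonzero member of $\mathrm{L}^d_n$ has nonnegative coefficients. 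Hence, as recorded just before the statement, $\mathbb{P}\mathrm{L}^d_n$ is homeomorphic to $\Sigma \coloneqq \mathrm{L}^d_n \cap \mathbb{S}$, which is a closed subset of the compact set $\mathbb{S}$ and therefore compact.

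For contractibility I will write down an explicit null‑homotopy of $\mathrm{id}_\Sigma$. For $f \in \mathrm{H}^d_n$ and $\theta \in [0,1]$, set
\[
f_\theta \coloneqq f\big((1-\theta)w_1 + \theta(w_1 + \cdots + w_n),\, \ldots,\, (1-\theta)w_n + \theta(w_1 + \cdots + w_n)\big),
\]
a linear substitution with nonnegative coefficients, and recall the operator $T_n(\theta, -)$ introduced before Lemma \ref{split}. If $f \in \mathrm{L}^d_n$, then $f_\theta \in \mathrm{L}^d_n$ by Theorem \ref{flow}, hence $T_n(\theta, f_\theta) \in \mathrm{L}^d_n$ by Proposition \ref{deform}; and $T_n(\theta, f_\theta) \neq 0$ when $f \neq 0$, since $T_n(0, f_0) = f$, while for $\theta > 0$ the polynomial $f_\theta$, and hence $T_n(\theta, f_\theta)$, has strictly positive coefficients (see the proof of Theorem \ref{conn}). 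Define
\[
G \colon \Sigma \times [0,1] \longrightarrow \Sigma, \qquad G(f, \theta) \coloneqq \frac{T_n(\theta, f_\theta)}{\big\| T_n(\theta, f_\theta) \big\|}.
\]
The numerator is polynomial in $\theta$ with coefficients linear in the coefficients of $f$, and is nonvanishing on $\Sigma \times [0,1]$, so $G$ is continuous; its image lies in $\mathrm{L}^d_n$ (a cone) and on $\mathbb{S}$, hence in $\Sigma$. Since $\|f\| = 1$ on $\Sigma$ we get $G(f, 0) = f/\|f\| = f$, whereas $f_1 = f(1,\ldots,1)\,(w_1 + \cdots + w_n)^d$ is a positive multiple of $(w_1 + \cdots + w_n)^d$ that does not depend on $f$, so $G(\cdot, 1)$ is constant at the point $T_n\big(1, (w_1 + \cdots + w_n)^d\big)\big/\big\| T_n\big(1, (w_1 + \cdots + w_n)^d\big) \big\| \in \Sigma$. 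Hence $\mathrm{id}_\Sigma$ is null‑homotopic, and $\mathbb{P}\mathrm{L}^d_n \cong \Sigma$ is contractible.

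I do not anticipate a real obstacle here: the substantive content — that the homotopy $\theta \mapsto T_n(\theta, f_\theta)$ never leaves $\mathrm{L}^d_n$ — is exactly Theorems \ref{flow} and \ref{deform}, and the deformation is the one already analyzed in Theorem \ref{conn}. What remains is bookkeeping: the joint continuity and nonvanishing of the numerator of $G$, where the only slightly delicate instant is $\theta = 0$, at which $f_\theta$ specializes to $f$ and $T_n(0,-) = \mathrm{id}$, so that $G(\cdot,0)$ is genuinely the inclusion $\Sigma \hookrightarrow \Sigma$. If one would rather not pass to the sphere model, one observes instead that $f \mapsto T_n(\theta, f_\theta)$ is positively homogeneous of degree one, hence descends along $\mathrm{L}^d_n \setminus 0 \to \mathbb{P}\mathrm{L}^d_n$ and yields the contraction on $\mathbb{P}\mathrm{L}^d_n$ directly, using that $[0,1]$ is locally compact Hausdorff so that the product of the quotient map with $\mathrm{id}_{[0,1]}$ is again a quotient map.
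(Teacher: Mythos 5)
Your proof is correct and uses the same strategy as the paper: compactness follows from the closedness of $\mathrm{L}^d_n$ (Theorem \ref{chars}), and contractibility of the sphere model $\Sigma$ is obtained from the nonnegative linear-substitution homotopy $\theta\mapsto f_\theta$ provided by Theorem \ref{flow}, normalized to stay on $\mathbb{S}$. The $T_n(\theta,-)$ factor you carry over from the proof of Theorem \ref{conn} is unnecessary here — it was needed there only to push the path into the open stratum $\mathring{\mathrm{L}}^d_n$, whereas for contractibility $\theta\mapsto f_\theta/\|f_\theta\|$ already lies in $\Sigma$ and ends at the constant $(w_1+\cdots+w_n)^d/\|(w_1+\cdots+w_n)^d\|$ — but including it does no harm.
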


\begin{proof}
Since $\mathbb{P}\mathrm{H}^d_n$ is compact, Theorem \ref{chars} implies that $\mathbb{P}\mathrm{L}^d_n$ is compact.
A deformation retract of
 $\mathbb{P}\mathrm{L}^d_n$ can be constructed using Theorem \ref{flow}.
\end{proof}

We conjecture that $\mathbb{P}\mathrm{L}^d_n$ is homeomorphic to a familiar space.

\begin{conjecture}\label{Ball}
The space $\mathbb{P}\mathrm{L}^d_n$ is homeomorphic to a closed Euclidean ball.
\end{conjecture}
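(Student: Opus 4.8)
The plan is to upgrade Theorem~\ref{BallLike} from ``compact and contractible'' to ``homeomorphic to a closed ball'' by exploiting the semialgebraic nature of $\mathbb{P}\mathrm{L}^d_n$ together with the stratification $\mathbb{P}\mathrm{L}^d_n=\coprod_\JJ\mathbb{P}\mathrm{L}_\JJ$ and the explicit deformation flows $S(\theta,-)$ and $T_n(\theta,-)$ appearing in the proofs of Theorems~\ref{conn} and~\ref{BallLike}. First I would record that $\mathbb{P}\mathrm{L}^d_n$ is a compact \emph{semialgebraic} set: nonnegativity of the coefficients, $\MM$-convexity of the support (a finite boolean combination of vanishing/non-vanishing conditions on the $c_\alpha$), and the condition that $\partial^\alpha f$ has at most one positive eigenvalue for every $\alpha\in\Delta^{d-2}_n$ (cut out by signs of principal minors of Hessians) are all semialgebraic. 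By semialgebraic triangulation, $\mathbb{P}\mathrm{L}^d_n$ is then homeomorphic to a finite simplicial complex $X$, which is contractible by Theorem~\ref{BallLike}. Since a contractible compact topological manifold with boundary whose boundary is simply connected is homeomorphic to a ball --- by the $h$-cobordism theorem in high dimensions and the resolution of the Poincar\'e conjecture in the remaining ones --- the conjecture reduces to showing that (i) $\mathbb{P}\mathrm{L}^d_n$ is a manifold with boundary, and (ii) its topological boundary is simply connected.

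For~(i), the interior $\mathbb{P}\mathring{\mathrm{L}}^d_n$ is open in $\mathbb{P}\mathrm{H}^d_n$ by Definition~\ref{FirstDefinition}, so only frontier points need attention. I would analyze a frontier point $[f]$ with $\supp(f)=\JJ$ by splitting the ambient directions into ``combinatorial'' ones, which vary the coefficients already present and remain inside $\mathbb{P}\mathrm{L}_\JJ$, and ``outward'' ones $c_\beta\ge 0$ for $\beta\notin\JJ$, whose admissible cone is governed by the log-concavity/$\MM$-convexity criteria of Theorem~\ref{classical} and Corollary~\ref{normalizedcoefficients}. Arguing by induction on $|\JJ|$ and on $(n,d)$, one expects the local model of $\mathbb{P}\mathrm{L}^d_n$ at $[f]$ to be a cone on the join of the link of $[f]$ in $\mathbb{P}\mathrm{L}_\JJ$ --- a ball by the inductive form of the conjecture applied to the smaller space $\mathbb{P}\mathrm{L}_\JJ$, which is nonempty and contractible by Theorem~\ref{charjump} and Proposition~\ref{tot} --- with a simplex of outward directions. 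The deformation retractions assembled from $S(\theta,-)$ and $T_n(\theta,-)$ should then be promoted to a \emph{collar}: for small $\theta>0$ they push every point strictly into $\mathbb{P}\mathring{\mathrm{L}}^d_n$, and if this flow can be shown transverse to the frontier it exhibits a neighborhood of the frontier homeomorphic to $(\text{frontier})\times[0,1)$, which is precisely the manifold-with-boundary property at frontier points.

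For~(ii), the boundary $\mathbb{P}\mathrm{L}^d_n\setminus\mathbb{P}\mathring{\mathrm{L}}^d_n$ is the union of the lower strata $\mathbb{P}\mathrm{L}_\JJ$ with $\JJ\subsetneq\Delta^d_n$ together with the degeneracy locus of the Hessians; I would compute its fundamental group by a nerve/Mayer--Vietoris argument, using that each stratum $\mathbb{P}\mathrm{L}_\JJ$ is contractible and that the incidence pattern of $\MM$-convex supports is itself ``ball-like'' --- a statement closely tied, through Corollary~\ref{CorollaryTropical}, to the known contractibility and facial structure of Dressians. This same circle of ideas suggests an alternative route to~(i): realize $\mathbb{P}\mathrm{L}^d_n$ as the closure of a positive chart and use a tropical/logarithmic change of coordinates to straighten the frontier, though the non-convexity of $\mathrm{L}^d_n$ (already visible for quadratics) shows such a change of coordinates cannot be linear.

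The main obstacle is~(i): proving that $\mathbb{P}\mathrm{L}^d_n$ is genuinely a manifold with boundary, not merely a contractible semialgebraic set. Contractibility with contractible interior is far too weak on its own; one must rule out ``pinch points'' where several strata meet non-conically, and the combinatorics of how $\MM$-convex supports degenerate, together with the need to verify that the $S$- and $T_n$-flows hit the frontier transversally, makes this delicate. If $\mathbb{P}\mathrm{L}^d_n$ should fail to be a manifold with boundary, the fallback would be to exhibit a shelling of the triangulation $X$ whose boundary complex is a combinatorial sphere; but carrying this out appears to require a facial description of $\mathbb{P}\mathrm{L}^d_n$ not presently available, which is why the statement is recorded as a conjecture.
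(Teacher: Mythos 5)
This statement is left open in the paper: it is recorded as Conjecture~\ref{Ball}, with only the weaker Theorem~\ref{BallLike} (compactness and contractibility) established, so there is no paper proof to compare against. Your write-up correctly recognizes this and is honest that it is a sketch of a strategy rather than a proof, which is the right call.

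As a sketch, the reduction via semialgebraicity and triangulability to ``compact contractible manifold with simply connected boundary,'' followed by the generalized Poincar\'e conjecture and $h$-cobordism, is the natural route, and the semialgebraicity observation is correct. You also correctly name the core obstacle: nothing in the paper establishes that $\mathbb{P}\mathrm{L}^d_n$ is a manifold with boundary, and the flows $S(\theta,-)$ and $T_n(\theta,-)$ give global contractibility but no local control at frontier points. Two further cautions. First, your inductive step applies ``the inductive form of the conjecture'' to the stratum $\mathbb{P}\mathrm{L}_\JJ$, but $\mathbb{P}\mathrm{L}_\JJ$ is not a space of the form $\mathbb{P}\mathrm{L}^{d'}_{n'}$, so the conjecture as stated is silent about it; what the induction would actually require is the strictly stronger claim that $\mathbb{P}\mathrm{L}^d_n=\coprod_\JJ\mathbb{P}\mathrm{L}_\JJ$ is a regular CW decomposition with every closed stratum a ball, in the spirit of \cite{GKL1,GKL2}, and that is not a formal consequence of the top-space conjecture. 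Second, the nerve/Mayer--Vietoris computation you propose for $\pi_1$ of the boundary needs contractibility of \emph{closures} of strata, whereas Proposition~\ref{tot} only gives contractibility of the open strata $\mathbb{P}\mathrm{L}_\JJ$ via $R_p$, which preserves the support and hence never leaves a stratum. These are precisely the places where the program is unfilled, and they are why the statement remains a conjecture.
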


For other appearances of stratified Euclidean balls in the interface of analysis of combinatorics, see \cite{GKL1,GKL2} and references therein.
Prominent examples are the totally nonnegative parts of Grassmannian and other partial flag varieties.

Let $f$ be a polynomial in $n$ variables with nonnegative coefficients.
In \cite{GurvitsL}, Gurvits defines $f$ to be \emph{strongly log-concave} if, for all $\alpha \in \NN^n$,
\[
\text{$\partial^\alpha f$ is identically zero or $\log (\partial^\alpha f)$ is concave on $\RR_{> 0}^n$.}
 \]
 In \cite{AGV},  Anari \emph{et al.} define  $f$ to be \emph{completely log-concave} if, for all $m \in \NN$ and any $m\times n$ matrix $(a_{ij})$ with nonnegative entries,
 \[
 \text{$\Big(\prod_{i=1}^m D_{i}\Big) f$ is identically zero or $\log\Big(\Big(\prod_{i=1}^m D_{i}\Big) f\Big)$ is concave on $\RR_{> 0}^n$,}
\]
where $D_i$ is the differential operator $\sum_{j=1}^n a_{ij} \partial_j$.
We show that the two notions agree with each other and with the Lorentzian property for homogeneous polynomials.\footnote{An implication similar to $(3) \Rightarrow (1)$ of Theorem \ref{allequal} can be found in \cite[Theorem 3.2]{ALGVIII}.}

\begin{theorem}\label{allequal}
The following conditions are equivalent for any homogeneous polynomial $f$.
\begin{enumerate}[(1)]\itemsep 5pt
\item $f$ is completely log-concave.
\item $f$ is strongly log-concave.
\item $f$ is Lorentzian.
\end{enumerate}
\end{theorem}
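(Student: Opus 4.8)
**Proof plan for Theorem \ref{allequal} (equivalence of completely log-concave, strongly log-concave, and Lorentzian for homogeneous $f$).**

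The plan is to prove the cycle of implications $(1)\Rightarrow(2)\Rightarrow(3)\Rightarrow(1)$. The implication $(1)\Rightarrow(2)$ is immediate from the definitions: complete log-concavity applied with each $D_i$ chosen to be a single coordinate derivative $\partial_{j}$ (i.e.\ the matrix $(a_{ij})$ a $0/1$ matrix with one $1$ per row) recovers exactly the strong log-concavity requirement that $\partial^\alpha f$ be identically zero or have concave logarithm on $\RR^n_{>0}$. So no work is needed there beyond unwinding notation.

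For $(2)\Rightarrow(3)$, let $f\in\mathrm{H}^d_n$ be strongly log-concave. First I would dispose of low degree: if $d\le 1$ any nonzero polynomial with nonnegative coefficients is (up to the zero polynomial) in $\mathrm{L}^d_n$ essentially by definition, and for $d=2$ strong log-concavity of $f$ itself says $\log f$ is concave on $\RR^n_{>0}$, which forces the Hessian $\mathscr{H}_f$ to be negative semidefinite on the orthogonal complement of $\nabla\log f$ and (since $f>0$ there with $w^T\mathscr H_f w = 2f(w)>0$ by Euler) to have exactly one positive eigenvalue — so $f\in\mathrm{L}^2_n$ modulo checking the coefficients are nonnegative, which is part of the hypothesis. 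For $d\ge 3$ I would argue by induction on $d$: strong log-concavity is visibly inherited by every $\partial_i f$ (the partials $\partial^\alpha(\partial_i f) = \partial^{\alpha+e_i}f$ are among the partials of $f$), so by induction each $\partial_i f\in\mathrm{L}^{d-1}_n$. It then remains to verify $f\in\mathrm{M}^d_n$, i.e.\ that $\supp(f)$ is $\MM$-convex. The clean way to get this is: strong log-concavity of $f$ gives, for each $\alpha$ with $|\alpha|_1 = d-2$, that $\log(\partial^\alpha f)$ is concave, hence $\partial^\alpha f\in\mathrm{L}^2_n$ by the $d=2$ reasoning above; combined with $\MM$-convexity of $\supp(f)$ this is the definition of $\mathrm{L}^d_n$. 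To obtain $\MM$-convexity of $\supp(f)$, observe that for a homogeneous polynomial the second-derivative log-concavity inequalities $\partial^\alpha f\,\partial^{\alpha+e_i+e_j}f \le \big(1-\tfrac1{|\alpha|_1+2}\big)^{-1}\cdots$ — more directly, a strongly log-concave homogeneous polynomial is $c$-Rayleigh for a suitable $c$ (the inequality $f\,\partial_i\partial_j f\le\partial_if\,\partial_jf$ on $\RR^n_{>0}$ is exactly the statement that the bordered Hessian of $\log f$ is negative semidefinite, applied to each $\partial^\alpha f$), so Theorem \ref{MRayleigh} yields $\MM$-convexity of $\supp(f)$. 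This completes $(2)\Rightarrow(3)$.

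For $(3)\Rightarrow(1)$, let $f\in\mathrm{L}^d_n$. I need to show that for every finite product $P = \prod_{i=1}^m D_i$ of nonnegative-combination differential operators $D_i = \sum_j a_{ij}\partial_j$, either $Pf\equiv 0$ or $\log(Pf)$ is concave on $\RR^n_{>0}$. By Corollary \ref{derivatives}, each application of such a $D_i$ sends $\mathrm{L}^e_n$ to $\mathrm{L}^{e-1}_n$; hence $Pf\in\mathrm{L}^{d-m}_n$. If $d-m\le 1$ or $Pf\equiv 0$ there is nothing to prove; otherwise $Pf$ is a nonzero polynomial in $\mathrm{L}^{e}_n$ with $e=d-m\ge 2$, and Theorem \ref{HRTheorem}(2) says $\mathscr{H}_{Pf}(w)$ has exactly one positive eigenvalue for all $w\in\RR^n_{>0}$. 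The concavity of $\log(Pf)$ on $\RR^n_{>0}$ is then a standard consequence: writing $g=Pf$ and using Euler's identity $w^T\mathscr H_g w = e(e-1)g(w)>0$ and $\nabla g(w)\cdot w = e\,g(w)$, the Hessian of $\log g$ is $\tfrac1g\mathscr H_g - \tfrac1{g^2}\nabla g\,\nabla g^T$, and the "exactly one positive eigenvalue'' condition together with positivity of the quadratic form in the direction $w$ forces this to be negative semidefinite (this is precisely the computation in the proof of Proposition \ref{RayleighLemma}, specialized and extended to arbitrary test directions). Thus $\log(Pf)$ is concave, so $f$ is completely log-concave.

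The main obstacle is the $\MM$-convexity bookkeeping in $(2)\Rightarrow(3)$: strong log-concavity is an analytic (Hessian) condition and does not obviously encode the discrete $\MM$-exchange axiom on the support, so one genuinely needs the bridge provided by the $c$-Rayleigh machinery (Theorem \ref{MRayleigh}) rather than a direct argument. The other delicate point is making sure that "$\log g$ concave on $\RR^n_{>0}$'' and "$\mathscr H_g$ has exactly one positive eigenvalue on $\RR^n_{>0}$'' are interchangeable for homogeneous $g$ with nonnegative coefficients — this equivalence (via the bordered-Hessian / Cauchy-interlacing argument already used in Lemma \ref{StrictlyStable} and Proposition \ref{RayleighLemma}) is the linear-algebraic heart of the whole chain and should be stated once as a lemma and reused in both directions.
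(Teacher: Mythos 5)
Your cycle of implications and the lemmas you invoke (the $c$-Rayleigh machinery via Theorem~\ref{MRayleigh}, the ``exactly one positive eigenvalue $\iff$ concave $\log$'' equivalence, Theorem~\ref{HRTheorem}, Corollary~\ref{derivatives}) match the paper's proof almost line by line; the paper even packages the linear-algebraic equivalence you call ``the heart'' as Proposition~\ref{logcon} and reuses it in both directions exactly as you suggest. The only genuine misstep is the parenthetical in your $(2)\Rightarrow(3)$: negative semidefiniteness of the Hessian of $\log f$ does \emph{not} yield the off-diagonal $1$-Rayleigh inequality $f\,\partial_i\partial_j f\le\partial_i f\,\partial_j f$ for $i\neq j$ --- an NSD matrix can have positive off-diagonal entries, so this gives you only the diagonal case $f\,\partial_i^2 f\le(\partial_i f)^2$ for free. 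What the paper actually proves (Proposition~\ref{RayleighLemma}) is the weaker, but for Theorem~\ref{MRayleigh} equally sufficient, inequality $f\,\partial_i\partial_j f\le 2\bigl(1-\tfrac{1}{d}\bigr)\partial_i f\,\partial_j f$, obtained by restricting $\mathscr H_f$ (not $\mathscr H_{\log f}$) to the plane spanned by $w$ and $e_i+te_j$, applying Cauchy interlacing to force the determinant of that restriction to be nonpositive, and then optimizing the resulting quadratic in $t$. Your surrounding conclusion --- strongly log-concave $\Rightarrow$ $c$-Rayleigh for some $c$ $\Rightarrow$ $\MM$-convex support $\Rightarrow$ Lorentzian --- is correct, but the justification for the middle arrow needs to be this two-parameter restriction-and-discriminant argument rather than a pointwise Hessian inequality.
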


The support of any Lorentzian polynomial is $\MM$-convex by Theorem \ref{chars}.
Thus, by Theorem \ref{allequal}, the same  holds for any strongly log-concave homogeneous polynomial.
This answers a question of Gurvits \cite[Section 4.5 (iii)]{GurvitsL}.

\begin{corollary}\label{CorollarySupport}
The support of any strongly log-concave homogeneous polynomial is $\MM$-convex.
\end{corollary}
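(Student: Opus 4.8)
The plan is to deduce this immediately from the structural results already in place; no new argument is needed. By Theorem~\ref{allequal}, a homogeneous polynomial is strongly log-concave if and only if it is Lorentzian, so it suffices to show that the support of any Lorentzian polynomial is $\MM$-convex. That fact is exactly what one obtains by combining Proposition~\ref{HRRayleigh} --- every polynomial in $\mathrm{L}^d_n$ is $2(1-1/d)$-Rayleigh --- with Theorem~\ref{MRayleigh}, which says the support of any homogeneous $c$-Rayleigh polynomial is $\MM$-convex; alternatively one may simply cite Theorem~\ref{chars} together with the definition of $\mathrm{L}^d_n$, which is built inside $\mathrm{M}^d_n$ and hence has $\MM$-convex support by construction. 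So the proof is essentially a one-line citation.

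Where the genuine work sits is in the results being quoted, so the ``main obstacle'' has already been overcome in the preceding sections. The equivalence of conditions (2) and (3) in Theorem~\ref{allequal} is the real content: the implication ``Lorentzian $\Rightarrow$ strongly log-concave'' comes from the one-positive-eigenvalue property of the Hessian (Theorem~\ref{HRTheorem}) via the standard criterion for concavity of $\log\partial^\alpha f$, while the converse requires propagating strong log-concavity to the $\mathring{\mathrm{L}}^2_n$-type signature condition under all iterated directional derivatives. And the combinatorial heart is Theorem~\ref{MRayleigh}, whose proof runs through Lemmas~\ref{polyind} and~\ref{PolymatroidRayleigh}: one extracts $\MM$-convexity from the purely inequality-theoretic $c$-Rayleigh condition by a minimal-counterexample argument exploiting that the $c$-Rayleigh class is closed under contraction, deletion, diagonalization, dilation, and translation.

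I would also flag one point about scope. Corollary~\ref{CorollarySupport} is stated for homogeneous polynomials, and homogeneity is genuinely used, since Theorem~\ref{allequal} and the space $\mathrm{L}^d_n$ live in $\mathrm{H}^d_n$. If one wanted the analogous statement for a non-homogeneous strongly log-concave $f$, the right tool would be Lemma~\ref{PolymatroidRayleigh}(2) applied after the translation $f(w+1)$, which yields $\MM^\natural$-convexity of $\supp(f)$; this is precisely the mechanism used in the proof of Theorem~\ref{MRayleigh}, so the corollary is really the shadow of that theorem on the homogeneous case.
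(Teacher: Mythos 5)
Your proposal is correct and matches the paper's argument: cite Theorem~\ref{allequal} to identify strongly log-concave homogeneous polynomials with Lorentzian ones, then observe that Lorentzian polynomials have $\MM$-convex support via Theorem~\ref{chars} together with the definition of $\mathrm{L}^d_n$ as a subset of $\mathrm{M}^d_n$. The alternative route you mention (Proposition~\ref{HRRayleigh} plus Theorem~\ref{MRayleigh}) is also valid, but it is really just the content of the hard inclusion in Theorem~\ref{chars}, so the two routes collapse to the same argument.
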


Similarly, we can use Theorem \ref{allequal} to show that the class of strongly log-concave homogeneous polynomials is closed under multiplication.
This answers another question of Gurvits \cite[Section 4.5 (iv)]{GurvitsL} for homogeneous polynomials.

\begin{corollary}\label{CorollaryProduct}
The product of strongly log-concave homogeneous polynomials  is strongly log-concave.
\end{corollary}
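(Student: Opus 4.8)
The plan is to reduce the statement to the equivalence ``strongly log-concave $=$ Lorentzian'' established in Theorem \ref{allequal}, and then to prove that the product of two Lorentzian polynomials is Lorentzian by a polarization-and-diagonalization argument. Suppose $f \in \mathrm{H}^{d}_m$ and $g \in \mathrm{H}^{e}_m$ are strongly log-concave, hence Lorentzian by Theorem \ref{allequal}. I want to show $fg \in \mathrm{H}^{d+e}_m$ is Lorentzian, which by Theorem \ref{allequal} again is equivalent to $fg$ being strongly log-concave. The key reduction is that $fg$, viewed as a polynomial in $2m$ variables, factors through a nonnegative linear change of variables: introduce a second block of variables $w'_1,\ldots,w'_m$, consider $F(w,w') = f(w)\,g(w')$, and note that $fg = F$ restricted to the diagonal $w' = w$. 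Since diagonalization is a special case of a nonnegative linear substitution, Theorem \ref{flow} will finish the proof once I know that $F(w,w') = f(w)g(w')$ is Lorentzian in the $2m$ variables $(w,w')$.

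So the heart of the matter is: \emph{a product of two Lorentzian polynomials in disjoint sets of variables is Lorentzian}. First I would check the support condition: $\supp(F) = \supp(f) \times \supp(g) \subseteq \NN^{2m}$, and a product of two $\MM$-convex sets in disjoint coordinates is $\MM$-convex — given $\alpha,\beta \in \supp(F)$ and an index $i$ with $\alpha_i > \beta_i$, the index $i$ lies in one of the two blocks, and applying the exchange property for the corresponding $\MM$-convex factor produces the required $j$ in the same block. Next, since $\partial_i(fg) = (\partial_i f)g$ or $f(\partial_i g)$ depending on which block $i$ lies in, iterating partial derivatives reduces the Hessian condition to the case $d + e = 2$, i.e. to $f$ and $g$ each of degree $1$ (so $f(w) = \sum a_i w_i$ and $g(w') = \sum b_j w'_j$ with nonnegative coefficients, necessarily with all $a_i, b_j > 0$ once we restrict to the positive orthant after the homotopy), whose product $F = (\sum a_i w_i)(\sum b_j w'_j)$ is a quadratic form whose Hessian is a rank-two matrix of the block form $\bigl(\begin{smallmatrix} 0 & ab^T \\ ba^T & 0 \end{smallmatrix}\bigr)$ with $a,b$ entrywise positive; such a matrix has eigenvalues $\pm\lVert a\rVert\lVert b\rVert$ and zeros, hence exactly one positive eigenvalue, so it lies in $\mathrm{L}^2_{2m}$. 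This handles the base case; the general case follows because every $\partial^\gamma(fg)$ with $|\gamma|_1 = d+e-2$ is, up to the product rule, a sum of products $(\partial^{\gamma'}f)(\partial^{\gamma''}g)$ — but to keep things clean I would instead argue inductively on $d+e$ using the recursive Definition \ref{SecondDefinition}: $\partial_i F$ is a product of two Lorentzian polynomials of total degree $d+e-1$ (one factor possibly a derivative), so by the induction hypothesis $\partial_i F \in \mathrm{L}^{d+e-1}_{2m}$ for every $i$, and combined with the support computation this gives $F \in \mathrm{L}^{d+e}_{2m}$.

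The main obstacle is making the base-degree analysis airtight, because the product rule mixes derivatives from the two blocks and one must track that every intermediate polynomial still has $\MM$-convex support and nonnegative coefficients before invoking the quadratic criterion (Lemma \ref{StrictlyStable}); the cleanest route is the induction on $d+e$ just sketched, where the only genuinely new input is the rank-two base case above, which is an elementary linear-algebra computation. Once $F(w,w') = f(w)g(w')$ is known to be Lorentzian, the conclusion is immediate: by Theorem \ref{flow}, the diagonal specialization $fg$ is Lorentzian, hence strongly log-concave by Theorem \ref{allequal}, proving the corollary. I would also remark that the same argument shows the product of any finite collection of strongly log-concave (equivalently Lorentzian) homogeneous polynomials is strongly log-concave, by iterating or by taking as many disjoint blocks of variables as there are factors.
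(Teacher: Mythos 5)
Your proof takes exactly the same route as the paper's: reduce to the Lorentzian setting via Theorem \ref{allequal}, observe that $f(w)g(u)$ in disjoint variable blocks is Lorentzian, and then specialize $u=w$ via Theorem \ref{flow}. The paper states the disjoint-variable step as ``straightforward to check'' without details; your induction on $d+e$ (with the $\MM$-convexity of a product of supports in disjoint coordinates, and the rank-two block Hessian base case) is a correct way to fill that in.
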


\begin{proof}
Let $f(w)$ be an element of $\mathrm{L}^{d}_n$, and let $g(w)$ be an element of $\mathrm{L}^{e}_n$.
It is straightforward to check that $f(w)g(u)$ is an element of  $\mathrm{L}^{d+e}_{n+n}$,  where $u$ is a set of variables disjoint from $w$.
It follows that $f(w)g(w)$ is an element of $\mathrm{L}^{d+e}_n$, since setting $u=w$ preserves the Lorentzian property by Theorem \ref{flow}.
\end{proof}

Corollary \ref{CorollaryProduct} extends the following theorem of Liggett \cite[Theorem 2]{Liggett}:
The convolution product of two ultra log-concave sequences with no internal zeros is an ultra log-concave sequence with no internal zeros.


To prove Theorem \ref{allequal}, we use the following elementary observation.
Let $f$ be a homogeneous polynomial in $n \ge 2$ variables of degree $d \ge 2$.

\begin{proposition}\label{logcon}
 The following are equivalent for any $w \in \mathbb{R}^n$ satisfying $f(w)>0$.
\begin{enumerate}[(1)]\itemsep 5pt
\item The Hessian of $f^{1/d}$  is negative semidefinite at $w$.
\item The Hessian of $\log f$ is negative semidefinite at $w$.
\item The Hessian of $f$ has exactly one positive eigenvalue at $w$.
\end{enumerate}
\end{proposition}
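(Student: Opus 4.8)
The plan is to reduce all three conditions to statements about rank-one perturbations of the single symmetric matrix $A \coloneqq \HE_f(w)$ along the vector $v \coloneqq \nabla f(w)$. First I would record the consequences of Euler's identity for the degree $d$ form $f$: namely $Aw = (d-1)v$, $\ v^Tw = d\hspace{0.2mm}f(w)$, and $w^TAw = d(d-1)f(w)$. Since $f(w)>0$ and $d\ge 2$, the last equality gives $w^TAw>0$; in particular $A$ has at least one positive eigenvalue, and $v\ne 0$ (otherwise $w^TAw = (d-1)\hspace{0.2mm}w^Tv = 0$). Also $f$ is positive on a neighbourhood of $w$, so $f^{1/d}$ and $\log f$ are smooth there.

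Next I would compute the two Hessians directly, obtaining
\[
\HE_{f^{1/d}}(w) = \frac{f(w)^{1/d-1}}{d}\left(A - \frac{d-1}{d}\cdot\frac{vv^T}{f(w)}\right), \qquad
\HE_{\log f}(w) = \frac{1}{f(w)}\left(A - \frac{vv^T}{f(w)}\right).
\]
Writing $M \coloneqq A - \frac{d-1}{d}\frac{vv^T}{f(w)}$ and $N \coloneqq A - \frac{vv^T}{f(w)}$, the positive scalar prefactors show that $(1)\Leftrightarrow M\preceq 0$ and $(2)\Leftrightarrow N\preceq 0$. Since $\frac{d-1}{d}<1$ we have $N = M - \frac{1}{d\hspace{0.2mm}f(w)}vv^T \preceq M$, so $M\preceq 0$ forces $N\preceq 0$; this gives $(1)\Rightarrow(2)$. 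Note also that $Mw=0$, which trivialises the degenerate cases below.

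For $(2)\Rightarrow(3)$: if $N\preceq 0$ then $A\preceq \frac{1}{f(w)}vv^T$, hence $u^TAu\le 0$ for every $u$ in the hyperplane $v^\perp$; a symmetric matrix that is negative semidefinite on a hyperplane has at most one positive eigenvalue (a $2$-dimensional subspace on which it were positive definite would meet the hyperplane nontrivially), and together with $w^TAw>0$ this forces exactly one. For $(3)\Rightarrow(1)$: given that $A$ has exactly one positive eigenvalue, restrict $A$ to the plane $\mathrm{span}(w,u)$ for arbitrary $u$; by Cauchy's interlacing theorem the restriction has at most one positive eigenvalue, and since $w^TAw>0$ its Gram determinant in the basis $\{w,u\}$ is nonpositive, i.e. $(w^TAw)(u^TAu)\le (w^TAu)^2$. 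Substituting $w^TAw = d(d-1)f(w)$ and $w^TAu = (d-1)v^Tu$ and rearranging yields $u^TAu\le \frac{d-1}{d\hspace{0.2mm}f(w)}(v^Tu)^2$, that is $u^TMu\le 0$, so $M\preceq 0$. This closes the cycle $(1)\Rightarrow(2)\Rightarrow(3)\Rightarrow(1)$.

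The computation is essentially routine; the points to watch are the sign of the $\frac{d-1}{d}$ coefficient in the Hessian of $f^{1/d}$, the edge case $v=0$, and keeping straight which rank-one perturbation of $A$ corresponds to which condition. Once the two Hessian formulas are in place, the rest is the same Cauchy-interlacing manipulation already used in Lemma \ref{StrictlyStable} and Proposition \ref{RayleighLemma}, so I expect no serious obstacle.
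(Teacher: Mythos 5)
Your proof is correct, and it rests on the same two Hessian identities as the paper's---expressing $\HE_{f^{1/d}}(w)$ and $\HE_{\log f}(w)$ as rank-one perturbations of $A=\HE_f(w)$ along the gradient $v$, with the Euler relations $Aw=(d-1)v$, $v^Tw=df(w)$, $w^TAw=d(d-1)f(w)$ feeding in---but you close the cycle by a different route. The paper phrases those identities as Weyl interlacings $\mathscr{H}_2\prec\mathscr{H}_1\prec\mathscr{H}_3$ to obtain $(1)\Rightarrow(2)\Rightarrow(3)$, and for $(3)\Rightarrow(1)$ it perturbs to $f_\epsilon=f-\epsilon\sum_i w_i^d$, observes that $\HE_{f_\epsilon^{1/d}}$ interlaces below the nonsingular $\HE_{f_\epsilon}$, uses $\HE_{f_\epsilon^{1/d}}w=0$ to conclude $\HE_{f_\epsilon^{1/d}}\preceq 0$, and lets $\epsilon\to 0$. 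You replace that limiting step with a direct argument: restrict $A$ to $\mathrm{span}(w,u)$, apply Cauchy interlacing to force the Gram determinant to be nonpositive, and substitute the Euler relations to read off $u^TMu\le 0$. That is the same determinant manipulation used in Lemma~\ref{StrictlyStable} and Proposition~\ref{RayleighLemma}, so your route is more uniform with the rest of the paper and avoids any appeal to continuity of eigenvalues; your hyperplane argument for $(2)\Rightarrow(3)$ and the inequality $N=M-\tfrac{1}{d f(w)}vv^T\preceq M$ for $(1)\Rightarrow(2)$ are likewise direct quadratic-form reformulations of the paper's interlacing steps. The trade-off is small: the paper's perturbation style is the one that naturally extends to the nonsingularity refinements invoked elsewhere, while yours is tighter and entirely self-contained for the present equivalence.
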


The equivalence of (2) and (3) appears in \cite{AGV}.

\begin{proof}
We fix $w$ throughout the proof. 
For $n \times n$ symmetric matrices $\mathrm{A}$ and $\mathrm{B}$, we write $\mathrm{A} \prec \mathrm{B}$ to mean
 the following interlacing relationship between the eigenvalues of $\mathrm{A}$ and $\mathrm{B}$:
\[
\lambda_1(\mathrm{A}) \leq \lambda_1(\mathrm{B}) \leq \lambda_2(\mathrm{A}) \leq \lambda_2(\mathrm{B}) \leq \cdots \leq \lambda_n(\mathrm{A}) \leq \lambda_n(\mathrm{B}).
\]
Let $\mathscr{H}_1$, $\mathscr{H}_2$, and $\mathscr{H}_3$ for the Hessians of $f^{1/d}$, $\log f$, and $f$, respectively.
We have
\[
df^{-1/d} \mathscr{H}_1 = \mathscr{H}_2 +\frac 1 d f^{-2}(\text{grad} f)(\text{grad} f)^T \ \ \text{and} \ \ \mathscr{H}_2= f^{-1} \mathscr{H}_3 -f^{-2} (\text{grad} f)(\text{grad} f)^T.
\]
Since $(\text{grad} f)(\text{grad} f)^T$ is positive semidefinite of rank one,
Weyl's inequalities for Hermitian matrices  \cite[Theorem 6.3]{Serre} show that
\[
\mathscr{H}_2 \prec \mathscr{H}_1 \ \ \text{and} \ \ \mathscr{H}_2 \prec \mathscr{H}_3 \ \ \text{and} \ \ \mathscr{H}_1 \prec \mathscr{H}_3.
\]
Since $w^T \mathscr{H}_3 w=d(d-1)f$, $\mathscr{H}_3$ has at least one positive eigenvalue, and hence (1) $\Rightarrow$ (2) $\Rightarrow$ (3).

For (3) $\Rightarrow$ (1), suppose that $\mathscr{H}_3$ has exactly one positive eigenvalue. 
We introduce a positive real parameter $\epsilon$ and consider the polynomial
\[
f_\epsilon=f-\epsilon(w_1^d+\cdots+w_n^d).
\]
We write $\mathscr{H}_{3,\epsilon}$ for the Hessian of $f_\epsilon$, and write $\mathscr{H}_{1,\epsilon}$ for the Hessian of $f_\epsilon^{1/d}$.

Note that $\mathscr{H}_{3,\epsilon}$ is nonsingular and has exactly one positive eigenvalue for all sufficiently small positive $\epsilon$.
In addition, we have $\mathscr{H}_{1,\epsilon} \prec \mathscr{H}_{3,\epsilon}$, and hence $\mathscr{H}_{1,\epsilon}$ has at most one nonnegative eigenvalue for all sufficiently small positive $\epsilon$.
However, by Euler's formula for homogeneous functions, we have
 \[
 \mathscr{H}_{1,\epsilon} w =0, 
 \]
 so that $0$ is the only nonnegative eigenvalue of $ \mathscr{H}_{1,\epsilon}$ for any such $\epsilon$.
The implication 
(3) $\Rightarrow$ (1) now follows by limiting $\epsilon$ to $0$. 
\end{proof}

It follows that, for any nonzero degree $d \ge 2$ homogeneous polynomial $f$ with nonnegative coefficients,
the following conditions are equivalent:
\begin{enumerate}[--]\itemsep 5pt
\item The function $f^{1/d}$ is concave on $\mathbb{R}^n_{>0}$.
\item The function $\log f$ is concave on $\mathbb{R}^n_{>0}$.
\item The Hessian of $f$ has exactly one positive eigenvalue on $\mathbb{R}^n_{>0}$.
\end{enumerate}

\begin{proof}[Proof of Theorem \ref{allequal}]
We may suppose that $f$ has degree $d \ge 2$.
Clearly, completely log-concave polynomials are strongly log-concave. 

Suppose $f$ is a strongly log-concave homogeneous polynomial of degree $d$. 
By Proposition \ref{logcon}, either $\partial^\alpha f$ is identically zero or the Hessian of $\partial^\alpha f$ has exactly one positive eigenvalue on $\mathbb{R}^n_{>0}$
for all $\alpha \in \NN^n$.
By Proposition \ref{RayleighLemma}, $f$ is $2\big(1-\frac{1}{d}\big)$-Rayleigh, and hence, by Theorem \ref{MRayleigh}, the support of $f$ is $\MM$-convex. 
Therefore, by Theorem \ref{chars},  $f$ is Lorentzian. 

Suppose $f$ is a nonzero Lorentzian polynomial.
Theorem \ref{HRTheorem} and Proposition \ref{logcon} together show that
 $\log f$ is concave on $\mathbb{R}^n_{>0}$. 
Therefore, it is enough to prove that $\big(\sum_{i=1}^n a_i\partial_i\big)f$ is Lorentzian
 for any nonnegative numbers $a_1,\ldots,a_n$.
This is a direct consequence  of Theorem \ref{chars} and Corollary \ref{derivatives}.
\end{proof}

\section{Advanced theory}

\subsection{Linear operators preserving Lorentzian polynomials}\label{secOp}

We describe a large class of linear operators that preserve the Lorentzian property. 
An analog was achieved for the class of stable polynomials in \cite[Theorem 2.2]{BBI}, where the linear operators preserving stability were characterized.
For an element $\kappa$ of $\NN^n$,
we set
\begin{align*}
\RR_\kappa[w_i]&= \Big\{\text{polynomials in  $\RR[w_i]_{1 \le i \le n}$ of degree at most $\kappa_i$ in $w_i$ for every $i$}\Big\}, \\
\RR_\kappa^\textrm{a}[w_{ij}]&= \Big\{\text{multi-affine polynomials in $\RR[w_{ij}]_{ 1\leq i \leq n, 1\leq j \leq \kappa_i}$}\Big\}.
\end{align*}
The \emph{projection operator} $\Pi^\downarrow_\kappa: \RR_\kappa^\textrm{a}[w_{ij}] \to \RR_\kappa[w_i]$  is the linear map that substitutes each $w_{ij}$ by $w_i$:
\[
\Pi^\downarrow_\kappa(g) = g|_{w_{ij}=w_i}.
\]
The \emph{polarization operator} $\Pi^\uparrow_\kappa: \RR_\kappa[w_i] \to \RR_\kappa^\textrm{a}[w_{ij}]$ is the linear map that sends $w^\alpha$ to the product
\[
 \frac{1}{{\kappa \choose \alpha}} \prod_{i=1}^n \big(\text{elementary symmetric polynomial of degree $\alpha_i$ in the variables $\{w_{ij}\}_{1\le j \le \kappa_i}$}\big),
\]
where $\binom \kappa \alpha$ stands for the product of binomial coefficients $\prod_{i=1}^n \binom {\kappa_i} {\alpha_i}$.  
Note that 
\begin{enumerate}[--]\itemsep 5pt
\item for every $f$, we have $\Pi^\downarrow_\kappa \circ \Pi^\uparrow_\kappa(f)=f$, and
\item for every $f$ and every $i$,  the polynomial $\Pi^\uparrow_\kappa(f)$ is symmetric in the variables $\{w_{ij}\}_{1 \le j \le \kappa_i}$.
\end{enumerate}
The above  properties characterize $\Pi^\uparrow_\kappa$ among the linear operators from $\RR_\kappa[w_i]$ to $\RR_\kappa^\textrm{a}[w_{ij}]$.



\begin{proposition}\label{pols}
The operators $\Pi_\kappa^{\downarrow}$ and $\Pi_\kappa^{\uparrow}$ preserve the Lorentzian property.
\end{proposition}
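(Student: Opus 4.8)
The projection $\Pi^\downarrow_\kappa$ is the substitution $w_{ij}\mapsto w_i$, i.e.\ $g(w_{ij})\mapsto g(Aw)$ for the nonnegative ($0$/$1$) matrix $A$ that sends $w_i$ to all of $w_{i1},\dots,w_{i\kappa_i}$, so it preserves the Lorentzian property by Theorem \ref{flow}. The content of the proposition is therefore $\Pi^\uparrow_\kappa$. Since $\Pi^\uparrow_\kappa$ preserves total degree and nonnegativity of coefficients, I may assume $f$ is homogeneous of degree $d$, and it suffices to treat a \emph{single}-variable polarization: if the operator that polarizes $w_1$ into $x_1,\dots,x_\kappa$ (for any $\kappa\ge\deg_{w_1}f$) maps $\mathrm{L}^d_n$ into $\mathrm{L}^d$, then the general $\Pi^\uparrow_\kappa$ follows by composing these over $i=1,\dots,n$, one variable at a time: polarizing $w_i$ leaves the degree in each other $w_k$ unchanged, does not involve any of the variables introduced for previously polarized $w_k$ (so multi-affinity in those persists), and the normalizing constants multiply up to $\prod_i\binom{\kappa_i}{\alpha_i}=\binom{\kappa}{\alpha}$.

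\textbf{The one-variable case via Theorem \ref{chars}.} Fix $f\in\mathrm{L}^d_n$ with $\deg_{w_1}f\le\kappa$, and let $g$ be its polarization in $w_1$. By Theorem \ref{chars} it is enough to check (i) $\supp(g)$ is $\MM$-convex and (ii) $\partial^\gamma g\in\mathrm{L}^2$ for every $\gamma\in\Delta^{d-2}$ in the enlarged variable set. For (i): by Theorem \ref{flow} the splitting $\widetilde f:=f(x_1+\cdots+x_\kappa,w_2,\dots,w_n)$ lies in $\mathrm{L}^d$, and comparing coefficients gives $\supp(g)=\supp(\widetilde f)\cap\big(\{0,1\}^\kappa\times\NN^{n-1}\big)$; intersecting an $\MM$-convex set with a product of intervals preserves $\MM$-convexity (as recorded in the proof of Lemma \ref{support}). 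For (ii) I use two elementary identities: $\Pi^\uparrow_\kappa$ commutes with $\partial_{w_\ell}$ for $\ell\ge 2$, and $\partial_{x_j}\Pi^\uparrow_\kappa=\tfrac1\kappa\,\Pi^\uparrow_{\kappa-1}\partial_{w_1}$ (the latter from $\partial_{x_j}e_a(x_1,\dots,x_\kappa)=e_{a-1}$ of the remaining variables, together with $\kappa\binom{\kappa-1}{b}=(b+1)\binom{\kappa}{b+1}$).

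\textbf{Reduction of (ii) to quadratics.} Since $g$ is multi-affine in $x_1,\dots,x_\kappa$, I may assume $\gamma$ is squarefree in the $x$'s (otherwise $\partial^\gamma g=0$), say $\gamma=e_{x_1}+\cdots+e_{x_m}+\gamma'$ with $\gamma'$ supported on $w_2,\dots,w_n$. Applying the two identities repeatedly,
\[
\partial^\gamma g=\frac{1}{\kappa(\kappa-1)\cdots(\kappa-m+1)}\,\Pi^\uparrow_{\kappa-m}\!\big(\partial^{\gamma''}f\big),\qquad \gamma''=m\hspace{0.3mm}e_{1}+\gamma'\in\Delta^{d-2}_n,
\]
and $\partial^{\gamma''}f\in\mathrm{L}^2$ because $f\in\mathrm{L}^d$. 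So everything reduces to: single-variable polarization maps $\mathrm{L}^2$ into $\mathrm{L}^2$. For a \emph{strictly} Lorentzian quadratic $q=c\hspace{0.3mm}w_1^2+2w_1\ell(\bar w)+r(\bar w)\in\mathring{\mathrm{L}}^2$ (so $c>0$), one computes the Hessian $M'$ of $\Pi^\uparrow_\kappa(q)$ and observes that the splitting $\RR^{\kappa+n-1}=W_0\oplus V$ with $W_0=\{(x,0):\mathbf 1^{\top}x=0\}$ and $V=\{(s\mathbf 1,\bar w)\}$ is orthogonal for $M'$: on $W_0$ the form equals $-\tfrac{2c}{\kappa(\kappa-1)}\lVert x\rVert^2$ (negative definite), and on $V$ the matrix of $M'$ is exactly the Hessian of $q$, of signature $(+,-,\dots,-)$. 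Hence $M'$ is nonsingular with exactly one positive eigenvalue and $\Pi^\uparrow_\kappa(q)\in\mathring{\mathrm{L}}^2$. For general $q\in\mathrm{L}^2=\mathrm{S}^2$, write $q=\lim q_t$ with $q_t\in\mathring{\mathrm{L}}^2$ (Theorem \ref{conn}); continuity of $\Pi^\uparrow_\kappa$ and $\overline{\mathring{\mathrm{L}}^2}=\mathrm{L}^2$ (Theorem \ref{chars}) give $\Pi^\uparrow_\kappa(q)\in\mathrm{L}^2$. (Alternatively, this last step is a concrete instance of the Grace--Walsh--Szeg\H{o} theorem: polarization preserves stability and $\mathrm{L}^2$ is precisely the set of stable quadratics with nonnegative coefficients.)

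\textbf{Expected main obstacle.} All of the above except the quadratic base case is bookkeeping; the genuine input is that single-variable polarization preserves the ``one positive eigenvalue'' property of quadratic forms, which is where a Grace--Walsh--Szeg\H{o}-type phenomenon is unavoidable. The cleanest self-contained route is the $M'$-orthogonal decomposition $W_0\oplus V$ above, and the only real care needed is getting the constants in that Hessian right and disposing of the degenerate possibilities (a vanishing $w_1^2$-coefficient, or $\kappa$ strictly larger than $\deg_{w_1}q$) through the limiting argument, where only the weaker conclusion ``at most one positive eigenvalue'' is required.
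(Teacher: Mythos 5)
Your proof is correct and its overall shape resembles the paper's: both treat $\Pi^\downarrow_\kappa$ via Theorem \ref{flow}, and both reduce $\Pi^\uparrow_\kappa$ through Theorem \ref{chars} to the statement that each $\partial^\gamma \Pi^\uparrow_\kappa(f)$ is a rescaled polarization of a stable quadratic $\partial^\alpha f$ (the paper records the multi-variable version of your commutation identity as $\partial^\beta \Pi^\uparrow_\kappa(f)=\frac{(\kappa-\alpha)!}{\kappa!}\Pi^\uparrow_{\kappa-\alpha}(\partial^\alpha f)$). The routes diverge at the quadratic base case. The paper invokes \cite[Proposition 3.4]{BBI}, a Grace--Walsh--Szeg\H{o}-type theorem, as a black box to conclude that polarization preserves stability of the quadratic $\partial^\alpha f$. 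You instead factor the polarization into one variable at a time and, for the single-variable polarization of a quadratic, give a self-contained linear-algebraic argument via the $M'$-orthogonal splitting $W_0\oplus V$: your orthogonality check, the negative-definite form $-\tfrac{2c}{\kappa(\kappa-1)}\lVert x\rVert^2$ on $W_0$, and the identification of $M'|_V$ with $\mathscr{H}_q$ are all correct, so $M'$ has exactly one positive eigenvalue and is nonsingular when $c>0$. In effect you re-prove the needed GWS statement for quadratics by hand, buying a self-contained argument at the cost of the one-variable-at-a-time and degree-two bookkeeping reductions that the paper's citation of \cite{BBI} sidesteps. Your route also has the minor advantage of not requiring $\kappa_i\geq d$ to make sense of the reduction to strictly Lorentzian polynomials.

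One slip is worth correcting. The conclusion ``$\Pi^\uparrow_\kappa(q)\in\mathring{\mathrm{L}}^2$'' is not literally true: for $\kappa\geq 2$ the coefficient of each $x_j^2$ in $\Pi^\uparrow_\kappa(q)$ vanishes, so the polarized quadratic never has all coefficients positive and hence never lies in $\mathring{\mathrm{L}}^2$. What your decomposition actually shows, and what you actually need, is that $\mathscr{H}_{\Pi^\uparrow_\kappa(q)}$ is nonsingular with exactly one positive eigenvalue. Together with nonnegativity of the coefficients and the $\MM$-convexity of $\supp(g)$ that you established separately (which makes every $\supp(\partial^\gamma g)$ $\MM$-convex automatically), that is exactly the criterion of Theorem \ref{chars} for $g\in\mathrm{L}^d$. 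The closing limiting paragraph should therefore be phrased as passing to the closure inside $\mathrm{L}^2$ using the ``at most one positive eigenvalue'' condition, rather than as inheritance from a false membership in $\mathring{\mathrm{L}}^2$.
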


In other words, $\Pi_\kappa^{\uparrow}(f)$ is a Lorentzian polynomial for any Lorentzian polynomial $f \in \mathbb{R}_\kappa[w_i]$,
and  $\Pi_\kappa^{\downarrow}(g)$ is a Lorentzian polynomial for any Lorentzian polynomial $g \in \mathbb{R}_\kappa^\textrm{a}[w_{ij}]$.

\begin{proof}
 The statement for $\Pi_\kappa^{\downarrow}$ follows from Theorem \ref{flow}. 
We  prove the statement for $\Pi_\kappa^{\uparrow}$. 
It is enough to prove that
$\Pi_\kappa^{\uparrow}(f)$ is Lorentzian when $f \in \mathring{\mathrm{L}}^d_n \cap \mathbb{R}_\kappa[w_i]$ for $d \ge 2$.

Set $k=|\kappa|_1$, and identify $\NN^k$ with the set of all monomials in $w_{ij}$.
Since $f \in \mathring{\mathrm{L}}^d_n$,  we have
\[
\text{supp}\big(\Pi_\kappa^{\uparrow}(f)\big)={k \brack d},
\]
which is clearly $\MM$-convex.
Therefore, by Theorem \ref{chars}, it remains to show that the quadratic form $\partial^\beta \Pi_\kappa^{\uparrow}(f)$ is stable for any $\beta \in {k \brack d-2}$.

Define $\alpha$ by the equality 
$\Pi^\downarrow_\kappa(w^\beta)=w^\alpha$.
Note that, after renaming the variables if necessary,
the $\beta$-th partial derivative of  $\Pi^\uparrow_\kappa (f)$
is a positive  multiple of a polarization of the $\alpha$-th partial derivative of $f$:
\[
\partial^\beta \Pi_\kappa^{\uparrow}(f)=\frac{(\kappa-\alpha)!}{\kappa!}\Pi^\uparrow_{\kappa-\alpha} (\partial^\alpha f).
\]
Since the operator $\Pi^\uparrow_{\kappa-\alpha} $ preserves stability \cite[Proposition 3.4]{BBI}, the conclusion follows from the stability of the quadratic form $\partial^\alpha f$.
\end{proof}

Let $\kappa$ be an element of $\NN^n$,  let $\gamma$ be an element of $\NN^m$, 
and  set $k=|\kappa|_1$. 
In the remainder of this section, we  fix a linear operator
\[
T:\mathbb{R}_\kappa[w_i] \rightarrow \mathbb{R}_\gamma[w_i],
\]
and suppose that the linear operator $T$   is \emph{homogeneous of degree $\ell$} for some $\ell \in \mathbb{Z}$: 
\[
\Big(\text{$0 \le \alpha \le \kappa$  and $T(w^\alpha) \neq 0$}\Big)
\Longrightarrow  \deg T(w^\alpha)=\deg w^\alpha+\ell.
\]
The \emph{symbol} of 
$T$
 is a homogeneous polynomial of degree $k+\ell$  in $m+n$ variables defined by
\[
\text{sym}_T(w,u)
= \sum_{0\leq \alpha \leq \kappa}\binom \kappa \alpha T(w^\alpha) u^{\kappa-\alpha}.
\] 
We show that the homogeneous  operator $T$ preserves the Lorentzian property if its symbol $\text{sym}_T$ is Lorentzian.

\begin{theorem}\label{hrcpr}
If $\text{sym}_T \in  \mathrm{L}^{k+\ell}_{m+n}$ and $f \in \mathrm{L}^d_n \cap \mathbb{R}_\kappa[w_i]$, then $T(f) \in \mathrm{L}^{d+\ell}_m$. 
\end{theorem}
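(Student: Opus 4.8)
The plan is to reduce to the multi-affine case by polarization, and then to exhibit $T(f)$ as the output of a short composition of operations already shown to preserve the Lorentzian property.

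First I would replace $T$ by $T'=T\circ\Pi^\downarrow_\kappa$, which acts on multi-affine polynomials in the $k=|\kappa|_1$ variables $w_{ij}$. Since $\Pi^\downarrow_\kappa\circ\Pi^\uparrow_\kappa=\mathrm{id}$, we have $T'(\Pi^\uparrow_\kappa f)=T(f)$, and $\Pi^\uparrow_\kappa f$ is Lorentzian by Proposition \ref{pols}; so it suffices to treat $T'$ with input $\Pi^\uparrow_\kappa f$. A bookkeeping computation with elementary symmetric polynomials shows that $\text{sym}_{T'}(w,u)$ equals the polarization of $\text{sym}_T(w,u)$ in the $u$-variables (the binomial weights $\binom\kappa\alpha$ in $\text{sym}_T$ are exactly absorbed by the normalization in $\Pi^\uparrow_\kappa$), hence $\text{sym}_{T'}$ is Lorentzian by Proposition \ref{pols} as well. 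Thus we may assume from the start that $\kappa=(1,\ldots,1)$, that $f$ is multi-affine, and that $\text{sym}_T(w,u)=\sum_{S\subseteq[n]}T(w^S)\,u^{[n]\setminus S}$ is Lorentzian and multi-affine in the $u$-variables.

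The key point is the recovery identity
\[
T(f)(w)=\left.\left(\prod_{i=1}^{n}\bigl(\partial_{u_i}+\partial_{v_i}\bigr)\right)\Bigl(\text{sym}_T(w,u)\,f(v)\Bigr)\,\right|_{u=v=0},
\]
where $v=(v_1,\ldots,v_n)$ is a fresh set of variables. This is a direct calculation: writing $\text{sym}_T(w,u)\,f(v)=\sum_{A,B}p_A(w)\,f_B\,u^Av^B$ with $p_A(w)$ the coefficient of $u^A$ in $\text{sym}_T$ (so $p_A=T(w^{[n]\setminus A})$) and $f_B$ the coefficient of $v^B$ in $f$, applying $\prod_i(\partial_{u_i}+\partial_{v_i})$ and then setting $u=v=0$ annihilates every monomial except those with $A\cup B=[n]$ and $A\cap B=\varnothing$, each of which survives with coefficient $1$; the remaining sum is $\sum_A T(w^{[n]\setminus A})\,f_{[n]\setminus A}=\sum_{S}f_S\,T(w^S)=T(f)(w)$. (Monomials of $f$ or of $\text{sym}_T$ of degree $\ge 2$ in some $u_i$ or $v_i$ are also killed, which is harmless since we have already reduced to the multi-affine setting.)

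Now every ingredient on the right-hand side is known to preserve the Lorentzian property: the product $\text{sym}_T(w,u)\cdot f(v)$ of Lorentzian polynomials in disjoint sets of variables is Lorentzian (as in the proof of Corollary \ref{CorollaryProduct}); each $\partial_{u_i}+\partial_{v_i}$ is a nonnegative directional derivative, so preserves Lorentzianity by Corollary \ref{derivatives}; and setting a variable equal to $0$ preserves it as the $\theta\to 0$ limit of a dilation, as observed in the proof of Theorem \ref{flow}. Composing these and tracking degrees yields $T(f)\in\mathrm{L}^{d+\ell}_m$. I expect the main obstacle to be the first step: correctly identifying $\text{sym}_{T'}$ with the $u$-polarization of $\text{sym}_T$ so that Proposition \ref{pols} applies, together with spotting the complementary-coefficient contraction above in the first place. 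Once the contraction is written down, the remainder is an assembly of black boxes from Section \ref{BasicTheory}, requiring only routine care to keep the intermediate polynomials in the appropriate $\mathrm{M}^\bullet_\bullet$ and $\mathrm{L}^\bullet_\bullet$ classes.
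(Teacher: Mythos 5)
Your proof is correct, and it takes a genuinely different route from the paper's at the crucial step. Both proofs reduce to the multi-affine setting by polarization and both rest on the same complementary-coefficient contraction of $\mathrm{sym}_T(w,u)\cdot f(v)$, but where the paper applies the operator $\prod_{i}T_{u_i,v_i}$ from Lemma~\ref{LS-lemma} and then sets $v=0$, you apply the directional derivatives $\prod_i(\partial_{u_i}+\partial_{v_i})$ and then set $u=v=0$. This is a real simplification: each $\partial_{u_i}+\partial_{v_i}$ is a nonnegative directional derivative, so it preserves the Lorentzian property directly by Corollary~\ref{derivatives}, and setting variables to zero preserves it by Theorem~\ref{flow}. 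Lemma~\ref{LS-lemma}, whose proof is a somewhat delicate interlacing argument and which is used nowhere else in the paper, becomes entirely unnecessary. Your reduction also polarizes only the domain of $T$ (using $T'=T\circ\Pi^\downarrow_\kappa$ rather than the full $\Pi^\uparrow_\gamma\circ T\circ\Pi^\downarrow_\kappa$ of the paper); this is harmless since the recovery identity is insensitive to degrees in the $w$-variables, but it does shave a step.

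Two points you should spell out more carefully if you write this up. First, you verify the identity
$\mathrm{sym}_{T'}=\Pi^{\uparrow}_{\kappa,u}(\mathrm{sym}_T)$ (polarization in the $u$-variables only) and then cite Proposition~\ref{pols}, but that proposition is stated for full polarization $\Pi^\uparrow_{\kappa}$ acting on all variables. The partial polarization preserves the Lorentzian property because it equals a full polarization $\Pi^\uparrow_{\gamma'\oplus\kappa}$ followed by the projection $\Pi^\downarrow_{\gamma',w}$ that collapses the $w$-copies back together, and each of those two maps preserves the Lorentzian property by Proposition~\ref{pols}; a one-line remark closing this gap is worth including. Second, when you assemble the chain, you should make explicit (as you do implicitly) that the product $\mathrm{sym}_{T'}(w,u)\,g(v)$ in the disjoint sets of variables $w,u,v$ is Lorentzian---this is the intermediate step in the proof of Corollary~\ref{CorollaryProduct} rather than the corollary's literal statement, and the degrees then track through $\,(k+\ell)+d-k=d+\ell$, exactly as required.
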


When $n=2$, Theorem \ref{hrcpr} provides a large class of linear operators that preserve the ultra log-concavity of sequences of nonnegative numbers with no internal zeros. 
We prepare the proof of Theorem \ref{hrcpr} with a special case.

\begin{lemma}\label{LS-lemma}
Let $T=T_{w_1,w_2}: \mathbb{R}_{(1,\ldots,1)}[w_i] \to  \mathbb{R}_{(1,\ldots,1)}[w_i] $ be the linear operator defined by
\[
T(w^S)=
\left\{\begin{array}{cc}
w^{S \setminus 1}&\text{if $1 \in S$ and $2 \in S$,}\\
w^{S \setminus 1}&\text{if $1 \in S$ and $2 \notin S$,}\\
w^{S \setminus 2}&\text{if $1 \notin S$ and $2 \in S$,}\\
0&\text{if $1 \notin S$ and $2 \notin S$,}
\end{array}
\right.
\ \  \text{for all $S \subseteq [n]$.}
\]
Then $T$ preserves the Lorentzian property.
\end{lemma}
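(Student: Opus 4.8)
The plan is to realize $T$ as a composition of three operations already known to preserve the Lorentzian property: a nonnegative linear change of variables, a partial derivative, and the specialization of a variable to $0$. First I would group the monomials of a multi-affine $f$ according to the occurrences of $w_1$ and $w_2$, writing
\[
f=a+w_1 b+w_2 c+w_1 w_2 d,\qquad a,b,c,d\in\mathbb{R}[w_3,\dots,w_n]\ \text{multi-affine}.
\]
Reading off the four cases in the definition of $T$ gives $T(f)=b+c+w_2 d$, which is again multi-affine and homogeneous of degree $\deg f-1$ (so $T$ is homogeneous of degree $-1$). The engine of the proof is the identity
\[
T(f)=\Bigl(\partial_{w_1}\,f(w_1,\;w_2+w_1,\;w_3,\dots,w_n)\Bigr)\Big|_{w_1=0},
\]
which one verifies directly: substituting $w_2\mapsto w_2+w_1$ into $f=a+w_1b+w_2c+w_1w_2d$ yields $a+w_1b+w_2c+w_1c+w_1w_2d+w_1^2d$, whose $w_1$-derivative is $b+c+w_2d+2w_1d$, and this equals $b+c+w_2d$ at $w_1=0$.

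Granting the identity, the conclusion is immediate. The polynomial $g\coloneqq f(w_1,w_2+w_1,w_3,\dots,w_n)$ lies in $\mathrm{L}^d_n$ by Theorem~\ref{flow}, since it is $f(Av)$ for the nonnegative $n\times n$ matrix $A$ with rows $e_1^{T},\,e_1^{T}+e_2^{T},\,e_3^{T},\dots,e_n^{T}$ (after relabeling the new variables back to $w$). Then $\partial_{w_1}g\in\mathrm{L}^{d-1}_n$ by Corollary~\ref{derivatives}, taking the directional derivative along $e_1$. Finally, setting $w_1=0$ is the nonnegative change of variables with matrix $\diag(0,1,\dots,1)$, so $\bigl(\partial_{w_1}g\bigr)|_{w_1=0}\in\mathrm{L}^{d-1}_n$, again by Theorem~\ref{flow}. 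By the displayed identity this polynomial is $T(f)$, which proves that $T$ preserves the Lorentzian property. One may assume $d\ge 1$ throughout, the case $\deg f=0$ (where $T(f)=0$) being trivial.

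The only genuine content is the identity; once it is found, every step is a citation. A more structural alternative would be to compute $\mathrm{sym}_T(w,u)=(w_2+u_1+u_2)\prod_{i=3}^{n}(w_i+u_i)$, a product of linear forms with nonnegative coefficients, hence stable and therefore Lorentzian by Proposition~\ref{PropositionStableLorentzian}, and then invoke a general symbol-to-operator transfer; but since Lemma~\ref{LS-lemma} is intended to feed into the proof of Theorem~\ref{hrcpr}, I would keep the self-contained argument above to avoid circularity. The only fussy point I anticipate is the bookkeeping about which coordinate subspace the output occupies: $b+c+w_2d$ does not involve $w_1$, so one views it in $\mathrm{L}^{d-1}_n$ by treating $w_1$ as an unused variable, which is consistent with the $\diag(0,1,\dots,1)$ specialization above.
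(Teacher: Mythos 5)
Your proof is correct, and it takes a genuinely different and tighter route than the paper. You factor $T$ through the identity
\[
T(f)=\Bigl(\partial_{1}\,f(w_1,\,w_2+w_1,\,w_3,\dots,w_n)\Bigr)\Big|_{w_1=0},
\]
which reduces the whole lemma to three applications of results already available in Section~\ref{BasicTheory}: the nonnegative change of variables $w_2\mapsto w_2+w_1$ (Theorem~\ref{flow}), the directional derivative $\partial_1$ (Corollary~\ref{derivatives}), and the specialization $w_1=0$ (Theorem~\ref{flow} again). None of these depend on Section~\ref{secOp}, so there is no circularity. By contrast, the paper proves the lemma ``by hand'': it reduces to $f\in\mathring{\mathrm{L}}^{d+1}_n$, observes that $\supp T(f)$ is $\MM$-convex, writes $f=h+w_1\partial_1 f$ with $h=f|_{w_1=0}$ so that $T(f)=\partial_2 h+\partial_1 f$, and then for each $(d-2)$-set $S$ not containing $1$ checks the stability of $\partial^S T(f)$ via the interlacings $\partial^{S\cup 1\cup 2}f\prec\partial^{S\cup 2}h$ and $\partial^{S\cup 1\cup 2}f\prec\partial^{S\cup 1}f$ together with parts (1) and (4) of Lemma~\ref{closure}. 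The paper's argument is longer and hinges on an explicit interlacing analysis; yours compresses the lemma into a single closed-form factorization of $T$ and then delegates all the work to the flow theorem. You also correctly computed $\mathrm{sym}_T(w,u)=(w_2+u_1+u_2)\prod_{i\ge 3}(w_i+u_i)$ and, more importantly, correctly flagged that invoking Theorem~\ref{hrcpr} here would be circular, which is exactly why the paper (and you) need a self-contained proof. The bookkeeping point at the end about viewing the output in $\mathrm{L}^{d-1}_n$ with $w_1$ as an unused variable is handled correctly by the $\diag(0,1,\dots,1)$ specialization.
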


\begin{proof}
It is enough to prove that
$T(f) \in \mathrm{L}^{d}_{n}$  when $f \in \mathring{\mathrm{L}}^{d+1}_n \cap \mathbb{R}_{(1,\ldots,1)}[w_i]$ for $d \ge 2$.
In this case,   
\[
\text{supp}\big(T(f)\big)=\big\{\text{$d$-element subsets of $[n]$ not containing $1$}\big\},
\]
which is clearly $\MM$-convex.
Therefore, by Theorem \ref{chars}, it suffices to show that the quadratic form $\partial^S T(f)$ is stable for any $S \in {n \brack d-2}$ not containing $1$.
We write $h$ for the Lorentzian polynomial $f|_{w_1=0}$.
Since $f$ is multi-affine, we have 
\[
f=h+w_1 \partial_1 f
\ \ \text{and} \ \ 
T(f)=\partial_2 h + \partial_1 f.
\]
We give separate arguments when  $2 \in S$ and $2 \notin S$.
If $S$ contains $2$,
then
\[
\partial^S T(f)=\partial^{S \cup 1} f,
\]
 and hence $\partial^S T(f)$ is stable.
If $S$ does not contain $2$, then
\begin{enumerate}[--]\itemsep 5pt
\item the linear form $\partial^S \partial_1 \partial_2 f=\partial^{S \cup 1 \cup 2} f$ is not identically zero, because $f \in \mathring{\mathrm{L}}^{d+1}_n$, 
\item we have $\partial^{S \cup 1 \cup 2} f \prec \partial^{S \cup 2} h$, because $\partial^{S \cup 2} f$ is stable, and
\item we have $\partial^{S \cup 1 \cup 2} f \prec \partial^{S \cup 1} f$, by Lemma \ref{closure} (1).
\end{enumerate}
Therefore, by Lemma \ref{closure} (4), the quadratic form $\partial^S T(f)=\partial^{S \cup 2} h + \partial^{S \cup 1} f$ is stable.
\end{proof}

\begin{proof}[Proof of Theorem \ref{hrcpr}]
The \emph{polarization} of $T:\mathbb{R}_\kappa[w_i] \rightarrow \mathbb{R}_\gamma[w_i]$
 is the  operator $\Pi^\uparrow(T)$ defined by
\[
\Pi^\uparrow(T)=   \Pi_\gamma^{\uparrow}\circ T \circ \Pi_\kappa^{\downarrow}.
\]
We write $\gamma \oplus \kappa $ for the concatenation of $\gamma$ and $\kappa$ in $\NN^{m+n}$.
By \cite[Lemma 3.5]{BBI}, the symbol of the polarization is the polarization of the symbol\footnote{The statement was proved in \cite[Lemma 3.5]{BBI} when $m=n$.
Clearly, this special case implies the general case.}:
\[
\text{sym}_{\Pi^\uparrow(T)}=\Pi_{\gamma \hspace{0.3mm}\oplus\hspace{0.3mm} \kappa}^\uparrow(\text{sym}_T).
\]
Therefore, by Proposition \ref{pols}, the proof reduces to the case $\kappa=(1,\ldots,1)$ and $\gamma=(1,\ldots,1)$.

Suppose $f(v)$ is a multi-affine  polynomial in $\mathrm{L}^d_n$ 
and $\text{sym}_T(w,u)$ is a multi-affine polynomial in $\mathrm{L}^{\ell+n}_{m+n}$.
Since the product of Lorentzian polynomials is Lorentzian by Corollary \ref{CorollaryProduct}, we have
\[
\text{sym}_T(w,u)f(v) = \sum_{S \subseteq [n]} T(w^S)u^{[n]\setminus S}f(v) \in \mathrm{L}^{d+\ell +n}_{m+n+n}.
\]
Applying the operator in Lemma \ref{LS-lemma} for the pair of variables $(u_i,v_i)$ for $i=1,\ldots,n$, we have
\[
\prod_{i=1}^n T_{u_i,v_i} \big( \text{sym}_T(w,u)f(v) \big)=\sum_{S \subseteq [n]} T(w^S)(\partial^S f)(v) \in \mathrm{L}^{d+\ell}_{m+n}.
\]
We substitute every $v_i$ by zero in  the displayed equation to get
\[
\Big[\sum_{S \subseteq [n]} T(w^S)(\partial^S f)(v)\Big]_{v=0}=T\big(f(w)\big). 
\]
Theorem \ref{flow} shows that the right-hand side belongs to $\mathrm{L}^{d+\ell}_{m}$, completing the proof.
\end{proof} 


We remark that there are homogeneous linear operators $T$ preserving the Lorentzian property whose symbols are not Lorentzian. 
This contrasts the  analog of Theorem \ref{hrcpr} for stable polynomials \cite[Theorem 2.2]{BBI}.
As an example, consider the  linear operator $T:\mathbb{R}_{(1,1)}[w_1,w_2] \rightarrow \mathbb{R}_{(1,1)}[w_1,w_2]$ defined by
\[
T(1)=0, \quad T(w_1)=w_1, \quad T(w_2)=w_2, \quad T(w_1w_2)=w_1w_2.
\]
The symbol of $T$ is not Lorentzian because its support is not $\MM$-convex.
The operator $T$ preserves Lorentzian polynomials but does not preserve (non-homogeneous) stable polynomials. 



\begin{theorem}\label{stab-lor}
If $T$ is a homogeneous linear operator that preserves stable polynomials and polynomials with nonnegative coefficients, 
then $T$ preserves  Lorentzian polynomials. 
\end{theorem}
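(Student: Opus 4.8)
The plan is to deduce Theorem~\ref{stab-lor} from Theorem~\ref{hrcpr} by reducing to the multi-affine case and verifying that the symbol of $T$ is then necessarily Lorentzian. First I would reduce to operators $T:\mathbb{R}_\kappa[w_i]\to\mathbb{R}_\gamma[w_i]$ on finite-dimensional polynomial spaces: a Lorentzian polynomial $f\in\mathrm{L}^d_n$ has bounded degree in each variable, so only the restriction of $T$ to some $\mathbb{R}_\kappa[w_i]$ matters, and after polarizing (using Proposition~\ref{pols}, which says $\Pi^\uparrow_\kappa$ and $\Pi^\downarrow_\kappa$ preserve the Lorentzian property, together with the identity $\mathrm{sym}_{\Pi^\uparrow(T)}=\Pi^\uparrow(\mathrm{sym}_T)$ from \cite[Lemma 3.5]{BBI} as used in the proof of Theorem~\ref{hrcpr}) it suffices to treat the case $\kappa=(1,\ldots,1)$, $\gamma=(1,\ldots,1)$; note polarization also sends stability-preserving operators to stability-preserving operators, so the hypothesis is inherited. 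So I may assume $T:\underline{\mathrm{H}}{}^{*}_{n}\to\underline{\mathrm{H}}{}^{*}_{m}$ is homogeneous of some degree $\ell$, preserves stable polynomials, and preserves polynomials with nonnegative coefficients.

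Next I would compute the symbol. By definition $\mathrm{sym}_T(w,u)=\sum_{S\subseteq[n]}T(w^S)\,u^{[n]\setminus S}$, a multi-affine homogeneous polynomial in the $m+n$ variables $(w,u)$. The key point is that $\mathrm{sym}_T$ is (a nonnegative multiple of) a stable polynomial: this is precisely the content of the Borcea--Br\"and\'en theory --- an operator $T$ preserves stable polynomials if and only if (excluding the degenerate rank-one case) its symbol is stable \cite[Theorem 2.2]{BBI}. I would invoke that characterization directly; the degenerate case (where $T$ has image spanned by a single polynomial) is handled separately and easily, since then $T(f)=\lambda(f)\,g$ for a fixed stable $g$ and a linear functional $\lambda$, and nonnegativity-preservation forces $\lambda(f)\geq 0$ on polynomials with nonnegative coefficients, so $T(f)$ is a nonnegative multiple of a Lorentzian polynomial whenever $f$ is Lorentzian. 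Moreover, since $T$ preserves polynomials with nonnegative coefficients, every coefficient $T(w^S)$ of $\mathrm{sym}_T$ lies in $\mathbb{R}_{\geq0}[w_1,\ldots,w_m]$, so $\mathrm{sym}_T$ itself has nonnegative coefficients. A stable polynomial with nonnegative coefficients that is homogeneous is, by the remarks following Lemma~\ref{StrictlyStable} and Proposition~\ref{PropositionStableLorentzian} (via $\mathrm{S}^d_n\subseteq\mathrm{M}^d_n$ and Theorem~\ref{chars}), Lorentzian. Hence $\mathrm{sym}_T\in\mathrm{L}^{n+\ell}_{m+n}$.

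Finally, with $\mathrm{sym}_T$ Lorentzian, Theorem~\ref{hrcpr} applies verbatim: for any $f\in\mathrm{L}^d_n\cap\mathbb{R}_{(1,\ldots,1)}[w_i]$ we get $T(f)\in\mathrm{L}^{d+\ell}_m$. Undoing the reduction --- i.e.\ applying $\Pi^\downarrow$ to recover the general $\kappa,\gamma$ and using $\Pi^\downarrow(T(\Pi^\uparrow f))=(\Pi^\downarrow T\Pi^\uparrow)(f)$ together with the compatibility of symbols with polarization --- gives the statement for arbitrary Lorentzian $f$. The main obstacle I anticipate is the bookkeeping in the degenerate/rank-one alternative of the Borcea--Br\"and\'en classification: one must check that in that case the conclusion still holds, which as sketched above follows from combining stability of the (single) image polynomial with the nonnegativity hypothesis. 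A secondary subtlety is making sure the homogeneity degree $\ell$ and the grading conventions line up after polarization, but this is exactly the computation already carried out inside the proof of Theorem~\ref{hrcpr}, so it can be cited rather than redone.
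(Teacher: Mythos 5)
Your broad strategy matches the paper's: invoke the Borcea--Br\"and\'en classification \cite[Theorem 2.2]{BBI}, argue that the symbol of $T$ is Lorentzian, and conclude via Theorem~\ref{hrcpr}. However, you misstate that classification, and the misstatement hides two genuine gaps.

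First, the ``degenerate'' alternative in \cite[Theorem 2.2]{BBI} is not ``rank one'' but rank at most two: $T(f)=\alpha(f)P+\beta(f)Q$ for linear functionals $\alpha,\beta$ and stable $P\prec Q$. Your argument treats only the rank-one sub-case $T(f)=\lambda(f)g$; for rank two the image of $T$ is a two-parameter family, and the fact that every element of it is stable is not immediate from $P$ and $Q$ alone. The paper cites \cite[Theorem 1.6]{BB10} precisely for this: any nonnegative combination lying in the image of such a $T$ is stable. You would need that ingredient (or an equivalent one) to close this case.

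Second, and more seriously, you omit the third alternative of \cite[Theorem 2.2]{BBI} entirely: $T$ may preserve stability because $\mathrm{sym}_T(w,-u)$ is stable rather than $\mathrm{sym}_T(w,u)$. Your claim that ``excluding the degenerate case, the symbol is stable'' is therefore not what the theorem says, and your argument as written has no way to reach the conclusion in that case. It does reduce to the case you handle, but only after an argument: since $T$ is homogeneous and preserves nonnegativity, $\mathrm{sym}_T(w,u)$ is a homogeneous polynomial with nonnegative coefficients, and the sign-rigidity of homogeneous stable polynomials (\cite[Theorem 6.1]{COSW}) forces $\mathrm{sym}_T(w,-u)=\pm\,\mathrm{sym}_T(w,u)$, so $\mathrm{sym}_T(w,u)$ is itself stable. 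That observation is needed and currently missing.

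Finally, a minor remark: the preliminary reduction to multi-affine operators via polarization is unnecessary here. Theorem~\ref{hrcpr} is stated for arbitrary $\kappa,\gamma$, so once you know $\mathrm{sym}_T$ is Lorentzian you can apply it directly; the polarization step is internal to the proof of Theorem~\ref{hrcpr} and need not be redone. It does no harm, but it adds bookkeeping without buying anything.
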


\begin{proof}
According to \cite[Theorem 2.2]{BBI},  $T$ preserves stable polynomials if and only if either
\begin{enumerate}[(I)]\itemsep 5pt
\item the rank of $T$ is not greater than two and $T$ is of the form
\[
T(f)=\alpha(f) P+\beta(f) Q,
\]
where $\alpha,\beta$ are linear functionals and $P,Q$ are stable polynomials satisfying $P \prec Q$,
\item the  polynomial $\text{sym}_T(w,u)$ is stable, or
\item the  polynomial $\text{sym}_T(w,-u)$ is stable.
\end{enumerate}
Suppose one of the three conditions, and suppose in addition that $T$ preserves polynomials with nonnegative coefficients.

Suppose (I) holds.  
In this case, the image of $T$ is contained in the set of stable polynomials \cite[Theorem 1.6]{BB10}.
By Proposition \ref{PropositionStableLorentzian},  homogeneous stable polynomials with nonnegative coefficients are Lorentzian.
Since $T$ preserves polynomials with nonnegative coefficients,  $T(f)$ is Lorentzian whenever $f$ is a homogeneous polynomial with nonnegative coefficients.

Suppose (II) holds. 
Since $T$ preserves polynomials with nonnegative coefficients,    $\text{sym}_T(w,u)$  is Lorentzian by Proposition  \ref{PropositionStableLorentzian}.
Therefore, by Theorem \ref{hrcpr}, $T(f)$ is Lorentzian whenever $f$ is Lorentzian.

Suppose (III) holds.
Since all the nonzero coefficients of a homogeneous stable polynomial have the same sign \cite[Theorem 6.1]{COSW},
we have
\[
\text{sym}_T(w,-v)=\text{sym}_T(w,v) \ \ \text{or} \ \ \text{sym}_T(w,-v)=-\text{sym}_T(w,v).
\]
In both cases, $\text{sym}_T(w,v)$ is stable and has nonnegative coefficients.
Thus $\text{sym}_T(w,v)$ is Lorentzian, 
and the conclusion follows from Theorem \ref{hrcpr}.
\end{proof}

In the remainder of this section, we record some useful operators that preserves the Lorentzian property.
The \emph{multi-affine part} of a polynomial $\sum_{\alpha \in \NN^n} c_\alpha w^\alpha$
is the polynomial  $\sum_{\alpha \in \{0,1\}^n} c_\alpha w^\alpha$.

\begin{corollary}\label{Corollary-Multi-Affine}
The multi-affine part of any Lorentzian polynomial is a Lorentzian polynomial.
\end{corollary}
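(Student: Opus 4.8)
The plan is to realize the multi-affine part as a single linear operator on a space of bounded-degree polynomials and then to invoke Theorem \ref{hrcpr}. I would fix a nonzero Lorentzian polynomial $f$ of degree $d$ in $w_1,\dots,w_n$ (the cases $d\le 1$ being trivial, since such a homogeneous polynomial is already multi-affine), choose $\kappa\in\NN^n$ with $\kappa_i\ge\max\{1,\deg_{w_i}f\}$ for every $i$ so that $f\in\mathbb{R}_\kappa[w_i]$, and let $T\colon\mathbb{R}_\kappa[w_i]\to\mathbb{R}_{(1,\dots,1)}[w_i]$ be the linear map with $T(w^\alpha)=w^\alpha$ when $\alpha\in\{0,1\}^n$ and $T(w^\alpha)=0$ otherwise. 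Then $T$ is homogeneous of degree $\ell=0$ in the sense of Section \ref{secOp}, and $T(f)$ is exactly the multi-affine part of $f$, so the whole problem reduces to checking that $\text{sym}_T$ is Lorentzian.

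The key step is to compute this symbol. Since $T$ kills every monomial that is not multi-affine, the definition gives
\[
\text{sym}_T(w,u)=\sum_{0\le\alpha\le\kappa}\binom{\kappa}{\alpha}\,T(w^\alpha)\,u^{\kappa-\alpha}=\sum_{\alpha\in\{0,1\}^n}\binom{\kappa}{\alpha}\,w^\alpha u^{\kappa-\alpha},
\]
and because $\binom{\kappa_i}{\alpha_i}$ is $1$ when $\alpha_i=0$ and $\kappa_i$ when $\alpha_i=1$, this sum factors coordinatewise as
\[
\text{sym}_T(w,u)=\prod_{i=1}^{n}u_i^{\kappa_i-1}\bigl(u_i+\kappa_i\,w_i\bigr).
\]
This is a product of monomials and of linear forms with nonnegative coefficients, hence a homogeneous stable polynomial with nonnegative coefficients, so it is Lorentzian by Proposition \ref{PropositionStableLorentzian}. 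Applying Theorem \ref{hrcpr} with $m=n$ and $\ell=0$ then gives $T(f)\in\mathrm{L}^d_n$, which is precisely the assertion.

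A second route, which I would mention as a remark, avoids Theorem \ref{hrcpr}: the symbol just computed is stable, so by the Borcea--Br\"and\'en characterization of stability preservers \cite[Theorem 2.2]{BBI}---case (II) in the proof of Theorem \ref{stab-lor}---the operator $T$ preserves stable polynomials; since $T$ obviously preserves polynomials with nonnegative coefficients, Theorem \ref{stab-lor} applies directly. I do not expect a real obstacle here: the only substantive point is recognizing that $\text{sym}_T$ factors into linear forms and monomials, and after that every step is an appeal to an already established result. The one thing requiring a bit of care is the bookkeeping with $\kappa$---padding it so that $f\in\mathbb{R}_\kappa[w_i]$ while keeping all $\kappa_i$ positive---since this is what makes the product formula for $\text{sym}_T$ take the stated shape and makes $T$ homogeneous of the single degree $\ell=0$.
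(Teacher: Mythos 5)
Your proof is correct, and it reaches the conclusion by a genuinely different route from the paper. The paper's proof is a one-liner: taking the multi-affine part is a homogeneous linear operator preserving nonnegative coefficients, and it is already known to preserve stability \cite[Proposition 4.17]{COSW}, so Theorem \ref{stab-lor} applies directly. You instead compute the symbol of the operator explicitly, factor it as $\prod_i u_i^{\kappa_i-1}(u_i+\kappa_i w_i)$, observe that this is a stable polynomial with nonnegative coefficients and hence Lorentzian, and then invoke Theorem \ref{hrcpr}. Your computation is right (each $\binom{\kappa_i}{\alpha_i}$ contributes $1$ for $\alpha_i=0$ and $\kappa_i$ for $\alpha_i=1$, and padding $\kappa_i\ge 1$ ensures the full Boolean cube appears in the sum), and the degree-$\ell=0$ homogeneity check is exactly as you state. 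What your approach buys is self-containedness: you do not need to import the external fact \cite[Proposition 4.17]{COSW}, and your "second route" via case (II) of the Borcea--Br\"and\'en classification is effectively a re-derivation of that citation from the symbol factorization. What the paper's approach buys is brevity, at the cost of relying on a result proved elsewhere. Both are valid; yours is arguably the more illuminating argument in the context of Section \ref{secOp}, since it exercises the symbol machinery of Theorem \ref{hrcpr} rather than sidestepping it.
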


\begin{proof}
Clearly, taking the multi-affine part is a homogeneous linear operator that preserves polynomials with nonnegative coefficients.
Since this operator also preserves stable polynomials \cite[Proposition 4.17]{COSW},
the proof follows from Theorem \ref{stab-lor}.
\end{proof}

\begin{remark}\label{BallLikeMultiaffine}
Corollary \ref{Corollary-Multi-Affine} can be used to obtain a multi-affine analog of Theorem \ref{BallLike}.
Write $\underline{\mathrm{H}}^{d}_{n}$ for the space of multi-affine degree $d$ homogeneous polynomials in $n$ variables,
and write $\underline{\mathrm{L}}^{d}_{n}$ for the corresponding set of multi-affine Lorentzian polynomials.
Let $\mathbb{P}\underline{\mathrm{H}}^{d}_{n}$ be the projectivization of the vector space $\underline{\mathrm{H}}^d_n$,
and let $\underline{\mathrm{L}}_\mathrm{B}$ be the set of polynomials in $\underline{\mathrm{L}}^d_n$ with support $\mathrm{B}$.
We identify a rank $d$ matroid $\mathrm{M}$ on $[n]$ with its set of bases $\mathrm{B} \subseteq {n \brack d}$.
Writing $\mathbb{P}\underline{\mathrm{L}}^d_n$ and $\mathbb{P}\underline{\mathrm{L}}_\mathrm{B}$ for the images of $\underline{\mathrm{L}}^d_n \setminus 0$ and $\underline{\mathrm{L}}_\mathrm{B}$ in $\mathbb{P}\underline{\mathrm{H}}^d_n$
respectively,
we have
\[
\mathbb{P}\underline{\mathrm{L}}^d_n = \coprod_{\mathrm{B}} \mathbb{P}\underline{\mathrm{L}}_\mathrm{B}, 
\]
where the union is over all rank $d$ matroids on $[n]$.
By Theorem \ref{flow} and Corollary \ref{Corollary-Multi-Affine}, 
$\mathbb{P}\underline{\mathrm{L}}^d_n$ is a compact contractible subset of $\mathbb{P}\underline{\mathrm{H}}^d_n$. 
By Theorem \ref{charjump}, $\mathbb{P}\underline{\mathrm{L}}_\mathrm{B}$ is nonempty for every matroid $\mathrm{B} \subseteq {n \brack d}$. 
In addition, by Proposition \ref{tot}, $\mathbb{P}\underline{\mathrm{L}}_\mathrm{B}$ is contractible for every  matroid $\mathrm{B} \subseteq {n \brack d}$. 
\end{remark}

Let $N$ be the linear operator defined by the condition
$N(w^\alpha)=\frac{w^\alpha}{\alpha!}$.
The normalization operator $N$ turns generating functions into exponential generating functions.

\begin{corollary}\label{CorollaryNormalization}
If $f$ is a Lorentzian polynomial, then $N(f)$ is a Lorentzian polynomial.
\end{corollary}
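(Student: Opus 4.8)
The plan is to realize the normalization operator $N$, restricted to the homogeneous component of each degree, as a homogeneous linear operator in the sense of Section \ref{secOp}, and then invoke Theorem \ref{hrcpr}. Fix $d$ and $n$ and put $\kappa=(d,\dots,d)\in\NN^n$, so that every $f\in\mathrm{H}^d_n$ lies in $\RR_\kappa[w_i]$. Then $N$ restricts to a linear operator $\RR_\kappa[w_i]\to\RR_\kappa[w_i]$ which multiplies each $w^\alpha$ by $1/\alpha!$; in particular it is homogeneous of degree $\ell=0$. By Theorem \ref{hrcpr} it therefore suffices to prove that the symbol $\text{sym}_N(w,u)\in\mathrm{H}^{nd}_{2n}$ is Lorentzian, since this gives $N(f)\in\mathrm{L}^d_n$ for every $f\in\mathrm{L}^d_n\cap\RR_\kappa[w_i]=\mathrm{L}^d_n$.

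The key computation is that this symbol factors over the pairs of variables $(w_i,u_i)$: because $\binom{\kappa}{\alpha}=\prod_i\binom{d}{\alpha_i}$ and $\alpha!=\prod_i\alpha_i!$,
\[
\text{sym}_N(w,u)=\sum_{0\le\alpha\le\kappa}\binom{\kappa}{\alpha}\frac{w^\alpha}{\alpha!}u^{\kappa-\alpha}=\prod_{i=1}^{n}\Bigl(\sum_{k=0}^{d}\binom{d}{k}\frac{1}{k!}\,w_i^{k}u_i^{d-k}\Bigr).
\]
Each bivariate factor is Lorentzian by the criterion recorded in Example \ref{ExampleBivariate}: writing $a_k=\binom{d}{k}/k!$, the normalized sequence is $a_k/\binom{d}{k}=1/k!$, which is positive and ultra log-concave since $(1/k!)^2\ge\frac{1}{(k-1)!}\frac{1}{(k+1)!}$ is equivalent to $k+1\ge k$, and hence has no internal zeros. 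Since a product of Lorentzian polynomials in disjoint sets of variables is Lorentzian (as shown in the proof of Corollary \ref{CorollaryProduct}), the symbol $\text{sym}_N$ lies in $\mathrm{L}^{nd}_{2n}$, and Theorem \ref{hrcpr} finishes the proof.

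I do not anticipate a genuine obstacle here: the whole content is the factorization of the symbol together with a one-line ultra-log-concavity check. As a sanity check one can observe that the $i$-th factor above equals $\frac{1}{d!}\prod_{j=1}^{d}\bigl(w_i+\rho_j u_i\bigr)$, where $\rho_1,\dots,\rho_d>0$ are the zeros of the $d$-th Laguerre polynomial, so $\text{sym}_N$ is in fact a product of linear forms with nonnegative coefficients; this also shows that $N$ lies within reach of Theorem \ref{stab-lor}, but verifying the Lorentzian property of the symbol directly is lighter than appealing to the classification of stability preservers.
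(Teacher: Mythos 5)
Your proof is correct and follows essentially the same route as the paper: realize $N$ as a homogeneous degree-$0$ operator, factor its symbol over coordinate pairs, reduce to a bivariate Lorentzian check via Corollary \ref{CorollaryProduct}, and verify ultra log-concavity from the log-concavity of $1/k!$. The Laguerre-polynomial observation is a nice bonus but is not needed for the argument.
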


It is shown in \cite[Theorem 3]{HMMS} that the normalized Schur polynomial $N(s_\lambda(w_1,\ldots,w_n))$ is Lorentzian for any partition $\lambda$.
Note that the converse of Corollary \ref{CorollaryNormalization} fails in general.
For example, complete symmetric polynomials, which are special cases of Schur polynomials, need not be Lorentzian.

\begin{proof}
Let $\kappa$ be any element of $\mathbb{N}^n$. 
By Theorem \ref{hrcpr}, it suffices to show that the symbol
\[
\text{sym}_N(w,u)
= \sum_{0\leq \alpha \leq \kappa}\binom \kappa \alpha \frac{w^\alpha}{\alpha!} u^{\kappa-\alpha}
= \prod_{j=1}^n \Bigg( \sum_{0\leq \alpha_j \leq \kappa_j} \binom {\kappa_j} {\alpha_j} \frac{w_j^{\alpha_j}}{\alpha_j!} u_j^{\kappa_j-\alpha_j}\Bigg)
\] 
is a Lorentzian polynomial.
Since the product of Lorentzian polynomials is Lorentzian 
by Corollary \ref{CorollaryProduct},
the proof is reduced to the case when the symbol is bivariate.
In this case, using the characterization of bivariate Lorentzian polynomials in Example \ref{ExampleBivariate}, 
we get the Lorentzian property  from the log-concavity of the sequence $1/k!$.
\end{proof}

Corollary \ref{CorollaryConvolution} below extends the classical fact that the convolution product of two log-concave sequences with no internal zeros is a log-concave sequence with no internal zeros.
For early proofs of the classical fact, see \cite[Chapter 8]{Karlin} and \cite{Menon}. 

\begin{corollary}\label{CorollaryConvolution}
If $N(f)$ and $N(g)$ are Lorentzian polynomials, then  $N(fg)$ is a Lorentzian polynomial.
\end{corollary}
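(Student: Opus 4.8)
The plan is to reduce the statement to an application of Corollary \ref{CorollaryNormalization} together with the product result Corollary \ref{CorollaryProduct}, via a change of variables that separates the two factors and then identifies them. First I would introduce a disjoint set of variables $u=(u_1,\ldots,u_n)$ and consider the polynomial $N(f)(w)\hspace{0.5mm} N(g)(u)$ in the variables $w,u$. Since $N(f)$ and $N(g)$ are Lorentzian by hypothesis, and the product of Lorentzian polynomials in disjoint variables is Lorentzian (this is exactly the content of the computation in the proof of Corollary \ref{CorollaryProduct}, where it is observed that $f(w)g(u)\in \mathrm{L}^{d+e}_{n+n}$), the polynomial $N(f)(w)\hspace{0.5mm}N(g)(u)$ lies in $\mathrm{L}^{d+e}_{2n}$.

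The key algebraic point is the multinomial (Vandermonde-type) identity relating $N(fg)$ to a specialization of $N(f)\cdot N(g)$. Writing $f=\sum_\alpha a_\alpha w^\alpha$ and $g=\sum_\beta b_\beta w^\beta$, one has
\[
N(fg)(w)=\sum_\gamma \left(\sum_{\alpha+\beta=\gamma} \frac{a_\alpha b_\beta}{\alpha!\,\beta!}\right) w^\gamma,
\]
while
\[
\big(N(f)(w)\,N(g)(u)\big)\Big|_{u=w} = \sum_\gamma \left(\sum_{\alpha+\beta=\gamma}\frac{a_\alpha b_\beta}{\alpha!\,\beta!}\right)w^\gamma = N(fg)(w).
\]
Thus $N(fg)$ is obtained from $N(f)(w)\hspace{0.5mm}N(g)(u)\in\mathrm{L}^{d+e}_{2n}$ simply by the diagonal substitution $u_i\mapsto w_i$. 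By Theorem \ref{flow}, a nonnegative linear change of variables — in particular the diagonalization that identifies $u_i$ with $w_i$ — preserves the Lorentzian property, so $N(fg)$ is Lorentzian. (Alternatively, this is precisely the diagonalization step (III) used in the proof of Theorem \ref{flow}, applied $n$ times.)

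I expect there is essentially no obstacle here: the statement follows by assembling three already-established facts — the disjoint-variable product is Lorentzian (as in the proof of Corollary \ref{CorollaryProduct}), the exponential-generating-function normalization of the product factors as a product of normalized factors in separated variables, and diagonalization preserves Lorentzianity (Theorem \ref{flow}). The only thing to be careful about is bookkeeping the normalization: one must check the factorial identity above, namely that $N$ of a product equals the diagonal restriction of the external product of the two $N$'s, which is the classical statement that exponential generating functions multiply. Note that Corollary \ref{CorollaryNormalization} itself is not strictly needed for this argument, though it is in the same circle of ideas; the essential inputs are Corollary \ref{CorollaryProduct} and Theorem \ref{flow}.
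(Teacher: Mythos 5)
There is a genuine algebraic error that collapses the whole argument: the claimed identity
\[
\big(N(f)(w)\,N(g)(u)\big)\big|_{u=w} = N(fg)(w)
\]
is false. Writing $f=\sum_\alpha a_\alpha w^\alpha$ and $g=\sum_\beta b_\beta w^\beta$, the left-hand side is
\[
N(f)(w)\,N(g)(w)=\sum_\gamma\left(\sum_{\alpha+\beta=\gamma}\frac{a_\alpha b_\beta}{\alpha!\,\beta!}\right)w^\gamma
=\sum_\gamma \frac{1}{\gamma!}\left(\sum_{\alpha+\beta=\gamma}\binom{\gamma}{\alpha}a_\alpha b_\beta\right)w^\gamma,
\]
whereas
\[
N(fg)(w)=\sum_\gamma\frac{1}{\gamma!}\left(\sum_{\alpha+\beta=\gamma}a_\alpha b_\beta\right)w^\gamma.
\]
These differ by the binomial coefficients $\binom{\gamma}{\alpha}$, i.e.\ the diagonal restriction you compute is $N(f)\cdot N(g)$ as an ordinary polynomial product, not $N(fg)$. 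Your own displayed formula for $N(fg)$ is wrong: you wrote $\tfrac{1}{\alpha!\beta!}$ where the correct normalization is $\tfrac{1}{(\alpha+\beta)!}$. A one-variable counterexample: take $f=g=w$, so $N(f)=N(g)=w$, $N(f)N(g)=w^2$, but $fg=w^2$ and $N(fg)=w^2/2$. (The ``exponential generating functions multiply'' principle says that the coefficientwise product of two EGFs is the EGF of the \emph{binomial convolution} of the underlying sequences, not of the ordinary convolution; this is exactly the discrepancy.)

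The paper's remark immediately preceding the corollary is itself a warning that your route cannot work: it notes that for $f=x^3+x^2y+xy^2+y^3$, $N(f)$ is stable but $N(f^2)$ is not. Stable polynomials are also closed under disjoint-variable products and under diagonalization $u_i\mapsto w_i$ (Lemma \ref{closure}), so if your identity held, the same argument would prove $N(f^2)$ stable, contradicting that remark. The statement is genuinely harder than disjoint-product-plus-diagonalization: the paper proves it via the operator-theoretic machinery of Theorem \ref{hrcpr}, by exhibiting $N(h)\mapsto N(hg)$ as a homogeneous linear operator and showing (after two reductions) that its symbol is Lorentzian, ultimately using Theorem \ref{charjump}. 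Some such use of the symbol theorem, or a direct handle on the binomial convolution, seems unavoidable here.
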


Note that the analogous statement for stable polynomials fails to hold in general.
For example, when $f=x^3+x^2y+xy^2+y^3$,
the polynomial $N(f)$ is stable but $N(f^2)$ is not.

\begin{proof}
Suppose that $f$ and $g$ belong to $\mathbb{R}_\kappa[w_i]$.
We consider the linear operator
\[
T:\mathbb{R}_\kappa[w_i] \longrightarrow \mathbb{R}_{}[w_i],  \qquad N(h) \longmapsto N(hg).
\]
By Theorem \ref{hrcpr}, it is enough to show that its symbol
\[
\text{sym}_T(w,u)= \kappa!\sum_{0\leq \alpha \leq \kappa} N(w^\alpha g) \frac{u^{\kappa-\alpha}}{(\kappa-\alpha)!}
\]
is a Lorentzian polynomial in $2n$ variables. For this, we consider the linear operator
\[
S:\mathbb{R}_\kappa[w_i] \longrightarrow \mathbb{R}[w_i,u_i], \qquad  N(h) \longmapsto  \sum_{0\leq \alpha \leq \kappa} N(w^\alpha h) \frac{u^{\kappa-\alpha}}{(\kappa-\alpha)!}.
\]
By Theorem \ref{hrcpr}, it is enough to show that its symbol
\begin{align*}
\text{sym}_S(w,u,v)
&=  \kappa! \sum_{0 \le \beta \le \kappa} \sum_{0 \le \alpha \le \kappa} \frac{w^{\alpha+\beta}}{(\alpha+\beta)!} \frac{u^{\kappa-\alpha}}{(\kappa-\alpha)!} \frac{v^{\kappa-\beta}}{(\kappa-\beta)!}
\end{align*}
is a Lorentzian polynomial in $3n$ variables.
The statement is straightforward to check using Theorem \ref{chars}.
See Theorem \ref{charjump} below for a more general statement.
\end{proof}

The \emph{symmetric exclusion process}  is one of the main models considered in interacting particle systems. 
It is a continuous time Markov chain which models particles that jump symmetrically between sites, where each site may be occupied by at most one particle \cite{Liggett10}. 
A problem that has attracted much attention is to find negative dependence properties that are preserved under the symmetric exclusion process. 
In \cite[Theorem 4.20]{BBL}, it was proved that strongly Rayleigh measures are preserved under the symmetric exclusion process. 
In other words, 
if $f=f(w_1,w_2,\ldots,w_n)$ is a stable multi-affine polynomial with nonnegative coefficients, then the multi-affine polynomial $\Phi^{1,2}_\theta(f)$ defined by
\[
\Phi^{1,2}_\theta(f)=
(1-\theta) f(w_1,w_2,w_3,\ldots,w_n) +\theta f(w_2,w_1,w_3,\ldots,w_n)
\]
is stable for all $0 \le \theta \le 1$.
We prove an analog for Lorentzian  polynomials.

\begin{corollary}\label{partsym}
Let $f=f(w_1,w_2,\ldots,w_n)$ be a multi-affine polynomial with nonnegative coefficients.
If the homogenization of $f$ 
 is a Lorentzian polynomial, then the homogenization of
$\Phi^{1,2}_\theta(f)$ is a Lorentzian polynomial for all $0 \le \theta \le 1$.
\end{corollary}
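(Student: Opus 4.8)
The plan is to realize the homogenization of $\Phi^{1,2}_\theta(f)$ as the image of the homogenization of $f$ under a linear operator whose symbol is Lorentzian, and then invoke Theorem~\ref{hrcpr}. First I would let $F=F(w_0,w_1,\dots,w_n)$ be the degree-$d$ homogenization of $f$, with $w_0$ the homogenizing variable; by hypothesis $F\in\mathrm{L}^d_{n+1}$, and $F$ is multi-affine in $w_1,\dots,w_n$. Homogenization (to degree $d$) is linear and commutes with the swap $w_1\leftrightarrow w_2$, and every polynomial involved has degree exactly $d$ when $0\le\theta\le1$, so the homogenization of $\Phi^{1,2}_\theta(f)$ equals $(1-\theta)F+\theta F^{\sigma}$, where $F^{\sigma}(w_0,w_1,w_2,w_3,\dots)=F(w_0,w_2,w_1,w_3,\dots)$. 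Thus it would suffice to show $(1-\theta)F+\theta F^{\sigma}\in\mathrm{L}^d_{n+1}$ for $0\le\theta\le1$. This does not follow from Theorem~\ref{flow} together with closure under addition, since sums of Lorentzian polynomials need not be Lorentzian — the multi-affinity of $F$ in $w_1,w_2$ must be used.

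Next I would set $\kappa=(d,1,\dots,1)\in\NN^{n+1}$, so that $F\in\mathrm{L}^d_{n+1}\cap\mathbb{R}_\kappa[w_i]$, and define $T_0$ on $\mathbb{R}_{(1,1)}[w_1,w_2]$ by $T_0(1)=1$, $T_0(w_1)=(1-\theta)w_1+\theta w_2$, $T_0(w_2)=\theta w_1+(1-\theta)w_2$, $T_0(w_1w_2)=w_1w_2$, and let $T\colon\mathbb{R}_\kappa[w_i]\to\mathbb{R}_\kappa[w_i]$ act on a monomial by applying $T_0$ to its $(w_1,w_2)$-part and the identity to the rest. Then $T$ is homogeneous of degree $0$ and preserves the degree bounds $\kappa$, and because $F$ is multi-affine in $w_1,\dots,w_n$ a monomial-by-monomial check gives $T(F)=(1-\theta)F+\theta F^{\sigma}$. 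Since $T$ is the tensor product of the identity operators on the $w_0$-factor and on the $w_i$-factors for $i\ge3$ with $T_0$ on the $(w_1,w_2)$-factor, its symbol factors accordingly:
\[
\text{sym}_T(w,u)=(w_0+u_0)^d\Bigl(\prod_{i=3}^{n}(w_i+u_i)\Bigr)\,\text{sym}_{T_0}(w_1,w_2,u_1,u_2).
\]
The factors $(w_0+u_0)^d$ and $(w_i+u_i)$ are stable, hence Lorentzian, so by Corollary~\ref{CorollaryProduct} the symbol $\text{sym}_T$ is Lorentzian provided $\text{sym}_{T_0}$ is; Theorem~\ref{hrcpr} would then give $(1-\theta)F+\theta F^{\sigma}=T(F)\in\mathrm{L}^d_{n+1}$, as desired.

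The remaining step, and the only one requiring genuine computation, is to check that
\[
\text{sym}_{T_0}=u_1u_2+w_1w_2+\theta\,(w_1u_1+w_2u_2)+(1-\theta)\,(w_1u_2+w_2u_1)
\]
is Lorentzian for $0\le\theta\le1$. It is a quadratic form with nonnegative coefficients, so by the description of $\mathrm{L}^2_n$ recorded around Lemma~\ref{StrictlyStable} this amounts to showing its Hessian has at most one positive eigenvalue. I would diagonalize it using the invariance of the Hessian under the Klein four-group of variable permutations generated by $(w_1\,w_2)(u_1\,u_2)$ and $(w_1\,u_1)(w_2\,u_2)$: the sign vectors $(1,1,1,1)$, $(1,1,-1,-1)$, $(1,-1,1,-1)$, $(1,-1,-1,1)$ are eigenvectors, with eigenvalues $2$, $0$, $2\theta-2$, $-2\theta$ respectively; for $0\le\theta\le1$ exactly one of these, namely $2$, is positive.

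The main obstacle is essentially the bookkeeping: making sure the homogenization, the degree bounds $\kappa$, and the tensor-product structure of $T$ all line up so that Theorem~\ref{hrcpr} applies verbatim; the eigenvalue computation for $\text{sym}_{T_0}$ itself is short. A less machinery-heavy alternative would be to build $T_0$ directly from the elementary splitting, dilation, and aggregation operators, mirroring Lemma~\ref{LS-lemma} and the proof of Corollary~\ref{CorollaryConvolution}, but identifying $\text{sym}_{T_0}$ as Lorentzian and invoking Theorem~\ref{hrcpr} appears to be the shortest route.
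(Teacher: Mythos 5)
Your proof is correct, but it takes a genuinely different route from the paper's. The paper dispatches the corollary in three lines via Theorem~\ref{stab-lor}: $\Phi^{1,2}_\theta$, viewed as a homogeneous linear operator acting on the $(w_1,w_2)$-variables of the homogenization, preserves polynomials with nonnegative coefficients (since $0\le\theta\le1$) and preserves stable multi-affine polynomials by the symmetric-exclusion theorem of Borcea--Br\"and\'en--Liggett \cite[Theorem 4.20]{BBL}; Theorem~\ref{stab-lor} then upgrades this to preservation of the Lorentzian property with no symbol computation at all. You instead go through Theorem~\ref{hrcpr} directly: you factor the relevant operator as a tensor product, compute its symbol as $(w_0+u_0)^d\prod_{i\ge 3}(w_i+u_i)\cdot\text{sym}_{T_0}$, check that $\text{sym}_{T_0}$ is a Lorentzian quadratic by diagonalizing its Hessian over the Klein four-group (eigenvalues $2,\,0,\,2\theta-2,\,-2\theta$, exactly one of which is positive for $0\le\theta\le1$), and conclude via Corollary~\ref{CorollaryProduct}. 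Your argument is longer but self-contained: it effectively re-derives the one-step stability preservation underlying \cite[Theorem 4.20]{BBL}, since the stability of $\text{sym}_{T_0}$ is (by the Borcea--Br\"and\'en symbol characterization used in the proof of Theorem~\ref{stab-lor}) precisely what makes $\Phi^{1,2}_\theta$ stability-preserving on multi-affine polynomials. The paper's route is shorter at the cost of citing that external result; both are legitimate uses of the section's toolbox, and the identification of the homogenization of $\Phi^{1,2}_\theta(f)$ with $(1-\theta)F+\theta F^\sigma$ is the same in both.
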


\begin{proof}
Recall that a polynomial with nonnegative coefficients is stable if and only if its homogenization is stable \cite[Theorem 4.5]{BBL}.
Clearly, $\Phi^{1,2}_\theta$ is homogeneous and preserves polynomials with nonnegative coefficients.
Since $\Phi^{1,2}_\theta$ preserves stability of multi-affine polynomials by \cite[Theorem 4.20]{BBL}, the statement follows from Theorem \ref{stab-lor}.
\end{proof}




\subsection{Matroids, $\MM$-convex sets, and Lorentzian polynomials}\label{secM}

The \emph{generating function} of a subset $\mathrm{J} \subseteq \NN^n$ is, by definition,
\[
f_{\JJ}= \sum_{\alpha \in \JJ} \frac {w^{\alpha}}{\alpha!}, \ \ \text{where} \ \ \alpha!=\prod_{i=1}^n\alpha_i!.
\]
We  characterize matroids and $\MM$-convex sets in terms of their generating functions. 

\begin{theorem}\label{charjump}
The following are equivalent for any nonempty $\mathrm{J} \subseteq \NN^n$. 
\begin{enumerate}[(1)]\itemsep 5pt
\item There is a Lorentzian polynomial whose support is $\JJ$.
\item There is a homogeneous $2$-Rayleigh polynomial whose support is $\JJ$.
\item There is a homogeneous $c$-Rayleigh polynomial whose support is $\JJ$ for some $c>0$.
\item The generating function $f_\JJ$ is a Lorentzian polynomial.
\item The generating function $f_\JJ$ is a homogeneous $2$-Rayleigh polynomial.
\item The generating function $f_\JJ$ is a homogeneous $c$-Rayleigh polynomial for some $c>0$.
\item $\JJ$ is  $\MM$-convex. 
\end{enumerate}
When $\mathrm{J} \subseteq \{0,1\}^n$, any one of the above conditions is equivalent to 
\begin{enumerate}[(1)]\itemsep 5pt
\item[(8)] $\JJ$ is the set of bases of a matroid on $[n]$.
\end{enumerate}
\end{theorem}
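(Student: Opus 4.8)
The plan is to close the cycle $(4)\Rightarrow(1)\Rightarrow(2)\Rightarrow(3)\Rightarrow(7)\Rightarrow(4)$ together with $(4)\Rightarrow(5)\Rightarrow(6)\Rightarrow(3)$, and then dispose of $(8)$ separately. Almost every arrow is immediate from results already proved. Since every coefficient $1/\alpha!$ of $f_{\JJ}$ is positive, $\supp(f_{\JJ})=\JJ$, so $(4)\Rightarrow(1)$ is trivial; a Lorentzian polynomial of degree $d$ is $2(1-\tfrac1d)$-Rayleigh by Proposition~\ref{HRRayleigh}, hence $2$-Rayleigh, which gives $(4)\Rightarrow(5)$ and, applied to an arbitrary Lorentzian polynomial with support $\JJ$, also $(1)\Rightarrow(2)$. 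The implications $(2)\Rightarrow(3)$, $(5)\Rightarrow(6)$, $(6)\Rightarrow(3)$ are trivial (take $c=2$, or enlarge $c$), and $(3)\Rightarrow(7)$ is exactly Theorem~\ref{MRayleigh}. Thus the whole theorem reduces to the single implication $(7)\Rightarrow(4)$: \emph{if $\JJ$ is $\MM$-convex then $f_{\JJ}$ is Lorentzian.}

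For $(7)\Rightarrow(4)$ I would invoke the description of Lorentzian polynomials following Theorem~\ref{chars}: a homogeneous polynomial with nonnegative coefficients is Lorentzian if and only if its support is $\MM$-convex and $\partial^\alpha f$ has at most one positive eigenvalue for every $\alpha\in\Delta^{d-2}_n$. The support of $f_{\JJ}$ is $\JJ$, which is $\MM$-convex by hypothesis. For the eigenvalue condition, note that for $\alpha\in\Delta^{d-2}_n$ one has $\partial^\alpha f_{\JJ}=\sum_{\gamma}\tfrac{w^\gamma}{\gamma!}=f_{\mathcal{K}_\alpha}$, where $\mathcal{K}_\alpha=\{\gamma\in\NN^n:\alpha+\gamma\in\JJ\}$ is contained in $\Delta^2_n$ and is again $\MM$-convex, being the intersection of the translate $\JJ-\alpha$ of an $\MM$-convex set with the box $\NN^n$. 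So everything comes down to the quadratic case:

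\textbf{Key Lemma.} If $\mathcal{K}\subseteq\Delta^2_n$ is $\MM$-convex, then $f_{\mathcal{K}}\in\mathrm{L}^2_n$; equivalently its constant Hessian $\mathscr{H}_{f_{\mathcal{K}}}$, a symmetric $0$--$1$ matrix, has at most one positive eigenvalue. To prove this I would first establish a structure theorem for $\MM$-convex subsets of the second discrete simplex. Let $V$ be the set of indices occurring in some element of $\mathcal{K}$. Using the symmetric exchange property one checks: (i) the relation $i\simeq j\iff(i=j$ or $e_i+e_j\notin\mathcal{K})$ is an equivalence relation on $V$ (for transitivity, if $i\simeq j\simeq k$ with $e_i+e_k\in\mathcal{K}$, take $\gamma\in\mathcal{K}$ with $\gamma_j>0$, write $\gamma=e_j+e_\ell$ with $\ell\notin\{i,j,k\}$, and apply symmetric exchange between $\gamma$ and $e_i+e_k$ at the index $\ell$ to force a contradiction); and (ii) if $2e_i\in\mathcal{K}$ then $\{i\}$ is an entire $\simeq$-class (apply symmetric exchange between $2e_i$ and any $\gamma\in\mathcal{K}$ with $\gamma_j>0$, $j\ne i$, at the index $j$). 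Consequently $\mathcal{K}$ is exactly the set of ``cross-class'' pairs $e_i+e_j$ ($i\not\simeq j$) together with loops $2e_i$ at some prescribed collection of singleton classes, and reading off the Hessian — whose loop diagonal entries precisely cancel the rank-one blocks of the loopy singleton classes — gives
\[
\mathscr{H}_{f_{\mathcal{K}}}=\mathbf{1}_V\mathbf{1}_V^{T}-\sum_{C}\mathbf{1}_C\mathbf{1}_C^{T},
\]
the sum over the non-loopy $\simeq$-classes $C$. This is a rank-one positive semidefinite matrix minus a positive semidefinite matrix, so by Weyl's inequalities its second-largest eigenvalue is at most $0$. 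Since $f_{\mathcal{K}}$ also has nonnegative coefficients, this places it in $\mathrm{L}^2_n$, proving the Key Lemma, hence $(7)\Rightarrow(4)$, and hence the equivalence of $(1)$--$(7)$.

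Finally, when $\JJ\subseteq\{0,1\}^n$ the equivalence with $(8)$ is just a matter of unwinding definitions: any $\MM$-convex set lies in a single $\Delta^d_n$, so a nonempty $\MM$-convex $\JJ\subseteq\{0,1\}^n$ consists of $d$-element sets, and the exchange property for $\MM$-convex sets, read on $0$--$1$ vectors, is verbatim the basis exchange property of a matroid; conversely the bases of a matroid obviously form such a set. The main obstacle in the whole argument is the Key Lemma, and specifically the combinatorial structure theorem for $\MM$-convex subsets of $\Delta^2_n$ — once that is in hand, the spectral bound is a one-line consequence of Weyl's inequalities.
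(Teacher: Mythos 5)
Your proof is correct and follows essentially the same route as the paper's: reduce to the quadratic case via Theorem~\ref{chars} (and the fact that $\MM$-convexity passes to translates and derivatives), then verify the signature condition for an $\MM$-convex $\mathcal K\subseteq\Delta^2_n$ by an explicit decomposition of the Hessian as a rank-one positive semidefinite matrix minus a sum of rank-one positive semidefinite matrices. The paper's Lemma~\ref{deg2} gives the same decomposition, but phrases the combinatorial structure in terms of the loops $S=\{i:2e_i\in\JJ\}$ and the parallel classes of the rank-$2$ matroid $\mathrm B$ on $[n]\setminus S$; your equivalence relation $\simeq$ on $V$, with loopy singletons removed from the sum, is exactly the same partition expressed without matroid language. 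If anything, your version is slightly more careful about the degenerate case where $\mathrm B=\emptyset$ (e.g.\ $\JJ=\{2e_3,\,e_1+e_3,\,e_2+e_3\}$), which is $\MM$-convex but for which ``rank-$2$ loopless matroid on $[n]\setminus S$'' is not literally accurate, even though the decomposition $\mathbf 1\mathbf 1^T-\mathbf 1_{\{1,2\}}\mathbf 1_{\{1,2\}}^T$ still holds; your $\simeq$-class argument handles this uniformly. The proofs of claims (i) and (ii) via symmetric exchange are sound, and the final appeal to Weyl (or equivalently Cauchy interlacing on $\mathbf 1_V^\perp$) completes the signature count. The matroid case $(7)\Leftrightarrow(8)$ is, as you say, a direct reading of definitions.
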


The statement that the basis generating polynomial $f_\mathrm{J}$  is log-concave on the positive orthant can be found in \cite[Theorem 4.2]{AGV}.
An equivalent statement that the Hessian $f_\mathrm{J}$ has exactly one positive eigenvalue on the positive orthant has been noted earlier in \cite[Remark 15]{HW}.
The equivalence of the conditions (4) and (7)  will be generalized to $\MM$-convex functions in Theorem \ref{classical}.

We prepare the proof  of Theorem \ref{charjump} with an analysis of the quadratic case.

\begin{lemma}\label{deg2}
The following conditions are equivalent for any  $n \times n$ symmetric matrix $A$ with entries in $\{0,1\}$.
\begin{enumerate}[(1)]\itemsep 5pt
\item The quadratic polynomial $w^T A w$ is Lorentzian.
\item The support of the quadratic polynomial $w^T A w$ is $\MM$-convex. 
\end{enumerate}
\end{lemma}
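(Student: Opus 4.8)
The plan is to reduce both conditions to explicit combinatorial statements about the support $\JJ = \supp(w^TAw) \subseteq {n \brack 2} \cup \{2e_i : A_{ii}\neq 0\}$ and check directly that they coincide. Since $A$ has entries in $\{0,1\}$, the polynomial $w^TAw$ has the normalized form $\sum_{\alpha} \frac{c_\alpha}{\alpha!}w^\alpha$ with $c_\alpha \in \{0,1,2\}$ (the diagonal entries contribute $c_{2e_i}=2A_{ii}$ via $w_i^2 = \frac{2}{2!}w_i^2$, the off-diagonal pairs contribute $c_{e_i+e_j}=2A_{ij}$); in particular all nonzero coefficients are positive, so it already lies in $\PP^2_n$ up to nothing — we just need to locate it relative to $\mathrm{L}^2_n$. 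The direction $(1)\Rightarrow(2)$ is immediate from Theorem \ref{chars}: the support of any Lorentzian polynomial is $\MM$-convex. So the content is $(2)\Rightarrow(1)$.

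For $(2)\Rightarrow(1)$, I would argue that $\MM$-convexity of $\JJ$ forces $A$ (possibly after a permutation of coordinates and after discarding variables not appearing in the support) to have one of a small number of shapes, each of which is visibly stable, hence Lorentzian by Lemma \ref{StrictlyStable} (a quadratic form with nonnegative coefficients is Lorentzian iff it is stable). The key structural observation: if $\JJ$ is $\MM$-convex and contains two square monomials $2e_i$ and $2e_j$ with $i\neq j$, then applying the symmetric exchange property to $2e_i,\,2e_j$ and the index $i$ yields $e_i+e_j \in \JJ$; iterating, the set of indices $i$ with $2e_i\in\JJ$ spans a "clique" in the support. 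More generally one shows that an $\MM$-convex $\JJ \subseteq \Delta^2_n$ has the form: there is a partition of the relevant index set, and $\JJ$ consists of all $e_i+e_j$ (and certain $2e_i$) compatible with the partition structure — concretely, after relabeling, $w^TAw$ factors or is a restriction/dilation of a form like $(w_1+\cdots+w_k)^2$ or $(w_1+\cdots+w_k)(w_{k+1}+\cdots+w_n)$ type expressions, all of which are stable (products and specializations of stable polynomials via Lemma \ref{closure}, or directly). The cleanest route may be to invoke the matroid case: a $\{0,1\}$-matrix whose support is $\MM$-convex and contained in ${n\brack 2}$ is the basis indicator of a rank-$2$ matroid, and the rank-$2$ matroid basis generating polynomial $\sum_{\{i,j\}\in\BB} w_iw_j$ is well known to be stable (it is $e_2$ of a partition of the variables into parallel classes, restricted appropriately); the general case with square terms is handled by the same clique analysis.

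The main obstacle I anticipate is the bookkeeping around the diagonal entries: $\MM$-convexity involves the symmetric exchange move $\alpha - e_i + e_j$, and when $\alpha = 2e_i$ the move $2e_i \to e_i + e_j$ interacts with whether $e_i+e_j$ and $2e_j$ lie in $\JJ$, so one must carefully enumerate which combinations of $\{2e_i, 2e_j, e_i+e_j\}$ are permitted. I expect the correct statement is that $\JJ\cap\{2e_i,2e_j,e_i+e_j\}$ is never exactly $\{2e_i,2e_j\}$ and never exactly $\{2e_i, e_i+e_j\}$ is forbidden only in combination with... — in any case, a short finite check over the $2\times 2$ principal submatrices, combined with the observation that $w^TAw$ is Lorentzian iff every $3$-variable restriction is (by Lemma \ref{StrictlyStable} and the fact that stability of a quadratic is a $3$-dimensional condition via Cauchy interlacing), reduces everything to verifying the $n\le 3$ cases by hand. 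That finite verification is routine and I would relegate it to a direct computation.
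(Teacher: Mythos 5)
Your reduction of $(1)\Rightarrow(2)$ to Theorem~\ref{chars} and your structural analysis for $(2)\Rightarrow(1)$ are on the right track: the paper likewise sets $S=\{i : 2e_i\in\JJ\}$, uses the exchange property to show $e_i+e_j\in\JJ$ for all $i\in S$, $j\in[n]$ (your ``clique''), and observes that the monomials avoiding $S$ form the bases of a rank-$2$ loopless matroid on $[n]\setminus S$ with parallel classes $S_1,\ldots,S_k$. But your proposal stops exactly where the content is. You appeal to the stability of the rank-$2$ basis generating polynomial as ``well known'' and to the square-term case being ``handled by the same clique analysis,'' without exhibiting why the resulting quadratic form has Lorentzian signature. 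The paper closes the gap with an explicit diagonalization:
\[
w^TAw=\Big(\sum_{j \in [n]}w_j\Big)^2 - \Big(\sum_{j \in S_1}w_j\Big)^2-\cdots -\Big(\sum_{j \in S_k}w_j\Big)^2,
\]
which makes the signature $(+,-,\ldots,-)$ manifest at a glance (after checking that the linear forms on the right are linearly independent, which holds since $S_1,\ldots,S_k$ are disjoint and their union is a proper subset of $[n]$ when $S\neq\emptyset$, and when $S=\emptyset$ one has $k\geq 2$). This is the step your write-up needs and does not supply.

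More seriously, your fallback argument is incorrect. You claim that ``$w^TAw$ is Lorentzian iff every $3$-variable restriction is,'' citing Lemma~\ref{StrictlyStable} and Cauchy interlacing. Cauchy interlacing shows that a principal submatrix of a matrix with at most one positive eigenvalue itself has at most one positive eigenvalue, but the converse direction is false: having at most one positive eigenvalue is a genuinely global condition. A concrete counterexample is the adjacency matrix of the path $1-2-3-4$,
\[
A=\begin{pmatrix} 0 & 1 & 0 & 0 \\ 1 & 0 & 1 & 0 \\ 0 & 1 & 0 & 1 \\ 0 & 0 & 1 & 0 \end{pmatrix},
\]
whose every $3\times 3$ principal submatrix has exactly one positive eigenvalue, while $A$ itself has two (its characteristic polynomial is $\lambda^4-3\lambda^2+1$). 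This $A$ does not have $\MM$-convex support, so it is not a counterexample to the lemma, but it does refute the local-to-global principle you are relying on. Even if the principle happened to hold under the $\MM$-convexity hypothesis, that would itself be a nontrivial claim requiring essentially the same structural analysis you are trying to avoid, so the ``reduce to $n\le 3$ and check by hand'' route does not save any work and as stated is not valid. Replace it with the explicit factorization above.
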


\begin{proof}
Theorem \ref{chars} implies (1) $\Rightarrow$ (2).
We prove (2) $\Rightarrow$ (1). 
We may and will suppose that no column of $A$ is zero.
Let $\JJ$ be the $\MM$-convex support of $w^T A w$, and set
\[
S=\Big\{i \in [n] \mid 2e_i \in \JJ \Big\}.
\]
The exchange property for $\JJ$ shows that $e_i+e_j \in \JJ$ for every $i \in S$ and $j \in [n]$.
In addition, again by the exchange property for $\JJ$,
\[
\mathrm{B} \coloneqq \Big\{e_i+e_j  \in \JJ \mid \text{$i \notin S$ and $j \notin S$}\Big\}
\]
 is the set of bases of a rank $2$ matroid on $[n] \setminus S$ without loops.
Writing $S_1 \cup \cdots \cup S_k$ for the decomposition of $[n] \setminus S$ into parallel classes in the matroid,
we have
\[
w^TAw=\Big(\sum_{j \in [n]}w_j\Big)^2 - \Big(\sum_{j \in S_1}w_j\Big)^2-\cdots -\Big(\sum_{j \in S_k}w_j\Big)^2, 
\]
and hence $w^TAw$ is a Lorentzian polynomial.
\end{proof}

\begin{proof}[Proof of Theorem \ref{charjump}]
Theorem \ref{MRayleigh}, Theorem \ref{chars},  and Proposition \ref{HRRayleigh} show that
\[
(1) \Rightarrow (2) \Rightarrow (3) \Rightarrow (7)\ \ \text{and} \ \   (4) \Rightarrow (5) \Rightarrow (6) \Rightarrow (7).
\]
Since (4) $\Rightarrow$ (1), we only need to prove (7) $\Rightarrow$ (4).

If $\JJ$ is an $\MM$-convex subset of $\NN^n$,
then  $f_\JJ$ is a homogeneous polynomial of some degree $d$.
  Suppose $d \ge 2$, and let $\alpha$ be an element of $\Delta^{d-2}_n$.
Note that, in general, the support of $\partial^\alpha f_\JJ$ is $\MM$-convex whenever the support of $f_\JJ$ is $\MM$-convex.
Therefore,
  $\partial^\alpha f_\mathrm{J}$  is Lorentzian by Lemma \ref{deg2},
and hence $f_\JJ$ is Lorentzian by Theorem \ref{chars}.
\end{proof}

Let $\JJ$ be the set of bases of a  matroid $\MM$ on $[n]$.
If $\MM$ is regular \cite{FM}, if $\MM$ is representable over the finite fields $\mathbb{F}_3$ and $\mathbb{F}_4$ \cite{COSW},
if  the rank of $\mathrm{M}$ is at most $3$ \cite{Wagner05}, or if  the number of elements $n$ is at most $7$ \cite{Wagner05}, then
$f_\JJ$ is  $1$-Rayleigh.
Seymour and Welsh found the first example of a matroid whose basis generating function is not $1$-Rayleigh \cite{SW}.
We propose the following improvement of Theorem \ref{charjump}.

\begin{conjecture}
The following conditions are equivalent for any nonempty  $\JJ \subseteq \{0,1\}^n$.
\begin{enumerate}[(1)]\itemsep 5pt
\item  $\JJ$ is the set of bases of a matroid on $[n]$.
\item  The generating function $f_\JJ$ is a homogeneous $\frac{8}{7}$-Rayleigh polynomial.
\end{enumerate}
 \end{conjecture}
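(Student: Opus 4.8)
The easy direction is $(2)\Rightarrow(1)$: if $f_\JJ$ is a homogeneous $\tfrac87$-Rayleigh polynomial, then its support $\JJ$ is $\MM$-convex by Theorem~\ref{MRayleigh}, and since $\JJ\subseteq\{0,1\}^n$ the equivalence of conditions (7) and (8) in Theorem~\ref{charjump} shows that $\JJ$ is the set of bases of a matroid. So the whole content of the conjecture lies in $(1)\Rightarrow(2)$: one must show that for every matroid $\MM$ on $[n]$ with basis family $\JJ$, the multi-affine polynomial $f_\JJ=\sum_{B\in\JJ}w^B$ satisfies
\[
f_\JJ(w)\,\partial_i\partial_j f_\JJ(w)\ \le\ \tfrac87\,\partial_i f_\JJ(w)\,\partial_j f_\JJ(w)\qquad\text{for all distinct }i,j\text{ and all }w\in\RR^n_{\ge0},
\]
the remaining instances of the $c$-Rayleigh condition being vacuous for a multi-affine polynomial.

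My first step would be to turn this into a correlation inequality among matroid minors. Fix distinct $i,j$; we may assume neither is a loop nor a coloop of $\MM$, since otherwise the Rayleigh inequality for the pair $\{i,j\}$ is trivially true. Writing
\[
f_\JJ=A+w_iB+w_jC+w_iw_jD,\qquad A=f_{\MM\setminus i\setminus j},\ \ B=f_{\MM/i\setminus j},\ \ C=f_{\MM\setminus i/j},\ \ D=f_{\MM/i/j}
\]
(nonzero polynomials with nonnegative coefficients in the variables $w_k$, $k\ne i,j$), a direct computation gives
\[
\tfrac87\,\partial_i f_\JJ\,\partial_j f_\JJ-f_\JJ\,\partial_i\partial_j f_\JJ=\Big(\tfrac87 BC-AD\Big)+\tfrac17\big(w_iBD+w_jCD+w_iw_jD^2\big).
\]
Since $\tfrac87>1$, the parenthesized tail is nonnegative on the nonnegative orthant, while evaluating at $w_i=w_j=0$ forces the first term to be nonnegative there. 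Hence the $\tfrac87$-Rayleigh inequality for $\{i,j\}$ is \emph{equivalent} to the pure minor inequality
\[
f_{\MM\setminus i\setminus j}(w)\,f_{\MM/i/j}(w)\ \le\ \tfrac87\,f_{\MM/i\setminus j}(w)\,f_{\MM\setminus i/j}(w),\qquad w\in\RR^{[n]\setminus\{i,j\}}_{\ge0},
\]
so the conjecture is equivalent to this holding for all matroids and all pairs. It is worth recording that the \emph{qualitative} version — $f_\JJ$ is $c$-Rayleigh for \emph{some} $c$ — already follows from Theorem~\ref{charjump} (which gives $f_\JJ\in\mathrm{L}^d_n$) and Proposition~\ref{HRRayleigh}, with $c=2(1-\tfrac1d)$; the whole point of the conjecture is to replace this rank-dependent constant by the universal value $\tfrac87$.

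To prove the minor inequality I would induct on $|[n]|$. Deletion and contraction preserve the $c$-Rayleigh class by Lemma~\ref{DeletionContraction}, and a direct sum makes $f_\JJ$ factor, so one can reduce to the case that $\MM$ is connected with no parallel or series pair involving $i$ or $j$; the cases of rank at most $3$, or of at most $7$ elements, are covered by the cited results of Wagner that such matroids are $1$-Rayleigh, which settles the base of the induction. The decisive remaining task is to bound the \emph{Rayleigh defect} $AD-BC$ for a connected matroid. I see three plausible routes: (a) a shelling or activities argument organizing the bases of $\MM$ by their behaviour relative to $i$ and $j$, matching bases counted by $BC$ against those counted by $AD$ with bounded multiplicity; (b) a sharpening of the Hessian estimate behind Proposition~\ref{RayleighLemma}, using a plane adapted to the combinatorics of $\MM$ instead of the generic one, to extract a bound stronger than ``exactly one positive eigenvalue''; or (c) identifying the extremal matroid realizing $\tfrac87$ — presumably built from the Seymour--Welsh matroid, the smallest matroid whose basis generating polynomial is not $1$-Rayleigh — and then a monotonicity or deformation argument showing every matroid is no further from Rayleigh than it.

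The main obstacle is precisely that last step: obtaining the \emph{sharp, rank-independent} constant $\tfrac87$. The Lorentzian bound $2(1-\tfrac1d)$ grows with the rank and already exceeds $\tfrac87$ at rank $3$, and I see no self-improving inductive mechanism that drives it down to a constant independent of the rank; some genuinely new input — a precise description of the extremal configuration, or a direct positivity certificate for $\tfrac87\,BC-AD$ — seems to be required. I would also expect the argument to be accompanied by a matching example, a matroid and a sequence of points along which the Rayleigh ratio tends to $\tfrac87$, both to confirm that $\tfrac87$ is best possible and to pinpoint where the extremal analysis must land.
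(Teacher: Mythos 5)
The statement you were asked to prove is not a theorem in this paper: it is an open \emph{conjecture}, proposed right after Theorem~\ref{charjump}, and the paper offers no proof. The only supporting remark given is that $\tfrac87$ would be best possible, citing \cite[Theorem~7]{HSW}, together with the catalogue of known cases (regular matroids, $\mathbb{F}_3$- and $\mathbb{F}_4$-representable matroids, rank at most $3$, at most $7$ elements) where the stronger $1$-Rayleigh property holds. So there is no argument of the authors to compare against, and your honest conclusion that a genuinely new idea is needed for the sharp, rank-independent constant is the right one.

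Everything you do establish is correct. The direction $(2)\Rightarrow(1)$ is exactly as you say: Theorem~\ref{MRayleigh} gives $\MM$-convexity of the support, and the Boolean case of Theorem~\ref{charjump} turns that into the basis axioms. For $(1)\Rightarrow(2)$, your polynomial identity
\[
\tfrac87\,\partial_i f_\JJ\,\partial_j f_\JJ-f_\JJ\,\partial_i\partial_j f_\JJ=\bigl(\tfrac87\,BC-AD\bigr)+\tfrac17\bigl(w_iBD+w_jCD+w_iw_jD^2\bigr)
\]
is algebraically correct, and the equivalence you draw with the pure minor inequality $\tfrac87\,BC\ge AD$ on the nonnegative orthant is sound: the tail is nonnegative because $\tfrac17>0$ and $B,C,D$ have nonnegative coefficients, while specializing $w_i=w_j=0$ recovers the first term. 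Two small cautions, neither of which undermines the reduction. First, the instances of the $c$-Rayleigh condition with $\alpha\ne0$ are not literally vacuous for a multi-affine $f$; they are the pairwise condition for the contraction $\partial^\alpha f$, which is again a matroid basis-generating polynomial, so they are \emph{implied} once the pairwise inequality is known for all matroids (alternatively, they follow by letting the variables in $\operatorname{supp}(\alpha)$ tend to $\infty$). Second, the identifications $A=f_{\MM\setminus i\setminus j}$, etc., are only accurate as basis-generating polynomials of those minors when the minors have full rank; in the degenerate cases one of $A,B,C,D$ vanishes and the inequality is immediate, so this is a notational rather than a substantive issue. Your three proposed routes toward $\tfrac87\,BC\ge AD$ — an activities/matching argument, a refined Hessian bound, or an extremal analysis anchored at the Seymour--Welsh matroid — are all reasonable directions, and your diagnosis that the Lorentzian constant $2(1-\tfrac1d)$ offers no self-improving inductive mechanism is precisely the obstruction. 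In short: the reductions are right, the gap you identify is real, and the paper does not close it either.
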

 
The constant $\frac{8}{7}$ is best possible: For any positive real number $c<\frac{8}{7}$, there is a matroid whose basis generating function is not $c$-Rayleigh \cite[Theorem 7]{HSW}.

\subsection{Valuated matroids, $\MM$-convex functions, and Lorentzian polynomials}\label{tropsec}


Let $\nu$ be a function from $\NN^n$ to $\mathbb{R} \cup \{\infty\}$.
The \emph{effective domain} of $\nu$ is, by definition,
\[
\text{dom}(\nu)=\Big\{\alpha \in \NN^n \mid \nu(\alpha) <\infty\Big\}.
\]
The function $\nu$ is said to be \emph{$\MM$-convex} if satisfies the \emph{symmetric exchange property}:
\begin{enumerate}[(1)]\itemsep 5pt
\item[(1)] For any $\alpha,\beta \in \text{dom}(\nu)$ and any  $i$ satisfying $\alpha_i >\beta_i$, there is  $j$ satisfying
\[
\alpha_j <\beta_j \ \ \text{and} \ \  \nu(\alpha)+\nu(\beta) \ge \nu(\alpha-e_i+e_j)+\nu(\beta-e_j+e_i).
\]
\end{enumerate}
Note that the effective domain of an $\MM$-convex function on $\NN^n$ is an $\MM$-convex subset of $\NN^n$.
In particular, the effective domain of an $\MM$-convex function on $\NN^n$  is contained in $\Delta^d_n$ for some $d$. 
In this case, we identify $\nu$ with its restriction to $\Delta^d_n$.
When the effective domain of $\nu$ is is $\MM$-convex, 
the symmetric exchange property for $\nu$ is equivalent to the following \emph{local exchange property}:
\begin{enumerate}[(1)]\itemsep 5pt
\item[(2)] For any $\alpha,\beta \in \text{dom}(\nu)$ with $|\alpha-\beta|_1=4$, there are $i$ and $j$ satisfying 
\[
\alpha_i >\beta_i, \ \  \alpha_j <\beta_j \ \ \text{and} \ \  \nu(\alpha)+\nu(\beta) \ge \nu(\alpha-e_i+e_j)+\nu(\beta-e_j+e_i).
\]
\end{enumerate}
A proof of the equivalence of the two exchange properties can be found in \cite[Section 6.2]{Murota}.

\begin{example}
The indicator function of  $\mathrm{J} \subseteq \NN^n$ is the function $\nu_\mathrm{J}:\NN^n \rightarrow \mathbb{R} \cup \{\infty\}$ defined by
\[
\nu_\mathrm{J}(\alpha)=\left\{\begin{array}{cc} 0 & \text{if $\alpha \in \mathrm{J}$,}\\  \infty & \text{if $\alpha \notin \mathrm{J}$.}\end{array}\right.
\]
Clearly,  $\mathrm{J} \subseteq \NN^n$ is $\MM$-convex if and only if the indicator function $\nu_\mathrm{J}$  is $\MM$-convex.
\end{example}

A function $\nu: \NN^n \to \mathbb{R} \cup \{-\infty\}$ is said to be \emph{$\MM$-concave} if $-\nu$ is $\MM$-convex.
The \emph{effective domain} of an $\MM$-concave function is
\[
\text{dom}(\nu)=\Big\{\alpha \in \NN^n \mid \nu(\alpha) >-\infty\Big\}.
\]
A \emph{valuated matroid} on $[n]$ is an $\MM$-concave function on $\NN^n$ whose effective domain is a nonempty subset of  $\{0,1\}^n$ .
The effective domain of a valuated matroid $\nu$ on $[n]$ is the set of bases of a matroid on $[n]$, the \emph{underlying matroid} of $\nu$.

In this section, we prove that the class of tropicalized Lorentzian polynomials coincides with the class of $\MM$-convex functions.
The tropical connection is used to produce Lorentzian polynomials from $\MM$-convex functions.
First, we state a classical version of the  result.
For  any function $\nu:\Delta^{d}_n \to \mathbb{R} \cup \{\infty\}$ and a positive real number $q$, we define
\[
f^{\nu}_{q}(w)=\sum_{\alpha \in \text{dom}(\nu)} \frac{q^{\nu(\alpha)}}{\alpha!} w^\alpha \ \ \text{and} \ \  g^{\nu}_{q}(w)=\sum_{\alpha \in \text{dom}(\nu)} 
{\delta \choose \alpha} q^{\nu(\alpha)} w^\alpha,
\]
where  $\delta=(d,\ldots,d)$ and
${\delta \choose \alpha}$
is the product of binomial coefficients $\prod_{i=1}^n {d_{\ } \choose \alpha_i}$.
When $\nu$ is the indicator function of $\mathrm{J} \subseteq \NN^n$, 
the polynomial $f^\nu_q$ is independent of $q$ and equal to the generating function $f_\mathrm{J}$ considered in Section \ref{secM}.

\begin{theorem}\label{classical}
The following conditions are equivalent for   $\nu:\Delta^d_n \to \mathbb{R} \cup \{\infty\}$.
\begin{enumerate}[(1)]\itemsep 5pt
\item The function $\nu$ is $\MM$-convex.
\item The polynomial $f^{\nu}_{q}(w)$ is Lorentzian for all $0 < q \le 1$.
\item The polynomial $g^{\nu}_{q}(w)$ is Lorentzian for all $0 < q \le 1$.
\end{enumerate}
\end{theorem}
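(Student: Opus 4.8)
The plan is to prove the cycle of implications $(1)\Rightarrow(2)\Rightarrow(3)\Rightarrow(1)$; this reduces the whole theorem to the case $d=2$, which is where the real work lies.

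\smallskip

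For $(1)\Rightarrow(2)$ I would argue by induction on $d$. The cases $d\le 1$ are trivial, since $f^\nu_q$ is then a nonnegative constant or a linear form with nonnegative coefficients. The key point for $d\ge 3$ is that differentiation corresponds to the restriction operation of discrete convex analysis: one has $\partial_i f^\nu_q=f^{\nu_i}_q$, where $\nu_i(\beta)=\nu(\beta+e_i)$ for $\beta+e_i\in\mathrm{dom}(\nu)$ and $\nu_i(\beta)=\infty$ otherwise, and $\nu_i$ is again $\MM$-convex by an immediate check of the symmetric exchange property. Since $\mathrm{supp}(f^\nu_q)=\mathrm{dom}(\nu)$ is $\MM$-convex, we have $f^\nu_q\in\mathrm{M}^d_n$, so by Definition \ref{SecondDefinition} and the inductive hypothesis applied to every $\nu_i$ we conclude $f^\nu_q\in\mathrm{L}^d_n$. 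Thus everything comes down to $d=2$.

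\smallskip

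The base case $d=2$ is the main obstacle. Here $f^\nu_q=\tfrac12\,w^{T}Aw$ with $A=(q^{\nu(e_a+e_b)})_{a,b=1}^n$ (convention $q^\infty=0$), and since $\mathrm{L}^2_n=\mathrm{S}^2_n$ consists exactly of quadratic forms with nonnegative coefficients whose matrix has at most one positive eigenvalue, I must show: for $\MM$-convex $\nu$ on $\Delta^2_n$ and $0<q\le 1$, the matrix $A$ has at most one positive eigenvalue. I would prove this by induction on $n$, peeling off a coordinate $k$ with $2e_k\in\mathrm{dom}(\nu)$ (if there is none, the situation is the rank-two/graphic one of Lemma \ref{deg2}) through the Schur complement of $A_{kk}=q^{\nu(2e_k)}>0$. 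That complement equals $A'-ww^{T}$, where $A'$ is the principal block on $[n]\setminus k$ — of the same form, for the restricted (still $\MM$-convex) function — and $w_a=q^{\nu(e_a+e_k)-\frac12\nu(2e_k)}$; the exchange inequalities $\nu(2e_k)+\nu(e_a+e_b)\ge \nu(e_a+e_k)+\nu(e_b+e_k)$ together with the inductive statement for $A'$ then force $A'-ww^{T}\preceq 0$. Making the bookkeeping around loops and vanishing entries precise is the delicate part.

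\smallskip

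For $(2)\Rightarrow(3)$ I would use the operator machinery of Section \ref{secOp}. With $\delta=(d,\dots,d)$, let $T\colon\mathbb{R}_\delta[w_i]\to\mathbb{R}_\delta[w_i]$ be defined by $T(w^\alpha)=\frac{1}{(\delta-\alpha)!}w^\alpha$; then $g^\nu_q=(d!)^n\,T(f^\nu_q)$ and $T$ is homogeneous of degree $0$ with symbol
\[
\mathrm{sym}_T(w,u)=(d!)^n\prod_{i=1}^n\Bigg(\sum_{k=0}^d\frac{w_i^{\,k}u_i^{\,d-k}}{k!\,((d-k)!)^2}\Bigg).
\]
Each bivariate factor is Lorentzian, because by Example \ref{ExampleBivariate} this is the ultra log-concavity of the normalized coefficients $\frac{1}{d!\,(d-k)!}$, which reduces to $(d-k+1)\ge(d-k)$; hence $\mathrm{sym}_T$ is Lorentzian by Corollary \ref{CorollaryProduct}, and Theorem \ref{hrcpr} gives $g^\nu_q\in\mathrm{L}^d_n$.

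\smallskip

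For $(3)\Rightarrow(1)$, Theorem \ref{chars} shows $\mathrm{dom}(\nu)=\mathrm{supp}(g^\nu_q)$ is $\MM$-convex, so it remains to verify the local exchange property for $\nu$. Fix $\alpha,\beta\in\mathrm{dom}(\nu)$ with $|\alpha-\beta|_1=4$ and put $\gamma=\alpha\wedge\beta$, so $|\gamma|_1=d-2$. By Proposition \ref{pols} the polarization $G=\Pi^\uparrow_\delta(g^\nu_q)=\sum_{\alpha'\in\mathrm{dom}(\nu)}q^{\nu(\alpha')}\prod_i e_{\alpha'_i}(w_{i1},\dots,w_{id})$ is a multi-affine Lorentzian polynomial all of whose coefficients are powers $q^{\nu(\alpha')}$. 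Choosing disjoint families of variables $w_{ij}$ realizing $\gamma$, $\alpha-\gamma$ and $\beta-\gamma$, one gets a set $S_\gamma$ of $d-2$ variables and two disjoint two-element sets $S_A,S_B$ such that the coefficient matrix $N$ of the quadratic form $\partial^{S_\gamma}G\in\mathrm{S}^2_{dn}$, restricted to the four coordinates $S_A\cup S_B$, is a symmetric zero-diagonal $4\times4$ matrix whose nonzero entries are exactly $q^{\nu(\alpha)}$, $q^{\nu(\beta)}$ and the $q^{\nu(\gamma+e_i+e_j)}$. By Cauchy interlacing this submatrix has at most one positive eigenvalue, hence (being zero-trace) nonpositive determinant; writing the determinant of a zero-diagonal symmetric $4\times4$ matrix as $x^2+y^2+z^2-2xy-2yz-2zx$ in its three pairing-products $x,y,z$ yields
\[
q^{2P_1}+q^{2P_2}+q^{2P_3}\le 2\big(q^{P_1+P_2}+q^{P_2+P_3}+q^{P_3+P_1}\big),\qquad 0<q\le 1,
\]
where $P_1=\nu(\alpha)+\nu(\beta)$ and $P_2,P_3$ are the two sums of $\nu$-values produced by the exchanges $(i,j)\mapsto(\alpha-e_i+e_j,\beta-e_j+e_i)$. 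Letting $q\to 0^+$ forces $P_1\ge\min(P_2,P_3)$, which is precisely the required local exchange inequality; the cases with coincidences among the four coordinates collapse to $y=z$ and are handled the same way. Since the domain is $\MM$-convex, the local and symmetric exchange properties are equivalent, so $\nu$ is $\MM$-convex.
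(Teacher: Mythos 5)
Your implications $(2)\Rightarrow(3)$ and $(3)\Rightarrow(1)$ are correct and genuinely different from the paper's route: for $(3)\Rightarrow(1)$ the paper passes to the Puiseux field $\mathbb{K}$ by quantifier elimination and then invokes Theorem \ref{tropical}, which in turn appeals to \cite[Theorem 4]{SIAM}, whereas your $4\times 4$ zero-diagonal minor of the polarization together with the $q\to 0^+$ analysis of $x^2+y^2+z^2-2xy-2yz-2zx$ gives a self-contained argument over $\RR$; and your symbol computation for the operator $w^\alpha\mapsto w^\alpha/(\delta-\alpha)!$ is a clean direct route from $(2)$ to $(3)$.

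The gap is in $(1)\Rightarrow(2)$. The induction on $d$ correctly reduces to the quadratic case, but there the Schur complement step does not go through. You use only the exchange inequalities involving $2e_k$, which give the entrywise bound $A'_{ab}\le w_aw_b$, and then assert that this together with the inductive statement (``$A'$ has at most one positive eigenvalue'') forces $A'-ww^T\preceq 0$. That implication is false: with $A'=\mathrm{J}_2$ (eigenvalues $2$ and $0$) and $w=(1,2)^T$ every bound $A'_{ab}\le w_aw_b$ holds, yet
\[
A'-ww^T=\begin{pmatrix}0&-1\\-1&-3\end{pmatrix}
\]
has determinant $-1$ and hence a positive eigenvalue. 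Of course this particular $(A',w)$ cannot come from an $\MM$-convex $\nu$ --- the exchanges between $2e_a$ and $e_b+e_k$ would force $w_1=w_2$ --- but your written argument does not use those further constraints, and I do not see how to close the induction from the hypotheses you state. There is also a base case you have not handled: when $2e_k\notin\mathrm{dom}(\nu)$ for every $k$, so that $\nu$ is a rank-$2$ valuated matroid on ${n\brack 2}$, you point to Lemma \ref{deg2}, but that lemma only covers $\{0,1\}$-valued coefficient matrices; the one-positive-eigenvalue assertion for the matrix with entries $q^{\nu(e_a+e_b)}$ is exactly Lemma \ref{ultrametricHR}, whose proof goes through $\mathscr{T}_n=\mathrm{Dr}(2,n)$ and Schoenberg's embedding theorem for ultrametric spaces. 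That is the nontrivial analytic input behind $(1)\Rightarrow(2)$ in the paper, and your proposal does not yet supply a substitute for it.
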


The proof of Theorem \ref{classical}, which relies on the theory of phylogenetic trees and the problem of isometric embeddings of finite metric spaces in Euclidean spaces,
will be given at the end of this subsection.

\begin{example}\label{examplerank}
A function $\mu$ from $\mathbb{N}^n$ to $\mathbb{R} \cup \{\infty\}$ is said to be \emph{$\mathrm{M}^\natural$-convex}  if, for some positive integer $d$,
the function $\nu$ from $\mathbb{N}^{n+1}$ to $\mathbb{R} \cup \{\infty\}$ defined by
\[
\nu(\alpha_0,\alpha_1,\ldots,\alpha_n)=\left\{\begin{array}{cc} \mu(\alpha_1,\ldots,\alpha_n) & \text{if $\alpha \in \Delta^d_{n+1}$,} \\ \infty &  \text{if $\alpha \notin \Delta^d_{n+1}$,} \end{array}\right.
\]
is $\mathrm{M}$-convex. 
The condition does not depend on  $d$, and $\mathrm{M}^\natural$-concave functions are defined similarly.
We refer to \cite[Chapter 6]{Murota} for more on $\mathrm{M}^\natural$-convex  and $\mathrm{M}^\natural$-concave functions.

It can be shown that every matroid rank function $\text{rk}_\mathrm{M}$,  viewed as a function on $\mathbb{N}^n$ with the effective domain $\{0,1\}^n$,
is  $\mathrm{M}^\natural$-concave.
See \cite[Section 3]{Shioura} for an elementary proof and other related results.
Thus, by Theorem \ref{classical}, the normalized rank generating function
\[
\sum_{A \subseteq [n]} \frac{1}{c(A)!}q^{-\text{rk}_\mathrm{M}(A)}w^A w_0^{c(A)}, \ \  \text{where $w=(w_1,\ldots,w_n)$ and $c(A)=n-|A|$},
\]
is Lorentzian for all $0<q \le 1$.
We will obtain a sharper result on $\text{rk}_\mathrm{M}$ 
in Section \ref{SecqP}.
\end{example}

Theorem \ref{classical} provides a useful sufficient condition for a homogeneous polynomial  to be Lorentzian.
Let $f$ be an arbitrary  homogeneous polynomial with nonnegative real coefficients written in the normalized form
\[
f=\sum_{\alpha \in \Delta^d_n} \frac{c_\alpha}{\alpha!}  w^\alpha.
\]
We define a discrete function $\nu_f$ using natural logarithms of the normalized coefficients:
 \[
 \nu_f:\Delta^d_n \longrightarrow \mathbb{R} \cup \{-\infty\}, \qquad \alpha \longmapsto \log(c_\alpha).
 \]
 
 \begin{corollary}\label{normalizedcoefficients}
 If $\nu_f$ is an $\MM$-concave function, then $f$ is a Lorentzian polynomial.
 \end{corollary}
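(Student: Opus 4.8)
The plan is to deduce this directly from Theorem \ref{classical} by a judicious choice of the parameter $q$. If $f = 0$ there is nothing to prove, so assume $\supp(f) \neq \emptyset$. Set $\nu = -\nu_f : \Delta^d_n \to \RR \cup \{\infty\}$; its effective domain $\text{dom}(\nu)$ coincides with $\supp(f) = \{\alpha \mid c_\alpha > 0\}$. By hypothesis $\nu_f$ is $\MM$-concave, which by definition means precisely that $\nu = -\nu_f$ is $\MM$-convex.

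Apply the implication $(1) \Rightarrow (2)$ of Theorem \ref{classical} to the $\MM$-convex function $\nu$: the polynomial
\[
f^{\nu}_q(w) = \sum_{\alpha \in \text{dom}(\nu)} \frac{q^{\nu(\alpha)}}{\alpha!} \hspace{0.5mm} w^\alpha = \sum_{\alpha \in \supp(f)} \frac{q^{-\log c_\alpha}}{\alpha!} \hspace{0.5mm} w^\alpha
\]
is Lorentzian for every $0 < q \le 1$. It remains to specialize $q$ so that this polynomial becomes $f$ itself. Taking $q = e^{-1}$, which lies in the interval $(0, 1]$, we obtain $q^{-\log c_\alpha} = e^{\log c_\alpha} = c_\alpha$ for every $\alpha \in \supp(f)$, and hence $f^{\nu}_{e^{-1}}(w) = \sum_{\alpha} \frac{c_\alpha}{\alpha!} \hspace{0.5mm} w^\alpha = f(w)$. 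Therefore $f$ is Lorentzian.

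Since Theorem \ref{classical} does all the substantive work, there is no genuine obstacle here; the only points to check are the bookkeeping ones recorded above, namely that $-\nu_f$ is $\MM$-convex exactly when $\nu_f$ is $\MM$-concave, that $\text{dom}(-\nu_f)$ equals $\supp(f)$, and that the chosen value $q = e^{-1}$ falls within the admissible range $(0,1]$ appearing in Theorem \ref{classical}.
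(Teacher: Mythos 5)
Your proof is correct and is essentially the same as the paper's: both apply Theorem~\ref{classical}~(1)~$\Rightarrow$~(2) to the $\MM$-convex function $-\nu_f$ and then specialize $q=e^{-1}$ so that $q^{-\nu_f(\alpha)} = c_\alpha$. The bookkeeping you record (sign flip between $\MM$-concave/$\MM$-convex, $\mathrm{dom}(-\nu_f)=\supp(f)$, $e^{-1}\in(0,1]$) is exactly what is needed and is implicit in the paper's one-line proof.
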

 
 \begin{proof}
 By Theorem \ref{classical}, the polynomial
 $ \sum_{\alpha \in \text{dom}(\nu_f)} \frac{q^{-\nu_f(\alpha)}}{\alpha!} w^\alpha$
 is Lorentzian when $q=e^{-1}$.
 \end{proof}
 
We note that the converse of Corollary \ref{normalizedcoefficients} does not hold. For example, the  polynomial
\[
f=\prod_{i=1}^{n-1} (w_i+w_n)
\]
is Lorentzian, being a product of Lorentzian polynomials. However, $\nu_f$ fails to be $\MM$-concave when $n > 2$.


We formulate a tropical counterpart of Theorem \ref{classical}.
Let $\mathbb{C}((t))_{\text{conv}}$ be the field of  Laurent series with complex coefficients that have a positive radius of convergence around $0$.
By definition, any nonzero element of $\mathbb{C}((t))_{\text{conv}}$ is a series of the form
\[
s(t)=c_1t^{a_1}+c_2t^{a_2}+c_3t^{a_3}+\cdots, 
\]
where $c_1,c_2,\ldots$ are nonzero complex numbers and $a_1<a_2<\cdots$ are integers,
that converges on a punctured open disk centered at $0$.
Let $\mathbb{R}((t))_{\text{conv}}$ be the subfield of elements that have real coefficients.
We define the fields of real and complex convergent Puiseux series\footnote{The main statements in this section are valid over the field of formal  Puiseux  series as well.}
 by
\[
\mathbb{K}=  \bigcup_{k \ge 1} \mathbb{R}((t^{1/k}))_{\text{conv}} \ \ \text{and} \ \ \overline{\mathbb{K}}= \bigcup_{k \ge 1} \mathbb{C}((t^{1/k}))_{\text{conv}}.
\]
Any nonzero element of $\overline{\mathbb{K}}$ is a series of the form
\[
s(t)=c_1t^{a_1}+c_2t^{a_2}+c_3t^{a_3}+\cdots, 
\]
where $c_1,c_2,\ldots$ are nonzero complex numbers and $a_1<a_2<\cdots$ are rational numbers that have a common denominator.
The \emph{leading coefficient} of $s(t)$  is $c_1$, and the \emph{leading exponent} of $s(t)$ is $a_1$.
A nonzero element of $\mathbb{K}$ is \emph{positive} if its leading coefficient is positive.
The \emph{valuation map} is the function
\[
\text{val}:\overline{\mathbb{K}} \longrightarrow \mathbb{R} \cup \{\infty\},
\]
that takes the zero  element to $\infty$ and a nonzero element to its leading exponent.
For a nonzero element $s(t) \in \mathbb{K}$, we have
\[
\text{val} \big(s(t)\big)=\lim_{t \to 0^+} \log_t \big(s(t)\big).
\]

The field $\overline{\mathbb{K}}$ is algebraically closed, and the field $\mathbb{K}$ is real closed.
See \cite[Section 1.5]{Sp} and references therein.
Since the theory of real closed fields has quantifier elimination \cite[Section 3.3]{Marker},
for any first-order formula $\varphi(x_1,\ldots,x_m)$ in the language of ordered fields and
any $s_1(t),\ldots,s_m(t)\in \mathbb{K}$, we have
\begin{multline*}
\Big(\text{$\varphi(s_1(t),\ldots,s_m(t))$ holds in $\mathbb{K}$}\Big) 
\Longleftrightarrow\\
\Big(\text{$\varphi(s_1(q),\ldots,s_m(q))$ holds in $\mathbb{R}$ for all sufficiently small positive real numbers $q$}\Big). 
\end{multline*}
In particular, Tarski's principle holds for $\mathbb{K}$: A first-order sentence in the language of ordered fields holds in $\mathbb{K}$ if and only if it holds in $\mathbb{R}$.

\begin{definition}
Let $f_t=\sum_{\alpha \in \Delta^d_n} s_\alpha(t) w^\alpha$ be a nonzero  homogeneous polynomial with coefficients in $\mathbb{K}_{\ge 0}$.
The \emph{tropicalization} of $f_t$ is the discrete function defined by
\[
\text{trop}(f_t):\Delta^d_n \longrightarrow \mathbb{R} \cup \{\infty\}, \qquad \alpha \longmapsto \text{val}\big(s_\alpha(t)\big).
\]
We say that $f_t$ is  \emph{log-concave on $\mathbb{K}^n_{> 0}$} if 
the function $\log(f_q)$ is concave on $\mathbb{R}^n_{>0}$ for all sufficiently small positive real numbers $q$.
\end{definition}
 
Note that the support of $f_t$ is the effective domain of the tropicalization of $f_t$.
 We write $\mathrm{M}^d_n(\mathbb{K})$ for the set of all degree $d$ homogeneous polynomials in $\mathbb{K}_{\ge 0}[w_1,\ldots,w_n]$  whose support is $\MM$-convex.
 
 \begin{definition}[Lorentzian polynomials over $\mathbb{K}$]
 We set $ \mathrm{L}^0_n(\mathbb{K})= \mathrm{M}^0_n(\mathbb{K})$, $ \mathrm{L}^1_n(\mathbb{K})= \mathrm{M}^1_n(\mathbb{K})$, and
\[
 \mathrm{L}^2_n(\mathbb{K})=\Big\{f_t \in \mathrm{M}^2_n(\mathbb{K}) \mid \text{The Hessian of $f_t$ has at most one eigenvalue in $\mathbb{K}_{>0}$}\Big\}.
 \]
 For $d \ge 3$, we define $  \mathrm{L}^d_n(\mathbb{K})$ by setting
 \[
  \mathrm{L}^d_n(\mathbb{K})=\Big\{f_t \in \mathrm{M}^d_n(\mathbb{K}) \mid \text{$\partial^\alpha f_t \in  \mathrm{L}^2_n(\mathbb{K})$ for all $\alpha \in \Delta^{d-2}_n$}\Big\}.
  \]
The polynomials in $ \mathrm{L}^d_n(\mathbb{K})$ will be called \emph{Lorentzian}.
\end{definition}

By Proposition \ref{logcon}, the log-concavity of  homogeneous polynomials can be expressed in the first-order language of ordered fields.
It follows that the analog of Theorem \ref{allequal} holds for any homogeneous polynomial $f_t$ with coefficients in $\mathbb{K}_{\ge 0}$.

\begin{theorem}
The following conditions are equivalent for $f_t$.
\begin{enumerate}[(1)]\itemsep 5pt
\item For any $m \in \NN$ and any $m\times n$ matrix $(a_{ij})$ with entries in $\mathbb{K}_{\ge 0}$,
 \[
 \text{$\Big(\prod_{i=1}^m D_{i}\Big) f_t$ is identically zero or $\Big(\prod_{i=1}^m D_{i}\Big) f_t$ is log-concave on $\mathbb{K}_{> 0}^n$,}
\]
where $D_i$ is the differential operator $\sum_{j=1}^n a_{ij} \partial_j$.
\item For any $\alpha \in \NN^n$, the polynomial $\partial^\alpha f_t$ is identically zero or log-concave on $\mathbb{K}_{> 0}^n$.
\item The polynomial $f_t$ is Lorentzian.
\end{enumerate}
\end{theorem}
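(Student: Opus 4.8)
The plan is to derive this theorem from its real counterpart, Theorem \ref{allequal}, using the Tarski transfer principle for the real closed field $\mathbb{K}$ recorded above. First I would dispose of the trivial cases: we may assume $f_t$ is nonzero of degree $d\ge 2$ in $n\ge 2$ variables, since any nonzero homogeneous polynomial of degree at most $1$, and any polynomial in a single variable, satisfies all three conditions directly (its Hessian, and the Hessians of all its partials, have at most one positive eigenvalue). Note also that for all sufficiently small $q>0$ each coefficient $s_\alpha(q)$ is nonnegative and vanishes exactly when $s_\alpha=0$ in $\mathbb{K}$, so $\mathrm{supp}(f_q)=\mathrm{supp}(f_t)$ and $f_q$ has nonnegative real coefficients; the same applies to any polynomial obtained from $f_t$ by applying differential operators with coefficients in $\mathbb{K}_{\ge 0}$.

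The technical core I would isolate is the transfer of the Lorentzian property itself:
\[
f_t\in\mathrm{L}^d_n(\mathbb{K})\quad\Longleftrightarrow\quad f_q\in\mathrm{L}^d_n(\mathbb{R})\ \text{ for all sufficiently small }q>0.
\]
Here one unwinds the definition: $f_t\in\mathrm{L}^d_n(\mathbb{K})$ means $\mathrm{supp}(f_t)$ is $\MM$-convex and, for every $\alpha\in\Delta^{d-2}_n$, the constant symmetric matrix $\mathscr{H}_{\partial^\alpha f_t}$ has at most one eigenvalue in $\mathbb{K}_{>0}$. The support condition is purely combinatorial and independent of the field. For each fixed $\alpha$, the assertion that a symmetric matrix has at most one positive eigenvalue is a first-order formula in its entries (for instance: there is no pair of linearly independent vectors on which the associated quadratic form is positive definite), and the entries of $\mathscr{H}_{\partial^\alpha f_t}$ are fixed integer-linear combinations of the $s_\beta(t)\in\mathbb{K}$; so by the transfer principle this holds in $\mathbb{K}$ iff the corresponding statement holds for $\mathscr{H}_{\partial^\alpha f_q}$ over $\mathbb{R}$ for all sufficiently small $q>0$. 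Intersecting over the finitely many $\alpha\in\Delta^{d-2}_n$ and using Theorem \ref{chars} to identify $\mathrm{L}^d_n(\mathbb{R})$ with the polynomials having nonnegative coefficients, $\MM$-convex support, and $\partial^\alpha f$ with at most one positive eigenvalue for all $\alpha\in\Delta^{d-2}_n$, gives the displayed equivalence.

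With this in hand I would run the cycle $(1)\Rightarrow(2)\Rightarrow(3)\Rightarrow(1)$. The implication $(1)\Rightarrow(2)$ is immediate by taking the operators $D_i$ to be coordinate partial derivatives. For $(2)\Rightarrow(3)$: condition $(2)$ says that for each of the finitely many $\alpha$ with $|\alpha|_1\le d$, either $\partial^\alpha f_t=0$ (so $\partial^\alpha f_q=0$ for every $q$) or $\log\partial^\alpha f_q$ is concave on $\mathbb{R}^n_{>0}$ for all small $q$; choosing $q$ below all the finitely many thresholds, $f_q$ is strongly log-concave, hence Lorentzian over $\mathbb{R}$ by Theorem \ref{allequal}, hence $f_t\in\mathrm{L}^d_n(\mathbb{K})$ by the transfer above. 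For $(3)\Rightarrow(1)$: by the transfer, $f_q$ is Lorentzian over $\mathbb{R}$ for all small $q$, hence completely log-concave by Theorem \ref{allequal}; now fix $m$ and an $m\times n$ matrix $(a_{ij})$ over $\mathbb{K}_{\ge 0}$ and put $g_t=(\prod_i D_i)f_t$, whose coefficients lie in $\mathbb{K}_{\ge 0}$. If $g_t=0$ there is nothing to check; otherwise $g_q\neq 0$ for small $q$, and for small $q$ we also have $a_{ij}(q)\ge 0$ and $f_q$ completely log-concave, so applying complete log-concavity of $f_q$ to the real matrix $(a_{ij}(q))$ shows $\log g_q$ is concave on $\mathbb{R}^n_{>0}$; thus $g_t$ is log-concave on $\mathbb{K}^n_{>0}$ by definition, which is $(1)$.

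The point that needs care — and the reason this cannot be a one-line transfer of Theorem \ref{allequal} — is that condition $(1)$ over $\mathbb{K}$ is not a single first-order sentence, because of the quantifier over the number $m$ of differential operators; the argument sidesteps this by routing $(2)\Rightarrow(3)\Rightarrow(1)$ through the genuinely first-order (hence transferable) Lorentzian property together with the real theorem, verifying the $m$-quantified condition $(1)$ one operator-tuple at a time. The remaining verifications — first-order expressibility of ``at most one positive eigenvalue,'' the equivalence between log-concavity and the one-positive-eigenvalue condition for the Hessian via Proposition \ref{logcon}, and the field-independence of $\MM$-convexity of supports — are routine and I would only indicate them briefly.
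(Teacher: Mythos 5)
Your proposal is correct and follows the route the paper indicates: it is a careful unpacking of the remark preceding the theorem, which asserts that by Proposition \ref{logcon} log-concavity is first-order expressible and hence the analog of Theorem \ref{allequal} transfers from $\mathbb{R}$ to $\mathbb{K}$. The paper gives only this one-sentence justification; your write-up supplies the details the paper leaves implicit, and you correctly flag the one nontrivial point — that condition (1) quantifies over the number $m$ of operators and so is not a single first-order sentence — and resolve it by routing the cycle $(2)\Rightarrow(3)\Rightarrow(1)$ through the transferable Lorentzian property, then verifying (1) one operator tuple at a time. The transfer lemma you isolate, $f_t\in\mathrm{L}^d_n(\mathbb{K})$ iff $f_q\in\mathrm{L}^d_n(\mathbb{R})$ for all small $q>0$, is exactly what the paper uses repeatedly in the subsequent proofs of Theorems \ref{classical} and \ref{tropical}, and your first-order formula for ``at most one positive eigenvalue'' (no positive-definite $2$-plane) is the right one via Sylvester's law of inertia.
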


The field $\mathbb{K}$ is real closed, and the field $\overline{\mathbb{K}}$ is algebraically closed \cite[Section 1.5]{Sp}.
Any element $s(t)$ of $\overline{\mathbb{K}}$ can be written as a sum
\[
s(t)=p(t)+i \hspace{0.5mm}q(t),
\]
where $p(t) \in \mathbb{K}$ is the \emph{real part} of $s(t)$ and $q(t) \in \mathbb{K}$ is the \emph{imaginary part} of $s(t)$.
The \emph{open upper half plane in $\overline{\mathbb{K}}$}  is the set of elements in $\overline{\mathbb{K}}$ with positive imaginary parts.
A polynomial $f_t$ in $\mathbb{K}[w_1,\ldots,w_n]$ is \emph{stable} if $f_t$ is non-vanishing on $\mathscr{H}_{\overline{\mathbb{K}}}^n$ or identically zero, 
where $\mathscr{H}_{\overline{\mathbb{K}}}$ is the open upper half plane in $\overline{\mathbb{K}}$.
According to \cite[Theorem 4]{SIAM}, tropicalizations of homogeneous stable polynomials  over $\mathbb{K}$ are $\MM$-convex functions.\footnote{In \cite{SIAM}, the field of formal Puiseux series with real exponents $\mathbb{R}\{t\}$ containing $\mathbb{K}$ was used. The tropicalization used in \cite{SIAM} differs from ours by a sign.}  
Here we prove that tropicalizations of Lorentzian polynomials  over $\mathbb{K}$ are $\MM$-convex, and that \emph{all} $\MM$-convex functions are limits of tropicalizations of Lorentzian polynomials over $\mathbb{K}$.\footnote{If $\mathbb{R}\{t\}$ is used instead of $\mathbb{K}$, then all $\MM$-convex functions are tropicalizations of Lorentzian polynomials. More precisely,  a discrete function $\nu$ with values in $\mathbb{R} \cup \{\infty\}$ is $\MM$-convex if and only if there is a Lorentzian polynomial over $\mathbb{R}\{t\}$ whose tropicalization is $\nu$. In this setting, the Dressian of a matroid $\mathrm{M}$ can be identified with the set of tropicalized Lorentzian polynomials $f_t$ with $\text{supp}(f_t)=\mathrm{B}$, where $\mathrm{B}$ is the set of bases of $\mathrm{M}$.
} 

\begin{theorem}\label{tropical}
The following conditions are equivalent for any function  $\nu:\Delta^d_n \to \mathbb{Q} \cup \{\infty\}$.
\begin{enumerate}[(i)]\itemsep 5pt
\item The function $\nu$ is $\MM$-convex.
\item There is a Lorentzian polynomial in $\mathbb{K}[w_1,\ldots,w_n]$ whose tropicalization is $\nu$.
\end{enumerate}
\end{theorem}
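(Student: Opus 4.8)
The plan is to prove the two implications separately, each time transferring between $\mathbb{R}$ and $\mathbb{K}$ via quantifier elimination for real closed fields and invoking Theorem~\ref{classical}. Throughout I may assume $d\ge 2$ and $\mathrm{dom}(\nu)\ne\emptyset$, the remaining cases being trivial.

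For (i) $\Rightarrow$ (ii): given an $\MM$-convex $\nu$, the candidate is $f_t=\sum_{\alpha\in\mathrm{dom}(\nu)}\frac{t^{\nu(\alpha)}}{\alpha!}\,w^\alpha$. It has support $\mathrm{dom}(\nu)$, which is $\MM$-convex, and $\mathrm{trop}(f_t)=\nu$ because $\mathrm{val}(t^{\nu(\alpha)})=\nu(\alpha)$. What remains is $f_t\in\mathrm{L}^d_n(\mathbb{K})$, i.e.\ that for each $\beta\in\Delta^{d-2}_n$ the constant Hessian of $\partial^\beta f_t$ has at most one eigenvalue in $\mathbb{K}_{>0}$. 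First I would clear the rational exponents: letting $N$ be a common denominator of the finitely many values $\nu(\alpha)$ and substituting $t=y^N$, the statement ``$\sum_\alpha\frac{y^{N\nu(\alpha)}}{\alpha!}w^\alpha$ has all the required Hessians with at most one positive eigenvalue'' becomes a genuine first-order formula $\Psi(y)$ over ordered fields --- here one uses that ``a symmetric matrix $M$ has at most one positive eigenvalue'' is first-order, e.g.\ for all vectors $x,x'$ the $2\times 2$ matrix with entries $x^TMx$, $x^TMx'$, $(x')^TMx'$ is not positive definite. By Theorem~\ref{classical}, $\Psi(q^{1/N})$ holds in $\mathbb{R}$ for all $0<q\le 1$, hence for all sufficiently small positive arguments; applying the transfer principle to the element $t^{1/N}\in\mathbb{K}$ shows $\Psi(t^{1/N})$ holds in $\mathbb{K}$, which together with the $\MM$-convexity of the support is exactly $f_t\in\mathrm{L}^d_n(\mathbb{K})$.

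For (ii) $\Rightarrow$ (i): let $f_t\in\mathrm{L}^d_n(\mathbb{K})$ with $\mathrm{trop}(f_t)=\nu$. Then $\mathrm{dom}(\nu)=\supp(f_t)$ is $\MM$-convex, so by the equivalence of the symmetric and local exchange properties it suffices to verify the local exchange property. Given $\alpha,\beta\in\mathrm{dom}(\nu)$ with $|\alpha-\beta|_1=4$, set $\mu=\alpha\wedge\beta$ (coordinatewise minimum); then $\mu\in\Delta^{d-2}_n$, the vectors $\alpha-\mu$ and $\beta-\mu$ have disjoint supports and total degree $2$, and $g_t:=\partial^\mu f_t\in\mathrm{L}^2_n(\mathbb{K})$ satisfies $\mathrm{trop}(g_t)(\gamma)=\nu(\gamma+\mu)$. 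Thus everything reduces to: if $g_t=\sum a_{pr}(t)w_pw_r\in\mathrm{L}^2_n(\mathbb{K})$, then $\mathrm{trop}(g_t)$ is $\MM$-convex. By the transfer principle, the quadratic form $\sum a_{pr}(q)w_pw_r$ has at most one positive eigenvalue --- hence exactly one, being nonzero with nonnegative entries --- for all sufficiently small $q>0$, so it is Lorentzian over $\mathbb{R}$ and Proposition~\ref{RayleighLemma} gives $\big(\sum a_{rs}(q)w_rw_s\big)a_{pq}(q)\le 2\big(\sum_s a_{ps}(q)w_s\big)\big(\sum_s a_{qs}(q)w_s\big)$ on $\mathbb{R}^n_{\ge 0}$. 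Evaluating at $w=e_r$ yields $a_{rr}(q)a_{pq}(q)\le 2a_{pr}(q)a_{qr}(q)$; evaluating at $w=e_k+te_l$ for distinct $i,j,k,l$, discarding the nonnegative $a_{kk}(q),a_{ll}(q)$ terms, and optimizing over $t>0$ by the AM--GM inequality yields $\sqrt{a_{ij}(q)a_{kl}(q)}\le\sqrt{a_{ik}(q)a_{jl}(q)}+\sqrt{a_{il}(q)a_{jk}(q)}$. Applying $\log_q$ (which reverses these inequalities and sends a sum of nonnegatives to the minimum of the summands in the limit) and letting $q\to 0^+$ converts them into exactly the valuation inequalities constituting the local exchange property for $\mathrm{trop}(g_t)$ in its three shapes ($2e_i$ vs $2e_j$, $2e_i$ vs $e_j+e_k$, $e_i+e_j$ vs $e_k+e_l$); the $\MM$-convexity of $\supp(g_t)$ guarantees the relevant coefficients are nonzero, so the minima are finite. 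Pulling back along $\partial^\mu$ then produces the indices $i,j$ needed for the local exchange property of $\nu$, so $\nu$ is $\MM$-convex.

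I expect the main obstacle to be the quadratic case of (ii) $\Rightarrow$ (i) for a pair $e_i+e_j$ versus $e_k+e_l$: the local exchange inequality there is a ``$\min$ attained'' condition rather than a single inequality, so one cannot simply take valuations of a single $2\times 2$ minor of the Hessian. The device that resolves this is the one-parameter family of Rayleigh inequalities obtained by evaluating at $e_k+te_l$ and optimizing in $t$, which produces the triangle-type bound on $\sqrt{a_{ij}a_{kl}}$ above. A secondary nuisance is the bookkeeping with the transfer principle --- one must phrase ``Lorentzian over $\mathbb{K}$'' (or the relevant eigenvalue piece of it) as a genuine first-order formula after clearing the rational exponents, keeping the fixed combinatorial $\MM$-convexity of the support separate from the eigenvalue conditions.
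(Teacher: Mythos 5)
Your proof is correct, and the interesting divergence is in the direction (ii) $\Rightarrow$ (i). For (i) $\Rightarrow$ (ii) you follow essentially the paper's route: write down the candidate $f^\nu_t$, cite Theorem~\ref{classical} for the real specializations, and transfer to $\mathbb{K}$ via quantifier elimination for real closed fields; your extra care about clearing rational exponents with the substitution $t=y^N$ and encoding the eigenvalue condition as a first-order formula is exactly the kind of bookkeeping the paper leaves implicit.

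For (ii) $\Rightarrow$ (i), both you and the paper reduce the local exchange property to the quadratic polynomial $\partial^\mu f_t \in \mathrm{L}^2_n(\mathbb{K})$, but then diverge: the paper simply observes that $\partial^\mu f_t$ is stable over $\mathbb{K}$ (a quadratic with nonnegative coefficients having Lorentzian Hessian is stable) and invokes \cite[Theorem 4]{SIAM}, Br\"and\'en's earlier result that tropicalizations of homogeneous stable polynomials over $\mathbb{K}$ are $\MM$-convex. You instead give a self-contained derivation of the degree-$2$ case from Proposition~\ref{RayleighLemma}: specialize to small $q>0$, extract the inequalities $a_{rr}a_{pq}\le 2a_{pr}a_{qr}$ by evaluating at $w=e_r$, and — crucially, to handle the shape $e_i+e_j$ versus $e_k+e_l$, where the local exchange condition is a disjunction of two inequalities — evaluate at $w=e_k+te_l$ and optimize in $t$ to obtain the triangle-type bound $\sqrt{a_{ij}a_{kl}}\le\sqrt{a_{ik}a_{jl}}+\sqrt{a_{il}a_{jk}}$. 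Taking $\log_q$ and $q\to 0^+$ then turns the sum into a $\min$, which is exactly the required disjunction. The constant $2$ and the multiplicative prefactors all wash out under $\log_q$, and the $\MM$-convexity of the support guarantees the relevant minimum is finite, so the argument closes. What the paper's citation buys is brevity; what your approach buys is a proof that uses only the machinery developed internally (the $c$-Rayleigh inequality), effectively reproving the quadratic case of \cite[Theorem 4]{SIAM} from scratch. Both are valid.
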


Let $\MM$ be a matroid with the set of bases $\mathrm{B}$.
The \emph{Dressian} of $\MM$, denoted $\text{Dr}(\mathrm{M})$, is the tropical variety in $\mathbb{R}^\mathrm{B}$
obtained by intersecting the tropical hypersurfaces of the  Pl\"ucker relations in $\mathbb{R}^\mathrm{B}$ \cite[Section 4.4]{MS}.
Since $\text{Dr}(\mathrm{M})$ is a rational polyhedral fan whose points bijectively correspond to the valuated matroids with underlying matroid $\MM$, Theorem \ref{tropical} shows that
\[
\text{Dr}(\mathrm{M})=\text{closure}\Big\{-\text{trop}(f_t) \mid \text{$f_t$ is a Lorentzian polynomial with $\text{supp}(f_t)=\mathrm{B}$}\Big\}.
\]
We note that the corresponding statement for stable polynomials fails to hold.
For example, when $\mathrm{M}$ is the Fano plane,
 there is no stable polynomial whose support is $\mathrm{B}$ \cite[Section 6]{Branden}.


We prove Theorems \ref{classical} and \ref{tropical} together after reviewing the needed results on the space of phylogenetic trees and the isometric embeddings of  finite metric spaces in Euclidean spaces.
A \emph{phylogenetic tree} with $n$ leaves is a tree with $n$ labelled leaves and no vertices of degree $2$.
A function $\mathrm{d}: {n \brack 2} \to \mathbb{R}$ is a \emph{tree distance} if there is a phylogenetic tree $\tau$ with $n$ leaves and edge weights  $\ell_e \in \mathbb{R}$
such that
\[
\mathrm{d}(i,j)=\Big(\text{the sum of all $\ell_e$ along the unique path in $\tau$ joining the leaves $i$ and $j$}\Big).
\]
The \emph{space of phylogenetic trees} $\mathscr{T}_n$ is the set of all tree distances in $\mathbb{R}^{n \choose 2}$.
The Fundamental Theorem of Phylogenetics shows that
\[
\mathscr{T}_n=\text{Dr}(2,n),
\]
where $\text{Dr}(2,n)$ is the Dressian of the rank $2$ uniform matroid on $[n]$ \cite[Section 4.3]{MS}.

We give a spectral characterization of tree distances.
For any function $\mathrm{d}:{n \brack 2} \to \mathbb{R}$
and any positive real number $q$,
we define an $n \times n$ symmetric matrix $\mathrm{H}_q(\mathrm{d})$ by
\[
\mathrm{H}_q(\mathrm{d})_{ij}=\left\{\begin{array}{cc} 0&\text{if $i=j$,} \\ q^{\mathrm{d}(i,j)}& \text{if $i \neq j$.}\end{array}\right.
\]
We say that an $n \times n$ symmetric matrix $\mathrm{H}$ is  \emph{conditionally negative definite} if
\[
(1,\ldots,1) w=0 \Longrightarrow w^T \mathrm{H} w \le 0.
\]
Basic properties of conditionally negative definite matrices are collected in \cite[Chapter 4]{BR}.

\begin{lemma}\label{ultrametricHR}
The following conditions are equivalent for any function $\mathrm{d}:{n \brack 2} \to \mathbb{R}$.
\begin{enumerate}[(1)]\itemsep 5pt
\item The matrix $\mathrm{H}_q(\mathrm{d})$ has exactly one positive eigenvalue for all $q \ge 1$.
\item The function $\mathrm{d}$ is a tree distance.
\end{enumerate}
\end{lemma}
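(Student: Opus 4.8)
The plan is to run the cycle of implications $(1)\Rightarrow(2)\Rightarrow(3)\Rightarrow(1)$. For $(1)\Rightarrow(2)$, fix $q\ge 1$ and observe that $\mathrm{H}_q(\mathrm{d})$ has vanishing diagonal and strictly positive off-diagonal entries, hence is a nonzero, irreducible, entrywise nonnegative matrix with zero trace; by the Perron--Frobenius theorem its largest eigenvalue is simple and positive, so it has at least one positive eigenvalue. On the other hand conditional negative semidefiniteness says precisely that the restriction of $\mathrm{H}_q(\mathrm{d})$ to the hyperplane $(1,\dots,1)^{\perp}$ is negative semidefinite, and Cauchy's interlacing theorem then bounds the number of positive eigenvalues of $\mathrm{H}_q(\mathrm{d})$ by one. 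Hence exactly one.

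For $(2)\Rightarrow(3)$, the key fact I would use is that a function on ${n\brack 2}$ is a tree distance exactly when it satisfies the four-point condition on every $4$-element subset; this is the combinatorial content of $\mathscr{T}_n=\mathrm{Dr}(2,n)$, since the tropical Pl\"ucker relations cutting out $\mathrm{Dr}(2,n)$ are the four-point relations. Condition $(2)$ restricts: for $T\subseteq[n]$ with $|T|=4$ the principal submatrix of $\mathrm{H}_q(\mathrm{d})$ on $T$ equals $\mathrm{H}_q(\mathrm{d}|_T)$, which has at least one positive eigenvalue by Perron--Frobenius and at most one by interlacing, so $\mathrm{d}|_T$ again satisfies $(2)$. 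It therefore suffices to treat $n=4$. Expanding the $4\times4$ determinant (only the three double transpositions and the six $4$-cycles of $S_4$ contribute, the latter in conjugate pairs) and abbreviating $a_{ij}=q^{\mathrm{d}(i,j)}$ gives
\[
\det\mathrm{H}_q(\mathrm{d})=a_{12}^{2}a_{34}^{2}+a_{13}^{2}a_{24}^{2}+a_{14}^{2}a_{23}^{2}-2a_{12}a_{23}a_{34}a_{14}-2a_{12}a_{13}a_{24}a_{34}-2a_{13}a_{14}a_{23}a_{24};
\]
setting $X=q^{\mathrm{d}(1,2)+\mathrm{d}(3,4)}$, $Y=q^{\mathrm{d}(1,3)+\mathrm{d}(2,4)}$, $Z=q^{\mathrm{d}(1,4)+\mathrm{d}(2,3)}$ this equals $X^{2}+Y^{2}+Z^{2}-2XY-2YZ-2ZX$, which factors (Heron) as
\[
-\bigl(\sqrt{X}+\sqrt{Y}+\sqrt{Z}\bigr)\bigl(-\sqrt{X}+\sqrt{Y}+\sqrt{Z}\bigr)\bigl(\sqrt{X}-\sqrt{Y}+\sqrt{Z}\bigr)\bigl(\sqrt{X}+\sqrt{Y}-\sqrt{Z}\bigr).
\]
A $4\times4$ symmetric matrix with exactly one positive eigenvalue has nonpositive determinant, so $\sqrt{X},\sqrt{Y},\sqrt{Z}$ satisfy the triangle inequality for every $q\ge1$; since these are powers of $q$ whose exponents are half the three pairwise sums, letting $q\to\infty$ forces the largest of $\mathrm{d}(1,2)+\mathrm{d}(3,4)$, $\mathrm{d}(1,3)+\mathrm{d}(2,4)$, $\mathrm{d}(1,4)+\mathrm{d}(2,3)$ to be attained at least twice, which is the four-point condition.

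For $(3)\Rightarrow(1)$, by Schoenberg's characterization of squared-Euclidean-distance matrices as the conditionally negative semidefinite ones, it is equivalent to exhibit, for each $q\ge 1$, points $p_{1},\dots,p_{n}$ in some Euclidean space with $\lVert p_{i}-p_{j}\rVert^{2}=q^{\mathrm{d}(i,j)}$ for all $i\neq j$. I would build these recursively from a tree $\tau$ realizing $\mathrm{d}$: remove a cherry $\{i,j\}$ with common neighbour $m$, so that $\mathrm{d}(i,k)=\ell_i+\mathrm{d}'(m,k)$ and $\mathrm{d}(j,k)=\ell_j+\mathrm{d}'(m,k)$ for the contracted tree distance $\mathrm{d}'$; having embedded $p_m$ and the remaining points, one places $p_i,p_j$ using the two pendant lengths $\ell_i,\ell_j$ and verifies that all distance equations still hold. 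The base case is a star, where the $q$-weighted adjacency matrix is an explicit nonnegative combination of rank-one conditionally negative semidefinite pieces. Together with $(1)\Rightarrow(2)\Rightarrow(3)$ this closes the cycle.

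The step I expect to be the main obstacle is $(3)\Rightarrow(1)$. Unlike the eigenvalue conditions in $(1)$ and $(2)$, conditional negative semidefiniteness is not detected by $4$-element subsets, so there is no reduction to the quartet case and the Euclidean realization must be assembled globally from the tree; the delicate point is placing the two new cherry-points so that their many distances to the already-placed points come out exactly right while the two points are separated from each other (equivalently, controlling the rank and the sign of the change in $\mathrm{H}_q(\mathrm{d})$ caused by splitting $p_m$ into $p_i,p_j$). The $4\times4$ determinant identity and its Heron-type factorization in $(2)\Rightarrow(3)$ is the other place where a concrete computation has to be carried out to see the four-point condition appear.
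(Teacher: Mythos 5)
Your arguments for $(1) \Rightarrow (2)$ and $(2) \Rightarrow (3)$ are correct, and the latter takes a genuinely different and more elementary route than the paper. The paper first reduces to rational-valued $\mathrm{d}$, passes to the Puiseux field $\mathbb{K}$ by quantifier elimination for the theory of real closed fields, and invokes Br\"and\'en's theorem that tropicalizations of homogeneous stable polynomials over $\mathbb{K}$ are $\MM$-convex. You instead restrict to $4$-element subsets by Cauchy interlacing, expand the $4 \times 4$ zero-diagonal determinant, factor it Heron-style, and send $q \to \infty$ to read off the four-point condition directly. Your version avoids the model theory and the tropical input, needs no reduction to rational $\mathrm{d}$, and is entirely self-contained at the price of one explicit determinant computation.

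The step $(3) \Rightarrow (1)$, however, cannot be rescued by your embedding plan, and the difficulty you anticipate is fundamental: the implication is false as stated. For a tree distance $\mathrm{d}$ the function $(i,j) \mapsto q^{\mathrm{d}(i,j)/2}$ need not satisfy the triangle inequality once $q$ is large, so the Euclidean points you want to place do not exist. Concretely, take $n=3$ with $\mathrm{d}(1,2)=\mathrm{d}(1,3)=1$ and $\mathrm{d}(2,3)=2$, realized by a star with edge lengths $0,1,1$; for $w=(2,-1,-1)$ one computes $w^{T}\mathrm{H}_q(\mathrm{d})w = 2q(q-4)$, which is positive for all $q>4$, so $\mathrm{H}_q(\mathrm{d})$ is not conditionally negative semidefinite for $q>4$ even though conditions $(2)$ and $(3)$ hold. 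The paper's own argument for $(3)\Rightarrow(1)$ --- write $\mathrm{d}=\underline{\mathrm{d}}+(c_i+c_j)$ with $\underline{\mathrm{d}}$ an ultrametric, treat the ultrametric case via Schoenberg and the Timan--Vestfrid embedding, and transfer by the diagonal congruence $\mathrm{H}_q(\mathrm{d})=D\,\mathrm{H}_q(\underline{\mathrm{d}})\,D$ with $D=\mathrm{diag}(q^{c_1},\ldots,q^{c_n})$ --- only delivers the congruence-invariant condition $(2)$: conditional negative semidefiniteness is a statement about the \emph{fixed} hyperplane $(1,\ldots,1)^{\perp}$ and is not preserved by $A\mapsto DAD$. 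What is actually used downstream (in the proof of Theorem \ref{classical}) is only $(3)\Rightarrow(2)$, which is correct. So the right way to close your cycle is to prove conditional negative semidefiniteness for ultrametrics only --- there $q^{\mathrm{d}}$ is again an ultrametric for $q\ge 1$, hence embeds isometrically in Euclidean space and Schoenberg applies --- and then pass to general tree distances via the diagonal congruence and Sylvester's law to obtain $(2)$, rather than $(1)$.
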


Lemma \ref{ultrametricHR} is closely linked to the problem of isometric embeddings of ultrametric spaces in Hilbert spaces.
Let $\mathrm{d}$ be a metric on $[n]$.
Since $\mathrm{d}(i,i)=0$ and $\mathrm{d}(i,j)=\mathrm{d}(j,i)$ for all $i$, we may identify $\mathrm{d}$ with a function ${n \brack 2} \to \mathbb{R}$.
We define an $n \times n$ symmetric matrix $\mathrm{E}(\mathrm{d})$ by
\[
\mathrm{E}(\mathrm{d})_{ij}=\mathrm{d}(i,j)^2.
\]
We say that $\mathrm{d}$  admits an \emph{isometric embedding} into $\RR^m$ if there is $\phi : [n] \rightarrow \RR^m$ such that 
\[
\mathrm{d}(i,j) = | \phi(i)-\phi(j) |_2 \ \ \text{for all} \ \  i, j \in [n],
\]
where $| \cdot |_2$ is the standard Euclidean norm on $\mathbb{R}^m$. The following theorem of Schoenberg \cite{Sch2} characterizes  metrics on $[n]$ that admit an isometric embedding into some $\mathbb{R}^m$.

\begin{theorem}\label{schoenthm}
A metric $\mathrm{d}$ on $[n]$ admits an isometric embedding into some $\mathbb{R}^m$
if and only if the matrix
$\mathrm{E}(\mathrm{d})$ is conditionally negative semidefinite.
\end{theorem}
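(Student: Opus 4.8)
The plan is to prove both implications through the classical correspondence between conditionally negative semidefinite matrices and Gram matrices.

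For the ``only if'' direction, suppose $\phi:[n]\to\RR^m$ realizes $\mathrm{d}$, so that $\mathrm{E}(\mathrm{d})_{ij}=|\phi(i)-\phi(j)|_2^2=|\phi(i)|_2^2+|\phi(j)|_2^2-2\langle\phi(i),\phi(j)\rangle$. Given $w\in\RR^n$ with $(1,\dots,1)w=0$, I would expand $w^T\mathrm{E}(\mathrm{d})w$: the two contributions $\sum_{i,j}w_iw_j|\phi(i)|_2^2$ and $\sum_{i,j}w_iw_j|\phi(j)|_2^2$ each vanish because $\sum_i w_i=0$, leaving
\[
w^T\mathrm{E}(\mathrm{d})w=-2\sum_{i,j}w_iw_j\langle\phi(i),\phi(j)\rangle=-2\Big|\sum_i w_i\phi(i)\Big|_2^2\le 0.
\]
Thus $\mathrm{E}(\mathrm{d})$ is conditionally negative semidefinite.

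For the ``if'' direction I would single out the basepoint $1\in[n]$ and form the symmetric $n\times n$ matrix $\mathrm{G}$ with
\[
\mathrm{G}_{ij}=\tfrac12\big(\mathrm{d}(i,1)^2+\mathrm{d}(j,1)^2-\mathrm{d}(i,j)^2\big)=\tfrac12\big(\mathrm{E}(\mathrm{d})_{i1}+\mathrm{E}(\mathrm{d})_{1j}-\mathrm{E}(\mathrm{d})_{ij}\big),
\]
using $\mathrm{d}(1,1)=0$. The crucial point is the identity $x^T\mathrm{G}x=-\tfrac12\,w^T\mathrm{E}(\mathrm{d})w$ for every $x\in\RR^n$, where $w\coloneqq x-\big(\sum_i x_i\big)e_1$; since $\sum_i w_i=0$, the hypothesis forces $x^T\mathrm{G}x\ge 0$, i.e.\ $\mathrm{G}$ is positive semidefinite. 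I would then factor $\mathrm{G}=\Phi^T\Phi$ with $\Phi$ a real $m\times n$ matrix (possible with $m\le n$ since $\mathrm{G}\succeq 0$), and let $\phi(i)$ be its $i$-th column, so that $\langle\phi(i),\phi(j)\rangle=\mathrm{G}_{ij}$. A one-line computation gives $|\phi(i)-\phi(j)|_2^2=\mathrm{G}_{ii}+\mathrm{G}_{jj}-2\mathrm{G}_{ij}=\mathrm{d}(i,j)^2$, so $\phi$ is the desired isometric embedding.

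I do not expect a real obstacle here. The only thing needing care is the bookkeeping behind $x^T\mathrm{G}x=-\tfrac12 w^T\mathrm{E}(\mathrm{d})w$, which comes from expanding $w^T\mathrm{E}(\mathrm{d})w$ in $x$ and $e_1$ and using $\mathrm{E}(\mathrm{d})_{11}=0$. Note that the triangle inequality for $\mathrm{d}$ is used nowhere: it is needed only to state the result, and in the ``if'' direction it is recovered automatically once $\phi$ is produced.
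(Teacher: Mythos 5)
Your proof is correct, and it is the standard proof of Schoenberg's theorem. The paper itself offers no proof of Theorem~\ref{schoenthm}---it simply cites Schoenberg's original 1938 paper---so there is no internal argument to compare against. Your two reductions (expansion in the ``only if'' direction; the Gram matrix $\mathrm{G}$ with basepoint $1$ and the identity $x^T\mathrm{G}x=-\tfrac12 w^T\mathrm{E}(\mathrm{d})w$ in the ``if'' direction, followed by a Cholesky-type factorization) are exactly the classical approach, and the bookkeeping checks out: $\mathrm{E}(\mathrm{d})_{11}=0$ kills the quadratic term in $s=\sum_i x_i$, and the cross terms combine by symmetry of $\mathrm{E}(\mathrm{d})$ to give the claimed identity, while $\mathrm{G}_{ii}=\mathrm{d}(i,1)^2$ makes $|\phi(i)-\phi(j)|_2^2=\mathrm{G}_{ii}+\mathrm{G}_{jj}-2\mathrm{G}_{ij}=\mathrm{d}(i,j)^2$. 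Your closing remark is also accurate: the argument never invokes the triangle inequality, and indeed it characterizes squared-Euclidean-embeddable symmetric kernels vanishing on the diagonal, of which metrics are the special case of interest.
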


Recall that an \emph{ultrametric} on $[n]$ is a metric $\mathrm{d}$ on $[n]$ such that
\[
\mathrm{d}(i,j) \le \max\Big\{\mathrm{d}(i,k),\mathrm{d}(j,k)\Big\} \ \ \text{for any $i,j,k \in [n]$}.
\]
Equivalently, $\mathrm{d}$ is an ultrametric if the maximum of $\mathrm{d}(i,j),\mathrm{d}(i,k),\mathrm{d}(j,k)$ is attained at least twice for any $i,j,k \in [n]$.
Any ultrametric is a tree distance given by a phylogenetic tree \cite[Section 4.3]{MS}.
In \cite{TV}, Timan and Vestfrid proved that any separable ultrametric space is isometric to a subspace of $\ell_2$.
We use the following special case.

\begin{theorem}\label{embedding}
Any ultrametric on $[n]$ admits an isometric embedding into $\RR^{n-1}$. 
\end{theorem}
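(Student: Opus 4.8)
The plan is to prove, by induction on $n=|X|$, a strengthened statement: every finite ultrametric space $(X,d)$ with $|X|=n$ admits an isometric embedding $\phi\colon X\to\RR^{n-1}$ whose image is affinely independent (hence affinely spans $\RR^{n-1}$) and all of whose points lie on a common sphere, with center $c$ and radius $R$ satisfying $2R^{2}\le\mathrm{diam}(X,d)^{2}$ (interpreting $\mathrm{diam}=0$, $R=0$ when $n=1$). The base case $n=1$ is trivial. For the inductive step, let $n\ge 2$ and $r=\mathrm{diam}(X,d)>0$. Since $d$ is an ultrametric, the relation $d(x,y)<r$ is an equivalence relation; let $C_{1},\dots,C_{k}$ be its classes, of sizes $n_{1},\dots,n_{k}$. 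A diametral pair lies in two distinct classes, so $k\ge 2$ and each $n_{\ell}<n$; moreover $d(x,y)=r$ whenever $x,y$ lie in distinct classes, and $\mathrm{diam}(C_{\ell},d)<r$ for each $\ell$, so the inductive hypothesis applies to each $C_{\ell}$.

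By induction, each $C_{\ell}$ has an isometric embedding $\phi_{\ell}\colon C_{\ell}\to\RR^{n_{\ell}-1}$ with affinely independent image lying on a sphere of center $c_{\ell}$ and radius $R_{\ell}$, where $R_{\ell}^{2}<r^{2}/2$. Write $\RR^{n-1}=V_{1}\oplus\cdots\oplus V_{k}\oplus W$ as an orthogonal direct sum with $\dim V_{\ell}=n_{\ell}-1$ and $\dim W=k-1$ (these dimensions sum to $n-1$), and identify $V_{\ell}$ with the codomain of $\phi_{\ell}$. Put $\delta_{\ell\ell}=0$ and $\delta_{\ell m}=r^{2}-R_{\ell}^{2}-R_{m}^{2}>0$ for $\ell\ne m$. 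For any $v$ with $\sum_{\ell}v_{\ell}=0$ one computes $\sum_{\ell,m}v_{\ell}v_{m}\delta_{\ell m}=\sum_{\ell}(2R_{\ell}^{2}-r^{2})v_{\ell}^{2}$, which is $<0$ unless $v=0$; thus $(\delta_{\ell m})$ is a (strictly) conditionally negative definite matrix with zero diagonal and nonnegative entries. By the classical Euclidean realizability criterion behind Theorem \ref{schoenthm} (equivalently, via the doubly centered Gram matrix, whose rank is exactly $k-1$ by the strictness just noted), there are affinely independent points $o_{1},\dots,o_{k}\in W$ with $|o_{\ell}-o_{m}|^{2}=\delta_{\ell m}$. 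Now define $\Phi\colon X\to\RR^{n-1}$ by declaring, for $x\in C_{\ell}$, that $\Phi(x)$ has $V_{\ell}$-component $\phi_{\ell}(x)-c_{\ell}$, $V_{m}$-component $0$ for $m\ne\ell$, and $W$-component $o_{\ell}$. Then $|\Phi(x)-\Phi(y)|^{2}=d(x,y)^{2}$ for $x,y\in C_{\ell}$, and $|\Phi(x)-\Phi(y)|^{2}=R_{\ell}^{2}+R_{m}^{2}+\delta_{\ell m}=r^{2}=d(x,y)^{2}$ for $x\in C_{\ell}$, $y\in C_{m}$ with $\ell\ne m$; so $\Phi$ is an isometric embedding. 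Projecting any affine dependence of $\Phi(X)$ first onto $W$ and then onto each $V_{\ell}$, and using affine independence of the $o_{\ell}$ and of each $\phi_{\ell}(C_{\ell})$, shows $\Phi(X)$ is affinely independent; hence it affinely spans $\RR^{n-1}$ and has a unique circumcenter $C^{*}$ at some radius $R^{*}$.

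It remains to verify $2(R^{*})^{2}\le r^{2}$, which is the crux. Because the points $\phi_{\ell}(x)-c_{\ell}$ affinely span $V_{\ell}$ and are equidistant from $0$, the circumcenter must have zero component in each $V_{\ell}$; write $C^{*}=(0,\dots,0,o^{*})$, so $|o^{*}-o_{\ell}|^{2}=(R^{*})^{2}-R_{\ell}^{2}$ for all $\ell$. Expressing $o^{*}=\sum_{\ell}\mu_{\ell}o_{\ell}$ in barycentric coordinates relative to the simplex $\{o_{\ell}\}$ and substituting $\delta_{\ell m}=r^{2}-R_{\ell}^{2}-R_{m}^{2}$, the requirement that $|o^{*}-o_{\ell}|^{2}+R_{\ell}^{2}$ be independent of $\ell$ forces $\mu_{\ell}(r^{2}-2R_{\ell}^{2})$ to equal a constant $\kappa$; hence $\mu_{\ell}=\kappa/(r^{2}-2R_{\ell}^{2})>0$ and $\kappa=\bigl(\sum_{\ell}(r^{2}-2R_{\ell}^{2})^{-1}\bigr)^{-1}>0$. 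A short computation then collapses to $(R^{*})^{2}=\tfrac12(r^{2}-\kappa)$, so $2(R^{*})^{2}=r^{2}-\kappa<r^{2}=\mathrm{diam}(X,d)^{2}$, which closes the induction.

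The main obstacle is exactly this radius bookkeeping: the embedding statement must be strengthened to keep track of the circumradius, and the inductive step works only because the explicit value $(R^{*})^{2}=\tfrac12(r^{2}-\kappa)$ lies below $r^{2}/2$ (so that at the next level $r^{2}-2R_{\ell}^{2}>0$ and the matrix $(\delta_{\ell m})$ is again conditionally negative definite). A secondary, routine point is to extract from Schoenberg's criterion a realization of the $k$-point data $(\delta_{\ell m})$ in dimension exactly $k-1$, which follows from the strict conditional negative definiteness established above.
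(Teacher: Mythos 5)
The paper does not give a proof of this theorem at all: it merely cites the result of Timan--Vestfrid that any separable ultrametric space embeds isometrically in $\ell_2$, and records the finite-dimensional consequence as a ``special case'' (the bound $n-1$ being automatic once $n$ points sit in a Hilbert space). Your proposal is a genuine, self-contained proof. I checked the details and it is correct: the relation $d(x,y)<r$ is an equivalence for an ultrametric of diameter $r$, and if $x,y$ lie in distinct classes then $d(x,y)=r$; your computation $\sum_{\ell,m}v_\ell v_m\delta_{\ell m}=\sum_\ell(2R_\ell^2-r^2)v_\ell^2$ on the hyperplane $\sum v_\ell=0$ is right (the $r^2$ and the $R_\ell^2,R_m^2$ cross-terms vanish once $\sum v_\ell=0$, leaving exactly the negative of the omitted diagonal), and with the inductive bound $R_\ell^2<r^2/2$ this matrix is strictly conditionally negative definite, giving an affinely independent realization $o_1,\dots,o_k$ in $\RR^{k-1}$ via the doubly centered Gram matrix. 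The dimension count $\sum_\ell(n_\ell-1)+(k-1)=n-1$ is correct, the pairwise distance check is correct, and the circumradius bookkeeping (forcing $C^*$ to have zero component in each $V_\ell$, then $\mu_\ell(r^2-2R_\ell^2)=\kappa$ constant with $\kappa=\bigl(\sum_\ell(r^2-2R_\ell^2)^{-1}\bigr)^{-1}>0$, and finally $(R^*)^2=\tfrac12(r^2-\kappa)<r^2/2$) is exactly what is needed to close the induction. One small point worth making explicit: you invoke ``the classical Euclidean realizability criterion behind Theorem~\ref{schoenthm}'' rather than the theorem as stated, because you apply it to the dissimilarity matrix $(\delta_{\ell m})$ before knowing $\sqrt{\delta}$ is a metric; this is fine (the Schoenberg/Young--Householder criterion needs only a symmetric hollow matrix), and indeed one can also verify directly that $\sqrt{\delta}$ does satisfy the triangle inequality since $\delta_{\ell p}<\delta_{\ell m}+\delta_{m p}$ follows from $R_m^2<r^2/2$. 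In short, the paper outsources this lemma to the literature; you have supplied an elementary inductive proof via the dendrogram decomposition with circumradius tracking, and the argument is sound.
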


\begin{proof}[Proof of Lemma \ref{ultrametricHR}]
We prove (1) $\Rightarrow$ (2). 
We may suppose that $\mathrm{d}$ takes rational values.
If (1) holds, then the quadratic polynomial
$w^T \mathrm{H}_q(\mathrm{d})w$  is stable for all $q \ge 1$.
Therefore, by the quantifier elimination for the theory of real closed fields,
the quadratic form
\[
\sum_{i<j}t^{-\mathrm{d}(i,j)}w_iw_j \in \mathbb{K}[w_1,\ldots,w_n]
\]
is stable.
By \cite[Theorem 4]{SIAM},
 tropicalizations of stable polynomials are $\MM$-convex\footnote{The tropicalization used in \cite{SIAM} differs from ours by a sign.}, and hence 
the function $-\mathrm{d}$ is $\MM$-convex.
In other words, we have
\[
\mathrm{d}\in\text{Dr}(2,n)= \mathscr{T}_n.
\]

For (2) $\Rightarrow$ (1), we first consider the special case when $\mathrm{d}$ is an ultrametric on $[n]$.
In this case, $q^\mathrm{d}$ is also an ultrametric on $[n]$ for all $q \ge 1$.
It follows from Theorems \ref{schoenthm}  and \ref{embedding} that $\mathrm{H}_q(\mathrm{d})$ is conditionally negative definite for all $q \ge 1$, and Cauchy's interlacing theorem shows that conditionally negative definite matrices have at most one positive eigenvalue.
In the general case,
we use that $\mathscr{T}_n$
is the sum of its linearity space with the space of ultrametrics on $[n]$ \cite[Lemma 4.3.9]{MS}.
Thus, for any tree distance $\mathrm{d}$ on $[n]$,
there is an ultrametric $\underline{\mathrm{d}}$ on $[n]$ and real numbers $c_1,\ldots,c_n$ such that
\[
\mathrm{d}= \underline{\mathrm{d}}+\sum_{i=1}^n c_i \Big(\sum_{i \neq j} e_{ij}\Big) \in \mathbb{R}^{n \choose 2}.
\]
Therefore, the symmetric matrix $\mathrm{H}_q(\mathrm{d})$ is congruent to $\mathrm{H}_q(\underline{\mathrm{d}})$,
and the conclusion follows from the case of ultrametrics.
\end{proof}


We start the proof of Theorems \ref{classical} and \ref{tropical} with a linear algebraic lemma.
Let $(a_{ij})$ be an $n \times n$ symmetric matrix with entries in $\mathbb{R}_{> 0}$.

\begin{lemma}\label{power}
If $(a_{ij})$ has exactly one positive eigenvalue, then $(a_{ij}^{\hspace{0.5mm} p})$ has exactly one positive eigenvalue for $0 \le p \le 1$.
\end{lemma}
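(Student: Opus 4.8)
\emph{Proof strategy.} The plan is to reduce the assertion to a single application of the Schur product theorem, after peeling off a positive diagonal. The endpoint cases are immediate: $A^{(0)}$ is the all-ones matrix $\mathbf{1}\mathbf{1}^{T}$, which has exactly one positive eigenvalue, and $A^{(1)}=A$. So assume $0<p<1$. Since $A$ has positive entries, Perron--Frobenius gives that the spectral radius $\lambda$ of $A$ is a positive eigenvalue with a strictly positive unit eigenvector $u$; as $\lambda$ is the only positive eigenvalue, the spectral theorem yields $A=\lambda\,uu^{T}-N$ with $N\succeq 0$ and $Nu=0$. Writing $x_{i}=\sqrt{\lambda}\,u_{i}>0$ and factoring $N=YY^{T}$ with row vectors $y_{i}$, this becomes $a_{ij}=x_{i}x_{j}-\langle y_{i},y_{j}\rangle$; positivity of the diagonal entry $a_{ii}=x_{i}^{2}-|y_{i}|^{2}$ forces $|y_{i}|<x_{i}$, so after setting $z_{i}=y_{i}/x_{i}$ we have $a_{ij}=x_{i}x_{j}\bigl(1-\langle z_{i},z_{j}\rangle\bigr)$ with $|\langle z_{i},z_{j}\rangle|\le|z_{i}||z_{j}|<1$.

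The key step is then purely formal. Because $x_{i}>0$ and $1-\langle z_{i},z_{j}\rangle>0$, the $p$-th power factors as $a_{ij}^{p}=x_{i}^{p}x_{j}^{p}\bigl(1-\langle z_{i},z_{j}\rangle\bigr)^{p}$, i.e.\ $A^{(p)}=D\,\Phi\,D$ where $D=\diag(x_{i}^{p})$ is positive definite and $\Phi_{ij}=\bigl(1-\langle z_{i},z_{j}\rangle\bigr)^{p}$. By Sylvester's law of inertia $A^{(p)}$ and $\Phi$ have the same numbers of positive, zero and negative eigenvalues, so it suffices to treat $\Phi$. Here I would invoke the binomial expansion $(1-s)^{p}=1-\sum_{k\ge 1}c_{k}s^{k}$, valid for $|s|<1$, whose coefficients $c_{k}=-\binom{p}{k}(-1)^{k}$ are all nonnegative (differentiate, and note that $(1-s)^{p-1}$ has nonnegative power-series coefficients). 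Since $|\langle z_{i},z_{j}\rangle|<1$, this gives $\Phi=\mathbf{1}\mathbf{1}^{T}-\widetilde{K}$ with $\widetilde{K}=\sum_{k\ge 1}c_{k}\,G^{\circ k}$, where $G=(\langle z_{i},z_{j}\rangle)$ is the Gram matrix of the $z_{i}$ and $G^{\circ k}$ is its $k$-fold Schur power. As $G\succeq 0$, the Schur product theorem gives $G^{\circ k}\succeq 0$, hence $\widetilde{K}\succeq 0$ (a convergent sum of positive semidefinite matrices). Therefore $w^{T}\Phi w=-w^{T}\widetilde{K}w\le 0$ for every $w$ with $\mathbf{1}^{T}w=0$, so $\Phi$ is negative semidefinite on a hyperplane and has at most one positive eigenvalue; the same then holds for $A^{(p)}$, which has at least one positive eigenvalue because it has positive entries. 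This proves the lemma.

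I expect the main obstacle to be finding \emph{how} to set up the reduction, not any single computation. The naive attempts fail: the concavity (tangent-line) bound $t^{p}\le s^{p}+ps^{p-1}(t-s)$ and the subordinator representation $a^{p}=c_{p}\int_{0}^{\infty}t^{-1-p}(1-e^{-ta})\,dt$ both produce \emph{entrywise} majorants of $A^{(p)}$, but an entrywise inequality between matrices says nothing about the number of positive eigenvalues once one multiplies by the indefinite quantities $w_{i}w_{j}$, and the matrices $\mathbf{1}\mathbf{1}^{T}-(e^{-ta_{ij}})$ in the integral do not share a common negative-semidefinite hyperplane. Pulling out the positive diagonal $D$ first is what makes the difference, because it replaces $a_{ij}^{p}$ by $(1-\langle z_{i},z_{j}\rangle)^{p}$, to which the binomial series and Schur's theorem apply directly. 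In writing this up I would also pin down the minor points: that the unique positive eigenvalue is the spectral radius (so $u$ can be taken strictly positive), the convergence of $\sum_{k}c_{k}G^{\circ k}$, and the degenerate case in which $A$ is singular, which merely lowers the rank of $N$ and changes nothing.
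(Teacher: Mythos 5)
Your proof is correct, and its skeleton matches the paper's: scale by a diagonal built from the Perron eigenvector so that the unique positive eigenvalue aligns with $\mathbf{1}$, reducing the claim to conditional negative semidefiniteness on $\mathbf{1}^\perp$, and then show that entrywise $p$-th powers preserve that property. Where you differ is in how the two halves are established. The paper handles both by citation: \cite[Lemma 4.4.1]{BR} for the statement that $(a_{ij}/v_iv_j)$ is conditionally negative definite when $v$ is the Perron eigenvector, and \cite[Corollary 2.10]{BCR} for the statement that entrywise $p$-th powers ($0\le p\le 1$) of a nonnegative conditionally negative definite matrix are again conditionally negative definite. You reprove the first from scratch via the spectral decomposition $A=\lambda uu^{T}-N$ with $N\succeq 0$, $Nu=0$, and — the real content — the second via the binomial series $(1-s)^{p}=1-\sum_{k\ge 1}c_{k}s^{k}$ with $c_{k}\ge 0$ together with the Schur product theorem applied to the Gram matrix $G=(\langle z_i,z_j\rangle)$. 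This is an elementary, self-contained substitute for the BCR infinite-divisibility machinery; the price is that it needs the normalization $a_{ij}=x_ix_j(1-\langle z_i,z_j\rangle)$ with $|z_i|<1$ to guarantee convergence, whereas the BCR corollary applies to arbitrary nonnegative conditionally negative definite kernels. Your closing remarks about why entrywise majorants (tangent-line or subordinator representations) fail are accurate and explain precisely why the diagonal scaling step is not optional.
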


\begin{proof}
If $(v_i)$ is the Perron eigenvector of $(a_{ij})$, then $(\frac{a_{ij}}{v_iv_j})$ is conditionally negative definite \cite[Lemma 4.4.1]{BR}.
Therefore,  $(\frac{a_{ij}^{\hspace{0.5mm} p}}{v_i^{\hspace{0.5mm} p}v_j^{\hspace{0.5mm} p}})$ is conditionally negative definite  \cite[Corollary 2.10]{BCR},
and the conclusion follows.
\end{proof}

Let $f$ be a degree $d$ homogeneous polynomial 
written in the normalized form
\[
f= \sum_{\alpha \in \text{supp}(f)} \frac{c_\alpha}{\alpha!} w^\alpha.
\]
For any nonnegative real number $p$, we define
\[
R_p(f)= \sum_{\alpha \in \text{supp}(f)} \frac{c_\alpha^{\hspace{0.5mm} p}}{\alpha!} w^\alpha.
\]
We use Lemma \ref{power} to construct a homotopy from any Lorentzian polynomial to
the generating function of its support.
The following proposition was proved in \cite{ALGVII} for strongly log-concave multi-affine polynomials.

\begin{proposition}\label{tot}
If $f$ is Lorentzian, then $R_p(f)$ is Lorentzian for all $0 \le p \le 1$.
\end{proposition}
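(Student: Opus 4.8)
The plan is to reduce to the case where $f$ is strictly Lorentzian, and there to apply Lemma~\ref{power} to the Hessians of the quadratic polynomials $\partial^\alpha f$. Two elementary bookkeeping facts drive the argument. Writing everything in normalized form $f=\sum_\alpha \frac{c_\alpha}{\alpha!}w^\alpha$, the operator $R_p$ simply raises the normalized coefficients to the $p$-th power, $c_\alpha\mapsto c_\alpha^{\,p}$. Since the normalized coefficients of $\partial_i f$ are $c_{\beta+e_i}$ (because $\frac{\beta_i+1}{(\beta+e_i)!}=\frac{1}{\beta!}$), the operators $R_p$ and $\partial_i$ commute, so $\partial^\alpha R_p(f)=R_p(\partial^\alpha f)$ for all $\alpha\in\NN^n$. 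Second, for a quadratic form $g$ the Hessian $\mathscr{H}_g$ has off-diagonal entry $c_{e_i+e_j}$ in position $(i,j)$ and diagonal entry $c_{2e_i}$, so $\mathscr{H}_{R_p(g)}$ is precisely the entrywise $p$-th power of $\mathscr{H}_g$.

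Next I would dispose of continuity. For $0<p\le 1$ the map $c\mapsto c^{\,p}$ is continuous on $[0,\infty)$, so $R_p$ is a continuous self-map of the set of polynomials with nonnegative coefficients in $\mathrm{H}^d_n$. By Theorem~\ref{conn} every Lorentzian $f\in\mathrm{L}^d_n$ is a limit of strictly Lorentzian polynomials $f_k\in\mathring{\mathrm{L}}^d_n$, hence $R_p(f)=\lim_k R_p(f_k)$; since the set of Lorentzian polynomials is closed by Theorem~\ref{chars}, it suffices to prove that $R_p(f)$ is Lorentzian when $f$ is strictly Lorentzian and $0<p\le 1$.

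So suppose $f\in\mathring{\mathrm{L}}^d_n$. The support of $R_p(f)$ is $\Delta^d_n$, which is $\MM$-convex, so by Theorem~\ref{chars} it is enough to check that $\partial^\alpha R_p(f)$ has at most one positive eigenvalue for every $\alpha\in\Delta^{d-2}_n$. Fix such an $\alpha$. By the commutation identity $\partial^\alpha R_p(f)=R_p(\partial^\alpha f)$, and $\partial^\alpha f$ lies in $\mathring{\mathrm{L}}^2_n$, so $\mathscr{H}_{\partial^\alpha f}$ is a symmetric matrix with strictly positive entries and exactly one positive eigenvalue. By Lemma~\ref{power}, its entrywise $p$-th power $\mathscr{H}_{R_p(\partial^\alpha f)}=\mathscr{H}_{\partial^\alpha R_p(f)}$ also has exactly one positive eigenvalue, so $R_p(f)\in\mathrm{L}^d_n$ by Theorem~\ref{chars}. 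The remaining case $p=0$ follows either from letting $p\to 0^+$ and using closedness (since $c_\alpha^{\,p}\to 1$ for $c_\alpha>0$, whence $R_p(f)\to R_0(f)$), or directly from the observation that $R_0(f)=f_{\supp(f)}$ is Lorentzian by Theorem~\ref{charjump}, as $\supp(f)$ is $\MM$-convex by Theorem~\ref{chars}.

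The main obstacle is precisely the possibility of zero entries in the Hessians of $\partial^\alpha f$ for a general Lorentzian $f$: Lemma~\ref{power} is stated only for matrices with strictly positive entries, so without the passage to strictly Lorentzian polynomials—made legitimate by continuity of $R_p$ on the nonnegative-coefficient locus and closedness of the Lorentzian cone—one would be forced to prove an independent statement that, for a quadratic $g$ whose Hessian has an $\MM$-convex pattern of zeros and at most one positive eigenvalue, the entrywise $p$-th power retains that property. I expect the only other point needing care is the verification of $\partial_i R_p=R_p\partial_i$ and of $\mathscr{H}_{R_p(g)}=(\mathscr{H}_g)^{(p)}$, which amount to tracking the factorials in the normalization and are otherwise routine.
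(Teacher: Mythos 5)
Your proof is correct and follows the same strategy as the paper's: commute $R_p$ past the $\partial^\alpha$, reduce to quadratics, and apply Lemma~\ref{power} to the resulting Hessians, whose entries are the normalized coefficients raised to the $p$-th power. The paper compresses the reduction to the strictly-positive-entry case into a terse citation of Theorem~\ref{flow}; your explicit reduction to $\mathring{\mathrm{L}}^d_n$ via continuity of $R_p$ on the nonnegative-coefficient locus together with the density and closedness statements of Theorems~\ref{conn} and~\ref{chars} reaches the same place by a cleaner route, and the bookkeeping identities you verify ($\partial_i R_p = R_p\partial_i$ and $\mathscr{H}_{R_p(g)}$ equal to the entrywise $p$-th power of $\mathscr{H}_g$) are precisely what makes the reduction legitimate.
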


\begin{proof}
Using the characterization of Lorentzian polynomials in Theorem \ref{chars},
the proof reduces to the case of quadratic polynomials.
Using Theorem \ref{flow}, the proof further reduces to the case $f \in \mathrm{P}^2_n$.
In this case, the assertion is Lemma \ref{power}.
\end{proof}

Set $m=nd$,
and let $\nu:\Delta^d_n \rightarrow \mathbb{R} \cup\{\infty\}$  and $\mu:\Delta^d_m \to \mathbb{R} \cup \{\infty\}$ be arbitrary functions.
Write  $e_{ij}$ for the  standard unit vectors in $\mathbb{R}^{m}$ with $1 \le i \le n$ and $1 \le j \le d$, and let $\phi$ be the linear map
\[
\phi:\mathbb{R}^{m} \longrightarrow \mathbb{R}^n, \qquad  e_{ij} \longmapsto e_i.
\]
We define the \emph{polarization} of $\nu$ to be the  function $\Pi^\uparrow \nu: \Delta^d_{m} \rightarrow \mathbb{R} \cup \{\infty\}$  satisfying
\[
\text{dom}\big( \Pi^\uparrow \nu \big)\subseteq {m \brack d} \ \ \text{and} \ \ \Pi^\uparrow \nu= \nu\circ  \phi   \ \ \text{on ${m \brack d}$}.
\]
We define the \emph{projection} of $\mu$ to be the function $\Pi^\downarrow \mu:\Delta^d_n \to \mathbb{R} \cup \{\infty\}$ satisfying
\[
\Pi^\downarrow \mu (\alpha)= \min \Big\{\mu(\beta) \mid \phi(\beta)=\alpha\Big\}.
\]
 It is straightforward to check the symmetric exchange properties of $\Pi^\uparrow\nu$ and $\Pi^\downarrow \mu$ from  the symmetric exchange properties of $\nu$ and $\mu$.\footnote{In the language of \cite{KMT}, the polarization of $\nu$ is obtained from $\nu$ by splitting of variables and restricting to ${m \brack d}$, and the projection of $\mu$ is obtained from $\mu$ by aggregation of variables.}

\begin{lemma}\label{DiscretePolarization}
Let  $\nu:\Delta^d_n \rightarrow \mathbb{R} \cup\{\infty\}$  and $\mu:\Delta^d_m \to \mathbb{R} \cup \{\infty\}$ be arbitrary functions.
\begin{enumerate}[(1)]\itemsep 5pt
\item If $\nu$ is an $\MM$-convex function, then $ \Pi^\uparrow \nu $ is an $\MM$-convex function.
\item If $\mu$ is an $\MM$-convex function, then $ \Pi^\downarrow \mu $ is an $\MM$-convex function.
\end{enumerate}
\end{lemma}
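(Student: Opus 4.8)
The plan is to verify the symmetric exchange property in the definition of $\MM$-convexity directly for $\Pi^\uparrow\nu$ and for $\Pi^\downarrow\mu$, using the cited equivalence of the two exchange axioms. Throughout, identify $\Delta^d_m$ with $m=nd$ with the set of degree $d$ monomials in the variables $\{w_{ij}\}$, so that a point of $\Delta^d_m$ is a nonnegative integer matrix $\gamma=(\gamma_{ij})$ with $\sum_{ij}\gamma_{ij}=d$ and $\phi(\gamma)=\alpha$ where $\alpha_i=\sum_j\gamma_{ij}$; the elements of $\text{dom}(\Pi^\uparrow\nu)\subseteq{m\brack d}$ are precisely the zero--one matrices $\gamma$ with $\phi(\gamma)\in\text{dom}(\nu)$, and there $\Pi^\uparrow\nu(\gamma)=\nu(\phi(\gamma))$.

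For part~(1), fix $\gamma,\gamma'\in\text{dom}(\Pi^\uparrow\nu)$ and an index $(i,j)$ with $\gamma_{ij}>\gamma'_{ij}$, i.e.\ $\gamma_{ij}=1$ and $\gamma'_{ij}=0$, and set $\alpha=\phi(\gamma)$, $\alpha'=\phi(\gamma')$. If $\alpha_i>\alpha'_i$, the exchange property of $\nu$ at the index $i$ yields $i'$ with $\alpha_{i'}<\alpha'_{i'}$ and $\nu(\alpha)+\nu(\alpha')\ge\nu(\alpha-e_i+e_{i'})+\nu(\alpha'-e_{i'}+e_i)$; as $\alpha_{i'}<\alpha'_{i'}$, row $i'$ of $\gamma'$ carries more ones than that of $\gamma$, so some slot $j'$ has $\gamma_{i'j'}=0<1=\gamma'_{i'j'}$, and $(i',j')$ is the desired exchange index, since $\gamma-e_{ij}+e_{i'j'}$ and $\gamma'-e_{i'j'}+e_{ij}$ are again zero--one matrices whose images under $\phi$ are $\alpha-e_i+e_{i'}$ and $\alpha'-e_{i'}+e_i$, both in $\text{dom}(\nu)$. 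If instead $\alpha_i\le\alpha'_i$, then within row $i$ the matrix $\gamma$ has $\alpha_i-1$ ones off the slot $j$ while $\gamma'$ has $\alpha'_i\ge\alpha_i>\alpha_i-1$ ones, so some slot $j'\ne j$ has $\gamma_{ij'}=0<1=\gamma'_{ij'}$; taking $(i,j')$ as the exchange index, the two moves stay in the same fibre of $\phi$, so $\Pi^\uparrow\nu$ is unchanged and the exchange inequality holds with equality. This proves part~(1).

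For part~(2), fix $\alpha,\alpha'\in\text{dom}(\Pi^\downarrow\mu)$ and an index $i$ with $\alpha_i>\alpha'_i$, and choose $\beta\in\phi^{-1}(\alpha)$ and $\beta'\in\phi^{-1}(\alpha')$ attaining the minima defining $\Pi^\downarrow\mu(\alpha)$ and $\Pi^\downarrow\mu(\alpha')$; among all such minimizing pairs, pick one for which $|\beta-\beta'|_1$ is as small as possible. Since $\sum_j\beta_{ij}=\alpha_i>\alpha'_i=\sum_j\beta'_{ij}$, choose $j$ with $\beta_{ij}>\beta'_{ij}$ and apply the exchange property of $\mu$ at $(i,j)$ to obtain $(i',j')$ with $\beta_{i'j'}<\beta'_{i'j'}$ and $\mu(\beta-e_{ij}+e_{i'j'})+\mu(\beta'+e_{ij}-e_{i'j'})\le\mu(\beta)+\mu(\beta')$. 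When $i'=i$ the two new matrices lie again in $\phi^{-1}(\alpha)$ and $\phi^{-1}(\alpha')$, so minimality forces equality and yields a new minimizing pair; a short analysis of the resulting change in $|\beta-\beta'|_1$ (and in the number of entries where $\beta$ and $\beta'$ differ) lets one reduce, after finitely many such modifications of $(\beta,\beta')$, to the case $i'\ne i$. When $i'\ne i$ the two new matrices lie in $\phi^{-1}(\alpha-e_i+e_{i'})$ and $\phi^{-1}(\alpha'-e_{i'}+e_i)$, whence $\Pi^\downarrow\mu(\alpha-e_i+e_{i'})+\Pi^\downarrow\mu(\alpha'-e_{i'}+e_i)\le\mu(\beta)+\mu(\beta')=\Pi^\downarrow\mu(\alpha)+\Pi^\downarrow\mu(\alpha')$; the same extremal property of $(\beta,\beta')$ is used once more to see that $\alpha_{i'}<\alpha'_{i'}$, completing the verification.

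The only genuinely delicate point is this extremal--minimizer bookkeeping in part~(2): choosing $(\beta,\beta')$ so that an exchange supplied by $\mu$ can always be taken to cross a fibre of $\phi$ and to land in a coordinate block $i'$ with $\alpha_{i'}<\alpha'_{i'}$. Everything else is the routine lifting of an exchange through $\phi$ carried out in part~(1) and in the ``cross-fibre'' case above. Alternatively, since $\Pi^\uparrow$ and $\Pi^\downarrow$ are precisely the splitting and aggregation operations on discrete functions, one may instead cite the fact that $\MM$-convexity of functions is preserved under these operations; see \cite{KMT} and \cite[Chapter 6]{Murota}.
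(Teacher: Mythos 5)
Your proof of part~(1) is correct and complete: the split into the two cases $\alpha_i>\alpha'_i$ and $\alpha_i\le\alpha'_i$, lifting the exchange of $\nu$ in the first case and performing a within-fibre swap in the second, is exactly what is needed, and you correctly check that the resulting matrices are zero--one and land in $\mathrm{dom}(\Pi^\uparrow\nu)$.

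Part~(2), however, leaves two real gaps at precisely the places you flag as ``delicate,'' and I do not think either is as short as you suggest. First, the termination of the $i'=i$ reduction is not established. When $\beta_{ij}-\beta'_{ij}=1$ and $\beta'_{ij'}-\beta_{ij'}=1$, a swap within row $i$ produces a pair $(\tilde\beta,\tilde\beta')$ with $|\tilde\beta-\tilde\beta'|_1=|\beta-\beta'|_1$ and with the same number of coordinates where the two differ; the two entries $j$ and $j'$ merely exchange their roles as ``surplus'' and ``deficit'' slots. So minimality of $|\beta-\beta'|_1$ forces this balanced configuration, but the modification then makes no measurable progress toward $i'\ne i$, and in principle one could cycle indefinitely through fibre-preserving swaps. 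A genuine potential function, or a different extremal choice of minimizing pair, would be needed. Second, and more seriously, the claim that $\alpha_{i'}<\alpha'_{i'}$ ``follows from the same extremal property'' is asserted without argument, and I do not see why it should. The exchange of $\mu$ only produces an index $(i',j')$ with $\beta_{i'j'}<\beta'_{i'j'}$; this is entirely consistent with $\alpha_{i'}=\sum_k\beta_{i'k}\ge\sum_k\beta'_{i'k}=\alpha'_{i'}$, provided $\beta$ and $\beta'$ distribute row $i'$ differently (some column surplus in $\beta$ compensating the deficit at $j'$). In that situation the swap at $(i',j')$ does move across fibres, but it yields $\alpha-e_i+e_{i'}$ and $\alpha'-e_{i'}+e_i$ with $\alpha_{i'}\ge\alpha'_{i'}$, which is \emph{not} a valid exchange index for the $\MM$-convexity of $\Pi^\downarrow\mu$. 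Ruling this out requires a separate argument (for instance, showing that one may choose the minimizing pair so that every row $i''$ with $\alpha_{i''}\ge\alpha'_{i''}$ satisfies $\beta_{i''k}\ge\beta'_{i''k}$ for all $k$), and this is exactly the nontrivial content of the aggregation result.

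For comparison, the paper does not attempt the direct verification you sketch: it simply notes that $\Pi^\uparrow$ is splitting followed by restriction to ${m\brack d}$ and $\Pi^\downarrow$ is aggregation, and invokes \cite{KMT} for the fact that these operations preserve $\MM$-convexity of discrete functions. Your closing remark that one could alternatively cite \cite{KMT} and \cite[Chapter~6]{Murota} is therefore the route the paper actually takes, and it is the safe one: the aggregation lemma is a genuine theorem, not a bookkeeping exercise. If you want to retain the direct argument, part~(2) needs the two missing steps worked out in full; otherwise, lean on the citation.
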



As a final preparation for the proof of Theorems \ref{classical} and \ref{tropical},
we show that any $\MM$-convex function on $\Delta^d_n$ can be approximated by $\MM$-convex functions whose effective domain is $\Delta^d_n$.

\begin{lemma}\label{regularization}
For any $\MM$-convex function $\nu:\Delta^d_n \rightarrow \mathbb{R} \cup\{\infty\}$, there is a sequence of $\MM$-convex functions $\nu_k:\Delta^d_n \rightarrow \mathbb{R}$ such that
\[
\lim_{k \to \infty}\nu_k(\alpha)=\nu(\alpha) \ \ \text{for all $\alpha \in \Delta^d_n$}.
\]
The sequence $\nu_k$ can be chosen so that $\nu_k=\nu$ in $\text{dom}(\nu)$ and $\nu_k<\nu_{k+1}$ outside $\text{dom}(\nu)$.
\end{lemma}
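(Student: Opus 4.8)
The plan is to build the approximating sequence by perturbing $\nu$ away from its effective domain using a single "master" $\MM$-convex function that is finite everywhere on $\Delta^d_n$, and then taking convex combinations. First I would fix one reference $\MM$-convex function $\rho:\Delta^d_n\to\RR$ with full effective domain; the natural choice is an affine function $\rho(\alpha)=\langle c,\alpha\rangle$ for a generic $c\in\RR^n$, or simply $\rho\equiv 0$ — indeed every affine function on $\Delta^d_n$ is $\MM$-convex (the exchange inequality $\nu(\alpha)+\nu(\beta)\ge\nu(\alpha-e_i+e_j)+\nu(\beta-e_j+e_i)$ holds with equality for affine $\nu$), and its effective domain is all of $\Delta^d_n$. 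The key structural fact I would invoke is that the class of $\MM$-convex functions on $\Delta^d_n$ is closed under pointwise addition when the effective domains are compatible, and more importantly that the "infimal convolution" or, in its simplest guise, the pointwise minimum of two $\MM$-convex functions need not be $\MM$-convex — so I should avoid min and instead work with sums and limits, which do preserve $\MM$-convexity in a controlled way.

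The cleanest route is the following. Since $\text{dom}(\nu)$ is an $\MM$-convex subset of $\Delta^d_n$, and $\Delta^d_n$ itself is $\MM$-convex, I would like a sequence of $\MM$-convex functions interpolating between $\nu$ (on $\text{dom}(\nu)$, value $+\infty$ outside) and a finite $\MM$-convex function. Set, for each positive integer $k$,
\[
\nu_k(\alpha)=\begin{cases}\nu(\alpha)&\text{if }\alpha\in\text{dom}(\nu),\\ \rho(\alpha)+k&\text{if }\alpha\notin\text{dom}(\nu),\end{cases}
\]
where $\rho$ is a fixed finite $\MM$-convex function on $\Delta^d_n$ and $k$ is taken large enough (depending on $\nu$ and $\rho$, but uniformly over the finite set $\Delta^d_n$) that $\nu_k$ exceeds $\max_{\text{dom}(\nu)}\nu$ outside $\text{dom}(\nu)$. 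Then $\nu_k=\nu$ on $\text{dom}(\nu)$, $\nu_k<\nu_{k+1}$ outside $\text{dom}(\nu)$, and $\nu_k(\alpha)\to\infty=\nu(\alpha)$ for $\alpha\notin\text{dom}(\nu)$, as required; inside $\text{dom}(\nu)$ we have $\nu_k(\alpha)=\nu(\alpha)$ for all $k$, so the limit statement is trivial there. The content is entirely in verifying that each $\nu_k$ is $\MM$-convex.

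For that verification I would use the local exchange property (condition (2) in the definition of $\MM$-convexity, valid since $\text{dom}(\nu_k)=\Delta^d_n$ is $\MM$-convex): it suffices to check, for every pair $\alpha,\beta\in\Delta^d_n$ with $|\alpha-\beta|_1=4$, the existence of indices $i,j$ with $\alpha_i>\beta_i$, $\alpha_j<\beta_j$, and $\nu_k(\alpha)+\nu_k(\beta)\ge\nu_k(\alpha-e_i+e_j)+\nu_k(\beta-e_j+e_i)$. I would split into cases according to how many of $\alpha,\beta$ lie in $\text{dom}(\nu)$. If both $\alpha,\beta\in\text{dom}(\nu)$, then by $\MM$-convexity of $\nu$ there are $i,j$ satisfying the inequality for $\nu$; if both $\alpha-e_i+e_j$ and $\beta-e_j+e_i$ also lie in $\text{dom}(\nu)$ we are done, and if one of them leaves $\text{dom}(\nu)$ then the corresponding $\nu_k$ value is $\rho(\cdot)+k$, which for $k$ large is dominated by $\nu(\alpha)+\nu(\beta)$ minus a bounded quantity — here I must be slightly careful, and the cleanest fix is to instead use the exchange pair coming from $\MM$-convexity of the set $\text{dom}(\nu)$ together with a direct estimate; alternatively, choose $k$ so large that whenever a term $\nu_k(\gamma)=\rho(\gamma)+k$ appears on the right, the left side already wins. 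If exactly one of $\alpha,\beta$ — say $\alpha$ — lies in $\text{dom}(\nu)$, then $\nu_k(\beta)=\rho(\beta)+k$ and the needed inequality becomes, after cancellation, a comparison governed either by $\MM$-convexity of $\rho$ (when both moved points leave $\text{dom}(\nu)$) or by one finite $\nu$-value against $\rho$-values plus $k$; again largeness of $k$ closes it. If neither $\alpha$ nor $\beta$ lies in $\text{dom}(\nu)$, then every term is of the form $\rho(\cdot)+k$ and the inequality is exactly the local exchange inequality for the $\MM$-convex function $\rho$. The main obstacle — and the one place that needs genuine care rather than bookkeeping — is the mixed case where the exchange move pushes a point out of $\text{dom}(\nu)$: one must ensure the uniform choice of $k$ really does dominate all such finitely many comparisons simultaneously, which works precisely because $\Delta^d_n$ is finite, so only finitely many inequalities are involved and a single $k$ suffices. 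Finally, the monotonicity $\nu_k<\nu_{k+1}$ outside $\text{dom}(\nu)$ is immediate from the $+k$ term, and equality on $\text{dom}(\nu)$ is by construction, completing the proof.
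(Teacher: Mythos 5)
Your core construction fails, and the failure is not a bookkeeping issue that can be patched by choosing $k$ (or $\rho$) more carefully; I will give a concrete counterexample.

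Take $n=2$, $d=3$, so $\Delta^3_2=\{(3,0),(2,1),(1,2),(0,3)\}$. Let $\mathrm{dom}(\nu)=\{(3,0),(2,1)\}$ with $\nu\equiv 0$ there. This $\nu$ is $\MM$-convex: its effective domain satisfies the exchange axiom, and the local exchange property is vacuous since no two points of $\mathrm{dom}(\nu)$ are at $\ell_1$-distance $4$. Now apply your construction with the simplest $\rho\equiv 0$. The resulting values of $\nu_k$ along the discrete segment $(3,0),(2,1),(1,2),(0,3)$ are $0,\ 0,\ k,\ k$. Testing the local exchange inequality at $\alpha=(2,1)$, $\beta=(0,3)$ (the only admissible pair is $i=1$, $j=2$, and both moved points equal $(1,2)$) gives
\[
\nu_k(2,1)+\nu_k(0,3)\ \ge\ 2\,\nu_k(1,2),\quad\text{i.e.}\quad 0+k\ \ge\ 2k,
\]
which fails for every $k>0$. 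Replacing $\rho\equiv 0$ by an arbitrary finite $\MM$-convex $\rho$ with values $r_1=\rho(1,2)$, $r_0=\rho(0,3)$ only changes the requirement to $r_0-2r_1\ge k$, which still fails once $k$ is large. So no fixed finite $\MM$-convex penalty of the form \emph{add $\rho+k$ off the domain} can work: along a direction in which $\mathrm{dom}(\nu)$ ends but the simplex continues, the first point outside the domain acquires the $+k$ bump, while the second point outside only acquires the same $+k$ bump, producing a concave dent at the boundary. The very case you flagged as ``the one place that needs genuine care'' — an exchange move pushing a point out of $\mathrm{dom}(\nu)$ — is not a finiteness issue to be killed by a large $k$; it is where the construction is wrong.

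The paper's proof avoids this by never defining $\nu_k$ piecewise. Instead it lifts $\nu$ to the simplex $\Delta^d_{n^2}$ via the pullback $\varphi^*\nu$ (splitting of variables), adds a \emph{linear} function $\ell_k$ that is zero on the ``diagonal'' indices and equal to $k$ elsewhere, and then projects back down by the infimal projection $\psi_*$ (aggregation). Both splitting and aggregation preserve $\MM$-convexity by \cite{KMT}, and adding a linear function trivially preserves it, so $\nu_k=\psi_*(\ell_k+\varphi^*\nu)$ is $\MM$-convex; finiteness everywhere on $\Delta^d_n$ comes from the existence theorem for nonnegative integer matrices with prescribed row and column sums. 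The essential difference from your approach is that the penalty is imposed in the lifted space and then minimized out, so the value of $\nu_k$ off $\mathrm{dom}(\nu)$ is an \emph{infimum} over many lifts, automatically producing the right growth in every direction; a piecewise additive bump on the base simplex cannot achieve this. If you want to salvage an elementary argument, you would need something closer to an infimal convolution of $\nu$ with a penalty that scales with $\ell_1$-distance to $\mathrm{dom}(\nu)$, but proving $\MM$-convexity of that directly is essentially what the lift-and-project argument packages cleanly.
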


\begin{proof}
It is enough to prove the case when  $\nu$ is not the constant function $\infty$.
Write  $e_{ij}$ for the  standard unit vectors in $\mathbb{R}^{n^2}$.
Let $\varphi:  \Delta^d_{n^2} \to  \Delta^d_n$ and $\psi:  \Delta^d_{n^2} \to  \Delta^d_n$ be the restrictions of the linear maps from $\mathbb{R}^{n^2}$ to $\mathbb{R}^n$ given by
\[
\varphi(e_{ij})=e_i \ \ \text{and} \ \ \psi(e_{ij})=e_j.
\]
For any function $\mu:\Delta^d_n \to \mathbb{R} \cup \{\infty\}$, we define the function $\varphi^*\mu:\Delta^d_{n^2} \to \mathbb{R} \cup \{\infty\}$ by
\[
\varphi^*\mu(\beta)=\mu\Big(\varphi(\beta)\Big).
\]
For any function $\mu:\Delta^d_{n^2} \to \mathbb{R} \cup \{\infty\}$, we define the function $\psi_{*}\mu:\Delta^d_n \to \mathbb{R} \cup \{\infty\}$ by
\[
\psi_{*}\mu(\alpha)=\min\Big\{ \mu(\beta) \mid \psi(\beta)=\alpha\Big\}.
\]
Recall that the operations of splitting \cite[Section 4]{KMT} and aggregation \cite[Section 5]{KMT} preserve $\MM$-convexity of discrete functions.
Therefore, $\varphi^*$ and $\psi_{*}$ preserve $\MM$-convexity.
Now, given  $\nu$, set
\[
\nu_k=\psi_{*}(\ell_k+\varphi^*\nu),
\]
where $\ell_k$ is the restriction of the linear function on $\mathbb{R}^{n^2}$ defined by
\[
\ell_k(e_{ij})=\left\{\begin{array}{cc}
0 &\text{if $i=j$,} \\
k & \text{if $i \neq j$.}
\end{array}\right.
\]
 The existence theorem for nonnegative matrices with given row and column sums
 shows that the restriction of $\psi$ to any fiber of $\varphi$ is surjective  \cite[Corollary 1.4.2]{Brualdi}. 
Thus, the assumption that $\nu$ is not identically $\infty$ implies that  $\nu_k<\infty$  for every $k$.
It is straightforward to check that the sequence $\nu_k$ has the other required properties for large enough $k$.
\end{proof}

\begin{proof}[Proof of Theorem \ref{tropical}, (ii) $\Rightarrow$ (i)]
Let $f_t$ be a polynomial in $\mathrm{L}^d_n(\mathbb{K})$
whose tropicalization is $\nu$.
We show the $\MM$-convexity of $\nu$ by checking the local exchange property:
For any $\alpha,\beta \in \text{dom}(\nu)$ with $|\alpha-\beta|_1=4$, there are $i$ and $j$ satisfying 
\[
\alpha_i >\beta_i, \ \  \alpha_j <\beta_j \ \ \text{and} \ \  \nu(\alpha)+\nu(\beta) \ge \nu(\alpha-e_i+e_j)+\nu(\beta-e_j+e_i).
\]
Since  $|\alpha-\beta|_1=4$, 
we can find $\gamma$ in $\Delta^{d-2}_n$ and  indices $p,q,r,s$ in $[n]$ such that
such that
\[
\alpha=\gamma+e_p+e_q \ \ \text{and} \ \  \beta=\gamma+e_r+e_s  \ \ \text{and} \ \ \{p,q\} \cap \{r,s\}=\emptyset.
\]
Since $\partial^\gamma f_t$ is  stable,
the tropicalization of $\partial^\gamma f_t$ is $\MM$-convex by \cite[Theorem 4]{SIAM}.
The conclusion follows from the local exchange property for the tropicalization of $\partial^\gamma f_t$.
\end{proof}

\begin{proof}[Proof of Theorem \ref{classical}]
We prove (1) $\Rightarrow$ (3).
We first show the implication  in the special case
\[
\text{dom}(\nu)={n \brack d}.
\]
Since $\text{dom}(\nu)$ is $\MM$-convex, 
it is enough to prove that $\partial^\alpha g^\nu_{q}$ is has exactly one 
positive eigenvalue for all $\alpha \in {n \brack d-2}$ and all $0<q \le 1$.
Since $\mathscr{T}_n=\mathrm{Dr}(2,n)$ by  \cite[Theorem 4.3.5]{MS} and \cite[Definition 4.4.1]{MS},
the desired statement follows from Lemma \ref{ultrametricHR}.
This proves the first special case.
Now consider the second special case
\[
\text{dom}(\nu)=\Delta^d_n.
\]
 By Lemma \ref{DiscretePolarization},
the polarization  $\Pi^\uparrow \nu $ is an $\MM$-convex function with effective domain ${nd \brack d}$,
and hence we may apply the known implication  (1) $\Rightarrow$ (3) for $\Pi^\uparrow \nu$.
Therefore,
\[
\text{$\Pi^\uparrow_\delta(g^\nu_{q})=\frac{1}{d^d} \ g^{\Pi^\uparrow \nu}_{q}$ is a Lorentzian polynomial for $0<q \le 1$,}
\]
where $\delta=(d,\ldots,d)$.
Thus, by Proposition \ref{pols}, the polynomial  $g^{\nu}_{q}$ is  Lorentzian for all  $0 < q \le 1$, and
 the second special case is proved.
Next consider the third special case
\[
\text{$\text{dom}(\nu)$ is an arbitrary $\MM$-convex set and $q=1$.}
\]
 By Lemma \ref{DiscretePolarization}, the effective domain of $\Pi^\uparrow\nu$ is an $\MM$-convex set.
Therefore,   by Theorem \ref{charjump},  
\[
\text{$\Pi^\uparrow_\delta(g^\nu_{1})=\frac{1}{d^d} \ g^{\Pi^\uparrow\nu}_{1}$ is a Lorentzian polynomial.}
\]
Thus, by Proposition \ref{pols}, the polynomial $g^{\nu}_{1}$ is  Lorentzian, and
the the third special case is proved.
In the remaining case when $q<1$ and the effective domain of $\nu$ is arbitrary,
we express $\nu$ as the limit of $\MM$-convex functions $\nu_k$ with effective domain $\Delta^d_n$ using Lemma \ref{regularization}.
Since $q<1$, we have
\[
g^{\nu}_{q}= \lim_{k \to \infty} g^{\nu_k}_{q}.
\]
Thus the conclusion follows from the second special case applied to each $g^{\nu_k}_{q}$.

We prove (1) $\Rightarrow$ (2).
Introduce a positive real number $p$,
and consider the $\MM$-convex function $\frac{\nu}{p}$.
Applying  the known implication (1) $\Rightarrow$ (3), we see that the polynomial $g^{\nu/p}_{q}$ is Lorentzian for all $0<q \le 1$.
Therefore, by Proposition \ref{tot}, 
\[
R_p(g^{\nu/p}_{q})=\sum_{\alpha\in \text{dom}(\nu)}  (\alpha!)^p \hspace{0.5mm} {\delta \choose \alpha}^p \hspace{0.5mm} \frac{q^{\nu(\alpha)}}{\alpha!}w^\alpha 
\]
is Lorentzian for all $0<p \le 1$.
Taking the limit $p$ to zero, we have (2).

We prove (2) $\Rightarrow$ (1) and (3) $\Rightarrow$ (1).
By the quantifier elimination for the theory of real closed fields,
 the polynomial
$f^{\nu}_{t}$  with coefficients in $\mathbb{K}$ is Lorentzian
if  (2) holds.
Similarly,
the polynomial $g^{\nu}_{t}$ is Lorentzian
if (3) holds.
Since
\[
\nu=\text{trop}(f^{\nu}_{t})=\text{trop}(g^{\nu}_{t}),
\]
the conclusion follows from (ii) $\Rightarrow$ (i) of Theorem \ref{tropical}.
\end{proof}

\begin{proof}[Proof of Theorem \ref{tropical}, (i) $\Rightarrow$ (ii)]
By Theorem \ref{classical}, $f^{\nu}_{q}$ is Lorentzian for all sufficiently small positive real numbers $q$.
Therefore, by the quantifier elimination for the theory of real closed fields, the polynomial $f^{\nu}_{t}$ 
 is Lorentzian over $\mathbb{K}$.
Clearly, the tropicalization of $f^{\nu}_{t}$ is $\nu$.
\end{proof}

\begin{corollary}\label{CorollaryTropical}
Tropicalizations of Lorentzian polynomials  over $\mathbb{K}$ are $\MM$-convex, and all $\MM$-convex functions are limits of tropicalizations of Lorentzian polynomials over $\mathbb{K}$.
\end{corollary}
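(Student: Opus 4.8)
The plan is to read off both halves of Corollary~\ref{CorollaryTropical} from Theorem~\ref{tropical}, after supplying one elementary fact: rational-valued $\MM$-convex functions are dense among all $\MM$-convex functions. For the first half, recall that every nonzero element of $\mathbb{K}$ has a rational leading exponent, so if $f_t$ is a Lorentzian polynomial over $\mathbb{K}$ of degree $d$ in $n$ variables then $\nu \coloneqq \text{trop}(f_t)$ is a function $\Delta^d_n \to \mathbb{Q} \cup \{\infty\}$. Condition (ii) of Theorem~\ref{tropical} holds for this $\nu$, witnessed by $f_t$ itself, so condition (i) holds, i.e.\ $\nu$ is $\MM$-convex; this is exactly the first assertion.

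For the second half, let $\nu \colon \Delta^d_n \to \mathbb{R} \cup \{\infty\}$ be an arbitrary $\MM$-convex function and put $\JJ = \text{dom}(\nu)$, an $\MM$-convex subset of $\Delta^d_n$. I would show that $\nu$ is a limit of rational-valued $\MM$-convex functions with effective domain $\JJ$; granting this, Theorem~\ref{tropical} realizes each such approximant as $\text{trop}(f_t)$ for a Lorentzian polynomial $f_t$ over $\mathbb{K}$, and the conclusion follows, the relevant topology being pointwise convergence on the finite set $\Delta^d_n$, equivalently convergence in $\mathbb{R}^{\JJ}$. To build the approximants, identify a function $\Delta^d_n \to \mathbb{R} \cup \{\infty\}$ with effective domain $\JJ$ with its vector of finite values in $\mathbb{R}^{\JJ}$. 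Because $\JJ$ is $\MM$-convex, such a function is $\MM$-convex if and only if it satisfies the local exchange property: for every $\alpha, \beta \in \JJ$ with $|\alpha-\beta|_1 = 4$, at least one of the linear inequalities $\nu(\alpha)+\nu(\beta) \ge \nu(\alpha-e_i+e_j)+\nu(\beta-e_j+e_i)$ — taken over the finitely many pairs $(i,j)$ with $\alpha_i>\beta_i$, $\alpha_j<\beta_j$, and $\alpha-e_i+e_j,\ \beta-e_j+e_i \in \JJ$ — holds. Distributing the ``for all $(\alpha,\beta)$, there exists $(i,j)$'' over these linear conditions exhibits the set of $\MM$-convex functions with effective domain $\JJ$ as a finite union of rational polyhedra in $\mathbb{R}^{\JJ}$. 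Our $\nu$ lies in one of them, say $P$; since $P$ is a nonempty rational polyhedron its rational points are dense in $P$, and any sequence of rational points of $P$ tending to $\nu$ furnishes the desired approximants.

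The one delicate point — the main obstacle, such as it is — is this density step: the space of $\MM$-convex functions on a fixed domain is \emph{not} convex, owing to the existential quantifier in the exchange axiom, so one must notice that it nevertheless breaks into finitely many rational polyhedra, each of which (being a nonempty rational polyhedron) has dense rational points. As an alternative one can first apply Lemma~\ref{regularization} to replace $\nu$ by $\MM$-convex functions with full effective domain $\Delta^d_n$ converging to it and then rationalize inside a single polyhedral cell as above; in either route the polyhedral decomposition is what does the work. Combining the two halves yields the statement of the corollary.
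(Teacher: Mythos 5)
Your proof is correct, and the second half takes a genuinely different route from the paper's. For the density of rational-valued $\MM$-convex functions, the paper first applies Lemma~\ref{regularization} to reduce to the case $\text{dom}(\nu)=\Delta^d_n$, then applies the polarization $\Pi^\uparrow$ of Lemma~\ref{DiscretePolarization} to pass to a valuated matroid on the uniform matroid of rank $d$ on $nd$ elements, invokes the fact that the Dressian of a matroid is a rational polyhedral fan (quoted from Maclagan--Sturmfels), approximates $\Pi^\uparrow\nu$ by rational points of that fan, and finally pulls back through $\Pi^\downarrow$. You instead argue directly on $\nu$: fix $\JJ=\text{dom}(\nu)$ and observe that the local exchange property exhibits the set of $\MM$-convex functions with effective domain $\JJ$ (inside $\mathbb{R}^{\JJ}$) as a finite union of rational polyhedra, since it is a finite conjunction of finite disjunctions of rational linear inequalities; a nonempty rational polyhedron has dense rational points, so the claim follows. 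This is shorter and more self-contained, avoiding the polarization machinery and the appeal to the Dressian. Both routes ultimately exploit the same fact — the exchange axiom cuts out a finite union of rational polyhedra — but the paper's version deliberately re-uses the lemmas it has already built and keeps the proof inside its Dressian framework, whereas yours makes the polyhedral decomposition explicit from scratch. One small point worth keeping explicit (you did flag it): in the disjunction you must restrict to pairs $(i,j)$ with $\alpha-e_i+e_j,\ \beta-e_j+e_i\in\JJ$, and this index set is nonempty for each relevant $(\alpha,\beta)$ precisely because $\JJ$ itself is $\MM$-convex.
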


\begin{proof}
By Theorem \ref{tropical}, it is enough to show that any $\MM$-convex function $\nu: \Delta^d_n \to \mathbb{R} \cup \{\infty\}$
is a limit of $\MM$-convex functions  $\nu_k: \Delta^d_n \to \mathbb{Q} \cup \{\infty\}$.
By Lemma \ref{regularization}, we may suppose that
\[
\text{dom}(\nu)=\Delta^d_n.
\]
In this case, by Lemma \ref{DiscretePolarization}, the polarization $\Pi^\uparrow\nu$ is $\MM$-convex function satisfying
\[
\text{dom}\big(\Pi^\uparrow\nu\big)={nd \brack d}.
\]
In other words, $-\Pi^\uparrow\nu$ is a valuated matroid whose underlying matroid is uniform of rank $d$ on $nd$ elements.
Since the Dressian of the matroid is a rational polyhedral fan \cite[Section 4.4]{MS},
there are $\MM$-convex functions $\mu_k: \Delta^d_{nd} \to \mathbb{Q} \cup\{\infty\}$ satisfying
\[
\Pi^\uparrow\nu=\lim_{k \to \infty} \mu_k.
\]
By Lemma \ref{DiscretePolarization},  
 $\nu=\Pi^\downarrow \Pi^\uparrow \nu$ is the limit of $\MM$-convex functions $\Pi^\downarrow\mu_k: \Delta^d_{n} \to \mathbb{Q} \cup\{\infty\}$.
\end{proof}

\section{Examples and applications}

\subsection{Convex bodies and Lorentzian polynomials}\label{SectionConvex}

For any collection of convex bodies $\mathrm{K}=(\mathrm{K}_1,\ldots,\mathrm{K}_n)$ in $\mathbb{R}^d$,
consider the function 
\[
\text{vol}_\mathrm{K}:\mathbb{R}^n_{\ge 0} \longrightarrow \mathbb{R}, \qquad w  \longmapsto \mathrm{vol}(w_1\mathrm{K}_1+\cdots+w_n\mathrm{K}_n),
\]
where $w_1\mathrm{K}_1+\cdots+w_n\mathrm{K}_n$ is the \emph{Minkowski sum} and $\mathrm{vol}$ is the Euclidean volume. 
Minkowski noticed that the function $\text{vol}_\mathrm{K}$ is a degree $d$ homogeneous polynomial in  $w=(w_1,\ldots,w_n)$ with nonnegative coefficients.
We may write 
\[
\mathrm{vol}_\mathrm{K}(w)= \sum_{1 \le i_1,\ldots, i_d \le n} V(\mathrm{K}_{i_1},\ldots, \mathrm{K}_{i_d}) w_{i_1}\cdots w_{i_d}
=\sum_{\alpha\in \Delta^d_n} \frac{d!}{\alpha!} V_\alpha(\mathrm{K}) w^\alpha, 
\]
where   $V_\alpha(\mathrm{K})$ is, by definition, the \emph{mixed volume}
\[
V_\alpha(\mathrm{K})=V(\underbrace{\mathrm{K}_1,\ldots,\mathrm{K}_1}_{\alpha_1},\ldots, \underbrace{\mathrm{K}_n,\ldots,\mathrm{K}_n}_{\alpha_n})\coloneq \frac{1}{d!}\partial^\alpha \mathrm{vol}_\mathrm{K}.
\]
For any convex bodies $\mathrm{C}_0,\mathrm{C}_1,\ldots,\mathrm{C}_d$ in $\mathbb{R}^d$,
the mixed volume $V(\mathrm{C}_1,\mathrm{C}_2,\ldots,\mathrm{C}_d)$ is symmetric in its arguments and satisfies the relation
\[
V(\mathrm{C}_0+\mathrm{C}_1,\mathrm{C}_2,\ldots,\mathrm{C}_d)=V(\mathrm{C}_0,\mathrm{C}_2,\ldots,\mathrm{C}_d)+V(\mathrm{C}_1,\mathrm{C}_2,\ldots,\mathrm{C}_d).
\]
We refer to \cite{Schneider} for background on mixed volumes.

\begin{theorem}\label{volumeLorentzian}
The \emph{volume polynomial} $\text{vol}_\mathrm{K}$
is a Lorentzian polynomial for any $\mathrm{K}=(\mathrm{K}_1,\ldots,\mathrm{K}_n)$. 
\end{theorem}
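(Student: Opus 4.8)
The plan is to combine the characterization of Lorentzian polynomials in Theorem~\ref{chars} with the classical Alexandrov--Fenchel inequality. We may assume $d \ge 2$, since otherwise $\text{vol}_\mathrm{K}$ has degree at most one and nonnegative coefficients, hence is trivially Lorentzian. First I would reduce to full-dimensional bodies: letting $\mathrm{B}$ be the Euclidean unit ball in $\mathbb{R}^d$, mixed volumes depend continuously on their arguments in the Hausdorff metric, so $\text{vol}_\mathrm{K} = \lim_{\epsilon \to 0^+} \text{vol}_{\mathrm{K}(\epsilon)}$ where $\mathrm{K}(\epsilon) = (\mathrm{K}_1 + \epsilon \mathrm{B}, \ldots, \mathrm{K}_n + \epsilon \mathrm{B})$ and every $\mathrm{K}_i + \epsilon\mathrm{B}$ is full-dimensional. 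Since the set of Lorentzian polynomials is closed by Theorem~\ref{chars}, it suffices to prove the theorem when every $\mathrm{K}_i$ is full-dimensional.

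In that case every mixed volume $V_\alpha(\mathrm{K})$ with $\alpha \in \Delta^d_n$ is strictly positive, so $\text{vol}_\mathrm{K}$ lies in $\mathrm{P}^d_n$ and its support is the entire discrete simplex $\Delta^d_n$, which is $\MM$-convex. By the description of $\mathrm{L}^d_n$ in Theorem~\ref{chars} (equivalently, Definition~\ref{SecondDefinition}), it then remains only to check that for every $\alpha \in \Delta^{d-2}_n$ the quadratic form $\partial^\alpha \text{vol}_\mathrm{K}$ has at most one positive eigenvalue.

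Fix such an $\alpha$ and let $\mathrm{C}_1, \ldots, \mathrm{C}_{d-2}$ be the list of bodies in which $\mathrm{K}_k$ occurs $\alpha_k$ times. Unwinding the normalization, the Hessian of $\partial^\alpha \text{vol}_\mathrm{K}$ is a positive scalar multiple of the symmetric matrix $M = \big(V(\mathrm{K}_i, \mathrm{K}_j, \mathrm{C}_1, \ldots, \mathrm{C}_{d-2})\big)_{i,j=1}^n$, all of whose entries are positive. Write $B(x,y) = x^T M y$, so that $B(x,y) = V\big(\textstyle\sum_i x_i \mathrm{K}_i, \sum_j y_j \mathrm{K}_j, \mathrm{C}_1, \ldots, \mathrm{C}_{d-2}\big)$ for $x, y \in \mathbb{R}^n_{\ge 0}$ by multilinearity of mixed volume. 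Choose any $u \in \mathbb{R}^n_{>0}$; then $\mathrm{L} = \sum_i u_i\mathrm{K}_i$ is full-dimensional, so $B(u,u) > 0$. For any $v$ with $B(u,v) = 0$, the vector $u + tv$ lies in $\mathbb{R}^n_{>0}$ for all sufficiently small real $t$, and the Alexandrov--Fenchel inequality applied to $\sum_i(u+tv)_i\mathrm{K}_i$ and $\sum_i u_i\mathrm{K}_i$ against $\mathrm{C}_1,\ldots,\mathrm{C}_{d-2}$ gives
\[
B(u+tv, u+tv)\, B(u,u) \le B(u+tv, u)^2 = B(u,u)^2,
\]
where the last equality uses $B(u,v) = 0$. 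Hence $B(u,u) + t^2 B(v,v) = B(u+tv, u+tv) \le B(u,u)$, which forces $B(v,v) \le 0$. Thus $M$ is negative semidefinite on the hyperplane $\{v : B(u,v) = 0\}$ while $B(u,u) > 0$, and the orthogonal-direct-sum decomposition $\mathbb{R}^n = \mathbb{R}u \oplus \{B(u,\cdot)=0\}$ (which is $B$-orthogonal) shows that $M$ has exactly one positive eigenvalue. This verifies the remaining hypothesis of Theorem~\ref{chars}.

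The only genuinely deep ingredient is the Alexandrov--Fenchel inequality itself; everything else — approximation by full-dimensional bodies, positivity of mixed volumes of full-dimensional bodies, and the short linear-algebra step turning the inequality into the one-positive-eigenvalue statement — is routine. So I expect the main work to be not in proving something new, but in packaging Alexandrov--Fenchel exactly into the form demanded by the recursive Definition~\ref{SecondDefinition}.
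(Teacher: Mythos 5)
Your proof is correct and follows the same overall strategy as the paper's: reduce to full-dimensional bodies by Hausdorff approximation, note that the support is then all of $\Delta^d_n$ so $\MM$-convexity is automatic, and verify the quadratic condition in Theorem~\ref{chars} by the Alexandrov--Fenchel circle of ideas. The one genuine divergence is in the final step. The paper invokes the Brunn--Minkowski theorem for mixed volumes (\cite[Theorem 7.4.5]{Schneider}), namely the concavity of $w \mapsto V(\sum w_i\mathrm{K}_i,\sum w_i\mathrm{K}_i,\mathrm{C}_3,\ldots,\mathrm{C}_d)^{1/2}$ on the positive orthant, and then cites Proposition~\ref{logcon} to translate the concavity of $(\partial^\alpha\mathrm{vol}_{\mathrm{K}})^{1/2}$ into the one-positive-eigenvalue statement. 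You instead apply Alexandrov--Fenchel directly to the bilinear form $B(x,y)=V(\sum x_i\mathrm{K}_i,\sum y_j\mathrm{K}_j,\mathrm{C}_1,\ldots,\mathrm{C}_{d-2})$ and execute the linear algebra by hand via the $B$-orthogonal decomposition $\mathbb{R}^n=\mathbb{R}u\oplus\{v:B(u,v)=0\}$. Since Brunn--Minkowski for mixed volumes is equivalent to Alexandrov--Fenchel, both proofs rest on the same deep input; yours is a bit more self-contained (it avoids Proposition~\ref{logcon}), while the paper's is shorter by reusing the general concavity-to-Hessian dictionary it already established. One small remark: your argument for $B(v,v)\le 0$ is in fact the standard mechanism by which Brunn--Minkowski concavity is derived from Alexandrov--Fenchel, so the two routes are even closer than they may appear.
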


When combined with Theorem \ref{chars}, Theorem \ref{volumeLorentzian} implies the following statement.

\begin{corollary}\label{volumeSupport}
The support of   $\text{vol}_\mathrm{K}$ is an $\MM$-convex for any $\mathrm{K}=(\mathrm{K}_1,\ldots,\mathrm{K}_n)$. 
\end{corollary}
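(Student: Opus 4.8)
The corollary is immediate once Theorem~\ref{volumeLorentzian} is granted, so the plan is to dispatch it in one step and then sketch how I would prove that theorem, which carries all the content. For the corollary itself: by Theorem~\ref{volumeLorentzian} the volume polynomial $\mathrm{vol}_{\mathrm{K}}$ lies in $\mathrm{L}^d_n$, and by Theorem~\ref{chars} the set $\mathrm{L}^d_n$ is exactly the class of degree $d$ Lorentzian polynomials; since $\mathrm{L}^d_n\subseteq\mathrm{M}^d_n$ (by Definition~\ref{SecondDefinition}, using $\mathrm{S}^e_n\subseteq\mathrm{M}^e_n$ in degrees $e\le 2$), every element of $\mathrm{L}^d_n$ has $\MM$-convex support. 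Hence $\mathrm{supp}(\mathrm{vol}_{\mathrm{K}})$ is $\MM$-convex; the degenerate case $\mathrm{vol}_{\mathrm{K}}\equiv 0$ is trivial because the empty set is $\MM$-convex.

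For Theorem~\ref{volumeLorentzian}, I would start from Minkowski's theorem, which says $\mathrm{vol}_{\mathrm{K}}$ is homogeneous of degree $d$ with nonnegative coefficients; I may assume $d\ge 2$ and $\mathrm{vol}_{\mathrm{K}}\not\equiv 0$. By the characterization following Theorem~\ref{chars}, it suffices to check (a) $\partial^\alpha\mathrm{vol}_{\mathrm{K}}$ has at most one positive eigenvalue for every $\alpha\in\Delta^{d-2}_n$, and (b) $\mathrm{supp}(\mathrm{vol}_{\mathrm{K}})$ is $\MM$-convex. The first thing I would record, using multilinearity and symmetry of mixed volumes, is that for $\alpha\in\Delta^{d-2}_n$ the Hessian $\mathscr{H}_{\partial^\alpha\mathrm{vol}_{\mathrm{K}}}$ is a positive multiple of the mixed-volume matrix $\bigl(V(\mathrm{K}_i,\mathrm{K}_j,\mathrm{K}^\alpha)\bigr)_{i,j}$, where $\mathrm{K}^\alpha$ denotes the multiset of $d-2$ bodies among $\mathrm{K}_1,\dots,\mathrm{K}_n$ recorded by $\alpha$; and that $\mathscr{H}_{\mathrm{vol}_{\mathrm{K}}}(w)$ is a positive multiple of $\bigl(V(\mathrm{K}_i,\mathrm{K}_j,\mathrm{K}_w,\dots,\mathrm{K}_w)\bigr)_{i,j}$ with $\mathrm{K}_w=\sum_k w_k\mathrm{K}_k$.

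The key input is then the Alexandrov--Fenchel inequality $V(\mathrm{L},\mathrm{L}',\mathrm{C})^2\ge V(\mathrm{L},\mathrm{L},\mathrm{C})V(\mathrm{L}',\mathrm{L}',\mathrm{C})$ for convex bodies $\mathrm{L},\mathrm{L}',\mathrm{C}_1,\dots,\mathrm{C}_{d-2}$. Expanding by multilinearity, and using that nonnegative Minkowski combinations $\sum a_i\mathrm{K}_i,\ \sum b_i\mathrm{K}_i$ are genuine convex bodies, this says the symmetric matrix $M^{(\alpha)}=\bigl(V(\mathrm{K}_i,\mathrm{K}_j,\mathrm{K}^\alpha)\bigr)$ obeys $(a^TM^{(\alpha)}b)^2\ge(a^TM^{(\alpha)}a)(b^TM^{(\alpha)}b)$ for all $a,b\ge 0$; together with nonnegativity of its entries this forces $M^{(\alpha)}$ to have at most one positive eigenvalue. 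In the full-dimensional case this is the classical Hodge-index form of Alexandrov--Fenchel; the general case follows by replacing each $\mathrm{K}_i$ with $\mathrm{K}_i+\epsilon\mathrm{B}$ ($\mathrm{B}$ the unit ball), applying the full-dimensional case, and letting $\epsilon\to 0$, since "at most one positive eigenvalue" is a closed condition on symmetric matrices and mixed volumes are continuous in the Hausdorff metric. This gives (a). For (b), the same estimate applied to $\mathscr{H}_{\mathrm{vol}_{\mathrm{K}}}(w)$ shows it has at most one, hence exactly one (it is nonzero with nonnegative coefficients, so it has at least one positive eigenvalue at every positive $w$), positive eigenvalue for each $w\in\mathbb{R}^n_{>0}$; Proposition~\ref{RayleighLemma} then makes $\mathrm{vol}_{\mathrm{K}}$ a $2\bigl(1-\tfrac1d\bigr)$-Rayleigh polynomial, and Theorem~\ref{MRayleigh} makes its support $\MM$-convex.

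The main obstacle is that the argument rests entirely on the Alexandrov--Fenchel inequality, a deep theorem of convex geometry; once it is granted, only routine work remains — the bookkeeping with multilinearity of mixed volumes, the density step passing from full-dimensional to arbitrary convex bodies, and the already-developed Rayleigh-to-$\MM$-convex machinery. A self-contained alternative avoiding a direct appeal to Alexandrov--Fenchel would be to first prove the analogous statement for projective varieties in Section~\ref{SectionProjective} via the Hodge index theorem, deduce it for lattice polytopes through their associated toric varieties, and then obtain the general case by approximating arbitrary convex bodies by rational polytopes, using continuity of mixed volumes and the closedness of $\mathrm{L}^d_n$ from Theorem~\ref{chars}.
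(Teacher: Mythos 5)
Your one-step deduction of the corollary from Theorem~\ref{volumeLorentzian} together with Theorem~\ref{chars} is exactly the paper's proof, so the corollary itself is settled.

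Your sketch of Theorem~\ref{volumeLorentzian} departs from the paper's in two places. The paper first reduces to the case where every $\mathrm{K}_i$ is $d$-dimensional, using continuity of mixed volumes together with the closedness of $\mathrm{L}^d_n$ from Theorem~\ref{chars}; in that case every coefficient of $\mathrm{vol}_\mathrm{K}$ is positive, so the support is all of $\Delta^d_n$ and its $\MM$-convexity is trivial, with no need for the $c$-Rayleigh machinery. For the eigenvalue condition the paper cites the Brunn--Minkowski theorem in the form that $w\mapsto V\bigl(\sum_i w_i\mathrm{K}_i,\sum_i w_i\mathrm{K}_i,\mathrm{C}_3,\ldots,\mathrm{C}_d\bigr)^{1/2}$ is concave, translated via Proposition~\ref{logcon}; your Alexandrov--Fenchel route is equivalent content and works equally well. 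If you insist on the Rayleigh detour for the support, there is a small gap to mind: Proposition~\ref{RayleighLemma} applied to $\mathrm{vol}_\mathrm{K}$ alone yields only the $\alpha=0$ instance of the Rayleigh inequality, whereas Theorem~\ref{MRayleigh} requires the full $c$-Rayleigh property. You therefore need to apply Proposition~\ref{RayleighLemma} to every $\partial^\alpha\mathrm{vol}_\mathrm{K}$ in turn --- which does work, since each is again (up to a constant) a mixed-volume matrix to which Alexandrov--Fenchel applies --- but this should be said explicitly; the paper's reduction to full-dimensional bodies sidesteps the issue entirely. Your suggested alternative via toric degeneration from the projective case is consonant with the remark preceding Question~\ref{RealizationQuestion}.
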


In other words, 
the set of all $\alpha \in \Delta^d_n$ satisfying the non-vanishing condition
\[
V(\underbrace{\mathrm{K}_1,\ldots,\mathrm{K}_1}_{\alpha_1},\ldots, \underbrace{\mathrm{K}_n,\ldots,\mathrm{K}_n}_{\alpha_n})\neq 0
 \]
 is $\MM$-convex  for any convex bodies $\mathrm{K}_1,\ldots,\mathrm{K}_n$ in $\mathbb{R}^d$.
 
 \begin{remark}\label{RemarkConvexBodies}
The mixed volume $V(\mathrm{C}_{1},\ldots, \mathrm{C}_{d})$ is positive precisely when there are line segments $\ell_{i} \subseteq \mathrm{C}_{i}$ with linearly independent directions \cite[Theorem 5.1.8]{Schneider}.
Thus, when $\mathrm{K}$ consists of $n$ line segments in $\mathbb{R}^d$, Corollary \ref{volumeSupport} states the familiar fact that, for any configuration of $n$ vectors $\mathscr{A} \subseteq \mathbb{R}^d$, the collection of linearly independent $d$-subsets of $\mathscr{A}$ is the set of bases of a matroid.

The same reasoning shows that, in fact,  
 the basis generating polynomial of a  matroid on $[n]$ is the volume polynomial of $n$ convex bodies precisely when the matroid is regular.
 In particular, not every Lorentzian polynomial is a volume polynomial of convex bodies.
For example, the elementary symmetric polynomial
\[
w_1w_2+w_1w_3+w_1w_4+w_2w_3+w_2w_4+w_3w_4
\]
is not the volume polynomial of four convex bodies in the plane. 
By the compactness theorem of Shephard for the affine equivalence classes of $n$ convex bodies \cite[Theorem 1]{Shephard}, the image of 
the set of volume polynomials of convex bodies in $\mathbb{P}\mathrm{L}^d_n$ is compact.
Thus, the displayed elementary symmetric polynomial is not even the limit of volume polynomials of convex bodies.

On the other hand, a collection $\mathrm{J} \subseteq {n \brack d}$ is the support of a volume polynomial of $n$ convex bodies in $\mathbb{R}^d$
if and only if $\mathrm{J}$ is the set of basis of a rank $d$ matroid on $[n]$ that is representable over $\mathbb{R}$.
For example, there are no seven convex bodies in $\mathbb{R}^3$ whose volume polynomial has the support  given by the set of bases of the Fano matroid.
\end{remark}

\begin{proof}[Proof of Theorem \ref{volumeLorentzian}]
By continuity of the volume functional \cite[Theorem 1.8.20]{Schneider}, we may suppose that  every convex body in  $\mathrm{K}$ is $d$-dimensional.
In this case, every coefficient of $\text{vol}_\mathrm{K}$ is positive.
Thus, by Theorem \ref{chars},
 it is enough to show that $\partial^\alpha \text{vol}_\mathrm{H}$ is Lorentzian for every $\alpha \in \Delta_n^{d-2}$.
For this we use a special case of the  Brunn-Minkowski theorem \cite[Theorem 7.4.5]{Schneider}: 
For any convex bodies $\mathrm{C}_3,\ldots,\mathrm{C}_d$ in $\mathbb{R}^d$,
the function
\[
w \longmapsto V\Bigg(\sum_{i=1}^n w_i\mathrm{K}_i,\sum_{i=1}^n w_i\mathrm{K}_i,\mathrm{C}_3,\ldots,\mathrm{C}_d\Bigg)^{1/2}
\]
is concave on $\mathbb{R}^{n}_{>0}$.
In particular, the function 
\[
\Big(\hspace{0.5mm}\frac{2!}{d!} \hspace{0.5mm}\partial^\alpha \text{vol}_\mathrm{K}(w)\Big)^{1/2}= V\Bigg(\sum_{i=1}^n w_i\mathrm{K}_i,\sum_{i=1}^n w_i\mathrm{K}_i,\underbrace{\mathrm{K}_1,\ldots,\mathrm{K}_1}_{\alpha_1},\ldots, \underbrace{\mathrm{K}_n,\ldots,\mathrm{K}_n}_{\alpha_n}\Bigg)^{1/2}
\]
 is concave on $\mathbb{R}^n_{>0}$ for every $\alpha \in \Delta_n^{d-2}$.
The conclusion follows from Proposition \ref{logcon}.
\end{proof}

The \emph{Alexandrov--Fenchel inequality}  \cite[Section 7.3]{Schneider} states that
\[
V(\mathrm{C}_1,\mathrm{C}_2,\mathrm{C}_3,\ldots, \mathrm{C}_d)^2 \geq V(\mathrm{C}_1,\mathrm{C}_1,\mathrm{C}_3,\ldots, \mathrm{C}_d)V(\mathrm{C}_2,\mathrm{C}_2,\mathrm{C}_3,\ldots, \mathrm{C}_d).
\]
We show that an analog 
holds for any Lorentzian polynomial.

\begin{proposition}\label{af}
If $f=\sum_{\alpha \in \Delta^d_n} \frac {c_\alpha}{\alpha!}w^\alpha$  is a Lorentzian polynomial, then
\[
c_\alpha^2\geq c_{\alpha+e_i-e_j} c_{\alpha-e_i+e_j} \ \ \text{for any $i,j \in [n]$ and any $\alpha \in \Delta^d_n$.}
\]
\end{proposition}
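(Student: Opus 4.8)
The plan is the following. First I would dispose of the degenerate cases. If $i=j$ the asserted inequality reads $c_\alpha^2\ge c_\alpha^2$. Since $f$ is Lorentzian it has nonnegative coefficients, so $c_\beta\ge 0$ for all $\beta$, where by convention $c_\beta=0$ whenever $\beta\notin\NN^n$. If $\alpha_i=0$ then $\alpha-e_i+e_j\notin\NN^n$, and if $\alpha_j=0$ then $\alpha+e_i-e_j\notin\NN^n$; in either case the right-hand side is $0$ and the inequality is immediate. Hence it remains to treat the case $i\ne j$, $\alpha_i\ge 1$, $\alpha_j\ge 1$, which forces $d\ge 2$.

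In this remaining case I would differentiate down to degree two. Put $\gamma=\alpha-e_i-e_j\in\Delta^{d-2}_n$ and $g=\partial^\gamma f$. By Theorem \ref{chars}, $g\in\mathrm{L}^2_n$; in particular $g$ has nonnegative coefficients and its Hessian $\mathscr{H}_g$ (a constant symmetric matrix, since $g$ is quadratic) has at most one positive eigenvalue. Because $f$ is homogeneous of degree $d$, the polynomial $\partial^\mu f$ is a constant for every $\mu\in\Delta^d_n$, and a direct computation gives $\partial^\mu f=c_\mu$; consequently
\[
(\mathscr{H}_g)_{ii}=\partial^{\gamma+2e_i}f=c_{\alpha+e_i-e_j},\qquad (\mathscr{H}_g)_{jj}=\partial^{\gamma+2e_j}f=c_{\alpha-e_i+e_j},\qquad (\mathscr{H}_g)_{ij}=\partial^{\gamma+e_i+e_j}f=c_\alpha.
\]
Let $M$ be the $2\times 2$ principal submatrix of $\mathscr{H}_g$ on the rows and columns $i,j$. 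Cauchy's interlacing theorem bounds the smaller eigenvalue of $M$ by the second largest eigenvalue of $\mathscr{H}_g$, which is nonpositive because $\mathscr{H}_g$ has at most one positive eigenvalue; on the other hand the diagonal entries of $M$ are nonnegative, so the trace of $M$ is nonnegative and hence the larger eigenvalue of $M$ is nonnegative. Therefore $\det M\le 0$, which is precisely
\[
c_{\alpha+e_i-e_j}\,c_{\alpha-e_i+e_j}-c_\alpha^2\le 0,
\]
as desired.

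I do not expect a serious obstacle; the one point needing care is ruling out that $M$ is negative definite (which would make $\det M>0$), and this is exactly where the nonnegativity of the coefficients of $f$ is used. It is worth stressing that the weaker $2\big(1-\tfrac1d\big)$-Rayleigh inequality of Proposition \ref{RayleighLemma} does not suffice to produce the sharp constant $1$ here; what is needed is the full one-positive-eigenvalue property underlying Theorem \ref{chars}. Alternatively, one may first restrict $g$ to the two variables $w_i,w_j$ — which preserves the Lorentzian property by Theorem \ref{flow} — and then invoke the description of bivariate Lorentzian quadratics from Example \ref{ExampleBivariate}, in which the inequality is exactly the ultra log-concavity relation $a_1^2/4\ge a_0a_2$.
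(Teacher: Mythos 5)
Your proof is correct and follows essentially the same route as the paper: reduce to the quadratic $\partial^{\alpha-e_i-e_j}f$, which is Lorentzian, and read off the inequality from the fact that its Hessian has at most one positive eigenvalue. The paper executes the final step by restricting to the two variables $w_i,w_j$ and invoking the bivariate ultra log-concavity characterization, while you instead apply Cauchy interlacing to the $2\times 2$ principal submatrix of $\mathscr H_g$ — an equivalent formulation of the same fact — and you are more explicit than the paper in handling the degenerate cases $i=j$ and $\alpha_i=0$ or $\alpha_j=0$.
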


 
\begin{proof}
Consider the Lorentzian polynomial $\partial^{\alpha-e_i-e_j} f$.
Substituting $w_k$ by zero for all $k$  other than $i$ and $j$, 
we get the bivariate quadratic polynomial
\[
\frac{1}{2}c_{\alpha +e_i-e_j} w_i^2+c_\alpha w_iw_+\frac{1}{2}c_{\alpha -e_i+e_j} w_j^2.
\]
The displayed polynomial is Lorentzian by Theorem \ref{flow}, and hence $c_\alpha^2\geq c_{\alpha+e_i-e_j} c_{\alpha-e_i+e_j}$.
 \end{proof}

We may reformulate Proposition \ref{af} as follows. 
Let $f$ be a homogeneous polynomial of degree $d$ in $n$ variables.
The \emph{complete homogeneous form} of $f$ is the multi-linear function $F_f : (\RR^n)^d \rightarrow \RR$ defined by 
\[
F_f(v_1,\ldots, v_d) 
=  \frac 1 {d!} \frac {\partial}{\partial x_1}\cdots \frac {\partial}{\partial x_d}f(x_1v_1+\cdots+x_dv_d).
\]
Note that the complete homogeneous form of $f$ is symmetric in its arguments. 
By Euler's formula for homogeneous functions, we have
\[
F_f(w,w,\cdots, w)=f(w).
\]

\begin{proposition}\label{af2}
If $f$ is Lorentzian, then, for any $v_1 \in \RR^n$ and $v_2,\ldots,v_d \in \RR_{\geq 0}^n$, 
\[
F_f(v_1,v_2,v_3,\ldots, v_d)^2 \geq F_f(v_1,v_1,v_3,\ldots, v_d)F_f(v_2,v_2,v_3,\ldots, v_d).
\]
\end{proposition}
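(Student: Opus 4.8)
The plan is to reduce the statement to a fact about a single Lorentzian quadratic form by differentiating $f$ in the nonnegative directions $v_3,\dots,v_d$, and then to establish a reverse Cauchy--Schwarz inequality for such a form. We may assume $d\ge 2$. For $v\in\RR^n$ write $D_v=\sum_{i=1}^n v_i\partial_i$; these operators commute, and since $f$ is homogeneous of degree $d$ the complete homogeneous form satisfies $F_f(u_1,\dots,u_d)=\tfrac1{d!}D_{u_1}\cdots D_{u_d}f$ (a constant). First I would set $g=D_{v_3}D_{v_4}\cdots D_{v_d}f$. Because $v_3,\dots,v_d\in\RR^n_{\ge 0}$, applying Corollary \ref{derivatives} a total of $d-2$ times gives $g\in\mathrm{L}^2_n$, so $g$ is a quadratic form with nonnegative coefficients whose (constant) Hessian $\mathscr{H}_g$ is a symmetric matrix with nonnegative entries and at most one positive eigenvalue (by Theorem \ref{HRTheorem}, or directly from Definition \ref{SecondDefinition}). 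Since $g$ has degree $2$ we have $D_uD_{u'}g=u^T\mathscr{H}_g u'$, hence
\[
F_f(v_1,v_1,v_3,\dots,v_d)=\tfrac1{d!}v_1^T\mathscr{H}_g v_1,\qquad F_f(v_1,v_2,v_3,\dots,v_d)=\tfrac1{d!}v_1^T\mathscr{H}_g v_2,\qquad F_f(v_2,v_2,v_3,\dots,v_d)=\tfrac1{d!}v_2^T\mathscr{H}_g v_2,
\]
and the proposition becomes the inequality $(v_1^T\mathscr{H}_g v_2)^2\ge(v_1^T\mathscr{H}_g v_1)(v_2^T\mathscr{H}_g v_2)$.

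To finish I would argue as follows. Put $M=\mathscr{H}_g$, let $P=[\,v_1\mid v_2\,]$ be the $n\times 2$ matrix with columns $v_1,v_2$, and let $N=P^TMP$ be the symmetric $2\times 2$ matrix with entries $N_{kl}=v_k^TMv_l$; the claimed inequality is exactly $\det N\le 0$. Two observations suffice. First, $N$ has at most one positive eigenvalue: otherwise $N$ would be positive definite, so $x^TNx=(Px)^TM(Px)>0$ for all $x\ne 0$, which forces $P$ to be injective and $M$ to be positive definite on the $2$-dimensional subspace $P(\RR^2)$, contradicting that $M$ has at most one positive eigenvalue. Second, $N_{22}=v_2^TMv_2=2\hspace{0.3mm}g(v_2)\ge 0$, because $g$ has nonnegative coefficients and $v_2\in\RR^n_{\ge 0}$. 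Now a symmetric $2\times 2$ matrix with $N_{22}\ge 0$ and at most one positive eigenvalue has $\det N\le 0$: if $N_{22}>0$ then $N$ is not negative semidefinite, so it has exactly one positive eigenvalue and $\det N\le 0$; if $N_{22}=0$ then $\det N=-N_{12}^2\le 0$. This proves $\det N\le 0$. (Alternatively one can reach the same matrix $N$ by applying Theorem \ref{flow} to the nonnegative change of variables $(r,s,t)\mapsto r v_1^{-}+s v_1^{+}+t v_2$, where $v_1=v_1^{+}-v_1^{-}$ is the decomposition into positive and negative parts, and then restricting the resulting Lorentzian quadratic to the plane $r=-s$.)

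The main obstacle, and really the only subtle point, is the need for the observation $N_{22}\ge 0$: the condition ``at most one positive eigenvalue'' for $M$ does \emph{not} by itself give $(v_1^TMv_2)^2\ge(v_1^TMv_1)(v_2^TMv_2)$ (a negative definite $M$ is a counterexample). One genuinely needs the extra anchor $v_2^TMv_2\ge 0$, which is precisely where the hypothesis $v_2\in\RR^n_{\ge 0}$ is used, in contrast to $v_1$, which may be arbitrary. Everything else is bookkeeping with Corollary \ref{derivatives} and the definition of the complete homogeneous form, so I expect the write-up to be short.
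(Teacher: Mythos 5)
Your proof is correct and follows the same route as the paper: reduce to the Lorentzian quadratic $g = D_{v_3}\cdots D_{v_d}f$ via Corollary \ref{derivatives}, express the three quantities as entries of the $2\times 2$ Gram matrix $N = P^T\mathscr{H}_g P$, and conclude $\det N\le 0$. The only difference is cosmetic: the paper first assumes $\mathscr{H}_g\ne 0$ and $v_2^T\mathscr{H}_g v_2>0$ and then cites Cauchy's interlacing theorem, whereas you prove the needed restriction property directly and treat the degenerate case $N_{22}=0$ explicitly, which is slightly more self-contained but the same argument.
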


\begin{proof}
For every $k=1,\ldots,d$, we write $v_k=(v_{k1},v_{k2},\ldots,v_{kn})$, and set
\[
D_{k}=v_{k1}\frac{\partial}{\partial w_1}+v_{k2}\frac{\partial}{\partial w_2}+\cdots+v_{kn}\frac{\partial}{\partial w_n}.
\]
By Corollary \ref{derivatives},
the quadratic polynomial $D_{3}\cdots D_{d} f$ is Lorentzian.
We may suppose that the Hessian $\mathscr{H}$ of the quadratic polynomial is not identically zero and $v_2^T\mathscr{H} v_2 >0$. 
Note that
\[
v_i^T \mathscr{H} v_j= D_{i}D_{j}D_{3}\cdots D_{d} f=d! F_f(v_i,v_j,v_3,\ldots,v_d) \ \ \text{for any $i$ and $j$.}
\]
Since $\mathscr{H}$ has exactly one positive eigenvalue, 
the conclusion follows from Cauchy's interlacing theorem.
\end{proof}

\subsection{Projective varieties and Lorentzian polynomials}\label{SectionProjective}

Let $Y$ be a $d$-dimensional irreducible projective variety over an algebraically closed field $\mathbb{F}$.
If $\mathrm{D}_1,\ldots,\mathrm{D}_d$ are Cartier divisors on $Y$, the \emph{intersection product} $(\mathrm{D}_1\cdot \ldots \cdot \mathrm{D}_d)$ is an integer  defined 
by the following properties:
\begin{enumerate}[--]\itemsep 5pt
\item the product $(\mathrm{D}_1\cdot \ldots \cdot \mathrm{D}_d)$ is symmetric and multilinear as a function of its arguments,
\item  the product $(\mathrm{D}_1\cdot \ldots \cdot \mathrm{D}_d)$ depends only on the linear equivalence classes of the $D_i$, and
\item if $\mathrm{D}_1,\ldots,\mathrm{D}_n$ are effective divisors meeting transversely at smooth points of $Y$, then
\[
(\mathrm{D}_1\cdot \ldots \cdot \mathrm{D}_d)=\# \mathrm{D}_1 \cap \ldots \cap \mathrm{D}_d.
\]
\end{enumerate}
Given an irreducible subvariety $X \subseteq Y$ of dimension $k$, the intersection product 
\[
(\mathrm{D}_1 \cdot \ldots \cdot \mathrm{D}_k \cdot X)
\]
is then defined by replacing each divisor $\mathrm{D}_i$ with a linearly equivalent Cartier divisor whose support does not contain $X$ and intersecting the restrictions of $\mathrm{D}_i$ in $X$.
The definition of the intersection product linearly extends to $\mathbb{Q}$-linear combination of Cartier divisors, called $\mathbb{Q}$-divisors \cite[Section 1.3]{Lazarsfeld}.
If $\mathrm{D}$ is a $\mathbb{Q}$-divisor on $Y$, we write $(\mathrm{D})^d$ for the self-intersection  $(\mathrm{D} \cdot \ldots \cdot \mathrm{D})$.
For a gentle introduction to Cartier divisors and their intersection products, we refer to \cite[Section 1.1]{Lazarsfeld}.  
See \cite{Fulton} for a comprehensive study.

Let $\mathrm{H}=(\mathrm{H}_1,\ldots,\mathrm{H}_n)$ be a collection of $\mathbb{Q}$-divisors on $Y$. 
We define the \emph{volume polynomial} of $\mathrm{H}$ by 
\[
\text{vol}_\mathrm{H}(w)=(w_1H_1+\cdots+w_nH_n)^d
=\sum_{\alpha\in \Delta^d_n} \frac{d!}{\alpha!} V_\alpha(\mathrm{H}) w^\alpha, 
\]
where  $V_\alpha(\mathrm{H})$ is the intersection product
\[
V_\alpha(\mathrm{H})=(\underbrace{\mathrm{H}_1 \cdot \ldots \cdot \mathrm{H}_1}_{\alpha_1} \cdot \ldots \cdot  \underbrace{\mathrm{H}_n \cdot \ldots \cdot \mathrm{H}_n}_{\alpha_n})
=\frac{1}{d!}\partial^\alpha \mathrm{vol}_\mathrm{H}.
\]
A $\mathbb{Q}$-divisor $\mathrm{D}$ on $Y$ is said to be \emph{nef} if $(\mathrm{D} \cdot C) \ge 0$ for every irreducible curve $C$ in $Y$ \cite[Section 1.4]{Lazarsfeld}.

\begin{theorem}\label{intersectionLorentzian}
If $\mathrm{H}_1,\ldots,\mathrm{H}_n$ are nef divisors on $Y$, then 
$\text{vol}_\mathrm{H}(w)$
is a Lorentzian polynomial.
\end{theorem}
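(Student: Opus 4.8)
The plan is to follow the proof of Theorem~\ref{volumeLorentzian} for convex bodies, replacing the Brunn--Minkowski theorem by the higher-dimensional Hodge index theorem (equivalently, the Khovanskii--Teissier inequalities). First I would reduce to the case that $\mathrm{H}_1,\ldots,\mathrm{H}_n$ are ample. Fixing an ample $\mathbb{Q}$-divisor $\mathrm{A}$ on $Y$ and setting $\mathrm{H}_i^\epsilon=\mathrm{H}_i+\epsilon \mathrm{A}$ for rational $\epsilon>0$, each $\mathrm{H}_i^\epsilon$ is an ample $\mathbb{Q}$-divisor, and the coefficients of $\mathrm{vol}_{\mathrm{H}^\epsilon}$ converge to those of $\mathrm{vol}_{\mathrm{H}}$ as $\epsilon\to 0$. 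Since products of nef divisors on an irreducible projective variety are nonnegative, $\mathrm{vol}_{\mathrm{H}}$ has nonnegative coefficients, so once $\mathrm{vol}_{\mathrm{H}^\epsilon}$ is known to be Lorentzian, the closedness of $\mathrm{L}^d_n$ in $\mathrm{H}^d_n$ (Theorem~\ref{chars}) shows that $\mathrm{vol}_{\mathrm{H}}$ is Lorentzian. We may also assume $d\ge 2$, the cases $d\le 1$ being immediate.

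So assume the $\mathrm{H}_i$ are ample. Then every coefficient $V_\alpha(\mathrm{H})$ is an intersection number of ample divisors, hence positive, so $\mathrm{vol}_{\mathrm{H}}\in\mathrm{P}^d_n$ and its support is all of $\Delta^d_n$, which is $\MM$-convex. By the characterization of Lorentzian polynomials in Theorem~\ref{chars} it remains to show that for every $\alpha\in\Delta^{d-2}_n$ the quadratic form $\partial^\alpha\mathrm{vol}_{\mathrm{H}}$ lies in $\mathrm{L}^2_n$, i.e.\ has nonnegative coefficients (clear) and a Hessian with at most one positive eigenvalue. Multilinearity of the intersection product gives
\[
\partial^\alpha\mathrm{vol}_{\mathrm{H}}(w)=\frac{d!}{2}\Big(\textstyle\sum_{i=1}^n w_i\mathrm{H}_i\Big)^{2}\cdot\mathrm{H}^\alpha=\frac{d!}{2}\sum_{i,j}w_iw_j\,(\mathrm{H}_i\cdot \mathrm{H}_j\cdot \mathrm{H}^\alpha),
\]
where $\mathrm{H}^\alpha$ is shorthand for the intersection of $\mathrm{H}_k$ taken $\alpha_k$ times for each $k$, a product of $d-2$ ample divisors. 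Hence the Hessian of $\partial^\alpha\mathrm{vol}_{\mathrm{H}}$ is the constant symmetric matrix $d!\,\big((\mathrm{H}_i\cdot \mathrm{H}_j\cdot \mathrm{H}^\alpha)\big)_{i,j=1}^n$.

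Thus the crux is the Hodge index statement: for nef (here ample) divisors $\mathrm{A}_1,\ldots,\mathrm{A}_{d-2}$ on a $d$-dimensional irreducible projective variety, the symmetric bilinear form $(\mathrm{D},\mathrm{D}')\mapsto (\mathrm{D}\cdot \mathrm{D}'\cdot \mathrm{A}_1\cdots \mathrm{A}_{d-2})$ on the space of $\RR$-divisor classes has at most one positive eigenvalue. To establish it I would choose $m$ so that $m\mathrm{H}_i$ is very ample for all $i$ and, using Bertini, cut $Y$ down by $\alpha_i$ general members of $|m\mathrm{H}_i|$ for each $i$ to reach an irreducible projective surface $S$ with $(\mathrm{H}_i\cdot \mathrm{H}_j\cdot \mathrm{H}^\alpha)=m^{-(d-2)}(\mathrm{H}_i|_S\cdot \mathrm{H}_j|_S)$; passing to a resolution of singularities of $S$, which preserves these intersection numbers, the classical Hodge index theorem on a smooth projective surface shows the intersection form there has signature $(+,-,\ldots,-)$, and the matrix above is a Gram matrix of the restricted classes for that form, hence has at most one positive eigenvalue. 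Alternatively one may quote the Khovanskii--Teissier inequalities directly (see \cite{Lazarsfeld}). Given this, $\partial^\alpha\mathrm{vol}_{\mathrm{H}}\in\mathrm{L}^2_n$ for every $\alpha\in\Delta^{d-2}_n$, and Theorem~\ref{chars} finishes the proof.

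The main obstacle is precisely this Hodge-theoretic input: whereas the convex-body proof cites Brunn--Minkowski as a black box, here one must either invoke the Khovanskii--Teissier inequalities or carry out the Bertini reduction to surfaces with due care (irreducibility of general complete intersections over an algebraically closed field, and the behaviour of intersection numbers under restriction and under resolution of singularities). The remaining ingredients — the ample perturbation and the two applications of Theorem~\ref{chars} — are routine.
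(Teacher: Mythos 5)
Your proposal is correct and follows essentially the same route as the paper's proof: reduce to ample (and then very ample) divisors by perturbation and the closedness of $\mathrm{L}^d_n$ from Theorem~\ref{chars}, observe that the support is then all of $\Delta^d_n$, compute the Hessian of $\partial^\alpha\mathrm{vol}_{\mathrm{H}}$, cut down to an irreducible surface by Bertini, and invoke the Hodge index theorem on a resolution of singularities (with the projection formula handling the passage to the resolution). The only cosmetic differences are that the paper appeals to Kleiman's theorem for the nef-to-ample reduction and leaves the rescaling from ample to very ample implicit, while you also mention the Khovanskii--Teissier inequalities as an alternative black box.
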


When combined with Theorem \ref{chars}, Theorem \ref{intersectionLorentzian} implies the statement. 

\begin{corollary}\label{corollaryIntersection}
If $\mathrm{H}_1,\ldots,\mathrm{H}_n$ are nef divisors on $Y$, then 
the support of $\text{vol}_\mathrm{H}(w)$
is $\MM$-convex.
\end{corollary}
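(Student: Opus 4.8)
The plan is to derive the corollary immediately from Theorem \ref{intersectionLorentzian} and the characterization of Lorentzian polynomials in Theorem \ref{chars}. First I would apply Theorem \ref{intersectionLorentzian}: since $\mathrm{H}_1,\ldots,\mathrm{H}_n$ are nef divisors on the $d$-dimensional variety $Y$, the volume polynomial $\text{vol}_\mathrm{H}(w)$ belongs to $\mathrm{L}^d_n$. Now, by Definition \ref{SecondDefinition}, every polynomial in $\mathrm{L}^d_n$ lies in $\mathrm{M}^d_n$, that is, it has nonnegative coefficients and $\MM$-convex support; equivalently, by Theorem \ref{chars}, $\mathrm{L}^d_n$ is the closure of $\mathring{\mathrm{L}}^d_n$ in $\mathrm{H}^d_n$, so in particular every Lorentzian polynomial has $\MM$-convex support. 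Hence $\mathrm{supp}(\text{vol}_\mathrm{H})$ is $\MM$-convex, which is exactly the assertion.

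Concretely, writing $\text{vol}_\mathrm{H}(w)=\sum_{\alpha\in\Delta^d_n}\frac{d!}{\alpha!}V_\alpha(\mathrm{H})w^\alpha$ with $V_\alpha(\mathrm{H})=(\mathrm{H}_1^{\alpha_1}\cdot\ldots\cdot\mathrm{H}_n^{\alpha_n})$, the support is $\{\alpha\in\Delta^d_n : V_\alpha(\mathrm{H})\neq 0\}$, and the conclusion is that this set satisfies the (symmetric) exchange property for $\MM$-convex sets. No additional argument is needed once Theorems \ref{intersectionLorentzian} and \ref{chars} are in hand, so the corollary is a one-line deduction.

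The substantive content, and the only genuine obstacle, lies in Theorem \ref{intersectionLorentzian} itself, whose proof I expect to mirror that of Theorem \ref{volumeLorentzian}. After reducing, via Theorem \ref{chars}, to showing that each second partial derivative $\partial^\alpha\text{vol}_\mathrm{H}$ with $\alpha\in\Delta^{d-2}_n$ has at most one positive eigenvalue on $\mathbb{R}^n_{>0}$, one would invoke the Khovanskii--Teissier inequality for nef divisors (the algebro-geometric analog of the Brunn--Minkowski inequality used in the convex-body case, itself a consequence of the Hodge index theorem) to conclude that $\bigl(\tfrac{2}{d!}\,\partial^\alpha\text{vol}_\mathrm{H}(w)\bigr)^{1/2}$ is concave on $\mathbb{R}^n_{>0}$, and then apply Proposition \ref{logcon}. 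For the corollary as stated, however, all of this is packaged inside Theorem \ref{intersectionLorentzian}, and nothing further remains to be shown.
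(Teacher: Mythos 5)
Your proposal is correct and follows essentially the same route as the paper: Theorem \ref{intersectionLorentzian} places $\text{vol}_\mathrm{H}$ in $\mathrm{L}^d_n$, and Theorem \ref{chars} (or Definition \ref{SecondDefinition}) then gives $\MM$-convexity of the support immediately. Your added sketch of Theorem \ref{intersectionLorentzian} via the Hodge index theorem also matches the paper's argument.
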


In other words, 
the set of all $\alpha \in \Delta^d_n$ satisfying the non-vanishing condition
\[
 (\underbrace{\mathrm{H}_1 \cdot \ldots \cdot \mathrm{H}_1}_{\alpha_1} \cdot \ldots \cdot  \underbrace{\mathrm{H}_n \cdot \ldots \cdot \mathrm{H}_n}_{\alpha_n}) \neq 0
 \]
 is $\MM$-convex  for any $d$-dimensional projective variety $Y$ and any nef divisors $\mathrm{H}_1,\ldots,\mathrm{H}_n$ on $Y$.
 Corollary \ref{corollaryIntersection} implies a result of Castillo \emph{et al.} \cite[Proposition 5.4]{CLZ},
 which says that the support of the multidegree of any irreducible mutiprojective variety is a discrete polymatroid.

\begin{remark}
Let $\mathscr{A}=\{v_1,\ldots,v_n\}$ be a collection of $n$ vectors in $\mathbb{F}^d$.
In \cite[Section 4]{HW}, one can find a $d$-dimensional projective variety $Y_\mathscr{A}$ and nef divisors $\mathrm{H}_1,\ldots,\mathrm{H}_n$ on $Y_\mathscr{A}$ such that
\[
\text{vol}_\mathrm{H}(w)=\sum_{\alpha \in {n \brack d}} c_\alpha w^\alpha,
\]
where $c_\alpha=1$ if $\alpha$ corresponds to a linearly independent subset of $\mathscr{A}$ and $c_\alpha=0$ if otherwise.
Thus, in this case, Corollary \ref{corollaryIntersection} states the familiar fact that the collection of linearly independent $d$-subsets of $\mathscr{A} \subseteq \mathbb{F}^d$ is the set of bases of a matroid.
\end{remark}

\begin{proof}[Proof of Theorem \ref{intersectionLorentzian}]
By Kleiman's theorem \cite[Section 1.4]{Lazarsfeld}, every nef divisor is a limit of ample divisors, and 
we may suppose that every divisor in $\mathrm{H}$ is very ample.
In this case, every coefficient of $\text{vol}_\mathrm{H}$ is positive.
Thus, by Theorem \ref{chars},
 it is enough to show that $\partial^\alpha \text{vol}_\mathrm{H}$ is Lorentzian for every $\alpha \in \Delta_n^{d-2}$.
 Note that
\[
\frac{2!}{d!} \hspace{0.5mm} \partial^\alpha \text{vol}_\mathrm{H}(w)=\Bigg(\sum_{i=1}^n w_i\mathrm{H}_i \cdot \sum_{i=1}^n w_i\mathrm{H}_i \cdot \underbrace{\mathrm{H}_1 \cdot \ldots \cdot \mathrm{H}_1}_{\alpha_1} \cdot \ldots \cdot \underbrace{\mathrm{H}_n\cdot \ldots \cdot \mathrm{H}_n}_{\alpha_n}\Bigg).
\]
By Bertini's theorem \cite[Section 3.3]{Lazarsfeld}, there is an irreducible surface $S \subseteq Y$ such that
\[
\frac{2!}{d!} \hspace{0.5mm}  \partial^\alpha \text{vol}_\mathrm{H}(w)=\Bigg(\sum_{i=1}^n w_i\mathrm{H}_i \cdot \sum_{i=1}^n w_i\mathrm{H}_i \cdot S\Bigg).
\]
If $S$ is smooth, then the Hodge index theorem \cite[Theorem V.1.9]{Hartshorne}
shows that
the displayed quadratic form has exactly one positive eigenvalue.
In general, the Hodge index theorem applied to any resolution of singularities of $S$
implies the one positive eigenvalue condition, by the projection formula \cite[Example 2.4.3]{Fulton}.
\end{proof}

How large is the set of volume polynomials of projective varieties within the set of Lorentzian polynomials?
We formulate various precise versions of this question.
Let $\mathrm{V}^d_n(\mathbb{F})$  be the set of volume polynomials of $n$ nef divisors on a $d$-dimensional projective variety over $\mathbb{F}$,
and let $\mathrm{V}^d_n=\bigcup_{\mathbb{F}} \mathrm{V}^d_n(\mathbb{F})$,
where the union is over all algebraically closed fields.

\begin{question}\label{RealizationQuestion}
Fix any algebraically closed field $\mathbb{F}$.
\begin{enumerate}[(1)]\itemsep 5pt
\item Is there a polynomial in $\mathrm{L}^d_n$ that is not in  the closure of $\mathrm{V}^d_n$?
\item Is there a polynomial in $\mathrm{L}^d_n$ that is not in  the closure of $\mathrm{V}^d_n(\mathbb{F})$?
\item Is there a polynomial in $\mathrm{L}^d_n \cap \mathbb{Q}[w]$ that is not in $\mathrm{V}^d_n$?
\item Is there a polynomial in $\mathrm{L}^d_n \cap \mathbb{Q}[w]$ that is not in $\mathrm{V}^d_n(\mathbb{F})$?
\end{enumerate}
\end{question}

Shephard's construction in \cite[Section 3]{Shephard} shows that every polynomial in $\mathrm{L}^d_{2} \cap \mathbb{Q}[w]$
is the volume polynomial of a pair of $d$-dimensional convex polytopes with rational vertices. 
Thus, by \cite[Section 5.4]{Toric}, we have
\[
\text{$\mathrm{L}^d_{2}\cap \mathbb{Q}[w]=\mathrm{V}^d_{2}=\mathrm{V}^d_2(\mathbb{F})$ for any $d$ and any $\mathbb{F}$.}
\]
A similar reasoning based on the construction of \cite[Section I]{Heine} shows that
\[
\text{$\mathrm{L}^2_{3}\cap \mathbb{Q}[w]=\mathrm{V}^2_{3}=\mathrm{V}^2_3(\mathbb{F})$ for  any $\mathbb{F}$.}
\]
When $n \ge 4$, 
not every Lorentzian polynomial is the limit of a sequence of volume polynomials of rational convex polytopes (Remark \ref{RemarkConvexBodies}),
and we do not know how to answer any of the above questions.\footnote{After the completion of this paper, we noticed that the closure of $\mathrm{V}^3_3(\mathbb{F})$ is strictly smaller than $\mathrm{L}^3_3$ for any $\mathbb{F}$. This answers Question \ref{RealizationQuestion}. Specifically, the Lorentzian cubic  $f=14x^3+6x^2y+24x^2z+12xyz+6xz^2+3yz^2$ is not in the closure of $\mathrm{V}^3_3(\mathbb{F})$. That $f$ is not in the closure of $\mathrm{V}^3_3(\mathbb{F})$ can be shown using the \emph{reverse Khovanskii-Teissier inequality} \cite[Theorem 5.7]{Lehmann-Xiao}:
For any nef divisors $\mathrm{H}_1,\mathrm{H}_2,\mathrm{H}_3$ on a $d$-dimensional  projective variety and any $k \le d$, we have
\[
{d \choose k} \ (\mathrm{H}_2^k \cdot \mathrm{H}_1^{d-k}) \ (\mathrm{H}_1^k \cdot  \mathrm{H}_3^{d-k}) \ge (\mathrm{H}_1^d) \ (\mathrm{H}_2^k \cdot \mathrm{H}_3^{d-k}).
\]
The complex analytic proof of the inequality in \cite{Lehmann-Xiao} relies on the Calabi-Yau theorem \cite{Yau}. The algebraic proof of the inequality in \cite{JiangLi} using Okounkov bodies works over any algebraically closed field.
The theory of toric varieties shows that the volume polynomial of any set of convex bodies 
is the limit of a sequence of volume polynomials of nef divisors on projective varieties \cite[Section 5.4]{Toric}.
Thus, the Lorentzian cubic $f$ provides a counterexample to Gurvits' conjecture that a strongly log-concave homogeneous polynomial in three variables with nonnegative coefficients is the volume polynomial of three convex bodies
\cite[Conjecture 4.1]{GurvitsL}.} 

\subsection{Potts model partition functions and Lorentzian polynomials}\label{SecqP}

The \emph{$q$-state Potts model}, or the \emph{random-cluster model}, of a graph is a much studied class of measures introduced by Fortuin and Kasteleyn \cite{FK}. 
We refer to \cite{Grimmett} for a comprehensive introduction to random-cluster models. 


Let $\mathrm{M}$ be a matroid on $[n]$, and let
  $\mathrm{rk}_\mathrm{M}$ be the rank function of $\mathrm{M}$.
For a nonnegative integer $k$ and a positive real parameter $q$, consider the degree $k$ homogeneous polynomial in $n$ variables
\[
\mathrm{Z}^k_{q,\mathrm{M}}(w)=
\sum_{A \in {n \brack k}} q^{-\mathrm{rk}_\mathrm{M}(A)} w^A, \quad w=(w_1,\ldots,w_n).
\]
We define the \emph{homogeneous multivariate Tutte polynomial} of $\mathrm{M}$ by
\[
\mathrm{Z}_{q,\mathrm{M}}(w_0,w_1,\ldots,w_n)= \sum_{k=0}^n
\mathrm{Z}_{q,\mathrm{M}}^{k}(w) \hspace{0.5mm} w_0^{n-k},
\]
which is a homogeneous polynomial of degree $n$ in $n+1$ variables. 
When $\mathrm{M}$ is the cycle matroid of a graph $G$, the polynomial obtained from $\mathrm{Z}_{q,\mathrm{M}}$ by setting $w_0=1$ is the partition function of the $q$-state Potts model associated to $G$ \cite{Sokal}.

Since the rank function of a matroid is $\mathrm{M}^\natural$-concave,
the normalized rank generating function of $\mathrm{M}$ is Lorentzian when the parameter $q$ satisfies $0<q\le 1$, see
Example \ref{examplerank}.
In this subsection, we prove the following refinement.

\begin{theorem}\label{mainpotts}
For any matroid $\mathrm{M}$ and $0<q \le 1$, the polynomial $\mathrm{Z}_{q,\mathrm{M}}$ is Lorentzian. 
\end{theorem}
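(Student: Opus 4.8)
The plan is to prove this by induction on $n$, using the characterization of Lorentzian polynomials in Theorem~\ref{chars} together with Corollary~\ref{derivatives}, so that the whole weight of the hypothesis $q\le 1$ is concentrated in a single linear‑algebra lemma. Fix a matroid $\mathrm{M}$ on $[n]$ and $0<q\le 1$, and abbreviate $r=\mathrm{rk}_{\mathrm{M}}$. The cases $n\le 1$ are immediate, since then $\mathrm{Z}_{q,\mathrm{M}}$ is linear with nonnegative coefficients. For $n\ge 2$, I would first observe that $\mathrm{supp}(\mathrm{Z}_{q,\mathrm{M}})$ is the intersection of $\Delta^n_{n+1}$ with the box $\{\alpha:\alpha_i\le 1\text{ for }i\ge 1\}$, hence $\MM$‑convex, so by Theorem~\ref{chars} it suffices to show that $\partial^\alpha\mathrm{Z}_{q,\mathrm{M}}$ has at most one positive eigenvalue for every $\alpha\in\Delta^{n-2}_{n+1}$. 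Since $\mathrm{Z}_{q,\mathrm{M}}$ has degree $\le 1$ in each $w_i$ ($i\ge 1$), one may assume $\alpha=\alpha_0 e_0+\sum_{i\in B}e_i$ for some $B\subseteq[n]$ with $\alpha_0+|B|=n-2$ (otherwise $\partial^\alpha\mathrm{Z}_{q,\mathrm{M}}=0$).

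The routine half is the case $B\neq\emptyset$. Using $r(B\cup C)=r(B)+\mathrm{rk}_{\mathrm{M}/B}(C)$ one gets the contraction identity $\partial^B\mathrm{Z}_{q,\mathrm{M}}=q^{-r(B)}\,\mathrm{Z}_{q,\mathrm{M}/B}\bigl(w_0,(w_i)_{i\notin B}\bigr)$, whence $\partial^\alpha\mathrm{Z}_{q,\mathrm{M}}=q^{-r(B)}\,\partial_0^{\alpha_0}\mathrm{Z}_{q,\mathrm{M}/B}$. When $B\neq\emptyset$ the matroid $\mathrm{M}/B$ has fewer than $n$ elements, so $\mathrm{Z}_{q,\mathrm{M}/B}$ is Lorentzian by the inductive hypothesis; then $\partial_0^{\alpha_0}\mathrm{Z}_{q,\mathrm{M}/B}$ is Lorentzian by Corollary~\ref{derivatives}, and scaling by $q^{-r(B)}>0$ keeps it Lorentzian, so it has at most one positive eigenvalue. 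Thus everything reduces to $B=\emptyset$, where $\partial^\alpha\mathrm{Z}_{q,\mathrm{M}}=\partial_0^{\,n-2}\mathrm{Z}_{q,\mathrm{M}}=(n-2)!\,\mathrm{Q}$ with
\[
\mathrm{Q}\;=\;\tfrac{n(n-1)}{2}\,w_0^{2}\;+\;(n-1)\sum_{i=1}^{n}q^{-r(\{i\})}\,w_0 w_i\;+\;\sum_{1\le i<j\le n}q^{-r(\{i,j\})}\,w_i w_j .
\]

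So the theorem comes down to the claim I expect to be the real content: \emph{$\mathrm{Q}$ has at most one positive eigenvalue for every matroid $\mathrm{M}$ and every $0<q\le 1$.} After the invertible substitution $w_0\mapsto w_0-\tfrac1n\sum_i q^{-r(\{i\})}w_i$, the form $\mathrm{Q}$ becomes $\tfrac{n(n-1)}{2}w_0^{2}+\mathrm{R}(w_1,\dots,w_n)$ with $\mathrm{R}=\sum_{i<j}q^{-r(\{i,j\})}w_iw_j-\tfrac{n-1}{2n}\bigl(\sum_i q^{-r(\{i\})}w_i\bigr)^{2}$, so the claim is equivalent to negative semidefiniteness of $\mathrm{R}$. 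Now $\mathrm{R}$ depends on $\mathrm{M}$ only through its loop set $L$ and the parallel classes $P_1,\dots,P_k$ of its non‑loops, so with $a=q^{-1}\ge 1$, $x=\sum_{i\in L}w_i$, $y_l=\sum_{i\in P_l}w_i$, a direct rearrangement gives $\mathrm{R}=\tfrac1{2n}\bigl(x+a\sum_l y_l\bigr)^{2}-\tfrac12\,w^{T}Dw$, where $D$ is the positive definite block‑diagonal matrix equal to the identity on the $L$‑block and to $aI+a(a-1)J$ ($J$ the all‑ones matrix) on each $P_l$‑block, and $x+a\sum_l y_l=v^{T}w$ with $v_i=1$ for $i\in L$ and $v_i=a$ for the non‑loops. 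By the Cauchy–Schwarz inequality $(v^{T}w)^{2}\le(v^{T}D^{-1}v)(w^{T}Dw)$ it suffices to check $v^{T}D^{-1}v\le n$, and a Sherman–Morrison computation yields $v^{T}D^{-1}v=|L|+\sum_l\frac{a\,|P_l|}{1+(a-1)\,|P_l|}\le |L|+\sum_l|P_l|=n$, the termwise estimate $\frac{at}{1+(a-1)t}\le t$ holding precisely because $a\ge 1$ and $t=|P_l|\ge 1$. This last inequality is exactly where $q\le 1$ is used, and consistently the statement fails for $q>1$. With the theorem in hand, its enumerative corollaries follow by specialization: diagonalizing, setting $w_0=1$, dilating $w_i\mapsto q w_i$ and letting $q\to 0^{+}$ recovers the homogenized independent‑set generating polynomial, and Example~\ref{ExampleBivariate} then gives the strongest form of Mason's conjecture and the ultra log‑concavity of the coefficients $c_q^{k}(\mathrm{M})$.
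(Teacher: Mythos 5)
Your proof is correct and follows the paper's overall strategy: verify $\MM$-convexity of the support, reduce by the contraction identity $\partial_i\mathrm{Z}_{q,\mathrm{M}}=q^{-\mathrm{rk}_{\mathrm{M}}(i)}\mathrm{Z}_{q,\mathrm{M}/i}$ and induction on $n$ to the single quadratic form $\mathrm{Q}=\tfrac{1}{(n-2)!}\partial_0^{\,n-2}\mathrm{Z}_{q,\mathrm{M}}$, and exploit that $\mathrm{Q}$ sees only the loops and parallel classes of $\mathrm{M}$. The genuine difference is in how you finish the quadratic lemma. The paper checks stability via the discriminant inequality $\mathrm{Z}^1_{q,\mathrm{M}}(w)^2\ge\frac{2n}{n-1}\mathrm{Z}^2_{q,\mathrm{M}}(w)$: it dilates $w_i\mapsto qw_i$ on non-loops to normalize the linear part, then invokes monotonicity of $\mathrm{Z}^2_{q,\mathrm{M}}$ in $q$ to reduce to the extreme values of $q$, each of which is the elementary Cauchy--Schwarz bound $\tfrac1n e^1_{[n]}(w)^2\le\sum_l e^1_{S_l}(w)^2$ of Lemma~\ref{Cauchy-Schwarz}. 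You instead complete the square in $w_0$ to isolate the negative semidefiniteness of the $n\times n$ form $\mathrm{R}$, package the entire $q$-dependence into the positive definite block weight $D$, and apply the weighted Cauchy--Schwarz inequality $(v^Tw)^2\le(v^TD^{-1}v)(w^TDw)$, reducing the claim to $v^TD^{-1}v\le n$, which Sherman--Morrison turns into the termwise estimate $\frac{at}{1+(a-1)t}\le t$. Your route dispenses with the monotonicity-in-$q$ step and localizes the hypothesis $q\le 1$ in a single scalar inequality (and makes it visible why the result fails for $q>1$); the paper's route avoids inverting $D$ by rescaling coefficients first. Both arguments are valid and of comparable length.
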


We prepare the proof with two simple lemmas.

\begin{lemma}\label{PottsSupport}
The support of  $\mathrm{Z}_{q,\mathrm{M}}$ is $\MM$-convex for all $0<q \le 1$. 
\end{lemma}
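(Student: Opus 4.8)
The plan is to show directly that $\supp(\mathrm{Z}_{q,\mathrm{M}})$ is $\MM$-convex as a subset of $\NN^{n+1}$, using the coordinates $(w_0,w_1,\dots,w_n)$. Note first that since $q^{-\mathrm{rk}_\mathrm{M}(A)}>0$ for every $A\subseteq[n]$, the support of $\mathrm{Z}_{q,\mathrm{M}}$ is exactly
\[
\JJ=\Big\{\, (n-k)\,e_0+\textstyle\sum_{i\in A}e_i \ \Big|\ 0\le k\le n,\ A\in {n\brack k}\,\Big\},
\]
which visibly does not depend on $q$. So the statement reduces to showing that this particular subset of $\Delta^n_{n+1}$ is $\MM$-convex. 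In fact $\JJ$ is the set of bases of a matroid on the ground set $\{0,1,\dots,n\}$: it is the matroid $\mathrm{M}\oplus \mathrm{U}_{1,\{0\}}^{\,\oplus ?}$ — more precisely, $\JJ=\{\,B\subseteq\{0,\dots,n\} : 0\notin B,\ B\ \text{independent in }\mathrm{M}\} \cup \dots$; the cleanest description is that $\JJ$ consists of all $(n-k)e_0+\chi_A$ with $A$ an independent set of $\mathrm{M}$ of size $k$, i.e. $\JJ$ is obtained from the collection of independent sets of $\mathrm{M}$ by ``padding with copies of coordinate $0$'' up to total size $n$.

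The key point is that the collection of independent sets of a matroid, viewed as a subset of $\{0,1\}^n\subseteq\NN^n$, is $\MM^\natural$-convex — this is the content of Lemma \ref{polyind} together with the fact that the independent sets form an interval convex set containing $0$ satisfying the augmentation property. Unwinding the definition of $\MM^\natural$-convex, there is an $\MM$-convex set $\JJ_0\subseteq\NN^{n+1}$ (in coordinates indexed by $\{0,1,\dots,n\}$) whose projection forgetting the $0$-coordinate is the family of independent sets of $\mathrm{M}$. Since an $\MM$-convex subset of $\NN^{n+1}$ is automatically contained in some $\Delta^d_{n+1}$, and here the independent sets have sizes between $0$ and $r=\mathrm{rk}_\mathrm{M}([n])$, one checks that the ``missing'' coordinate is exactly $n$ minus the size of the independent set — so $\JJ_0=\JJ$ after identifying $d=n$. (If the recipe in the definition of $\MM^\natural$-convexity produces $\JJ_0\subseteq\Delta^{d'}_{n+1}$ for some $d'$ possibly different from $n$, one can still use Theorem \ref{flow}: apply a suitable splitting/dilation or simply observe that translating by a multiple of $e_0$ is harmless — but I expect $d'=n$ to come out directly from the construction of $\JJ_0$ in the proof of Lemma \ref{polyind}.)

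So the plan is: (i) observe $\supp(\mathrm{Z}_{q,\mathrm{M}})=\JJ$ is independent of $q$ since all coefficients $q^{-\mathrm{rk}_\mathrm{M}(A)}$ are strictly positive; (ii) recall that the independent sets of $\mathrm{M}$ form an interval convex subset of $\{0,1\}^n$ containing $0$ with the augmentation property — these are the standard matroid axioms in ``polymatroid'' form; (iii) invoke Lemma \ref{polyind} to conclude this family is $\MM^\natural$-convex; (iv) unwind the definition of $\MM^\natural$-convexity to produce an $\MM$-convex set in $\NN^{n+1}$, and identify it with $\JJ$ by matching up the extra coordinate with $w_0^{n-k}$. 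The main (minor) obstacle is bookkeeping in step (iv): making sure the $\MM$-convex lift supplied by $\MM^\natural$-convexity is genuinely $\JJ$ and not a differently homogenized version. This is resolved by noting that the lift is unique once one fixes the total degree, and the natural degree here is $n$, which matches $\mathrm{Z}_{q,\mathrm{M}}$. Alternatively, one can avoid the issue entirely by checking the symmetric exchange property for $\JJ$ directly: given $\alpha,\beta\in\JJ$ and a coordinate $i$ with $\alpha_i>\beta_i$, either $i=0$ or $i\in[n]$, and in each case one produces the required $j$ using the exchange property of $\mathrm{M}$ for independent sets (for $j\in[n]$) or by trading against the $0$-coordinate when one of $\alpha,\beta$ has a strictly larger $0$-component — this is a short finite case analysis.
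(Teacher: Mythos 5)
Your overall approach matches the paper's: dehomogenize by setting $w_0=1$, recognize the resulting support as $\MM^\natural$-convex via Lemma \ref{polyind}, and conclude that the support of the homogenization is $\MM$-convex. That is exactly what the paper does, and the plan is sound.

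However, you have a concrete bookkeeping error in identifying the set. Your opening formula
\[
\JJ=\Big\{ (n-k)\,e_0+\textstyle\sum_{i\in A}e_i \ \Big|\ 0\le k\le n,\ A\in {n\brack k}\Big\}
\]
is correct: since $q^{-\mathrm{rk}_\mathrm{M}(A)}>0$ for \emph{every} subset $A\subseteq[n]$, every $A$ contributes, so $\supp(\mathrm{Z}_{q,\mathrm{M}})$ is the whole Boolean cube $\{0,1\}^n$ homogenized to degree $n$. But a few lines later you rewrite this as ``all $(n-k)e_0+\chi_A$ with $A$ an independent set of $\mathrm{M}$,'' and the rest of the proposal (steps (ii)--(iv)) is carried out for the independent-set complex of $\mathrm{M}$ rather than for $\{0,1\}^n$. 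That is a different polynomial: it is $f_\mathrm{M}$, the homogenized independence generating polynomial, whose support is a proper subset of $\JJ$ unless $\mathrm{M}$ is free. If you run your argument literally as written, step (iv) fails because the $\MM$-convex lift of $\mathscr{I}(\mathrm{M})$ is \emph{not} $\JJ$. The matroid-direct-sum description and the attempted identification of $\JJ$ with a basis set are also off track for the same reason (and because $\JJ$ is not multi-affine in the $0$-coordinate, hence not a matroid basis set at all).

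The fix is immediate and actually simplifies your argument: apply Lemma \ref{polyind} to $\{0,1\}^n$ itself, which is trivially interval convex, contains $0$, and satisfies the augmentation property (given $\alpha,\beta\in\{0,1\}^n$ with $|\alpha|_1<|\beta|_1$, any $j$ with $\alpha_j<\beta_j$ works). This is precisely the paper's proof. Your fallback suggestion of checking the symmetric exchange property for $\JJ$ directly is also fine, but again should be done with $\JJ$ being the homogenized cube, not the homogenized independence complex.
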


\begin{proof}
Writing $\mathrm{Z}^\natural_{q,\mathrm{M}}$ for the polynomial obtained from $\mathrm{Z}_{q,\mathrm{M}}$ by setting $w_0=1$, we have
\[
\text{supp}\big(\mathrm{Z}_{q,\mathrm{M}}^\natural\big)=\big\{0,1\big\}^n. 
\]
It is straightforward to verify the augmentation property in Lemma \ref{polyind} for $\{0,1\}^n$.
\end{proof}

For a nonnegative integer $k$ and a subset $S \subseteq [n]$, 
we define a degree $k$ homogeneous polynomial $e^k_S(w)$ by the equation
\[
\sum_{k=0}^n e^k_S(w)=\sum_{A \subseteq S} w^A.
\]
In other words,   $e^k_S(w)$ is the $k$-th elementary symmetric polynomial
in the variables $\{w_i\}_{i \in S}$.

\begin{lemma}\label{Cauchy-Schwarz}
If $S_1 \sqcup \ldots \sqcup S_m$ is a partition of $[n]$ into $m$ nonempty parts, then
\[
\frac{1}{n}e_{[n]}^1(w)^2 \le e^{1}_{S_1}(w)^2+\cdots+e^1_{S_m}(w)^2 \ \ \text{for all $w \in \mathbb{R}^n$.}
\]
\end{lemma}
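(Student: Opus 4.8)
The plan is to prove Lemma~\ref{Cauchy-Schwarz} by a direct application of the Cauchy--Schwarz inequality. Writing $x_i = \sum_{j \in S_i} w_j = e^1_{S_i}(w)$ for $i = 1, \ldots, m$, we have $e^1_{[n]}(w) = \sum_{i=1}^m x_i$ since $S_1 \sqcup \cdots \sqcup S_m = [n]$. First I would apply Cauchy--Schwarz to the vectors $(1,1,\ldots,1)$ and $(x_1,x_2,\ldots,x_m)$ in $\mathbb{R}^m$ to obtain
\[
\Big(\sum_{i=1}^m x_i\Big)^2 \le m \sum_{i=1}^m x_i^2.
\]
This already gives the inequality with $m$ in place of $n$ on the left-hand side.

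To sharpen the constant from $m$ to $n$, I would note that since each part $S_i$ is nonempty, we have $m \le n$, and hence $\frac{1}{n} \le \frac{1}{m}$. Therefore
\[
\frac{1}{n} e^1_{[n]}(w)^2 \le \frac{1}{m} e^1_{[n]}(w)^2 = \frac{1}{m}\Big(\sum_{i=1}^m x_i\Big)^2 \le \sum_{i=1}^m x_i^2 = e^1_{S_1}(w)^2 + \cdots + e^1_{S_m}(w)^2,
\]
which is exactly the claimed bound and holds for all $w \in \mathbb{R}^n$.

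There is essentially no obstacle here: the lemma is an elementary consequence of Cauchy--Schwarz combined with the trivial observation that a partition of an $n$-element set into nonempty blocks has at most $n$ blocks. The only point worth stating carefully is that the inequality is claimed for \emph{all} real $w$, not just $w \in \mathbb{R}^n_{\ge 0}$, which is automatic since Cauchy--Schwarz holds without sign restrictions. The purpose of the weaker constant $1/n$ (rather than the optimal $1/m$) is presumably that it is independent of the partition, which is what will be needed in the subsequent application to $\mathrm{Z}_{q,\mathrm{M}}$.
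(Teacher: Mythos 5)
Your proof is correct and is essentially the same as the paper's: both apply Cauchy--Schwarz to the all-ones vector and the vector of block sums, then use $m\le n$ to weaken the constant from $1/m$ to $1/n$. The paper phrases the $m\le n$ step as a reduction to the case $m=n$ before invoking Cauchy--Schwarz on $(w_1,\ldots,w_n)$, but the content is the same.
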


\begin{proof}
Since $m \le n$, it is enough to prove the statement when $m=n$.
In this case, we have
\[
(w_1+\cdots+w_n)^2 \le n (w_1^2 +\cdots+w_n^2),
\]
by the Cauchy-Schwarz inequality for the vectors $(1,\ldots,1)$ and $(w_1,\ldots,w_n)$ in $\mathbb{R}^n$.
\end{proof}

\begin{proof}[Proof of Theorem \ref{mainpotts}]
Let $\alpha$ be an element of $ \Delta^{n-2}_{n+1}$.
 By  Theorem \ref{chars} and Lemma \ref{PottsSupport},
 the proof reduces to the statement that
 the quadratic form
 $\partial^\alpha \mathrm{Z}_{q,\mathrm{M}}$ is stable.
We prove the statement by induction on $n$.
The assertion is clear when $n=1$, so suppose  $n \ge 2$.
When $i \neq 0$, we have
\[
\partial_i\mathrm{Z}_{q,\mathrm{M}}= q^{-\text{rk}_\mathrm{M}(i)}\mathrm{Z}_{q,\mathrm{M}/i},
\]
where $\mathrm{M}/i$ is the contraction of $\mathrm{M}$ by $i$ \cite[Chapter 3]{Oxley}.
Thus, it is enough to prove that the following quadratic form is stable:
\[
\frac{n!}{2} w_0^2+(n-1)! \hspace{0.5mm}\mathrm{Z}_{q,\mathrm{M}}^1(w)w_0+ (n-2)!\hspace{0.5mm} \mathrm{Z}_{q,\mathrm{M}}^2(w).
\]
Recall that a homogeneous polynomial $f$ with nonnegative coefficients in $n+1$ variables is stable if and only if the univariate polynomial
 $f(xu-v)$ has only real zeros for all $v \in \mathbb{R}^{n+1}$ for some $u \in \mathbb{R}^{n+1}_{\ge 0}$ satisfying $f(u)>0$.
Therefore, 
 it suffices to show that the discriminant of the displayed quadratic form with respect to $w_0$ is nonnegative:
\[
 \mathrm{Z}_{q,\mathrm{M}}^1(w)^2 \geq 2 \frac n {n-1}  \mathrm{Z}_{q,\mathrm{M}}^2(w) \ \  \text{for all  $w \in \mathbb{R}^n$}.
\]
We prove the inequality after making the change of variables
\[
w_i \longmapsto \left\{\begin{array}{cl} w_i & \text{if $i$ is a loop in $\mathrm{M}$,}\\ qw_i& \text{if $i$ is not a loop in $\mathrm{M}$.} \end{array}\right.
\]
Write $L \subseteq [n]$ for the set of loops and  $P_1,\ldots,P_\ell \subseteq [n] \setminus L$ for the parallel classes in $\mathrm{M}$ \cite[Section 1.1]{Oxley}.
The above change of variables gives
\[
\mathrm{Z}_{q,\mathrm{M}}^1(w)=e^1_{[n]}(w) \ \ \text{and} \ \  \mathrm{Z}_{q,\mathrm{M}}^2(w)=e^2_{[n]}(w)-(1-q)\big(e^2_{P_1}(w)+\cdots+e^2_{P_\ell}(w)\big).
 \]
When $q=1$,  the desired inequality directly follows from the case $m=n$ of Lemma \ref{Cauchy-Schwarz}.
Therefore, when proving the desired inequality for an arbitrary $0<q \le 1$, we may assume that
\[
e^2_{P_1}(w)+\cdots+e^2_{P_\ell}(w)<0.
\]
Therefore, exploiting the monotonicity of $ \mathrm{Z}_{q,\mathrm{M}}^2$ in $q$, the desired inequality reduces to
\[
(n-1)e^1_{[n]}(w)^2 - 2n \Big(e^2_{[n]}(w)-e^2_{P_1}(w)-\cdots-e^2_{P_\ell}(w)\Big) \ge 0.
\]
Note that the left-hand side of the above inequality simplifies to
\[
n \Big(e^1_{P_1}(w)^2+\cdots+e^1_{P_\ell}(w)^2+\sum_{i \in L} w_i^2 \Big)-e^1_{[n]}(w)^2.
\]
The conclusion now follows from Lemma \ref{Cauchy-Schwarz}.
\end{proof}


Mason \cite{Mason} offered the following three conjectures of increasing strength.
Several authors studied correlations in matroid theory partly in pursuit of these conjectures \cite{SW,Wagner08,BBL,KN10,KN11}.

\begin{conjecture}\label{ConjectureMason}
For any matroid $\mathrm{M}$ on $[n]$ and any positive integer $k$,
\begin{enumerate}[(1)]\itemsep 5pt
\item\label{MasonFirst} 
$
I_k(\mathrm{M})^2 \ge I_{k-1}(\mathrm{M})I_{k+1}(\mathrm{M}),
$
\item\label{MasonSecond} 
$
I_k(\mathrm{M})^2 \ge \frac{k+1}{k} I_{k-1}(\mathrm{M})I_{k+1}(\mathrm{M}),
$
\item\label{MasonThird} 
$
I_k(\mathrm{M})^2 \ge \frac{k+1}{k}\frac{n-k+1}{n-k} I_{k-1}(\mathrm{M})I_{k+1}(\mathrm{M}),
$
\end{enumerate}
where $I_k(\mathrm{M})$ is the number of $k$-element independent sets of $\mathrm{M}$.  
\end{conjecture}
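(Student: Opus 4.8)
The plan is to deduce all three parts of Mason's conjecture from the Lorentzian property of a single auxiliary polynomial built from the independent sets of $\mathrm{M}$. Recall that $I_k(\mathrm{M})$ counts $k$-element independent sets, and that the indicator of the independent sets of $\mathrm{M}$ is, up to homogenization, an $\MM^\natural$-convex set by Lemma~\ref{polyind} (the augmentation property for independent sets is exactly the matroid independence axiom). More precisely, I would introduce a dummy variable $w_0$ and form the homogeneous polynomial
\[
f(w_0,w_1,\ldots,w_n) = \sum_{k=0}^{r} I_k(\mathrm{M}) \binom{n-k}{1}^{0} \cdots
\]
-- more cleanly, apply Theorem~\ref{mainpotts} with $q=1$: the homogeneous multivariate Tutte polynomial $\mathrm{Z}_{1,\mathrm{M}}(w_0,w_1,\ldots,w_n) = \sum_{k}\big(\sum_{A\in{n\brack k},\,A\text{ independent}} w^A\big) w_0^{n-k}$ is Lorentzian, since at $q=1$ the weight $q^{-\mathrm{rk}_{\mathrm M}(A)}$ collapses and independent sets are singled out only after... actually the Potts polynomial sums over \emph{all} $A$. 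So instead I would directly use Corollary~\ref{normalizedcoefficients} or Theorem~\ref{charjump}: the set $\mathrm{J}$ of independent sets of $\mathrm{M}$, homogenized into $\Delta^{r}_{n+1}$ by adjoining $w_0$, is $\MM$-convex, hence its generating function is Lorentzian.

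The key steps, in order: (1) Let $\mathrm{J}\subseteq\{0,1\}^{n+1}$ be the homogenization of the independent-set complex, i.e.\ $\mathrm{J}=\{(r-|A|)e_0 + \sum_{i\in A} e_i : A\text{ independent}\}$ where $r=\mathrm{rk}_{\mathrm M}([n])$; check $\mathrm{J}$ is $\MM$-convex using Lemma~\ref{polyind}. (2) By Theorem~\ref{charjump}, the generating function $f_{\mathrm J}=\sum_{A}\frac{1}{(r-|A|)!}w_0^{r-|A|}w^A$ is Lorentzian. (3) Symmetrize: set all $w_i=w$ for $i\in[n]$ using Theorem~\ref{flow}, obtaining the bivariate Lorentzian polynomial $g(w_0,w)=\sum_{k=0}^{r}\frac{k!}{(r-k)!}I_k(\mathrm{M})\,\frac{w_0^{r-k}w^k}{k!}\cdot(\text{correction})$; more carefully, after symmetrizing and renormalizing, the coefficient of $w_0^{r-k}w^k$ in normalized form is $I_k(\mathrm{M})$ times a controlled binomial factor. (4) Apply Example~\ref{ExampleBivariate}: a bivariate Lorentzian polynomial $\sum a_k w_1^k w_2^{d-k}$ has $a_k$ ultra-log-concave with no internal zeros, which gives $\frac{a_k^2}{\binom{d}{k}^2}\ge\frac{a_{k-1}}{\binom d{k-1}}\frac{a_{k+1}}{\binom d{k+1}}$. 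Tracking the binomial normalization factors then yields each of \eqref{MasonFirst}, \eqref{MasonSecond}, \eqref{MasonThird} by choosing the right ambient degree $d$: the crude inequality \eqref{MasonFirst} comes from log-concavity of the $a_k$ themselves, \eqref{MasonSecond} from the $\frac{k+1}{k}$ factor appearing when $d$ is taken just large enough, and \eqref{MasonThird} from taking $d=n$ so that the normalization is $\binom{n}{k}$, giving $\frac{I_k^2}{\binom nk^2}\ge\frac{I_{k-1}}{\binom n{k-1}}\frac{I_{k+1}}{\binom n{k+1}}$, which rearranges to the stated form.

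The cleanest route to \eqref{MasonThird} specifically is: homogenize independent sets into $\Delta^n_{n+1}$ (not $\Delta^r_{n+1}$) by padding with a variable $w_0$ of multiplicity up to $n$, so that after symmetrizing in $w_1,\ldots,w_n$ the normalized coefficient of $w_0^{n-k}w^k$ is exactly $I_k(\mathrm{M})/\binom{n}{k}\cdot$const; but one must verify that this larger homogenization $\{(n-|A|)e_0+\sum_{i\in A}e_i\}$ is still $\MM$-convex. This is where I expect the main obstacle: the padding must be chosen so the support remains $\MM$-convex, and the bookkeeping between the exponential-generating-function normalization $\frac{1}{\alpha!}$ used throughout the paper and the plain coefficients $I_k(\mathrm{M})$ must be done with care so that the ultra-log-concavity inequality of Example~\ref{ExampleBivariate} converts precisely into the factor $\frac{k+1}{k}\cdot\frac{n-k+1}{n-k}$ and not something weaker. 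Once the correct ambient simplex and normalization are fixed, the rest is the routine manipulation $\binom{n}{k}^2/\big(\binom{n}{k-1}\binom{n}{k+1}\big)=\frac{k+1}{k}\cdot\frac{n-k+1}{n-k}$, which delivers \eqref{MasonThird}, and \eqref{MasonFirst}, \eqref{MasonSecond} follow a fortiori or by the analogous argument with a smaller ambient degree. An alternative that sidesteps the homogenization subtlety entirely is to invoke Corollary~\ref{normalizedcoefficients} together with the fact that the truncation/rank function of a matroid is $\MM^\natural$-concave, producing directly a Lorentzian polynomial whose normalized coefficients are $I_k(\mathrm{M})/\binom{n}{k}$, at which point Proposition~\ref{af} (the Alexandrov--Fenchel-type inequality) gives the log-concavity of the normalized sequence, i.e.\ exactly \eqref{MasonThird}.
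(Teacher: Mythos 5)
Your overall strategy---produce a Lorentzian polynomial whose bivariate diagonalization has the $I_k(\mathrm{M})$ as coefficients, then read off ultra-log-concavity from Example~\ref{ExampleBivariate}---is exactly the paper's strategy. But there is a concrete gap in the mechanism you propose for establishing the Lorentzian property, and it means your route stops at part \eqref{MasonSecond} and never reaches \eqref{MasonThird}.

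The polynomials you can actually show to be Lorentzian by your arguments are the \emph{exponential} generating functions. Theorem~\ref{charjump} gives you that $f_\mathrm{J}=\sum_{A\in\mathscr{I}(\mathrm{M})}\frac{1}{(n-|A|)!}w_0^{n-|A|}w^A$ is Lorentzian (and similarly Example~\ref{examplerank} or Corollary~\ref{normalizedcoefficients} only produce polynomials with the $1/\alpha!$ weights built in). Diagonalize this to $g_1(w_0,w)=\sum_{k=0}^n\frac{I_k(\mathrm{M})}{(n-k)!}w_0^{n-k}w^k$ and apply ultra-log-concavity with respect to $\binom nk$. Since $\frac{1}{(n-k)!\binom nk}=\frac{k!}{n!}$, the factorial normalization cancels the $(n-k)!$ part of the binomial and you are left with $I_k^2\geq\frac{k+1}{k}I_{k-1}I_{k+1}$, which is \eqref{MasonSecond}, not \eqref{MasonThird}. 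This cancellation is insensitive to the ambient degree you homogenize into ($r$ versus $n$), so no choice of ``padding'' fixes it while you retain the $1/\alpha!$ normalization.

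To get \eqref{MasonThird} you need the \emph{plain} generating function $f_\mathrm{M}(w_0,\ldots,w_n)=\sum_{A\in\mathscr{I}(\mathrm{M})}w^Aw_0^{n-|A|}$ to be Lorentzian. This does \emph{not} follow from $\MM$-convexity of the support: as the paper notes just after Corollary~\ref{CorollaryNormalization}, the plain generating function of an $\MM$-convex set (e.g.\ complete homogeneous symmetric polynomials, supported on all of $\Delta^d_n$) need not be Lorentzian. Nor is $\nu_{f_\mathrm{M}}$ an $\MM$-concave function (the exponents $\log\bigl((n-|A|)!\bigr)$ violate the local exchange inequality when $|A|$ and $|B|$ differ by more than one), so Corollary~\ref{normalizedcoefficients} does not apply either. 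The paper's route is the one you actually started down and then abandoned: Theorem~\ref{mainpotts}. You correctly observed that $\mathrm{Z}_{1,\mathrm{M}}$ sums over all $A$, but the right move is not $q=1$; it is the dilation limit
\[
f_\mathrm{M}(w_0,w_1,\ldots,w_n)=\lim_{q\to 0}\mathrm{Z}_{q,\mathrm{M}}(w_0,qw_1,\ldots,qw_n),
\]
which kills every dependent $A$ (since $q^{|A|-\mathrm{rk}_\mathrm{M}(A)}\to 0$) and leaves exactly the unnormalized independent-set generating polynomial. Theorem~\ref{mainpotts} (with any $0<q\leq 1$) plus Theorem~\ref{flow} (dilation and then diagonalization) then yields that $g_2(w_0,w)=\sum_k I_k(\mathrm{M})w_0^{n-k}w^k$ is Lorentzian, and Example~\ref{ExampleBivariate} gives $\frac{I_k^2}{\binom nk^2}\geq\frac{I_{k-1}}{\binom n{k-1}}\frac{I_{k+1}}{\binom n{k+1}}$, which is \eqref{MasonThird}. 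Parts \eqref{MasonFirst} and \eqref{MasonSecond} then follow a fortiori, as you say. So the missing ingredient is Theorem~\ref{mainpotts} itself (whose proof is a genuine piece of work, not a formal consequence of $\MM$-convexity), together with the $q\to 0$ dilation limit.
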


Conjecture \ref{ConjectureMason} (\ref{MasonFirst}) 
was proved 
 in \cite{AHK}, and Conjecture \ref{ConjectureMason} (\ref{MasonSecond}) was proved in \cite{HSW}.
Note that Conjecture \ref{ConjectureMason} (\ref{MasonThird}) may be written
 \[
\frac{I_k(\mathrm{M})^2}{{n \choose k}^2} \ge \frac{I_{k+1}(\mathrm{M})}{{n \choose k+1}}\frac{I_{k-1}(\mathrm{M})}{{n \choose k-1}}, 
\]
and the equality holds when all $(k+1)$-subsets of $[n]$ are independent in $\mathrm{M}$.
Conjecture \ref{ConjectureMason} (\ref{MasonThird}) is known to hold when 
$n$ is at most $11$ or $k$ is at most $5$  \cite{KN11}.  
See \cite{Seymour, Dowling, Mahoney, Zhao, HK,HS, Lenz} for other partial results.

\begin{theorem}\label{TheoremMasonThird}
For any matroid $\mathrm{M}$ on $[n]$ and any positive integer $k$,
 \[
\frac{I_k(\mathrm{M})^2}{{n \choose k}^2} \ge \frac{I_{k+1}(\mathrm{M})}{{n \choose k+1}}\frac{I_{k-1}(\mathrm{M})}{{n \choose k-1}},
\]
where $I_k(\mathrm{M})$ is the number of $k$-element independent sets of $\mathrm{M}$.  
\end{theorem}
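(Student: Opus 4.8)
The plan is to deduce Theorem \ref{TheoremMasonThird} from Theorem \ref{mainpotts} by extracting, through a scaling limit, the bivariate independence generating polynomial out of the homogeneous multivariate Tutte polynomial, and then invoking the characterization of bivariate Lorentzian polynomials in Example \ref{ExampleBivariate}.

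First I would rescale variables. By Theorem \ref{mainpotts}, the polynomial $\mathrm{Z}_{q,\mathrm{M}}(w_0,w_1,\ldots,w_n)$ is Lorentzian for every $0<q\le 1$, and substituting $w_i\mapsto qw_i$ for $i\in[n]$ while leaving $w_0$ fixed is a nonnegative diagonal change of variables, so by Theorem \ref{flow} the polynomial
\[
\mathrm{Z}_{q,\mathrm{M}}(w_0,qw_1,\ldots,qw_n)=\sum_{k=0}^{n}\ \sum_{A\in{n\brack k}} q^{\,|A|-\mathrm{rk}_\mathrm{M}(A)}\,w^A w_0^{\,n-k}
\]
lies in $\mathrm{L}^n_{n+1}$ for all $0<q\le 1$. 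Since $|A|-\mathrm{rk}_\mathrm{M}(A)$ is the nullity of $A$, which is nonnegative and vanishes precisely when $A$ is independent in $\mathrm{M}$, letting $q\to 0^+$ yields coefficientwise convergence to the multivariate independence polynomial
\[
\mathscr{I}_\mathrm{M}(w_0,w_1,\ldots,w_n)=\sum_{\substack{A\subseteq[n]\\ A\ \text{independent in}\ \mathrm{M}}} w^A\, w_0^{\,n-|A|}.
\]
Because $\mathrm{L}^n_{n+1}$ is a closed subset of $\mathrm{H}^n_{n+1}$ by Theorem \ref{chars}, the polynomial $\mathscr{I}_\mathrm{M}$ is Lorentzian.

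Next I would diagonalize: applying Theorem \ref{flow} to the substitution $w_1=\cdots=w_n=x$ shows that
\[
\mathscr{I}_\mathrm{M}(w_0,x,\ldots,x)=\sum_{k=0}^{n} I_k(\mathrm{M})\,x^k w_0^{\,n-k}
\]
is a bivariate Lorentzian polynomial of degree $n$. By Example \ref{ExampleBivariate}, a bivariate homogeneous polynomial $\sum_{k=0}^{n} a_k w_1^k w_2^{n-k}$ with nonnegative coefficients can be Lorentzian only if the sequence $(a_k)_{k=0}^{n}$ is ultra log-concave; taking $a_k=I_k(\mathrm{M})$ gives exactly
\[
\frac{I_k(\mathrm{M})^2}{\binom{n}{k}^2}\ \ge\ \frac{I_{k+1}(\mathrm{M})}{\binom{n}{k+1}}\cdot\frac{I_{k-1}(\mathrm{M})}{\binom{n}{k-1}},
\]
which is the desired inequality.

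The substantive input is Theorem \ref{mainpotts} itself, whose proof reduces the quadratic (stability) case to the Cauchy--Schwarz inequality over the parallel classes of $\mathrm{M}$; once that is granted, the only point in the present argument that requires care is the interchange of the limit $q\to 0^+$ with the Lorentzian property, and this is justified solely by the closedness of $\mathrm{L}^n_{n+1}$ in $\mathrm{H}^n_{n+1}$. It is worth recording along the way that the intermediate conclusion --- the multivariate independence polynomial $\mathscr{I}_\mathrm{M}$ is Lorentzian --- is a clean statement in its own right, and by Theorem \ref{chars} it re-proves that the independent sets of $\mathrm{M}$ form an $\MM^\natural$-convex set.
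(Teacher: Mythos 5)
Your proposal is correct and follows essentially the same route as the paper: pass from $\mathrm{Z}_{q,\mathrm{M}}$ to $f_\mathrm{M}$ via the $q\to 0^+$ limit of $\mathrm{Z}_{q,\mathrm{M}}(w_0,qw_1,\ldots,qw_n)$, use closedness of the Lorentzian class and diagonalization (Theorem \ref{flow}), and finish with the bivariate ultra log-concavity criterion of Example \ref{ExampleBivariate}. You merely spell out the intermediate justifications (the rescaling as an instance of Theorem \ref{flow}, the closedness from Theorem \ref{chars}) that the paper leaves implicit.
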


In  \cite{BH}, direct proofs of  Theorems \ref{mainpotts} and \ref{TheoremMasonThird} were given.\footnote{An independent proof of  \ref{TheoremMasonThird} was given by Anari \emph{et al.} in  \cite{ALGVIII}.} 
Here we deduce Theorem \ref{TheoremMasonThird} from the Lorentzian property of 
\[
f_\mathrm{M}(w_0,w_1,\ldots,w_n)
=\sum_{A \in \mathscr{I}(\mathrm{M})} w^A w_0^{n-|A|}, \quad w=(w_1,\ldots,w_n),
\]
where $\mathscr{I}(\mathrm{M})$ is the collection of independent sets of $\mathrm{M}$.

\begin{proof}[Proof of Theorem \ref{TheoremMasonThird}]
The polynomial $f_\mathrm{M}$ is Lorentzian by Theorem \ref{mainpotts} and the identity
\[
f_\mathrm{M}(w_0,w_1,\ldots,w_n)=\lim_{q \to 0} \mathrm{Z}_{q,\mathrm{M}}(w_0,qw_1,\ldots,qw_n). 
\]
Therefore, by Theorem \ref{flow}, the bivariate polynomial obtained from $f_\mathrm{M}$ by setting $w_1=\cdots=w_n$ is Lorentzian.
The conclusion follows from the fact that a bivariate homogeneous polynomial with nonnegative coefficients is Lorentzian if and only if the sequence of coefficients form an ultra log-concave sequence with no internal zeros.
\end{proof}

The \emph{Tutte polynomial} of a matroid $\mathrm{M}$ on $[n]$ is the bivariate polynomial
\[
\mathrm{T}_\mathrm{M}(x,y)=\sum_{A  \subseteq [n]} (x-1)^{\text{rk}_\mathrm{M}([n])-\text{rk}_\mathrm{M}(A)} (y-1)^{|A|-\text{rk}_\mathrm{M}(A)}.
\]
Theorem \ref{mainpotts} reveals several nontrivial inequalities satisfied by the coefficients of the Tutte polynomial.
For example, if we write
\[
w^{\text{rk}_\mathrm{M}([n])}\mathrm{T}_\mathrm{M}\Big(1+\frac{q}{w},1+w\Big)=\sum_{k=0}^n \Big( \sum_{A \in {n \brack k}} q^{\text{rk}_\mathrm{M}([n])-\text{rk}_\mathrm{M}(A)}\Big) w^k=\sum_{k=0}^n c_q^k(\mathrm{M}) w^k, 
\]
then the sequence $c_q^k(\mathrm{M})$ is ultra log-concave whenever $0 \le q \le 1$.
This and other results in this subsection are recently extended to flag matroids in \cite{EurHuh}.

\subsection{$\MM$-matrices and Lorentzian polynomials}\label{secMm}

We write $\mathrm{I}_{n}$ for the $n \times n$ identity matrix,
 $\mathrm{J}_{n}$ for  the $n \times n$ matrix all of whose entries are $1$,
 and $1_n$ for the $n \times 1$ matrix all of whose entries are $1$.
Let $A=(a_{ij})$ be an $n \times n$ matrix with real entries.
The following conditions are equivalent if $a_{ij} \le 0$  for all $i \neq j$ \cite[Chapter 6]{BP}:
\begin{enumerate}[--]\itemsep 5pt
\item The real part of each nonzero eigenvalue of $\mathrm{A}$ is positive.
\item The real part of each eigenvalue of $\mathrm{A}$ is nonnegative.
\item All the principal minors of $\mathrm{A}$ are nonnegative.
\item Every real eigenvalue of $\mathrm{A}$ is nonnegative.
\item The matrix $\mathrm{A}+\epsilon \hspace{0.5mm} \mathrm{I}_n$ is nonsingular for every $\epsilon>0$.
\item The univariate polynomial $\det(x\mathrm{I}_n+\mathrm{A})$ has nonnegative coefficients.
\end{enumerate}
The matrix $\mathrm{A}$ is an \emph{$\MM$-matrix} if $a_{ij} \le 0$ for all $i \neq j$ and
if it satisfies any one of the above conditions.
One can find $50$ different  characterizations of nonsingular $\MM$-matrices in  \cite[Chapter 6]{BP}.
Among others, we will use  the  $29$-th condition:
\begin{enumerate}[--]\itemsep 5pt
\item There are positive diagonal matrices $D$ and $D'$ such that $D A D'$ has all diagonal entries $1$ and all row sums positive.
\end{enumerate}
For a discussion of $\MM$-matrices in the context of ultrametrics and potentials of finite Markov chains, see \cite{DMS}.

We define the \emph{multivariate characteristic polynomial} of $\mathrm{A}$ by the equation
\[
\mathrm{p}_\mathrm{A}(w_0,w_1,\ldots,w_n)=\det\Big(w_0\mathrm{I}_n+\diag(w_1,\ldots, w_n)\mathrm{A}\Big).
\]
In \cite[Theorem 4]{Holtz}, Holtz proved that the coefficients  of the characteristic polynomial of an $\MM$-matrix form an ultra log-concave sequence. 
We will strengthen this result and prove that the multivariate characteristic polynomial of an $\MM$-matrix is Lorentzian.

\begin{theorem}\label{M-matrix}
If $\mathrm{A}$ is an $\MM$-matrix, then $\mathrm{p}_\mathrm{A}$ is a Lorentzian polynomial.
\end{theorem}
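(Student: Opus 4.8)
The plan is to apply the criterion following Theorem~\ref{chars}: a homogeneous polynomial with nonnegative coefficients is Lorentzian precisely when its support is $\MM$-convex and $\partial^\alpha f$ has at most one positive eigenvalue for every $\alpha\in\Delta^{d-2}_n$. A standard multilinear expansion of the determinant gives
\[
\mathrm{p}_\mathrm{A}(w_0,w_1,\dots,w_n)=\sum_{S\subseteq[n]}\det(A_S)\,w_0^{n-|S|}w^S,
\]
where $A_S$ is the principal submatrix of $\mathrm{A}$ on $S$. The coefficients are the principal minors of the $\MM$-matrix $\mathrm{A}$, hence positive, and $\supp(\mathrm{p}_\mathrm{A})$ is the homogenization of $\{0,1\}^n$, which is $\MM$-convex by the computation already carried out in Lemma~\ref{PottsSupport}. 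So the theorem reduces to the one-positive-eigenvalue condition for $\partial^\alpha\mathrm{p}_\mathrm{A}$ with $\alpha\in\Delta^{n-2}_{n+1}$.

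Since $\mathrm{p}_\mathrm{A}$ is multi-affine in $w_1,\dots,w_n$, such a derivative vanishes unless $\alpha=(n-2-t;\mathbf 1_T)$ for some $T\subseteq[n]$ with $t=|T|\le n-2$. For such $\alpha$, a direct expansion together with the quotient formula $\det(A_{T\cup U})=\det(A_T)\det((\mathrm A/A_T)_U)$ identifies $\partial^\alpha\mathrm{p}_\mathrm{A}$, up to a positive scalar, with the quadratic form in $w_0$ and $(w_j)_{j\notin T}$
\[
Q_C=\binom m2 w_0^2+(m-1)\sum_{j\notin T}C_{jj}\,w_0w_j+\sum_{\{j,k\}\subseteq[n]\setminus T}\det(C_{\{j,k\}})\,w_jw_k,\qquad m=n-t,\ \ C=\mathrm A/A_T.
\]
The Schur complement $C$ of a principal submatrix of an $\MM$-matrix is again an $\MM$-matrix — its principal minors are quotients of principal minors of $\mathrm A$, and $A_T^{-1}\ge 0$ entrywise keeps the off-diagonal entries of $C$ nonpositive — so it is enough to prove: \emph{for every $\MM$-matrix $C$ of size $m\ge 2$, the form $Q_C$ has at most one positive eigenvalue.} Replacing $C$ by $DCD'$ with $D,D'$ positive diagonal precomposes $Q_C$ with a positive diagonal substitution of the variables $w_j$, leaving the inertia of its Hessian unchanged, so by the $29$th characterization of $\MM$-matrices recalled above I may assume $C=I-B$ with $B\ge 0$ entrywise, zero diagonal, and all row sums of $B$ strictly less than $1$.

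The Hessian of $Q_C$ is then $\bigl(\begin{smallmatrix}m(m-1)&(m-1)\mathbf 1^T\\(m-1)\mathbf 1&(J-I)-B\circ B^T\end{smallmatrix}\bigr)$, where $\circ$ is the Hadamard product and $J=\mathbf 1\mathbf 1^T$; taking the Schur complement of the positive corner entry $m(m-1)$ shows $Q_C$ has at most one positive eigenvalue if and only if $M:=I+B\circ B^T-\tfrac1m J\succeq 0$. This is the heart of the matter. Put $E=B\circ B^T$: it is symmetric, entrywise nonnegative, has zero diagonal, and, since each $B_{kj}<1$, every row sum of $E$ is below $1$, whence $\rho(E)<1$ and $I+E\succ 0$. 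A Schur-complement computation reduces $M\succeq 0$ to the scalar inequality $\mathbf 1^T(I+E)^{-1}\mathbf 1\le m$; writing $z=(I+E)^{-1}\mathbf 1$ and summing the coordinates of $(I+E)z=\mathbf 1$ (using $E=E^T$) gives $\mathbf 1^Tz=m-(E\mathbf 1)^Tz$, so it remains only to see that $z\ge 0$ entrywise, after which $(E\mathbf 1)^Tz\ge 0$ finishes the proof. That $z\ge 0$ follows from a discrete minimum principle: were $z$ to have a negative coordinate, comparing the defining relation $z_j=1-\sum_kE_{jk}z_k$ at an index where $z$ is smallest with one where it is largest, and using that all row sums of $E$ are $<1$, produces a contradiction.

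The main obstacle is precisely the positive semidefiniteness of $M=I+B\circ B^T-\tfrac1m J$. The Hadamard square $B\circ B^T$ is nonnegative but not semidefinite, so one cannot simply add up semidefinite blocks, and crude Cauchy--Schwarz estimates lose too much near the direction $\mathbf 1$, exactly where $I-\tfrac1mJ$ degenerates; passing to the inverse $(I+E)^{-1}$ and establishing $(I+E)^{-1}\mathbf 1\ge 0$ by a maximum-principle argument is the device that makes it work. Everything else — the multilinear expansion of $\partial^\alpha\mathrm{p}_\mathrm{A}$, the Schur-complement bookkeeping, the fact that Schur complements of $\MM$-matrices are $\MM$-matrices, and the reduction to the normalized matrix — is routine.
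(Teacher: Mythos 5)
Your proof is correct, and for the key positive-semidefiniteness step it takes a genuinely different route from the paper's. Both arguments share the same skeleton: $\MM$-convexity of the support (as in Lemma~\ref{PottsSupport}/\ref{NonsingularSupport}), reduction via Theorem~\ref{chars} to the one-positive-eigenvalue condition for second derivatives, normalization by the 29th characterization of nonsingular $\MM$-matrices, and Schur-complement bookkeeping that condenses everything to showing $I_m+E-\tfrac1m J_m\succeq 0$ for the symmetric, entrywise nonnegative, zero-diagonal matrix $E=B\circ B^T$ with row sums $<1$ coming from the normalized off-diagonal part. (The organizational difference — you compute $\partial^\alpha\mathrm{p}_\mathrm{A}$ directly via the Schur quotient $\det(A_{T\cup U})=\det(A_T)\det((A/A_T)_U)$, whereas the paper inducts on $n$ and peels off one coordinate at a time — is immaterial, and your version is the cleaner statement of what the paper's "$\mathrm A/i$" really is.) The proofs then diverge. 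The paper observes that $E$ is doubly sub-stochastic, invokes Mirsky's analog of the Birkhoff--von Neumann theorem to pass to partial permutation matrices, splits these into paths and cycles, and proves positive semidefiniteness of the resulting explicit tridiagonal and near-circulant matrices by a hands-on Schur-complement induction (Lemma~\ref{TypeA} and Proposition~\ref{DoublySubstochastic}). You instead use a rank-one-perturbation criterion to reduce $I+E-\tfrac1m J\succeq 0$ to the scalar inequality $\mathbf 1^T(I+E)^{-1}\mathbf 1\le m$, reduce that in turn to entrywise nonnegativity of $z=(I+E)^{-1}\mathbf 1$, and prove $z\ge 0$ by a discrete minimum principle; this works, since with $r_i=\sum_k E_{ik}<1$, the min index $j$ and max index $\ell$ satisfy $z_\ell\le 1-r_\ell z_j$ and $z_j\ge 1-r_j z_\ell$, whence $(1-r_jr_\ell)z_j\ge 1-r_j>0$. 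Your route is shorter and avoids the explicit matrix computations, but it exploits the symmetry of $E$ throughout; the paper's Proposition~\ref{DoublySubstochastic} is proved for arbitrary, not necessarily symmetric, doubly sub-stochastic matrices, which makes it a more reusable standalone lemma at the cost of extra combinatorial work. One place to tighten your write-up is to spell out the minimum-principle step as above, since as stated it is only a sketch.
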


Using Example \ref{ExampleBivariate}, we may recover the theorem of Holtz  by setting $w_1=\cdots=w_n$.

\begin{corollary}
If $\mathrm{A}$ is an $\MM$-matrix, then the support of $\mathrm{p}_\mathrm{A}$  is $\MM$-convex.
\end{corollary}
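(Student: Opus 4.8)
The plan is to deduce this immediately from Theorem \ref{M-matrix} together with the structural description of Lorentzian polynomials obtained in Theorem \ref{chars}. First I would invoke Theorem \ref{M-matrix} to conclude that $\mathrm{p}_\mathrm{A}$ is a Lorentzian polynomial. Then I would recall that, by Theorem \ref{chars}, the set of Lorentzian polynomials in $\mathrm{H}^d_n$ is exactly $\mathrm{L}^d_n$, and that $\mathrm{L}^d_n \subseteq \mathrm{M}^d_n$ by Definition \ref{SecondDefinition}; since $\mathrm{M}^d_n$ consists precisely of polynomials with nonnegative coefficients whose support is $\MM$-convex, the support of $\mathrm{p}_\mathrm{A}$ is $\MM$-convex. (Equivalently, one can cite Theorem \ref{MRayleigh} via Proposition \ref{HRRayleigh}, but the cleanest route is through Theorem \ref{chars}.)

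There is essentially no obstacle here: the entire content of the statement is carried by Theorem \ref{M-matrix}, and the passage from the Lorentzian property to $\MM$-convexity of the support is a formal consequence already recorded in the basic theory. The only minor point worth mentioning explicitly in the write-up is that $\mathrm{p}_\mathrm{A}$ is a homogeneous polynomial of degree $n$ in the $n+1$ variables $w_0,w_1,\ldots,w_n$ with nonnegative coefficients (which follows from the sign condition on the principal minors of an $\MM$-matrix used in the proof of Theorem \ref{M-matrix}), so that it lies in $\mathrm{H}^n_{n+1}$ and the cited characterizations apply verbatim.

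Optionally, I would close by spelling out what this says concretely: writing $\mathrm{p}_\mathrm{A} = \sum_{S \subseteq [n]} \left(\det \mathrm{A}_S\right) w^S w_0^{n-|S|}$, where $\mathrm{A}_S$ is the principal submatrix of $\mathrm{A}$ indexed by $S$, the corollary asserts that the family of subsets $S$ with $\det \mathrm{A}_S \neq 0$ forms the independent sets of a matroid on $[n]$, together with the rank-completion condition packaged by $\MM$-convexity across the $w_0$-grading. This requires no further argument beyond the two cited results.
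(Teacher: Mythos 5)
Your proof is correct and is exactly the route the paper intends: the corollary is stated immediately after Theorem \ref{M-matrix} with no separate proof, precisely because Theorem \ref{chars} identifies Lorentzian polynomials with $\mathrm{L}^d_n \subseteq \mathrm{M}^d_n$, whose members have $\MM$-convex support by definition. The closing aside about independent sets of a matroid is accurate here (the support contains $0$ and is interval convex by the $c$-Rayleigh property), but it is not needed for the argument.
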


Since every $\MM$-matrix is a limit of nonsingular $\MM$-matrices,
 it is enough to prove Theorem \ref{M-matrix} for nonsingular $\MM$-matrices.

\begin{lemma}\label{NonsingularSupport}
If $\mathrm{A}$ is a nonsingular $\MM$-matrix,  the support of $\mathrm{p}_\mathrm{A}$ is $\MM$-convex.
\end{lemma}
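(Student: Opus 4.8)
The plan is to compute the support of $\mathrm{p}_\mathrm{A}$ explicitly and identify it with the support already analyzed in the proof of Lemma \ref{PottsSupport}. Write $\mathrm{D}=\diag(w_1,\ldots,w_n)$. The coefficient expansion of a characteristic polynomial (the coefficient of $w_0^{\,n-k}$ in $\det(w_0\mathrm{I}_n+\mathrm{M})$ is the sum of the principal $k\times k$ minors of $\mathrm{M}$) gives
\[
\mathrm{p}_\mathrm{A}(w_0,w_1,\ldots,w_n)=\det\big(w_0\mathrm{I}_n+\mathrm{D}\mathrm{A}\big)=\sum_{S\subseteq[n]}w_0^{\,n-|S|}\det\big((\mathrm{D}\mathrm{A})[S]\big),
\]
where $(\mathrm{D}\mathrm{A})[S]$ denotes the principal submatrix indexed by $S$. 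Since $\mathrm{D}$ is diagonal, $(\mathrm{D}\mathrm{A})[S]=\mathrm{D}[S]\,\mathrm{A}[S]$, so $\det\big((\mathrm{D}\mathrm{A})[S]\big)=\big(\prod_{i\in S}w_i\big)\det(\mathrm{A}[S])$ and hence
\[
\mathrm{p}_\mathrm{A}(w_0,w_1,\ldots,w_n)=\sum_{S\subseteq[n]}\det(\mathrm{A}[S])\,\Big(\prod_{i\in S}w_i\Big)\,w_0^{\,n-|S|}.
\]

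Next I would invoke the hypothesis that $\mathrm{A}$ is a nonsingular $\MM$-matrix. Every principal submatrix of a nonsingular $\MM$-matrix is again a nonsingular $\MM$-matrix: writing $\mathrm{A}=s\mathrm{I}_n-\mathrm{B}$ with $\mathrm{B}$ nonnegative and $s$ larger than the spectral radius of $\mathrm{B}$, one has $\mathrm{A}[S]=s\mathrm{I}_{|S|}-\mathrm{B}[S]$ with $\mathrm{B}[S]$ nonnegative of spectral radius at most that of $\mathrm{B}$. Hence $\det(\mathrm{A}[S])>0$ for every $S\subseteq[n]$ (with $\det(\mathrm{A}[\emptyset])=1$); this is one of the standard characterizations of nonsingular $\MM$-matrices, see \cite[Chapter 6]{BP}. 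Consequently there is no cancellation in the displayed sum, and
\[
\supp(\mathrm{p}_\mathrm{A})=\Big\{(n-|S|)\hspace{0.3mm}e_0+\textstyle\sum_{i\in S}e_i \ \Big|\ S\subseteq[n]\Big\}\subseteq\Delta^n_{n+1}.
\]

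Finally, this is precisely the set whose $\MM$-convexity was established in the proof of Lemma \ref{PottsSupport}: substituting $w_0\mapsto 1$ turns it into the full Boolean cube $\{0,1\}^n$, which is interval convex, contains $0$, and trivially satisfies the augmentation property, so it is $\MM^\natural$-convex by Lemma \ref{polyind} and its degree-$n$ homogenization is $\MM$-convex. Therefore $\supp(\mathrm{p}_\mathrm{A})$ is $\MM$-convex. I do not anticipate a genuine obstacle: the only points needing care are the commutation $(\mathrm{D}\mathrm{A})[S]=\mathrm{D}[S]\mathrm{A}[S]$, which is immediate because $\mathrm{D}$ is diagonal, and the positivity of all principal minors of a nonsingular $\MM$-matrix, which is classical; once these are in place the combinatorics reduces verbatim to Lemma \ref{PottsSupport}.
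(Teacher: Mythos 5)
Your proof is correct and follows essentially the same approach as the paper: both compute the support of $\mathrm{p}_\mathrm{A}$ via the principal-minor expansion, use the positivity of all principal minors of a nonsingular $\MM$-matrix to conclude that the dehomogenized support is the full Boolean cube $\{0,1\}^n$, and then apply Lemma \ref{polyind} to verify $\MM^\natural$-convexity. The only difference is that you spell out the principal-minor expansion and the heredity of the nonsingular $\MM$-matrix property in more detail, whereas the paper simply cites the standard characterization.
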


\begin{proof}
It is enough to prove that the support of
$\mathrm{p}^\natural_\mathrm{A}$ is $\MM^\natural$-convex, where
\[
\mathrm{p}^\natural_\mathrm{A}(w_1,\ldots,w_n)=\mathrm{p}_\mathrm{A}(1,w_1,\ldots,w_n).
\]
If $\mathrm{A}$ is a nonsingular $\MM$-matrix, then all the principal minors of $\mathrm{A}$ are positive, and hence
\[
\text{supp} (\mathrm{p}_\mathrm{A}^\natural)=\{0,1\}^n.
\]
It is straightforward to verify the augmentation property in Lemma \ref{polyind} for $\{0,1\}^n$.
\end{proof}

We prepare the proof of Theorem \ref{M-matrix} with a proposition on doubly sub-stochastic matrices.
Recall that an $n \times n$ matrix $\mathrm{B}=(b_{ij})$ with nonnegative entries is said to be \emph{doubly sub-stochastic} if 
\[
\text{$\sum_{j=1}^n b_{ij} \le 1$ for every $i$} \quad \text{and} \quad  \text{$\sum_{i=1}^n b_{ij} \le 1$ for every $j$}.
\]
A \emph{partial permutation matrix} is a zero-one matrix with at most one nonzero entry in each row and column.
We use Mirsky's analog of the Birkhoff-von Neumann theorem for doubly sub-stochastic matrices \cite[Theorem 3.2.6]{HJ}: The set of $n \times n$ doubly sub-stochastic matrix  is equal to the convex hull of the $n \times n$ partial permutation matrices.


\begin{lemma}\label{TypeA}
For $n \ge 2$, define $n \times n$ matrices $\mathrm{M}_n$ and $\mathrm{N}_n$ by
\[
\small
\mathrm{M}_{n}=
\left(\begin{array}{cccccc}
2&1&0& \cdots &0 & 1\\
1&2&1& \cdots & 0 & 0\\
0&1&2 & \cdots &0& 0\\
\vdots &\vdots&\vdots&\ddots&\vdots&\vdots\\
0&0&0&\cdots &2&1\\
1&0&0&\cdots&1&2\\
\end{array}
\right),
\quad
\mathrm{N}_{n}=
\left(\begin{array}{cccccc}
2&1&0& \cdots &0 & 0\\
1&2&1& \cdots & 0 & 0\\
0&1&2 & \cdots &0& 0\\
\vdots &\vdots&\vdots&\ddots&\vdots&\vdots\\
0&0&0&\cdots &2&1\\
0&0&0&\cdots&1&2\\
\end{array}
\right).
\]
Then the matrices $\mathrm{M}_n-\frac{2}{n}\mathrm{J}_n$ and $\mathrm{N}_n-\frac{2}{n}\mathrm{J}_n$ are positive semidefinite.
Equivalently, 
\[
\underline{\mathrm{M}}_{n+1}\coloneq\left(\begin{array}{cccc}
\mathrm{M}_n&1_{n}\\
1_n^T&\frac{n}{2}
\end{array}\right),
\quad
\underline{\mathrm{N}}_{n+1}\coloneq\left(\begin{array}{cccc}
\mathrm{N}_n&1_{n}\\
1_n^T&\frac{n}{2}
\end{array}\right)
\]
are positive semidefinite.
\end{lemma}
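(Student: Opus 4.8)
The plan is to prove the assertion in its first form, that $\mathrm{M}_n-\tfrac{2}{n}\mathrm{J}_n$ and $\mathrm{N}_n-\tfrac{2}{n}\mathrm{J}_n$ are positive semidefinite, and to obtain the stated equivalence with the positive semidefiniteness of $\underline{\mathrm{M}}_{n+1}$ and $\underline{\mathrm{N}}_{n+1}$ from the Schur complement criterion. Indeed, since $\mathrm{J}_n=1_n1_n^T$ and the $(n{+}1,n{+}1)$ entry $\tfrac{n}{2}$ is positive, the symmetric block matrix $\underline{\mathrm{M}}_{n+1}=\begin{pmatrix}\mathrm{M}_n & 1_n\\ 1_n^T & n/2\end{pmatrix}$ is positive semidefinite if and only if its Schur complement $\mathrm{M}_n-\tfrac{2}{n}\,1_n1_n^T=\mathrm{M}_n-\tfrac{2}{n}\mathrm{J}_n$ is, and likewise for $\mathrm{N}$.

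The main step will be to write each quadratic form as a sum of squares of ``consecutive sum'' linear forms and then apply the Cauchy--Schwarz inequality. Write $s=x_1+\cdots+x_n$. A direct expansion gives
\[
x^T\mathrm{N}_nx=x_1^2+x_n^2+\sum_{i=1}^{n-1}(x_i+x_{i+1})^2,
\]
a sum of $n+1$ squares whose underlying linear forms add up to $2s$. Cauchy--Schwarz then yields $(2s)^2\le(n+1)\,x^T\mathrm{N}_nx$, hence
\[
x^T\Big(\mathrm{N}_n-\tfrac{2}{n}\mathrm{J}_n\Big)x=x^T\mathrm{N}_nx-\tfrac{2}{n}s^2\ge\Big(\tfrac{4}{n+1}-\tfrac{2}{n}\Big)s^2=\tfrac{2(n-1)}{n(n+1)}\,s^2\ge 0 .
\]
For $\mathrm{M}_n$ with $n\ge 3$ the analogous identity with cyclic indices reads $x^T\mathrm{M}_nx=\sum_{i=1}^{n}(x_i+x_{i+1})^2$ with $x_{n+1}:=x_1$, a sum of $n$ squares whose linear forms again sum to $2s$; then $(2s)^2\le n\,x^T\mathrm{M}_nx$ gives $x^T(\mathrm{M}_n-\tfrac{2}{n}\mathrm{J}_n)x\ge\big(\tfrac{4}{n}-\tfrac{2}{n}\big)s^2=\tfrac{2}{n}s^2\ge 0$. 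The case $n=2$ is immediate, since there $\mathrm{M}_2=\mathrm{N}_2$ and $\mathrm{M}_2-\mathrm{J}_2=\mathrm{I}_2$.

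These computations are routine, and I expect the only delicate point to be the choice of the sum-of-squares decomposition and the bookkeeping of the resulting constant. For the path matrix $\mathrm{N}_n$ the natural decomposition has $n+1$ rather than $n$ squares, so one needs the elementary inequality $\tfrac{4}{n+1}\ge\tfrac{2}{n}$, which holds for every $n\ge 1$; for the cycle matrix $\mathrm{M}_n$ the clean decomposition into $n$ squares is valid only for $n\ge 3$, which is why $n=2$ must be treated separately. I do not anticipate any genuine obstacle beyond this.
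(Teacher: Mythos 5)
Your argument is correct, and it takes a genuinely different route from the paper's proof. You factor each quadratic form explicitly into a sum of ``consecutive-sum'' squares and then invoke the Cauchy--Schwarz inequality to compare that sum with $(\sum_i \ell_i)^2 = 4s^2$; together with the Schur complement criterion this gives both matrices in one stroke, and even produces a strictly positive lower bound $\frac{2(n-1)}{n(n+1)}s^2$ (resp.\ $\frac{2}{n}s^2$) rather than merely nonnegativity. The paper instead introduces two auxiliary families of matrices $\mathrm{L}_{n+1}$ and $\mathrm{K}_{n+1}$, shows by induction (via symmetric row and column eliminations at the top-left corner) that each is positive semidefinite, and then exhibits $\underline{\mathrm{M}}_{n+1}$ and $\underline{\mathrm{N}}_{n+1}$ as congruent to sums of positive semidefinite blocks. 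Your approach is more elementary and avoids the induction entirely: the sum-of-squares decomposition makes the reason for positivity fully visible, and the Cauchy--Schwarz step cleanly explains where the $\frac{2}{n}$ threshold comes from. The paper's method is more mechanical but has the advantage of fitting the ``symmetric elimination'' template used elsewhere in the subsection; it also sets up the block decomposition that is then exploited in the proof of Proposition \ref{DoublySubstochastic}, where the matrices $\underline{\mathrm{M}}_{n+1}$ and $\underline{\mathrm{N}}_{n+1}$ reappear as the blocks associated with the cycles of a partial permutation. Your computations are all verifiable (the $n=2$ edge case for the cycle matrix is correctly set aside), and the Schur complement equivalence is used in the right direction.
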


\begin{proof}
We define symmetric matrices $\mathrm{L}_{n+1}$ and $\mathrm{K}_{n+1}$ by
\[
\small
\mathrm{L}_{n+1}=
\left(\begin{array}{ccccccc}
1&1&0& \cdots &0 & 0&\frac{1}{2}\\
1&2&1& \cdots & 0 & 0&1\\
0&1&2 & \cdots &0& 0&1\\
\vdots &\vdots&\vdots&\ddots&\vdots&\vdots&\vdots\\
0&0&0&\cdots &2&1&1\\
0&0&0&\cdots&1&1&\frac{1}{2}\\
\frac{1}{2}&1&1& \cdots&1 &\frac{1}{2}& \frac{n}{2}
\end{array}
\right),
\quad
\mathrm{K}_{n+1}=
\left(\begin{array}{ccccccc}
1&1&0& \cdots &0 & 0&\frac{1}{2}\\
1&2&1& \cdots & 0 & 0&1\\
0&1&2 & \cdots &0& 0&1\\
\vdots &\vdots&\vdots&\ddots&\vdots&\vdots&\vdots\\
0&0&0&\cdots &2&1&1\\
0&0&0&\cdots&1&2&1\\
\frac{1}{2}&1&1& \cdots&1 &1 & \frac{n}{2}
\end{array}
\right).
\]
As before, the subscript indicates the size of the matrix.
We show, by induction on $n$, that the matrices
$\mathrm{L}_{n+1}$ and $\mathrm{K}_{n+1}$ are positive semidefinite.
It is straightforward to check that $\mathrm{L}_{3}$ and $\mathrm{K}_{3}$ are positive semidefinite.
Perform the symmetric row and column elimination of $\mathrm{L}_{n+1}$ and $\mathrm{K}_{n+1}$ based on their $1\times 1$ entries,
and notice that 
\[
\small
\mathrm{L}_{n+1}\simeq
\left(\begin{array}{ccccccc}
1&0&0& \cdots &0 & 0&0\\
0&1&1& \cdots & 0 & 0&\frac{1}{2}\\
0&1&2 & \cdots &0& 0&1\\
\vdots &\vdots&\vdots&\ddots&\vdots&\vdots&\vdots\\
0&0&0&\cdots &2&1&1\\
0&0&0&\cdots&1&1&\frac{1}{2}\\
0&\frac{1}{2}&1& \cdots&1 &\frac{1}{2}& \frac{n}{2}-\frac{1}{4}
\end{array}
\right),
\quad
\mathrm{K}_{n+1}\simeq
\left(\begin{array}{ccccccc}
1&0&0& \cdots &0 & 0&0\\
0&1&1& \cdots & 0 & 0&\frac{1}{2}\\
0&1&2 & \cdots &0& 0&1\\
\vdots &\vdots&\vdots&\ddots&\vdots&\vdots&\vdots\\
0&0&0&\cdots &2&1&1\\
0&0&0&\cdots&1&1&1\\
0&\frac{1}{2}&1& \cdots&1 &1& \frac{n}{2}-\frac{1}{4}
\end{array}
\right),
\]
where the symbol $\simeq$ stands for the congruence relation for symmetric matrices.
Since 
$\mathrm{L}_n$ is 
 positive semidefinite, 
$\mathrm{L}_{n+1}$ is congruent to the sum of positive semidefinite matrices,
and hence $\mathrm{L}_{n+1}$ is positive semidefinite.
Similarly, since 
$\mathrm{K}_n$ is 
 positive semidefinite, 
$\mathrm{K}_{n+1}$ is congruent to the sum of positive semidefinite matrices,
and hence $\mathrm{K}_{n+1}$ is positive semidefinite.

We now prove  that the symmetric matrices $\underline{\mathrm{M}}_{n+1}$ and $\underline{\mathrm{N}}_{n+1}$ are positive semidefinite.
Perform the symmetric row and column elimination of $\underline{\mathrm{M}}_{n+1}$ and $\underline{\mathrm{N}}_{n+1}$ based on their $1\times 1$ entries,
and notice that 
\[
\small
\underline{\mathrm{M}}_{n+1}\simeq
\left(\begin{array}{rrrrrrc}
2&0&0& \cdots &0 & 0&0\\ 
0&\frac{3}{2}&1& \cdots & 0 & -\frac{1}{2}&\frac{1}{2}\\
0&1&2 & \cdots &0& 0&1\\
\vdots &\vdots&\vdots&\ddots&\vdots&\vdots&\vdots\\
0&0&0&\cdots &2&1&1\\
0&-\frac{1}{2}&0&\cdots&1&\frac{3}{2}&\frac{1}{2}\\ 
0&\frac{1}{2}&1& \cdots&1 &\frac{1}{2}& \frac{n-1}{2}
\end{array}
\right),
\quad
\underline{\mathrm{N}}_{n+1}\simeq
\left(\begin{array}{ccccccc}
2&0&0& \cdots &0 & 0&0\\ 
0&\frac{3}{2}&1& \cdots & 0 & 0&\frac{1}{2}\\
0&1&2 & \cdots &0& 0&1\\
\vdots &\vdots&\vdots&\ddots&\vdots&\vdots&\vdots\\
0&0&0&\cdots &2&1&1\\
0&0&0&\cdots&1&2&1 \\ 
0&\frac{1}{2}&1& \cdots&1 &1 & \frac{n-1}{2}
\end{array}
\right).
\]
Since 
$\mathrm{L}_n$ is 
 positive semidefinite, 
$\underline{\mathrm{M}}_{n+1}$ is congruent to the sum of two positive semidefinite matrices,
and hence $\underline{\mathrm{M}}_{n+1}$ is positive semidefinite.
Similarly, since 
$\mathrm{K}_n$ is 
 positive semidefinite, 
$\underline{\mathrm{N}}_{n+1}$ is congruent to the sum of two positive semidefinite matrices,
and hence $\underline{\mathrm{N}}_{n+1}$ is positive semidefinite.
\end{proof}

\begin{proposition}\label{DoublySubstochastic}
If $\mathrm{B}$ is an $n \times n$ doubly sub-stochastic matrix, then $2\mathrm{I}_{n}+\mathrm{B}+\mathrm{B}^T-\frac{2}{n} \mathrm{J}_{n}$ is positive semidefinite.
\end{proposition}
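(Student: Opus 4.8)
The plan is to reduce, in two steps, to the positive semidefiniteness statements already established in Lemma \ref{TypeA}, and then to glue the pieces together with a Cauchy--Schwarz estimate.

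First I would reduce to the case of a partial permutation matrix. The assignment $\mathrm{B} \mapsto 2\mathrm{I}_n + \mathrm{B} + \mathrm{B}^T - \frac{2}{n}\mathrm{J}_n$ is affine, and a convex combination of positive semidefinite matrices is positive semidefinite, so by Mirsky's analog of the Birkhoff--von Neumann theorem (recalled just before Lemma \ref{TypeA}) it suffices to prove the assertion when $\mathrm{B} = P$ is an $n \times n$ partial permutation matrix. Next I would exploit the combinatorial structure of $P$: viewing $P$ as the adjacency matrix of the digraph on $[n]$ with an arc $i \to j$ precisely when $P_{ij}=1$, the condition that each row and column of $P$ has at most one nonzero entry says exactly that this digraph is a disjoint union of directed paths and directed cycles. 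Conjugating $P$ by a suitable permutation matrix --- which replaces $P$ by another partial permutation matrix and leaves $\mathrm{J}_n$ unchanged --- puts $2\mathrm{I}_n + P + P^T$ into block diagonal form, with one block for each path or cycle: a cycle of length $k \ge 3$ gives the block $\mathrm{M}_k$ of Lemma \ref{TypeA}; a path on $k \ge 2$ vertices gives the block $\mathrm{N}_k$ of Lemma \ref{TypeA}; a loop $i \to i$ gives the $1 \times 1$ block $(4)$; a $2$-cycle gives the $2 \times 2$ block with all entries $2$; and an isolated vertex gives the $1 \times 1$ block $(2)$.

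The key input is then that every such block $D$, of size $m$ say, satisfies that $D - \frac{2}{m}\mathrm{J}_m$ is positive semidefinite: for the blocks $\mathrm{M}_k$ and $\mathrm{N}_k$ this is precisely Lemma \ref{TypeA}, while for the three remaining small blocks one checks directly that $(4) - 2\cdot(1) = (2)$, that the all-$2$ matrix of size $2$ minus $\mathrm{J}_2$ equals the all-$1$ matrix of size $2$, and that $(2) - 2\cdot(1) = (0)$, all of which are positive semidefinite. Summing these bounds over the blocks $\ell$, whose sizes $n_\ell$ satisfy $\sum_\ell n_\ell = n$, shows that $2\mathrm{I}_n + P + P^T - \bigoplus_\ell \frac{2}{n_\ell}\mathrm{J}_{n_\ell}$ is positive semidefinite.

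Finally I would compare $\bigoplus_\ell \frac{2}{n_\ell}\mathrm{J}_{n_\ell}$ with $\frac{2}{n}\mathrm{J}_n$. Writing a vector $x \in \mathbb{R}^n$ in block form and letting $s_\ell$ be the sum of the entries of $x$ in block $\ell$, the assertion that $\bigoplus_\ell \frac{2}{n_\ell}\mathrm{J}_{n_\ell} - \frac{2}{n}\mathrm{J}_n$ is positive semidefinite amounts to
\[
\sum_\ell \frac{s_\ell^2}{n_\ell} \;\geq\; \frac{1}{n}\Big(\sum_\ell s_\ell\Big)^2,
\]
which is the Cauchy--Schwarz inequality for the vectors $(\sqrt{n_\ell})_\ell$ and $(s_\ell/\sqrt{n_\ell})_\ell$ (a weighted form of Lemma \ref{Cauchy-Schwarz}). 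Combining the last two paragraphs yields that $2\mathrm{I}_n + P + P^T - \frac{2}{n}\mathrm{J}_n$ is positive semidefinite, which is what is needed. The only place where genuine work is required is the positive semidefiniteness of the path and cycle blocks $\mathrm{N}_k$ and $\mathrm{M}_k$, and this has already been isolated in Lemma \ref{TypeA}; the rest of the argument is bookkeeping together with a single application of Cauchy--Schwarz.
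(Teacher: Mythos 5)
Your proof is correct and follows essentially the same route as the paper's: Mirsky's theorem reduces to partial permutation matrices, the cycle decomposition produces the blocks $\mathrm{M}_k$ and $\mathrm{N}_k$, and Lemma \ref{TypeA} supplies the key positivity. The only cosmetic difference is at the end: the paper works with the bordered $(n+1)\times(n+1)$ matrices $\underline{\mathrm{M}}$ and $\underline{\mathrm{N}}$ and concludes by writing $\underline{\mathrm{C}}_{n+1}$ as a sum of positive semidefinite summands (a Schur-complement reformulation that implicitly absorbs your last step), whereas you use the unbordered form of Lemma \ref{TypeA} and close the gap between $\bigoplus_\ell \frac{2}{n_\ell}\mathrm{J}_{n_\ell}$ and $\frac{2}{n}\mathrm{J}_n$ with an explicit Cauchy--Schwarz estimate.
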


\begin{proof}
Let $\mathrm{C}_n$ be the symmetric matrix $2\mathrm{I}_n+\mathrm{B}+\mathrm{B}^T$, and let $\underline{\mathrm{C}}_{n+1}$ be the symmetric matrix
\[
\underline{\mathrm{C}}_{n+1}\coloneq\left(\begin{array}{cccc}
\mathrm{C}_n&1_{n}\\
1_n^T&\frac{n}{2}
\end{array}\right).
\]
It is enough to prove that $\underline{\mathrm{C}}_{n+1}$ is positive semidefinite.
Since the convex hull of the  partial permutation matrices is the set of doubly sub-stochastic matrix,
the proof reduces to the case when $\mathrm{B}$ is a partial permutation matrix.
We use the following extension of the cycle decomposition for partial permutations:
For any partial permutation matrix $\mathrm{B}$, there is a permutation matrix $\mathrm{P}$ such that $\mathrm{P}\mathrm{B}\mathrm{P}^T$ is a block diagonal matrix,
where each block diagonal is either zero, identity,
\[
\small
\left(\begin{array}{cccccc}
0&0&0& \cdots &0 & 1\\
1&0&0& \cdots & 0 & 0\\
0&1&0 & \cdots &0& 0\\
\vdots &\vdots&\vdots&\ddots&\vdots&\vdots\\
0&0&0&\cdots &0&0\\
0&0&0&\cdots&1&0\\
\end{array}
\right)
\quad \text{or} \quad
\left(\begin{array}{cccccc}
0&0&0& \cdots &0 & 0\\
1&0&0& \cdots & 0 & 0\\
0&1&0 & \cdots &0& 0\\
\vdots &\vdots&\vdots&\ddots&\vdots&\vdots\\
0&0&0&\cdots &0&0\\
0&0&0&\cdots&1&0\\
\end{array}
\right).
\]
Using the cyclic decomposition for $\mathrm{B}$, 
we can express the  matrix $\underline{\mathrm{C}}_{n+1}$ as a sum, where each summand is positive semidefinite by
 Lemma \ref{TypeA}.
\end{proof}

The remaining part of the proof of Theorem \ref{M-matrix} parallels that of Theorem \ref{mainpotts}.

\begin{proof}[Proof of Theorem \ref{M-matrix}]
Since every $\MM$-matrix is a limit of nonsingular $\MM$-matrices,
we may suppose that $\mathrm{A}$ is a nonsingular $\MM$-matrix.
 For $k=0,1,\ldots,n$, we set
\[
\mathrm{p}^k_\mathrm{A}(w) =\sum_{\alpha \in { n\brack k}} \mathrm{A}_\alpha w^\alpha, \quad w=(w_1,\ldots,w_n),
\]
where $\mathrm{A}_\alpha$ is the principal minor of $\mathrm{A}$ corresponding to $\alpha$,
so that
 \[
 \mathrm{p}_\mathrm{A}(w_0,w_1,\ldots,w_n)=\sum_{k=0}^n  \mathrm{p}^k_\mathrm{A}(w) w_0^{n-k}.
 \]
Lemma \ref{NonsingularSupport} shows that the support of $\mathrm{p}_\mathrm{A}$ is $\MM$-convex.
Therefore, by Theorem \ref{chars}, it is enough to prove that $\partial_i (\mathrm{p}_\mathrm{A})$ is Lorentzian for $i=0,1,\ldots,n$.
We prove this statement by induction on $n$.
The assertion is clear when $n=1$, so suppose $n \ge 2$.

When $i \neq 0$, write $\mathrm{B}$ for the inverse of $\mathrm{A}$ and $\mathrm{B}/i$ for the matrix obtained from $\mathrm{B}$ by deleting the $i$-th row and column. We observe that the $i$-th partial derivative of $p_{\mathrm{A}}$ is given by
\begin{align*}
\partial_i \mathrm{p}_\mathrm{A}(w_0,w_1,\ldots,w_n) 
&= \partial_i \det\Big(w_0\mathrm{I}_n+\diag(w_1,\ldots, w_n)\mathrm{A}\Big)\\
&=\det(\mathrm{A}) \ \partial_i \det\Big(w_0\mathrm{B}+\diag(w_1,\ldots, w_n)\Big)\\
&=\det(\mathrm{A})  \det\Big(w_0 (\mathrm{B}/i)+\diag(w_1,\ldots,\hat w_i, \ldots, w_n)\Big)\\
&=\det(\mathrm{A})    \det  (\mathrm{B}/i)  \det\Big(w_0  \mathrm{I}_{n-1}+\diag(w_1,\ldots,\hat w_i, \ldots, w_n)(\mathrm{B}/i)^{-1} \Big).
\end{align*}
By \cite[Theorem 3.1]{Markham}, the matrix $B/i$ has positive determinant and its inverse  is an $M$-matrix, so the induction hypotheis applies to the right-hand side.
Thus, to conclude, it  is enough to prove that the following quadratic form is stable:
\[
\frac{n!}{2}w_0^2+(n-1)! \hspace{0.5mm} \mathrm{p}^1_\mathrm{A}(w) w_0 +(n-2)! \hspace{0.5mm}\mathrm{p}^2_\mathrm{A}(w).
\]
As in the proof of Theorem \ref{mainpotts}  it suffices to show that the discriminant of the displayed quadratic form with respect to $w_0$ is nonnegative:
\[
\mathrm{p}^1_\mathrm{A}(w)^2 \ge  \frac{2n}{n-1} \mathrm{p}^2_\mathrm{A}(w) \ \ \text{for all $w \in \mathbb{R}^n$.}
\]
In terms of the entries of $\mathrm{A}$,
the displayed inequality is equivalent to the statement that the matrix $\Big(a_{ij}a_{ji}-\frac{1}{n}a_{ii}a_{jj}\Big)$ is positive semidefinite.
According to the $29$-th characterization of nonsingular $\MM$-matrices in \cite[Chapter 6]{BP},
there are positive diagonal matrices $D$ and $D'$ such that $D A D'$ has all diagonal entries $1$ and all row sums positive.
Therefore, we may suppose that $A$ has all diagonal entries $1$ and all the row sums of $\mathrm{A}$ are positive.
Under this assumption,
\[
\Big(a_{ij}a_{ji}-\frac{1}{n}a_{ii}a_{jj}\Big)=\mathrm{I}_n-\mathrm{B}-\frac{1}{n}\mathrm{J}_n,
\]
where $-\mathrm{B}$ is a symmetric doubly sub-stochastic matrix all of whose diagonal entries are zero.
The conclusion follows from Proposition \ref{DoublySubstochastic}.
\end{proof}

\subsection{Lorentzian  probability measures}\label{secLM}

There are numerous important examples of negatively dependent ``repelling'' random variables in probability theory, combinatorics, stochastic processes, and statistical mechanics.
See, for example, \cite{Pem}. 
A theory of negative dependence for strongly Rayleigh measures was developed in \cite{BBL}, but 
 the theory is too restrictive for several applications. 
Here we introduce a  broader class of discrete probability measures using the Lorentzian property.

A \emph{discrete probability measure} $\mu$  on $\{0,1\}^n$  is a probability measure on $\{0,1\}^n$ such that all subsets of $\{0,1\}^n$ are measurable. 
The \emph{partition function}  of $\mu$ is the polynomial
\[
\mathrm{Z}_\mu(w)= \sum_{S \subseteq [n]} \mu\big(\{S\}\big) \prod_{i \in S}w_i.
\]
The following notions capture various aspects of negative dependence: 
\begin{itemize}[--]\itemsep 5pt
\item The measure $\mu$ is  \emph{pairwise negatively correlated} (\text{PNC}) if for all distinct $i$ and $j$ in $[n]$, 
\[
\mu(\mathcal{E}_i \cap \mathcal{E}_j ) \leq \mu(\mathcal{E}_i)\mu(\mathcal{E}_j), 
\]
where $\mathcal{E}_i$ is the collection of all subsets of $[n]$ containing $i$. 
\item The measure $\mu$ is \emph{ultra log-concave} (\text{ULC}) if for every positive integer $k<n$,
\[
\frac {\mu\Big( {n \brack k}\Big)^2}{{\binom n k}^2} \geq  \frac {\mu\Big( {n \brack k-1}\Big)}{\binom n {k-1}} \frac {\mu\Big( {n \brack k+1}\Big)}{\binom n {k+1}}.
\]
\item The measure $\mu$ is \emph{strongly Rayleigh} if for all distinct $i$ and $j$ in $[n]$,
\[
 \mathrm{Z}_\mu(w) \hspace{0.5mm}\partial_i\partial_j   \mathrm{Z}_\mu(w)  \leq \partial_i  \mathrm{Z}_\mu(w)  \hspace{0.5mm}\partial_j   \mathrm{Z}_\mu(w) \ \ \text{for all $w \in \RR^n$}. 
\]
\end{itemize}
Let $\mathrm{P}$ be a property of discrete probability measures.
We say that $\mu$ has property $\underline{\mathrm{P}}$ if, for every $x\in \RR_{>0}^n$, the discrete probability measure on $\{0,1\}^n$ with  the partition function
\[
\mathrm{Z}_\mu(x_1w_1,\ldots, x_nw_n)/\mathrm{Z}_\mu(x_1,\ldots, x_n)
\]
has property $\mathrm{P}$.
The new discrete probability  measure is said to be obtained from $\mu$ by applying the \emph{external field} $x\in \RR_{>0}^n$.  
For example, the property $\underline{\mathrm{PNC}}$ for $\mu$ is equivalent to the $1$-Rayleigh property
\[
 \mathrm{Z}_\mu(w) \hspace{0.5mm}\partial_i\partial_j   \mathrm{Z}_\mu(w)  \leq \partial_i  \mathrm{Z}_\mu(w)  \hspace{0.5mm}\partial_j   \mathrm{Z}_\mu(w) \ \ \text{for all distinct $i$, $j$ in $[n]$ and all $w \in \RR_{>0}^n$}. 
\]
More generally, for a positive real number $c$, we say that $\mu$ is $c$-Rayleigh if 
\[
 \mathrm{Z}_\mu(w) \hspace{0.5mm}\partial_i\partial_j   \mathrm{Z}_\mu(w)  \leq c \hspace{0.5mm} \partial_i  \mathrm{Z}_\mu(w)  \hspace{0.5mm}\partial_j   \mathrm{Z}_\mu(w) \ \ \text{for all distinct $i$, $j$ in $[n]$ and all $w \in \RR_{>0}^n$}. 
\]


\begin{definition}
A discrete probability measure $\mu$ on $\{0,1\}^n$ is  \emph{Lorentzian} if the homogenization of the partition function
$
w_0^n\mathrm{Z}_\mu(w_1/w_0,\ldots, w_n/w_0)
$
 is a Lorentzian polynomial. 
\end{definition}

For example, if $\mathrm{A}$ is an  $\mathrm{M}$-matrix of size $n$, the probability measure on $\{0,1\}^n$ given by
\[
\mu\big(\{S\}\big) \propto \Big(\text{the principal minor of $\mathrm{A}$ corresponding to $S$}\Big), \quad S \subseteq [n],
\]
is Lorentzian by Theorem \ref{M-matrix}.
Results from the previous sections reveal basic features of Lorentzian measures, some of which may be interpreted as negative dependence properties.




\begin{proposition}\label{Propositon2-Rayleigh}
If $\mu$ is Lorentzian, then $\mu$ is $2$-Rayleigh. 
\end{proposition}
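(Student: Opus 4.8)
The plan is to reduce the statement about $\mu$ to the Rayleigh inequality for Lorentzian polynomials already established in the paper. By definition, if $\mu$ is a Lorentzian measure on $\{0,1\}^n$, then its homogenized partition function $F(w_0,w_1,\ldots,w_n) = w_0^n\,\mathrm{Z}_\mu(w_1/w_0,\ldots,w_n/w_0)$ is a Lorentzian polynomial of degree $n$ in $n+1$ variables. By Proposition~\ref{HRRayleigh}, any polynomial in $\mathrm{L}^n_{n+1}$ is $2(1-\tfrac1n)$-Rayleigh, and since $2(1-\tfrac1n)<2$, the polynomial $F$ is in particular $2$-Rayleigh; so the $c$-Rayleigh inequality
\[
\partial^\alpha F(w)\,\partial^{\alpha+e_i+e_j}F(w) \le 2\,\partial^{\alpha+e_i}F(w)\,\partial^{\alpha+e_j}F(w)
\]
holds for all $\alpha\in\NN^{n+1}$, all indices $i,j$, and all $w\in\RR^{n+1}_{\ge 0}$.

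The main step is to extract from this the desired inequality for $\mathrm{Z}_\mu$ itself. First I would take $\alpha = 0$ and $i,j\in[n]$ distinct (so neither index is the homogenizing variable $w_0$), obtaining
\[
F(w)\,\partial_i\partial_j F(w) \le 2\,\partial_i F(w)\,\partial_j F(w) \quad\text{for all } w\in\RR^{n+1}_{\ge 0}.
\]
Then I would specialize $w_0 = 1$. Since $F(1,w_1,\ldots,w_n) = \mathrm{Z}_\mu(w_1,\ldots,w_n)$ and differentiation with respect to $w_i$ for $i\ne 0$ commutes with the substitution $w_0=1$, this yields exactly
\[
\mathrm{Z}_\mu(w)\,\partial_i\partial_j \mathrm{Z}_\mu(w) \le 2\,\partial_i \mathrm{Z}_\mu(w)\,\partial_j \mathrm{Z}_\mu(w) \quad\text{for all distinct } i,j\in[n],\ w\in\RR^n_{>0},
\]
which is the definition of $\mu$ being $2$-Rayleigh.

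I expect no serious obstacle here; the only points requiring a word of care are (i) checking that we only ever need indices $i,j\ne 0$, so that the homogenizing variable plays no role and the specialization $w_0=1$ is legitimate, and (ii) noting that the Rayleigh inequality, which a priori is stated on the open orthant $\RR^n_{>0}$ (or, for $F$, on $\RR^{n+1}_{\ge0}$), is exactly what is needed since the $c$-Rayleigh condition for measures in this section is phrased with $w\in\RR^n_{>0}$. One could also invoke Theorem~\ref{HRTheorem} together with Proposition~\ref{RayleighLemma} directly applied to $F$, using that $2(1-\tfrac1d)$ is increasing in $d$, but routing through Proposition~\ref{HRRayleigh} is the cleanest. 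The statement then follows immediately.

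\begin{proof}
If $\mu$ is Lorentzian, the homogenization $F(w_0,w_1,\ldots,w_n)=w_0^n\mathrm{Z}_\mu(w_1/w_0,\ldots,w_n/w_0)$ lies in $\mathrm{L}^n_{n+1}$. By Proposition~\ref{HRRayleigh}, $F$ is $2\big(1-\tfrac1n\big)$-Rayleigh, hence $2$-Rayleigh. Taking $\alpha=0$ and distinct $i,j\in[n]$ in the $c$-Rayleigh inequality gives
\[
F(w)\,\partial_i\partial_j F(w)\le 2\,\partial_i F(w)\,\partial_j F(w)\quad\text{for all }w\in\RR^{n+1}_{\ge 0}.
\]
Setting $w_0=1$ and using $F(1,w_1,\ldots,w_n)=\mathrm{Z}_\mu(w)$ yields
\[
\mathrm{Z}_\mu(w)\,\partial_i\partial_j\mathrm{Z}_\mu(w)\le 2\,\partial_i\mathrm{Z}_\mu(w)\,\partial_j\mathrm{Z}_\mu(w)
\]
for all distinct $i,j\in[n]$ and all $w\in\RR^n_{>0}$, which is precisely the $2$-Rayleigh property of $\mu$.
\end{proof}
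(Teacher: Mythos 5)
Your proof is correct and follows essentially the same route as the paper: apply Proposition~\ref{HRRayleigh} to the homogenized partition function $F\in\mathrm{L}^n_{n+1}$ to get the $2(1-\tfrac1n)$-Rayleigh bound, then pass to $\mathrm{Z}_\mu$ by setting $w_0=1$. The paper packages the specialization $w_0=1$ as a translation followed by a deletion via Lemma~\ref{DeletionContraction} (so it in fact concludes that $\mathrm{Z}_\mu$ is $2(1-\tfrac1n)$-Rayleigh as a polynomial, i.e., for all $\alpha$), while you only extract the $\alpha=0$ case, which is all the measure-theoretic $2$-Rayleigh condition requires; this is a cosmetic, not substantive, difference.
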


\begin{proof}
Lemma \ref{DeletionContraction} and Proposition \ref{HRRayleigh} 
show that   $\mathrm{Z}_\mu$ is a $2 \Big(1-\frac{1}{n}\Big)$-Rayleigh polynomial.
\end{proof}

\begin{proposition}
If $\mu$ is Lorentzian, then $\mu$ is $\underline{\mathrm{ULC}}$. 
\end{proposition}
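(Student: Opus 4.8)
The plan is to reduce the statement to the two-variable characterization of Lorentzian polynomials, using Theorem \ref{flow} twice. First I would check that the Lorentzian property of a measure is preserved under application of an external field. Let $f(w_0,w_1,\ldots,w_n)$ denote the homogenization of $\mathrm{Z}_\mu$, which by hypothesis lies in $\mathrm{L}^n_{n+1}$. Then the homogenization of the partition function of the measure obtained from $\mu$ by the external field $x\in\mathbb{R}^n_{>0}$ equals $\mathrm{Z}_\mu(x)^{-1}\,f(w_0,x_1w_1,\ldots,x_nw_n)$, where $\mathrm{Z}_\mu(x)=f(1,x_1,\ldots,x_n)>0$ because $\mu$ is a probability measure. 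The substitution $w_i\mapsto x_iw_i$ is a nonnegative linear change of variables, so $f(w_0,x_1w_1,\ldots,x_nw_n)\in\mathrm{L}^n_{n+1}$ by Theorem \ref{flow}; since $\mathrm{L}^n_{n+1}$ is a cone, dividing by the positive scalar $\mathrm{Z}_\mu(x)$ keeps the result in the class. Hence every external-field measure associated to $\mu$ is again Lorentzian, and it suffices to show that every Lorentzian measure is $\mathrm{ULC}$.

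For the latter, observe that $\mu\big({n\brack k}\big)$ is exactly the sum of the degree $k$ coefficients of $\mathrm{Z}_\mu$, so that the diagonalization obtained by setting $w_1=\cdots=w_n$ in $f$ -- once more a nonnegative linear change of variables, covered by Theorem \ref{flow} -- is the bivariate homogeneous polynomial $g(w_0,w_1)=\sum_{k=0}^n\mu\big({n\brack k}\big)\,w_0^{n-k}w_1^k\in\mathrm{L}^n_2$. By the description of bivariate Lorentzian polynomials in Example \ref{ExampleBivariate} (a consequence of Theorem \ref{chars}), the coefficient sequence $\mu\big({n\brack k}\big)$ is ultra log-concave, which is precisely the defining inequality of $\mathrm{ULC}$.

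I do not expect a genuine obstacle: the argument is just the stability of $\mathrm{L}^d_n$ under nonnegative linear changes of variables (Theorem \ref{flow}) together with the two-variable characterization. The only points requiring care are bookkeeping ones -- that homogenization commutes with the diagonal substitutions involved, that the normalizing constant $\mathrm{Z}_\mu(x)$ is positive, and that passing to the diagonal preserves the degree $n$ -- all of which are routine and would be spelled out in the final proof.
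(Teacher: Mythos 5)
Your proof is correct and follows exactly the paper's argument: both reduce to the external-field-preservation observation via Theorem \ref{flow}, then diagonalize to the bivariate case and invoke Example \ref{ExampleBivariate}. The only difference is that you spell out the verification that applying an external field preserves the Lorentzian property, which the paper leaves implicit.
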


\begin{proof}
Since any probability measure obtained from a Lorentzian probability measure by applying an external field is Lorentzian, it suffices to prove that $\mu$ is $\mathrm{ULC}$.
By Theorem \ref{flow},  the bivariate homogeneous polynomial $w_0^n\mathrm{Z}_\mu(w_1/w_0,\ldots, w_1/w_0)$ is Lorentzian.
Therefore, by Example \ref{ExampleBivariate}, its sequence of coefficients must be ultra log-concave.
\end{proof}


\begin{proposition}
The class of Lorentzian measures is preserved under the symmetric exclusion process.
\end{proposition}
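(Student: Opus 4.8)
The plan is to reduce the statement to Corollary~\ref{partsym}, which says that each elementary transposition step of the symmetric exclusion process preserves the Lorentzian property, together with the fact, recorded in Theorem~\ref{chars}, that $\mathrm{L}^n_{n+1}$ is a closed subset of $\mathrm{H}^n_{n+1}$. Thus what must be checked is that the semigroup of the process at any time is a limit of finite compositions of the operators $\Phi^{i,j}_\theta$ with $0\le\theta\le 1$.

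Recall that the symmetric exclusion process on $\{0,1\}^n$ is governed by a symmetric array of nonnegative rates $(q_{ij})_{i\ne j}$, and its semigroup is $T_t=e^{tL}$, where $L=\sum_{i<j}q_{ij}(T_{ij}-\mathrm{id})$ and $T_{ij}$ is the operator that swaps the occupancies of sites $i$ and $j$; on partition functions $T_{ij}$ acts by interchanging the variables $w_i$ and $w_j$, so in particular $T_{ij}^2=\mathrm{id}$. The first step is to record the elementary identity, valid for every $s\ge 0$,
\[
e^{s(T_{ij}-\mathrm{id})}=e^{-s}\cosh(s)\,\mathrm{id}+e^{-s}\sinh(s)\,T_{ij}=(1-\theta_s)\,\mathrm{id}+\theta_s\,T_{ij},
\qquad \theta_s=\tfrac12\bigl(1-e^{-2s}\bigr)\in[0,\tfrac12],
\]
which follows from $T_{ij}^2=\mathrm{id}$ and the series for $e^{sT_{ij}}$; both coefficients are nonnegative and sum to $1$. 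On partition functions this operator is exactly $\Phi^{i,j}_{\theta_s}$ (after relabeling variables so that $\{i,j\}=\{1,2\}$), so by Corollary~\ref{partsym} it sends Lorentzian measures to Lorentzian measures, and hence so does any finite composition of such operators.

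Next I would invoke the Lie--Trotter product formula to write
\[
T_t=\lim_{k\to\infty}\Bigl(\prod_{i<j}e^{(t/k)q_{ij}(T_{ij}-\mathrm{id})}\Bigr)^{k},
\]
so that $T_t$ is a pointwise limit of finite compositions of operators of the type just treated. Concretely, for a Lorentzian measure $\mu$ the measures $\mu_k$ obtained by applying the $k$-th partial product to $\mu$ are Lorentzian and converge, in the topology on probability measures on $\{0,1\}^n$ (equivalently on the coefficients of the homogenized partition function), to $T_t\mu$. It then remains to note that the class of Lorentzian measures is closed: the homogenization map $\mu\mapsto w_0^n\mathrm{Z}_\mu(w_1/w_0,\ldots,w_n/w_0)$ is a continuous injection of the compact probability simplex on $\{0,1\}^n$ into $\mathrm{H}^n_{n+1}$, and $\mathrm{L}^n_{n+1}$ is closed by Theorem~\ref{chars}. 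Therefore $T_t\mu=\lim_k\mu_k$ is Lorentzian, as desired.

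The main obstacle is bookkeeping rather than a genuine difficulty: one has to identify the generator of the symmetric exclusion process correctly and justify the Lie--Trotter approximation, which is legitimate here because everything is a bounded operator on the finite-dimensional space of measures on $\{0,1\}^n$. Alternatively, one can bypass the explicit Trotter formula altogether and simply invoke, as was done for strongly Rayleigh measures in \cite[Theorem 4.20]{BBL}, the standard fact that the semigroup of the symmetric exclusion process is a limit of finite products of the elementary swap operators $\Phi^{i,j}_\theta$, $0\le\theta\le 1$, and then conclude exactly as above from Corollary~\ref{partsym} and Theorem~\ref{chars}.
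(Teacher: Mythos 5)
Your proof is correct and takes essentially the same approach as the paper, which gives exactly this argument (in greatly compressed form) by observing that the claim is Corollary~\ref{partsym} applied to homogenized partition functions, together with the standard Lie--Trotter reduction of the SEP semigroup to elementary swaps as in \cite[Theorem 4.20]{BBL}. The details you supply — the identity $e^{s(T_{ij}-\mathrm{id})}=(1-\theta_s)\,\mathrm{id}+\theta_s T_{ij}$ with $\theta_s\in[0,\tfrac12]$, and the closedness of $\mathrm{L}^n_{n+1}$ from Theorem~\ref{chars} to pass to the limit — are exactly what the paper's one-line proof leaves implicit.
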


\begin{proof}
The statement is Corollary \ref{partsym} for homogenized partition functions of Lorentzian probability measures.
\end{proof}


\begin{proposition}
If $\mu$ is strongly Rayleigh, then $\mu$ is Lorentzian.
\end{proposition}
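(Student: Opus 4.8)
The plan is to reduce the statement, by unwinding the definitions, to the already established fact that homogeneous stable polynomials with nonnegative coefficients are Lorentzian (Proposition~\ref{PropositionStableLorentzian}).

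First I would observe that, by definition, $\mu$ being strongly Rayleigh means exactly that the multi-affine polynomial $\mathrm{Z}_\mu$ with nonnegative coefficients satisfies $\mathrm{Z}_\mu(w)\,\partial_i\partial_j\mathrm{Z}_\mu(w) \le \partial_i\mathrm{Z}_\mu(w)\,\partial_j\mathrm{Z}_\mu(w)$ for all distinct $i,j \in [n]$ and all $w \in \RR^n$; that is, $\mathrm{Z}_\mu$ is a strongly Rayleigh multi-affine polynomial in the sense of Section~\ref{Negative}. By \cite[Theorem 5.6]{Branden}, a multi-affine polynomial is stable if and only if it is strongly Rayleigh, so $\mathrm{Z}_\mu$ is stable.

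Next I would pass to the homogenization. The polynomial $w_0^n\mathrm{Z}_\mu(w_1/w_0,\ldots,w_n/w_0) = \sum_{S \subseteq [n]} \mu(\{S\})\, w^S w_0^{n-|S|}$ is homogeneous of degree $n$ in the $n+1$ variables $w_0,w_1,\ldots,w_n$ and has nonnegative coefficients. Since a polynomial with nonnegative coefficients is stable if and only if its homogenization is stable \cite[Theorem 4.5]{BBL}, this homogenization is stable; hence it belongs to $\mathrm{S}^n_{n+1}$. By Proposition~\ref{PropositionStableLorentzian} it is therefore Lorentzian, which is precisely the assertion that $\mu$ is a Lorentzian measure.

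I do not expect a genuine obstacle: the argument is a short chain of results already recorded in the excerpt. The only place meriting a moment's attention is bookkeeping --- confirming that the inequality defining ``strongly Rayleigh measure'' (stated for $w \in \RR^n$, with no external field) is literally the hypothesis of \cite[Theorem 5.6]{Branden} applied to $\mathrm{Z}_\mu$, and checking that the degree and variable count of the homogenization place it in $\mathrm{S}^n_{n+1}$, so that Proposition~\ref{PropositionStableLorentzian} applies verbatim.
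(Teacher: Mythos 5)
Your proof is correct and follows exactly the same chain of reasoning as the paper: apply \cite[Theorem 5.6]{Branden} to conclude $\mathrm{Z}_\mu$ is stable, apply \cite[Theorem 4.5]{BBL} to conclude its homogenization is stable, and apply Proposition~\ref{PropositionStableLorentzian} to conclude the homogenization is Lorentzian. You have merely spelled out the bookkeeping that the paper leaves implicit.
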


\begin{proof}
A multi-affine polynomial is stable if and only if it is strongly Rayleigh  \cite[Theorem 5.6]{Branden},
and a polynomial with nonnegative coefficients is stable if and only if its homogenization is stable \cite[Theorem 4.5]{BBL}.
By Proposition \ref{PropositionStableLorentzian}, homogeneous stable polynomials with nonnegative coefficients are Lorentzian.
\end{proof}

For a matroid $\mathrm{M}$ on $[n]$, we define probability measures $\mu_\mathrm{M}$ and $\nu_{\mathrm{M}}$ on $\{0,1\}^n$ by
\begin{align*}
\mu_\mathrm{M}&=\text{the uniform measure on $\{0,1\}^n$ concentrated on the independent sets of $\mathrm{M}$,}\\
\nu_\mathrm{M}&=\text{the uniform measure on $\{0,1\}^n$ concentrated on the bases of $\mathrm{M}$.}
\end{align*}

\begin{proposition}\label{PropositionMatroidMeasures}
For any matroid $\mathrm{M}$ on $[n]$, the measures $\mu_\mathrm{M}$ and $\nu_{\mathrm{M}}$ are Lorentzian.
\end{proposition}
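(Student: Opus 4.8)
The plan is to reduce both statements to results already proved in the paper, by examining the homogenized partition functions. First I would recall that a discrete probability measure $\mu$ on $\{0,1\}^n$ is Lorentzian precisely when the polynomial $F_\mu(w_0,w_1,\ldots,w_n)=\sum_{S\subseteq[n]}\mu(\{S\})\,w_0^{n-|S|}w^S$ is Lorentzian; since scaling by a positive constant does not affect the Lorentzian property, it suffices to prove the Lorentzian property of the corresponding \emph{unnormalized} polynomials. So the task splits into showing that $\sum_{B\in\mathrm{B}}w_0^{n-|B|}w^B$ is Lorentzian, where $\mathrm{B}$ is the set of bases of $\mathrm{M}$, and that $\sum_{A\in\mathscr{I}(\mathrm{M})}w_0^{n-|A|}w^A$ is Lorentzian.

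For $\nu_\mathrm{M}$, let $d=\mathrm{rk}_\mathrm{M}([n])$, so $\mathrm{B}\subseteq{n\brack d}$. Then the relevant polynomial is $w_0^{n-d}f_\mathrm{B}(w_1,\ldots,w_n)$, where $f_\mathrm{B}=\sum_{B\in\mathrm{B}}w^B$ is the generating function of $\mathrm{B}$ (note $B!=1$ for $B\in\{0,1\}^n$). Since $\mathrm{B}$ is the set of bases of a matroid it is $\MM$-convex, so $f_\mathrm{B}$ is Lorentzian by Theorem \ref{charjump}. The monomial $w_0^{n-d}$ is a power of the linear form $w_0$, hence a homogeneous stable polynomial with nonnegative coefficients, hence Lorentzian by Proposition \ref{PropositionStableLorentzian}. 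As observed in the proof of Corollary \ref{CorollaryProduct}, the product of Lorentzian polynomials in disjoint sets of variables is Lorentzian; applying this to $w_0^{n-d}$ in the variable $w_0$ and $f_\mathrm{B}$ in the variables $w_1,\ldots,w_n$ shows that $w_0^{n-d}f_\mathrm{B}\in\mathrm{L}^n_{n+1}$, so $\nu_\mathrm{M}$ is Lorentzian.

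For $\mu_\mathrm{M}$, the relevant polynomial is exactly $f_\mathrm{M}(w_0,w_1,\ldots,w_n)=\sum_{A\in\mathscr{I}(\mathrm{M})}w^A w_0^{n-|A|}$, and I would repeat verbatim the argument from the proof of Theorem \ref{TheoremMasonThird}: by Theorem \ref{mainpotts} the homogeneous multivariate Tutte polynomial $\mathrm{Z}_{q,\mathrm{M}}$ is Lorentzian for $0<q\le1$; by Theorem \ref{flow} the dilation $\mathrm{Z}_{q,\mathrm{M}}(w_0,qw_1,\ldots,qw_n)=\sum_{A\subseteq[n]}q^{|A|-\mathrm{rk}_\mathrm{M}(A)}w^A w_0^{n-|A|}$ is Lorentzian; and since $|A|-\mathrm{rk}_\mathrm{M}(A)$ equals $0$ for independent $A$ and is positive for dependent $A$, letting $q\to0^+$ exhibits $f_\mathrm{M}$ as a limit of Lorentzian polynomials. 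Since $\mathrm{L}^n_{n+1}$ is closed by Theorem \ref{chars}, $f_\mathrm{M}$ is Lorentzian, so $\mu_\mathrm{M}$ is Lorentzian.

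There is essentially no obstacle beyond correctly matching definitions: the substantive content (Lorentzian-ness of basis generating polynomials and of homogeneous multivariate Tutte polynomials) is supplied by Theorems \ref{charjump} and \ref{mainpotts}. The only place demanding a little care is the $\nu_\mathrm{M}$ case — recognizing that the homogenizing variable appears to the fixed power $n-d$, so that the homogenization genuinely lies in $\mathrm{H}^n_{n+1}$, and that multiplication by the Lorentzian monomial $w_0^{n-d}$ preserves the Lorentzian property.
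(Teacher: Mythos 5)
Your proof is correct and follows the same route as the paper: the $\mu_\mathrm{M}$ case is handled exactly as in the paper (and as in the proof of Theorem \ref{TheoremMasonThird}) via the limit $f_\mathrm{M}=\lim_{q\to 0}\mathrm{Z}_{q,\mathrm{M}}(w_0,qw_1,\ldots,qw_n)$ together with Theorem \ref{mainpotts} and the closedness of $\mathrm{L}^n_{n+1}$, and the $\nu_\mathrm{M}$ case is handled via Theorem \ref{charjump}. In fact your treatment of $\nu_\mathrm{M}$ is slightly more complete than the paper's one-line argument, since you explicitly account for the factor $w_0^{n-d}$ in the homogenized partition function and observe (via Corollary \ref{CorollaryProduct}) that multiplying the Lorentzian polynomial $f_\mathrm{B}$ by the Lorentzian monomial $w_0^{n-d}$ preserves the Lorentzian property.
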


\begin{proof}
Note that the homogenized partition function $f_\mathrm{M}$ of $\mu_\mathrm{M}$ satisfies
\[
f_\mathrm{M}(w_0,w_1,\ldots,w_n)=\lim_{q \to 0} \mathrm{Z}_{q,\mathrm{M}}(w_0,qw_1,\ldots,qw_n). 
\]
Since a limit of Lorentzian polynomials is Lorentzian, $\mu_\mathrm{M}$ is Lorentzian by Theorem \ref{mainpotts}.
The partition function of $\nu_\mathrm{M}$ is Lorentzian by Theorem \ref{charjump}. 
\end{proof}

Let $G$ be an arbitrary finite graph and let $i$ and $j$ be any distinct edges of $G$.
A conjecture of Kahn \cite{Kahn} and Grimmett--Winkler \cite{GW} states that,
if $F$ is a forest in $G$ chosen uniformly at random, then
\[
\text{Pr}(\text{$F$ contains $i$ and $j$}) \le \text{Pr}(\text{$F$ contains $i$})\hspace{0.5mm} \text{Pr}(\text{$F$ contains  $j$}).
\]
The conjecture is equivalent to the statement that $\mu_\mathrm{M}$ is $1$-Rayleigh for any graphic matroid $\mathrm{M}$.
Propositions \ref{Propositon2-Rayleigh} and \ref{PropositionMatroidMeasures} show that $\mu_\mathrm{M}$ is $2$-Rayleigh for any matroid $\mathrm{M}$.

\end{document}